\sloppy \pagestyle{plain}
\numberwithin{equation}{section}
\newtheorem{theorem}[subsection]{Theorem}
\newtheorem*{theorem*}{Theorem}
\newtheorem{lemma}[subsection]{Lemma}
\newtheorem{corollary}[subsection]{Corollary}
\newtheorem{proposition}[subsection]{Proposition}
\theoremstyle{definition}
\newtheorem{example}[subsection]{Example}
\newtheorem{definition}[subsection]{Definition}
\newtheorem*{definition*}{Definition}
\newtheorem{notation}[subsection]{Notation}
\theoremstyle{remark}
\newtheorem{remark}[subsection]{Remark}
\newtheorem{construction}[subsection]{Construction}
\makeatletter\@addtoreset{equation}{section}
\newcommand{\KK}{\mathbb{K}}
\newcommand{\LL}{\mathbb{L}}
\newcommand{\PP}{\mathbb{P}}
\newcommand{\ZZ}{\mathbb{Z}}
\newcommand{\QQ}{\mathbb{Q}}
\newcommand{\cP}{\mathcal{P}}
\newcommand{\cG}{\mathcal{G}}
\newcommand{\Spec}{\operatorname{Spec}}
\newcommand{\Aut}{\operatorname{Aut}}
\newcommand{\rk}{\operatorname{rk}}
\newcommand{\Pic}{\operatorname{Pic}}
\newcommand{\rkPic}{\operatorname{rkPic}}
\newcommand{\Gal}{\operatorname{Gal}}
\newcommand{\WD}{\mathrm{W}(\mathrm{D}_5)}
\newcommand{\SSS}{\mathfrak{S}}
\newcommand{\NNN}{\mathfrak{N}}
\newcommand{\HHH}{\mathfrak{H}}
\newcommand{\et}{\acute{e}t}
\def \ge {\geqslant}
\def \le {\leqslant}
\title{Birational geometry of del Pezzo surfaces of degree~$4$}
\author{Constantin Shramov}
\author{Andrey Trepalin}
\address{\emph{Constantin Shramov}
\newline
\textnormal{Steklov Mathematical Institute of RAS,
8 Gubkina street, Moscow 119991, Russia.}
\newline
\textnormal{HSE University, Russian Federation,
Laboratory of Algebraic Geometry, 6 Usacheva str., Moscow, 119048, Russia.}
\newline
\textnormal{\texttt{costya.shramov@gmail.com}}}
\address{\emph{Andrey Trepalin}
\newline
\textnormal{Steklov Mathematical Institute of RAS,
8 Gubkina street, Moscow 119991, Russia.}
\newline
\textnormal{HSE University, Russian Federation,
Laboratory of Algebraic Geometry, 6 Usacheva str., Moscow, 119048, Russia.}
\newline
\textnormal{\texttt{trepalin@mccme.ru}}}
\begin{document}

\begin{abstract}
It is known that any Mori fiber space birational to a minimal
smooth del Pezzo surface $S$ of degree~$4$ is either a del Pezzo
surface of degree~$4$ itself, or a smooth cubic surface with a structure of a relatively minimal
conic bundle. We show that any del Pezzo surface of degree~$4$ birational to~$S$
is actually isomorphic to~$S$. Also, we sketch an equivariant
version of this fact.
On the way, we review the biregular classification of
del Pezzo surfaces of degree~$4$ obtained by A.\,N.\,Skorobogatov.
\end{abstract}

\maketitle
\tableofcontents

\section{Introduction}

A smooth geometrically irreducible projective surface $Z$
together with a morphism~\mbox{$\phi\colon Z\to B$}
is called a \emph{Mori fiber space} if $\dim Z>\dim B$, the anticanonical divisor of $Z$ is $\phi$-ample, one has
$\rkPic(Z)-\rkPic(B)=1$, and $\phi_*\mathcal{O}_Z=\mathcal{O}_B$. Thus, a Mori fiber space is either
a del Pezzo surface (i.e. a smooth geometrically irreducible projective surface with ample anticanonical divisor) of Picard rank~$1$,
or a relatively minimal conic bundle (i.e. a conic bundle of relative Picard rank~$1$).

Mori fiber spaces are end results of the Minimal Model Program ran on geometrically
ruled surfaces (see e.g.~\mbox{\cite[Theorem~2.7]{Mori}}),
and so they naturally arise from studying rationality questions.
It is known that if $S$ is a del Pezzo surface of degree $K_S^2\ge 5$ over
a field $\KK$ with $S(\KK)\neq\varnothing$, then $S$ is rational, see for instance~\mbox{\cite[Theorem~2.1]{VA}} or~\mbox{\cite[Theorem~1.2]{BT}}.
On the other hand,
if $\KK$ is perfect,
the degree of $S$ is at most~$3$, and $\rkPic(S)=1$, then the only Mori fiber space birational to $S$ is $S$ itself, see~\mbox{\cite[Theorem~1.6(ii) and~\S4]{Isk96}}; in particular $S$ is not rational in this case. The same assertion holds over non-perfect fields as well: one can easily deduce this from the results of~\cite{BFSZ} (see Theorem~\ref{theorem:lowdegree} below).
If $S$ is a del Pezzo surface of degree~$9$ over a field $\KK$, such that either $\KK$ is perfect, or its characteristic is different from~$2$ and~$3$, with $S(\KK)=\varnothing$, then $S$ is not birational to conic bundles, and there exist only two del Pezzo surfaces of Picard rank~$1$ birational to $S$, including $S$ itself; see \cite{Weinstein} or~\mbox{\cite[Corollary 2.4 and Theorem 2.10]{Sh20}}. If $S$ is a del Pezzo surface of degree $8$ of Picard rank $1$ over
a perfect field $\KK$ with $S(\KK)=\varnothing$, then~$S$ is birational to certain conic bundles, but the only del Pezzo surface of Picard rank~$1$ birational to $S$ is $S$ itself;
see~\mbox{\cite[Theorem 1.6]{Trepalin-dP8}}. Birational geometry of del Pezzo surfaces of degree~$6$ over perfect fields was studied in detail in~\cite{KY}.

Concerning del Pezzo surfaces of degree~$4$, the following results are known.
They were proved by V.\,A.\,Iskovskikh over a perfect field, and the case of an arbitrary field
follow from~\cite{BFSZ};
we refer the reader to Theorem~\ref{theorem:Iskovskikh-dP4-models} below for a more detailed statement and a proof.

\begin{theorem}
\label{theorem:Iskovskikh-old}
Let $S$ be a del Pezzo surface of degree $4$ over
a field $\KK$ such that~\mbox{$\rkPic(S)=1$}. The following assertions hold.
\begin{itemize}
\item[(i)] If $S(\KK)=\varnothing$, then the only Mori fiber space birational to $S$ is $S$ itself.

\item[(ii)] If $S(\KK)\neq \varnothing$, then all the Mori fiber spaces birational to $S$ are either del Pezzo
surfaces of Picard rank~$1$ and degree $4$,
or smooth cubic surfaces with a structure of a relatively minimal conic bundle over $\PP^1$.
In particular, $S$ is not rational.
\end{itemize}
\end{theorem}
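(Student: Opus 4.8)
The plan is to deduce the statement from the two-dimensional Sarkisov program, which asserts that any birational map between Mori fiber spaces over $\KK$ factors as a composition of elementary links of types~I--IV; over a perfect field this is Iskovskikh's theorem~\cite[Theorem~1.6]{Isk96}, and over an arbitrary field it is the content we may borrow from~\cite{BFSZ}. Since a del Pezzo surface $S$ of degree~$4$ with $\rkPic(S)=1$ is itself a Mori fiber space over $\Spec\KK$, describing its whole birational class reduces to enumerating the elementary links that can issue from $S$, and then iterating the same analysis at each new Mori fiber space that appears.

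First I would classify the links with source $S$. A link of type~I or~II is centred at a closed point $P\in S$; blowing up $P$ yields a surface of anticanonical degree $4-\deg P$, and the requirement that the complementary extremal ray again contract to a Mori fiber space with relatively ample anticanonical class restricts the admissible centres to points $P$ of degree~$1$ and~$2$, with the allowed positions governed by the location of $P$ relative to the sixteen lines on $S_{\bar\KK}$ and by Galois invariance. This local analysis is the technical heart of the argument: one must check, orbit by orbit, that a candidate centre actually produces a valid link defined over $\KK$, and identify the resulting target.

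For part~(i), the hypothesis $S(\KK)=\varnothing$ rules out centres of degree~$1$, so the only possible links are centred at points of degree~$2$. I would show that any such link is a birational self-map of $S$, so that its target Mori fiber space is $S$ again, and moreover that no conic bundle can occur in the class, since producing one would require a rational point. Consequently no link leads away from $S$, the surface is birationally rigid, and $S$ is the unique Mori fiber space in its class; in particular it is not rational, as $\PP^2$ does not appear.

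For part~(ii), a rational point is available, and the new feature is the type~I link obtained by blowing up a rational point in suitably general position: the complementary contraction realizes the resulting cubic surface as a relatively minimal conic bundle over $\PP^1$. To finish I would enumerate in the same way the links issuing from such conic bundles and from the degree-$4$ del Pezzo surfaces reached along the way, verifying that the class is closed, so that every target is again either a del Pezzo surface of Picard rank~$1$ and degree~$4$ or a conic-bundle cubic surface; the presence of a conic bundle together with the absence of $\PP^2$ then shows that $S$ is not rational. The main obstacle throughout is the bookkeeping of the line configuration and the Galois action that decides, for each admissible centre, whether the link exists over $\KK$ and what its target is.
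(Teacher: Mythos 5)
There is a genuine gap in your classification of the links issuing from $S$, which is the step you yourself identify as the heart of the argument. You claim that the admissible centres of links of type~I or~II are points of degree~$1$ and~$2$, on the grounds that the complementary extremal contraction must again yield a Mori fiber space. This exclusion of degree-$3$ centres is wrong: blowing up a point of degree~$3$ in Sarkisov general position on a degree-$4$ del Pezzo surface produces a regular del Pezzo surface $Y$ of degree~$1$ with $\rkPic(Y)=2$, and the Bertini involution of $Y$ interchanges the two extremal rays of its nef cone, so the complementary ray does contract back to a del Pezzo surface of degree~$4$; this is a perfectly valid link of type~II. The paper's classification (Proposition~\ref{proposition:links-starting-from-dP4}, fed into Theorem~\ref{theorem:Iskovskikh-dP4-models}) lists exactly three kinds of links from $S$: birational Geiser involutions centred at degree-$2$ points, birational Bertini involutions centred at degree-$3$ points, and type-I blow-ups of $\KK$-points leading to cubic conic bundles. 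Since the Sarkisov decomposition of an arbitrary birational map from $S$ may pass through a Bertini link, your induction as described simply does not cover all cases; the omitted links happen to be self-maps of $S$ (by the analogue of Corollary~\ref{corollary:GeiBerIso}), so the statement of the theorem survives, but your proof needs this case added and the faulty exclusion argument repaired.

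Beyond that, your overall architecture matches the paper's: invoke the Sarkisov program over arbitrary fields from~\cite{BFSZ}, classify links from $S$, then show the class of targets is closed under links from the cubic conic bundles. Two remarks on the closure step, which you leave as a task to ``verify.'' First, the paper does not redo an orbit-by-orbit analysis of line configurations; it imports the list of links from~\mbox{\cite[Propositions 5.1, 5.5 and 5.6]{BFSZ}} and identifies the degree-$2$ and degree-$3$ links as Geiser/Bertini involutions via the exchange of extremal rays (Appendix~\ref{appendix:involutions}), which is considerably lighter than the bookkeeping you envisage. Second, the key fact that a type-II transformation of the conic bundle again lands on a smooth cubic surface containing a line is not automatic: it is Proposition~\ref{proposition:Iskovskikh-elementary-transformation} (a nontrivial statement proved via Riemann--Roch and adjunction, valid over arbitrary fields), and your sketch should cite or prove something of this strength rather than treat closure as routine. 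Finally, note that intermediate surfaces in a Sarkisov chain over an imperfect field are a priori only regular, not smooth; this is handled in~\cite{BFSZ} and silently assumed in your proposal.
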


The first goal of this paper is to prove the following assertion
which makes Theorem~\ref{theorem:Iskovskikh-old} more precise.

\begin{theorem}\label{theorem:dP4-unique}
Let $S$ be a del Pezzo surface of degree $4$ such that~{$\rkPic(S)=1$}.
Then the only del Pezzo surface of degree $4$ birational to $S$ is $S$ itself.
\end{theorem}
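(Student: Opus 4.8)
The plan is to prove the statement by combining the birational rigidity information already encoded in Theorem~\ref{theorem:Iskovskikh-old} with a careful analysis of the links between del Pezzo surfaces of degree~$4$ of Picard rank~$1$. By part~(ii) of Theorem~\ref{theorem:Iskovskikh-old}, once $S(\KK)\neq\varnothing$ there may a~priori exist several del Pezzo surfaces of degree~$4$ and Picard rank~$1$ in the birational class of $S$; the task is to show that, in fact, any such surface is isomorphic to $S$. (When $S(\KK)=\varnothing$ part~(i) already gives the result, so the substance lies in the rational case.) The first step is therefore to decompose an arbitrary birational map $S\dashrightarrow S'$ between two such surfaces into a chain of elementary Sarkisov links, using the equivariant/arithmetic Sarkisov program as developed by Iskovskikh and reproved via~\cite{BFSZ}. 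I would record precisely which link types occur: by Theorem~\ref{theorem:Iskovskikh-old}(ii) every intermediate Mori fiber space is either a degree~$4$ del Pezzo surface or a conic-bundle cubic surface, so the links are constrained to connect these.

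Next I would enumerate the elementary links emanating from a minimal degree~$4$ del Pezzo surface. The relevant ones are the links of Iskovskikh's classification that pass through the blow-up of a point (or a Galois-stable cluster of points) on $S$ and contract a different set of curves; these are exactly the links realizing the $\mathrm{W}(\mathrm{D}_5)$-symmetry of the configuration of lines. The key observation I would aim to establish is that each such link, when it lands again on a degree~$4$ del Pezzo surface of Picard rank~$1$, produces a surface \emph{isomorphic} to the original — i.e. the links act on the set of degree~$4$ models by giving back $S$ up to isomorphism, rather than producing genuinely new surfaces. Concretely, I expect this to reduce to analyzing the action on the anticanonical model in $\PP^4$, where $S$ is a smooth intersection of two quadrics, and to tracking how a Galois-equivariant Sarkisov link permutes the $16$ lines and the five pencils of conics.

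The main obstacle, and the technical heart of the argument, will be ruling out links that factor through a conic-bundle cubic surface and return to a \emph{non-isomorphic} quartic del Pezzo surface. Here I would lean on the biregular classification of degree~$4$ del Pezzo surfaces via the $\mathrm{W}(\mathrm{D}_5)$-action on the Picard lattice (reviewed later in the paper, cf.~the Skorobogatov classification mentioned in the abstract), pinning down the Galois representation on $\Pic(\overline{S})$ and showing it is a birational invariant that determines $S$ uniquely among degree~$4$ models. The delicate point is that while the Picard lattice with Galois action is manifestly preserved along links between degree~$4$ surfaces of Picard rank~$1$, one must verify that no composite link can alter the finer invariants (such as the relevant Brauer class or the $5$-dimensional quadric data) that distinguish two abstractly lattice-isomorphic but non-isomorphic quartics. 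I would address this by checking that the intermediate conic bundles do not introduce new birational degrees of freedom, so that the only degree~$4$ del Pezzo surface reachable from $S$ is $S$ itself.
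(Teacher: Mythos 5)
Your skeleton matches the paper's strategy: decompose the map into Sarkisov links (Theorem~\ref{theorem:Iskovskikh-dP4-models}), observe that the Geiser/Bertini links of type $\mathrm{II}$ return $S$ itself, and reduce everything to showing that a chain $S \leftarrow X \dashrightarrow X' \rightarrow S'$ passing through a transformation of conic bundles on cubic surfaces forces $S \cong S'$. But at exactly that point — which you correctly identify as the technical heart — there is a genuine gap, and the fallback invariant you name is in fact insufficient. You propose to pin down ``the Galois representation on $\Pic(\overline{S})$'' and show ``it is a birational invariant that determines $S$ uniquely among degree~$4$ models.'' It does not determine $S$: the paper's example over $\mathbb{R}$ at the end of Section~\ref{section:biregular} (the surfaces $S_1,\ldots,S_5$ built from five real points on a conic) exhibits pairwise non-isomorphic quartic del Pezzo surfaces whose Galois actions on the Picard lattice are conjugate in $\WD$ (the involutions $\iota_{klmn}$ form a single $\SSS_5$-conjugacy class) and whose configurations of $16$ lines with Galois action are isomorphic as incidence schemes. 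What separates them is the structure of the Hilbert scheme of lines $\Lambda(S)$ as a \emph{torsor} under the group scheme $G(S)$, together with the embedding $\Delta(S)\subset\PP^1$; by Skorobogatov's Theorem~\ref{theorem:Skorobogatov} these data determine $S$, and the paper's examples show that the weaker lattice-theoretic data you invoke do not.

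You do hedge by mentioning ``finer invariants (such as the relevant Brauer class or the $5$-dimensional quadric data),'' but ``checking that the intermediate conic bundles do not introduce new birational degrees of freedom'' is a restatement of the theorem, not an argument. The actual content of the paper's proof is the mechanism for transporting the torsor through the link: Lemma~\ref{lemma:discriminant} identifies $\Delta(S)$ with the discriminant of the conic bundle, which a transformation visibly preserves, giving $\Delta(S)\cong\Delta(S')$ as subschemes of $\PP^1$; and the marking formalism (Constructions~\ref{construction:markings-1-to-1-S-to-X} and~\ref{construction:markings-1-to-1-X-to-S}, together with the compatibility Lemmas~\ref{lemma:markings-wd-equivariant} and~\ref{lemma:markings-wd-equivariant-cb}) produces a bijection $\Lambda(S)\to\Lambda(S')$ that is simultaneously $\WD$-equivariant and Galois-equivariant (Proposition~\ref{proposition:Lambda-Lambda-prime-WD-equivariant}), hence an isomorphism of torsors over $G(S)\cong G(S')$ (Corollary~\ref{corollary:dP4-isomorphic}); Corollary~\ref{corollary:Skorobogatov} then yields $S\cong S'$, and an induction over the chain of links (Corollary~\ref{corollary:Iskovskikh-dP4-models}) finishes the proof. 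Without a substitute for this equivariance argument — a proof that the natural identification of components of degenerate fibers under a conic-bundle transformation respects not merely the Picard lattice but the torsor structure on the sixteen lines — your plan does not close.
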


In particular, Theorem~\ref{theorem:dP4-unique} applied together
with Theorem~\ref{theorem:Iskovskikh-old}
implies that the only del Pezzo surface of Picard rank~$1$
birational to a del Pezzo surface~$S$ of degree~$4$ and Picard rank~$1$ is~$S$ itself.

It is interesting to compare Theorems~\ref{theorem:Iskovskikh-old} and~\ref{theorem:dP4-unique} with a result of the same
flavor which holds in a totally different setting, where Mori fiber spaces (and in particular del Pezzo surfaces) are not assumed
to be smooth, but only regular.

\begin{theorem}[{\cite[Theorem 4.38(2)]{BFSZ}}]
Let $S$ be a geometrically integral and geometrically non-normal
regular del Pezzo surface of degree $4$ over a field $\KK$ of characteristic~$2$
such that $\rkPic(S)=1$.
Then all the regular Mori fiber spaces birational to~$S$ are~$S$ itself, and
certain regular cubic surfaces with a structure of a relatively minimal conic bundle over~$\PP^1$.
\end{theorem}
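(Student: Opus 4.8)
\emph{The plan} is to run the Sarkisov program in the category of regular Mori fiber spaces over the imperfect base field $\KK$, paralleling the argument behind Theorem~\ref{theorem:Iskovskikh-old} but tracking the pathologies that characteristic~$2$ allows. Since the Sarkisov program is available in the regular category, every birational map between regular Mori fiber spaces factors as a chain of elementary links, so it suffices to enumerate the links issuing from a regular del Pezzo surface of degree~$4$ with $\rkPic=1$, then from the surfaces they produce, until the whole birational class is described. As $\rkPic(S)=1$, each such link begins with a divisorial extraction over a closed point $p\in S$, yielding a regular surface of Picard rank~$2$ on which one plays the two-ray game.

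First I would sort these links by the degree $d=[\KK(p):\KK]$ of the extracted point and by the self-intersection of the exceptional curve, as in the smooth classification. Extraction over an appropriate $\KK$-point lowers the anticanonical degree to~$3$ and terminates the two-ray game in a relatively minimal conic bundle over $\PP^1$ whose total space is a regular cubic surface; these account for the conic bundles in the statement. Every link whose target is again a regular del Pezzo surface of degree~$4$ must then be shown to be an isomorphism, which is simultaneously the analogue of Theorem~\ref{theorem:dP4-unique}; once the admissible centres are listed, this should reduce to checking that the only extraction-and-contraction returning to a quartic del Pezzo surface is the trivial one. One finally verifies that no regular Mori fiber space outside the asserted list is reachable from the cubic conic bundles, which closes the birational class.

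The genuinely new difficulty, absent in the smooth setting, is that one cannot base change to $\bar{\KK}$ and invoke the classical description of a del Pezzo quartic as an intersection of two quadrics in $\PP^4$, because $S_{\bar{\KK}}$ is not normal. I would instead transfer the needed control to the normalization $\nu\colon S^{\nu}_{\bar{\KK}}\to S_{\bar{\KK}}$, whose conductor is forced into a very restricted shape by the classification of geometrically non-normal regular del Pezzo surfaces in characteristic~$2$; the non-smooth locus descends to a distinguished curve on $S$, and the interplay between this curve and the exceptional divisor of each extraction is what bounds the admissible links. The subtlest point is that the dimension counts for anticanonical systems and the behaviour of divisorial extractions can degenerate over an imperfect field, so every numerical step of the two-ray game has to be reconfirmed directly on $S$ rather than read off from its geometric model.
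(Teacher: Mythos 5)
First, a point of comparison: the paper does not prove this statement at all --- it is quoted verbatim from \cite{BFSZ} (their Theorem 4.38(2)) purely for contrast with Theorems~\ref{theorem:Iskovskikh-old} and~\ref{theorem:dP4-unique}, so there is no in-paper proof to measure you against. Judged on its own, your proposal correctly identifies the overall strategy (factor any birational map into Sarkisov links in the regular category via \cite[Theorem A]{BFSZ}, then classify the links issuing from $S$ and from the cubic conic bundles), which is indeed the skeleton of the argument both in \cite{BFSZ} and in the paper's own Appendix~\ref{appendix:pointless-4} and Theorem~\ref{theorem:Iskovskikh-dP4-models}. But as written it is a plan, not a proof: every load-bearing step is deferred. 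Concretely, you never establish (a) that the only admissible centres on $S$ are closed points of degree $1$, $2$, $3$ in Sarkisov general position, and that the degree-$2$ and degree-$3$ links are birational Geiser/Bertini involutions returning to $S$ itself --- this is the content of Proposition~\ref{proposition:links-starting-from-dP4} and Corollary~\ref{corollary:GeiBerIso}, resting on \cite[Propositions 4.18, 4.21, 5.1 and 5.5]{BFSZ}, and your phrase ``sort these links by the degree \dots and by the self-intersection of the exceptional curve, as in the smooth classification'' is precisely the case analysis that must be redone over an imperfect field rather than imported; nor (b) the closing step that a rank-$2$ conic bundle with $K^2=3$ is a cubic surface containing a line (the regular analogue of Proposition~\ref{proposition:Iskovskikh-elementary-transformation}, i.e.\ \cite[Proposition 4.32(3)]{BFSZ}), which you assert without argument.

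The most serious gap is the uniqueness claim ``the only extraction-and-contraction returning to a quartic del Pezzo surface is the trivial one,'' which you yourself flag as what the argument ``should reduce to.'' In the smooth setting this is exactly Theorem~\ref{theorem:dP4-unique}, and its proof consumes Sections~\ref{section:markings}--\ref{section:birational} of the paper: $\WD$-markings, the pencil of quadrics, and Skorobogatov's torsor classification (Theorem~\ref{theorem:Skorobogatov}). None of that machinery is available when $S_{\bar{\KK}}$ is non-normal --- there are no $16$ lines, no smooth discriminant quintuple $\Delta(S)$, no torsor $\Lambda(S)$ --- and your proposed substitute (the conductor of the normalization $S^{\nu}_{\bar{\KK}}\to S_{\bar{\KK}}$ and the descended non-smooth curve on $S$) is left entirely as a heuristic: you give no mechanism by which this curve controls which del Pezzo surface of degree $4$ one lands on after a chain of transformations between conic bundles on the intermediate cubics. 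Since the theorem asserts that the \emph{only} non-conic-bundle Mori fiber space in the class is $S$ itself, this step carries the entire content beyond the formal Sarkisov bookkeeping, and the proposal does not contain an idea that would close it; one would have to work through the structure theory of geometrically non-normal regular del Pezzo surfaces in characteristic~$2$ as \cite{BFSZ} do, not merely invoke its existence.
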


We also give a sketch of a proof of a more general version of
Theorems~\ref{theorem:Iskovskikh-old} and~\ref{theorem:dP4-unique} taking into account an action of a finite group.

\begin{theorem}\label{theorem:dP4-unique-equivariant}
Let $S$ be a del Pezzo surface of degree $4$ over
a field $\KK$
with an action of a finite group $\Gamma$ such that $\rkPic(S)^{\Gamma}=1$.
The following assertions hold.
\begin{itemize}
\item[(i)] If there are no $\Gamma$-invariant $\KK$-points on $S$, then the only $\Gamma$-Mori fiber space \mbox{$\Gamma$-equivariantly} birational to $S$ is $S$ itself.
In particular, $S$ is not $\Gamma$-rational.

\item[(ii)] If there exists
a $\Gamma$-invariant $\KK$-point on $S$, then all the $\Gamma$-Mori fiber spaces \mbox{$\Gamma$-equivariantly} birational to $S$ are
$S$ itself and smooth cubic surfaces with a~structure of a relatively $\Gamma$-minimal conic bundle over $\PP^1$
obtained by a blow up of \mbox{a $\Gamma$-invariant} $\KK$-point on~$S$.
In particular, $S$ is not $\Gamma$-rational.
\end{itemize}
\end{theorem}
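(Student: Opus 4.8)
The plan is to run the two-dimensional Sarkisov program in its $\Gamma$-equivariant form, in the same spirit as the proofs of Theorems~\ref{theorem:Iskovskikh-old} and~\ref{theorem:dP4-unique}, but now working with the combined action of the absolute Galois group $\Gal(\overline{\KK}/\KK)$ and the finite group $\Gamma$ on the geometric Picard lattice $\Pic(S_{\overline{\KK}})$. The starting point is that any $\Gamma$-equivariant birational map between two $\Gamma$-Mori fiber spaces factors as a chain of elementary $\Gamma$-equivariant links (the equivariant surface Sarkisov program). Thus it suffices to enumerate all elementary links whose source is a del Pezzo surface $S$ of degree $4$ with $\rkPic(S)^{\Gamma}=1$, and to identify their targets.

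Since $\rkPic(S)^{\Gamma}=1$, the invariant part of the Mori cone of $S$ is generated by $-K_S$, so the only links of types I and II available begin with the blow-up of a $\Gamma$-stable reduced zero-dimensional subscheme $P\subset S$, i.e. a disjoint union of $(\Gal\times\Gamma)$-orbits of geometric points. The key numerical constraint is that if $P$ has length $\ell$ and lies in suitably general position with respect to the $16$ lines and the pencils of conics governed by the action of $\WD$ on $\Pic(S_{\overline{\KK}})$, then its blow-up is a weak del Pezzo surface of degree $4-\ell$; to obtain a Mori fiber space after contracting a $\Gamma$-stable extremal curve one needs $\ell\le 3$. Running through the admissible $(\Gal\times\Gamma)$-orbit configurations one finds exactly two outcomes: the length-$1$ center requires a $\Gamma$-invariant $\KK$-point and produces, via a type~I link, the blow-up of $S$ at that point, namely a smooth cubic surface carrying a relatively $\Gamma$-minimal conic bundle structure over $\PP^1$; every other admissible center yields a type~II link that, after the relative MMP, terminates on another del Pezzo surface of degree $4$.

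For part~(i), the absence of a $\Gamma$-invariant $\KK$-point rules out the length-$1$ center and hence every link to a conic bundle, so only type~II links survive; combined with the uniqueness statement this shows that $S$ is $\Gamma$-birationally rigid, and in particular not $\Gamma$-rational. For part~(ii), the $\Gamma$-invariant $\KK$-point supplies precisely the conic-bundle link in the statement, and one checks that it carries no further conic bundle structures beyond those claimed. In both cases the crucial input is the uniqueness of the del Pezzo target, namely that any type~II link between two degree-$4$ del Pezzo surfaces of $\Gamma$-Picard rank $1$ is a $\Gamma$-isomorphism; this is inherited from the argument proving Theorem~\ref{theorem:dP4-unique}, applied verbatim to the lattice $\Pic(S_{\overline{\KK}})$ equipped with the combined $\Gal\times\Gamma$-action.

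The main obstacle is the bookkeeping in the middle step: for each admissible $(\Gal\times\Gamma)$-orbit structure on the configuration of $16$ lines one must control both which orbits can serve as link centers and what the relative MMP does after the blow-up, verifying that the only escape to a conic bundle passes through a genuine $\Gamma$-fixed $\KK$-point. This is where the interplay between the arithmetic (Galois) and the geometric ($\Gamma$) parts of the symmetry is most delicate, and where transferring the non-equivariant arguments of Theorems~\ref{theorem:Iskovskikh-old} and~\ref{theorem:dP4-unique} demands the most care; it is precisely for this reason that we present the argument only as a sketch.
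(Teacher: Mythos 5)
Your overall route coincides with the paper's: decompose the $\Gamma$-equivariant map into $\Gamma$-Sarkisov links, classify the links out of $S$ (birational Geiser and Bertini involutions at centers of degree $2$ and $3$, now including $\Gamma$-invariant pairs and triples of $\KK$-points, plus the blow up of a $\Gamma$-invariant $\KK$-point leading to a cubic surface with a relatively $\Gamma$-minimal conic bundle), and reduce everything to the uniqueness of the degree-$4$ target; this is exactly the shape of Proposition~\ref{proposition:Iskovskikh-dP4-models-equivariant} and Corollary~\ref{corollary:Iskovskikh-dP4-models-equivariant}. However, there is a genuine gap at the one point where the equivariant statement exceeds the non-equivariant one: you claim the uniqueness step is inherited ``verbatim'' by running the proof of Theorem~\ref{theorem:dP4-unique} on the Picard lattice equipped with the combined $\Gal(\KK^{sep}/\KK)\times\Gamma$-action. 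The heart of that proof is not a lattice computation but a descent step: Skorobogatov's classification (Theorem~\ref{theorem:Skorobogatov}, Corollary~\ref{corollary:Skorobogatov}) recovers $S$ from the scheme $\Delta(S)\subset\PP^1$ and the $G(S)$-torsor $\Lambda(S)$, and torsors here are a Galois-theoretic notion. Since $\Gamma$ acts by $\KK$-automorphisms rather than by field automorphisms, there is no enlarged ``Galois group'' $\Gal\times\Gamma$ over which the twisting formalism makes sense, so the verbatim transfer breaks down precisely at the step that produces the isomorphism $\psi\colon S\stackrel{\sim}\longrightarrow S'$.

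The paper's actual fix (Lemma~\ref{lemma:transformation-via-cubic-equivariant}) is a two-stage argument you would need to supply. First one obtains a plain isomorphism $\psi$ from Corollary~\ref{corollary:dP4-isomorphic}(iv), using only the Galois structure. Then one observes that the induced map on the $16$ lines equals the marking isomorphism $\nu$, which \emph{is} $\Gamma$-equivariant, because Lemmas~\ref{lemma:markings-wd-equivariant} and~\ref{lemma:markings-wd-equivariant-cb} are stated uniformly for $G=\Gal(\KK^{sep}/\KK)$ or $G$ a subgroup of the automorphism group, so the homomorphisms $\pi_S$, $\pi_X$, $\pi_{X'}$, $\pi_{S'}$ from $\Gamma$ to $\WD$ all coincide. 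Finally, the line-level equivariance of $\psi$ is upgraded to equivariance of $\psi$ itself via the faithfulness of the action of $\Aut(S_{\KK^{sep}})$ on the lines (Lemma~\ref{lemma:Weyl-group-on-dP4-basic}(iii)). A second omission of the same kind: when a step in the chain is a birational Geiser or Bertini involution, the underlying surface is unchanged but the $\Gamma$-action could a priori be twisted by the link; the paper rules this out in Lemma~\ref{lemma:GB-involution-equivariant}, showing that the Geiser (resp.\ Bertini) involution commutes with every automorphism of the blown-up surface, since it is determined by the linear system $|-K|$ (resp.\ $|-2K|$), whence the two $\Gamma$-actions coincide. Without these two ingredients your chain argument only yields abstract isomorphisms between the degree-$4$ models, not $\Gamma$-equivariant ones, which is exactly the content of the theorem.
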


In course of the proofs of our main results, we use the following
theorem due to A.\,N.\,Skorobogatov.

\begin{theorem}[{see \cite[Theorem~2.3]{Skorobogatov-Kummer}}]
\label{theorem:Skorobogatov-short}
There is a natural one-to-one correspondence
between isomorphism classes of del Pezzo surfaces of degree~$4$ over a field $\KK$,
and the set of pairs~\mbox{$(\Delta,\Lambda)$},
where $\Delta\subset\PP^1$ is a smooth closed subscheme of length~$5$, and $\Lambda$ is an isomorphism class of torsors
over the group scheme~\mbox{$(R_{\KK[\Delta]/\KK}\,(\ZZ/2\ZZ))/(\ZZ/2\ZZ)$};
here~$R_{\KK[\Delta]/\KK}$ denotes the Weil restriction of scalars.
\end{theorem}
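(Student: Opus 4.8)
The plan is to recover a smooth del Pezzo surface of degree~$4$ from its anticanonical model and to isolate the two pieces of data $(\Delta,\Lambda)$ from the geometry of a pencil of quadrics. First I would use that the anticanonical system embeds such a surface $S$ into $\PP^4$ as the base locus of a pencil of quadrics $\mathcal{Q}$, where $\mathcal{Q}=\PP\big(H^0(\PP^4,\mathcal{I}_S(2))\big)\cong\PP^1$ is intrinsic to $S$. The locus of singular members of this pencil is cut out by the discriminant, a form of degree~$5$ on $\mathcal{Q}$, and smoothness of $S$ is equivalent to this form being squarefree together with all singular members having corank exactly~$1$. This produces the smooth closed subscheme $\Delta\subset\mathcal{Q}$ of length~$5$ and the assignment $S\mapsto\Delta$, defined over $\KK$ because the pencil and its discriminant are.

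Next I would extract $\Lambda$ from the rulings of the degenerate quadrics. Each point $s\in\Delta$ corresponds to a corank-$1$ quadric $Q_s\subset\PP^4$, a cone over a smooth quadric surface, and the two rulings of that surface form a torsor $T_s$ under $\ZZ/2\ZZ$. Working over the étale $\KK$-algebra $\KK[\Delta]$, where the five points become rational, these assemble into a single torsor under the Weil restriction $R_{\KK[\Delta]/\KK}(\ZZ/2\ZZ)$, the Galois group acting simultaneously on the index set $\Delta$ and on the individual rulings. The key geometric point is that switching all five rulings at once is realised by an automorphism of the ambient configuration fixing $S$ (equivalently, only the collection of rulings modulo the simultaneous switch is canonically attached to $S$), so the genuine invariant is a torsor under the quotient $G=\big(R_{\KK[\Delta]/\KK}(\ZZ/2\ZZ)\big)/(\ZZ/2\ZZ)$; its class is the required $\Lambda$.

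To prove that $(\Delta,\Lambda)$ determines $S$ and that every pair occurs, I would run a descent argument. Over $\bar\KK$ a smooth surface is recovered from $\Delta$ alone by simultaneously diagonalizing the pencil into Segre normal form (the $G$-torsor being automatically trivial there), and the rulings then rigidify the $\KK$-structure, so that $(\Delta,\Lambda)$ reconstructs $S$ up to isomorphism, giving injectivity of $S\mapsto(\Delta,\Lambda)$. For surjectivity I would, given a degree-$5$ subscheme $\Delta$ and a $G$-torsor, twist the split pencil by the corresponding cocycle to manufacture a pencil of quadrics over $\KK$ with prescribed singular members and rulings, and check that its base locus is a smooth degree-$4$ del Pezzo surface inducing the chosen data. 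This matches the exact sequence
\[
1\longrightarrow (\ZZ/2\ZZ)^4 \longrightarrow \WD \longrightarrow \SSS_5\longrightarrow 1,
\]
where $\Delta$ records the image in $H^1(\KK,\SSS_5)$ (the degree-$5$ étale algebra) and $\Lambda$ records the fibre over it, a torsor under the $\Delta$-twisted form $G$ of $(\ZZ/2\ZZ)^4$.

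The main obstacle is the middle step: justifying the quotient by the diagonal $\ZZ/2\ZZ$ and proving that the ruling data constitutes exactly a $G$-torsor and nothing more. Concretely, I expect the delicate part to be verifying that interchanging all rulings simultaneously does not change the isomorphism class of $S$ while any non-diagonal change does, and then upgrading this into a clean descent equivalence. Characteristic~$2$ requires extra care, since one must work with the constant group scheme $\ZZ/2\ZZ$ of rulings rather than $\mu_2$, and check that corank-$1$ quadrics still carry two well-defined rulings in that setting.
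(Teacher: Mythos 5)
Your overall architecture coincides with the paper's (which follows Skorobogatov): $\Delta$ is the discriminant scheme of the pencil, $\Lambda$ is a torsor under $G=\big(R_{\KK[\Delta]/\KK}(\ZZ/2\ZZ)\big)/(\ZZ/2\ZZ)$, and $S$ is recovered by twisting a split model. But the step you yourself single out as delicate is where the argument, as written, is wrong: the simultaneous switch of all five rulings is \emph{not} realised by an automorphism fixing $S$. An automorphism of $S_{\KK^{sep}}$ preserving each singular quadric fixes five points of $\cP\cong\PP^1$, hence acts trivially on the pencil and lies in the kernel $\NNN\cong(\ZZ/2\ZZ)^4$ of $\WD\to\SSS_5$; and the involution $\iota_{R_j}$ preserves both rulings of $R_j$ while exchanging the two rulings of each $R_i$ with $i\neq j$ (a line $L$ and $\iota_{R_j}(L)$ span a plane contained in $R_j$, and a plane lies on only one quadric of the pencil, so $L$ and $\iota_{R_j}(L)$ lie in the same family of conic components for $R_j$ and in opposite families for every other $R_i$, cf.\ Lemma~\ref{lemma:involution-description}). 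Hence automorphisms act on the $2^5$ ruling choices only through the even-weight subgroup of $(\ZZ/2\ZZ)^5$, which does not contain the odd-weight diagonal flip. Your fallback formulation fails for the same reason: each cone carries a canonical pair of rulings, so the full $R_{\KK[\Delta]/\KK}(\ZZ/2\ZZ)$-torsor of ruling choices \emph{is} canonically attached to $S$. The actual source of the diagonal quotient is the relation $\iota_{R_1}\cdots\iota_{R_5}=\id$ among the five Galois involutions of the double covers (the relation $\iota_{1234}\iota_{1235}\iota_{1245}\iota_{1345}\iota_{2345}=1$ in $\NNN$); this is precisely what Lemma~\ref{lemma:GS-vs-NNN} uses to identify $G(S)_{\KK^{sep}}$ with $\NNN$, after which the torsor is taken to be the Hilbert scheme of the $16$ lines, on which $\NNN$ acts simply transitively by Lemma~\ref{lemma:Weyl-group-on-dP4-basic}(ii). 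Your ruling data modulo the diagonal is in fact isomorphic to this line torsor --- each line selects one ruling of each cone, and the realised selections form a single coset of the even-weight subgroup --- but this must be proved, and proving it amounts to the lemmas just cited.

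The reconstruction half is also missing its key lemma. To turn ``the rulings rigidify the $\KK$-structure'' into a proof you must know which surface corresponds to the trivial torsor, canonically in terms of $\Delta\subset\PP^1$. The paper gets this from Lemma~\ref{lemma:5-lines}: if $S$ contains a line $Q$ over $\KK$, there is a canonical isomorphism $Q\cong\cP$ carrying the scheme of intersection points of $Q$ with the other lines to $\Delta(S)$, so contracting those five lines exhibits $S$ as the blow-up of $\PP^2$ at the Veronese image of $\Delta$ (Lemma~\ref{lemma:Delta-vs-Delta-triv}). Injectivity then follows by the trivialization trick of Theorem~\ref{theorem:Skorobogatov} (twist $S$ by its own torsor to produce a rational line, identify the result with $S^{triv}_{\Delta(S)}$, twist back), and surjectivity by twisting $S^{triv}_{\Delta}$ --- no cocycle-level descent through normal forms is needed. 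Note also that Lemma~\ref{lemma:5-lines} is exactly where the paper removes Skorobogatov's hypotheses $\Char\KK\neq 2$ and $|\KK|\geqslant 5$; your alternative of diagonalizing the pencil to Segre normal form is unavailable in characteristic $2$, where one must instead use the corank-one normal forms behind Theorem~\ref{theorem:dP4-basic}. In short: right skeleton, but the two load-bearing steps --- the $G$-torsor structure on the ruling/line data, and the characterization of the split model --- are respectively mis-justified and absent.
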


A more precise version of Theorem~\ref{theorem:Skorobogatov-short} is Theorem~\ref{theorem:Skorobogatov}.
In~\cite{Skorobogatov-Kummer}, this theorem was proved under certain minor assumptions:
the characteristic of~$\KK$ was assumed to be different from~$2$, and $\KK$ was assumed to contain
at least~$5$ elements. We take an opportunity to recall the beautiful proof
from~\cite{Skorobogatov-Kummer} with some minor additional work which
allows to get rid of these assumptions on~$\KK$ (cf. Remark~\ref{remark:Skorobogatov-upgrade}).

\begin{remark}
In a recent paper~\cite{ESS}, a totally different approach based on the results of~\cite{Elagin}
was used to prove Theorems~\ref{theorem:dP4-unique} and~\ref{theorem:dP4-unique-equivariant}
over perfect fields.
\end{remark}

\medskip
Some of our results can be generalized to a wider case of regular (possibly non-smooth) del Pezzo surfaces.
Recall that a regular del Pezzo surface $S$ with $\rkPic(S)=1$
is called \emph{birationally rigid}, if any regular Mori fiber space birational to $S$ is isomorphic to~$S$.
Furthermore, $S$ is called \emph{birationally super-rigid}, if any birational map $S \dashrightarrow S'$ to a regular Mori fiber space $S'$ is an isomorphism.
The following assertion is a generalization of Theorem~\ref{theorem:Iskovskikh-old}(i).

\begin{theorem}\label{theorem:pointless-dP4}
Let $S$ be a geometrically integral regular del Pezzo surface of degree~$4$ over
a field~$\KK$ such that~\mbox{$\rkPic(S)=1$} and
$S(\KK)=\varnothing$. Then $S$ is birationally rigid. Moreover, if~$S$ has no points of degree $2$ and $3$, then it is
birationally super-rigid.
\end{theorem}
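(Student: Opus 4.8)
The plan is to run the Sarkisov program for regular Mori fiber spaces, whose validity over an arbitrary field $\KK$ (including the geometrically non-normal regular case that occurs in characteristic~$2$) is guaranteed by the results of \cite{BFSZ}. Let $\chi\colon S\dashrightarrow S'$ be a birational map to a regular Mori fiber space $S'$. Choose a mobile linear system defining a projective embedding of $S'$ and let $\mathcal{H}$ be its strict transform on $S$; since $\rkPic(S)=1$ and $-K_S$ generates $\Pic(S)\otimes\QQ$, we may write $\mathcal{H}\equiv -nK_S$ for a positive rational number $n$. The Noether--Fano--Iskovskikh criterion then asserts that if $\chi$ is not already an isomorphism of Mori fiber spaces, the log pair $(S,\tfrac1n\mathcal{H})$ fails to be canonical, so it has a maximal singularity: a geometric valuation $E$ over $S$ with $\ord_E\mathcal{H}>n\,a(E,S)$, where $a(E,S)$ is the discrepancy of $E$. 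Since everything is defined over $\KK$, the Galois orbit of the center of $E$ is a closed point $P\in S$.

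The second step is to bound $\deg P$. Let $H_1,H_2\in\mathcal{H}$ be general members, so that $H_1\cdot H_2=n^2K_S^2=4n^2$. Localizing at the orbit of $P$ and using the standard surface inequalities for maximal singularities (valid in the regular setting by \cite{BFSZ}), each of the $\deg P$ geometric points of $P$ forces a local contribution exceeding $n^2$ to this intersection number, whence
\[
4n^2=H_1\cdot H_2 > \deg P\cdot n^2,
\]
so that $\deg P\le 3$. As $S(\KK)=\varnothing$, there are no closed points of degree~$1$, and therefore every maximal singularity is centered at a closed point of degree $2$ or $3$. In particular, if $S$ has no points of degree $2$ and $3$, there are no maximal singularities at all: every birational map $S\dashrightarrow S'$ to a regular Mori fiber space is then an isomorphism, which is precisely birational super-rigidity. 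This settles the ``moreover'' part.

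For the first assertion I would show that the only elementary links starting from $S$ are self-links. A link untwisting a maximal singularity at a point $P$ is obtained by blowing up $P$, producing a surface $\tilde S$ with $K_{\tilde S}^2=4-\deg P$, and then contracting a different extremal configuration. For $\deg P=2$ the surface $\tilde S$ is a del Pezzo surface of degree~$2$, whose Geiser involution is a biregular automorphism carrying the exceptional orbit $E_P$ to another orbit $E_P'$ of $(-1)$-curves; contracting $E_P'$ returns a del Pezzo surface of degree~$4$ isomorphic to $S$, so the link is a birational self-map of $S$. For $\deg P=3$ the surface $\tilde S$ has degree~$1$ and the same role is played by the Bertini involution. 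The only link that could leave the class of $S$ would blow up a point of degree~$1$ and contract a conic bundle structure on the resulting cubic surface, and this is unavailable since $S(\KK)=\varnothing$. Consequently the Sarkisov decomposition of $\chi$ consists of self-links of $S$ followed by a terminal link with no maximal singularity, which is an isomorphism onto a regular Mori fiber space; therefore $S'\cong S$, i.e. $S$ is birationally rigid. This is the regular analogue of Theorem~\ref{theorem:Iskovskikh-old}(i).

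The main obstacle is not the intersection-theoretic bound, which is routine, but the verification that the entire apparatus of maximal singularities and the Sarkisov program remains valid for regular, possibly geometrically non-normal, del Pezzo surfaces in characteristic~$2$; this is exactly where \cite{BFSZ} is indispensable, both to make sense of discrepancies and to guarantee that the blow-ups and contractions producing the Geiser and Bertini self-links exist and stay within the category of regular Mori fiber spaces. A secondary technical point is to confirm that these self-links genuinely land back on a surface isomorphic to $S$ rather than on some a~priori different degree-$4$ del Pezzo surface, which follows once one checks that the relevant involution of $\tilde S$ carries the first exceptional orbit to precisely the orbit that is being contracted.
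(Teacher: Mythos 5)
Your proposal is correct and follows essentially the same route as the paper: decomposition into Sarkisov links via \cite[Theorem A]{BFSZ}, the fact that links from $S$ are centered at points of degree at most $3$ (which the paper simply quotes from \cite[Propositions 5.1 and 5.5]{BFSZ}, while you re-derive it by the Noether--Fano multiplicity bound), identification of the degree-$2$ and degree-$3$ links as Geiser and Bertini self-links of $S$, and the absence of $\KK$-points ruling out the type~$\mathrm{I}$ link to a cubic surface with a conic bundle structure. The only cosmetic difference is that the paper packages your final verification (that the self-link really lands back on $S$) as Corollary~\ref{corollary:GeiBerIso}, using that the Geiser/Bertini involution interchanges the two extremal rays of the blown-up surface by \cite[Proposition 4.21]{BFSZ}.
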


Although this lies apart from the main goals of the paper, we
prove the following theorem which is well known over perfect fields, and goes back to Yu.\,I.\,Manin, see~\mbox{\cite[Theorem~33.2]{Man74}}.

\begin{theorem}
\label{theorem:lowdegree}
Let $S$ be a geometrically integral regular del Pezzo surface of degree $d \leqslant 3$ over a field $\KK$ such that~\mbox{$\rkPic(S)=1$}.
Then $S$ is birationally rigid. Moreover, if $S(\KK)=\varnothing$ or $d = 1$, then $S$ is birationally super-rigid.
\end{theorem}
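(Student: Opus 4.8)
The plan is to run the Sarkisov program for surfaces. Any birational map $\chi\colon S\dashrightarrow S'$ to a regular Mori fiber space decomposes into elementary links, and since $\rkPic(S)=1$ the only way to initiate such a link is to blow up a closed point of $S$. Concretely, I would fix a very ample movable linear system $\mathcal{H}'$ on $S'$, let $\mathcal{H}=\chi^{-1}_*\mathcal{H}'$ be its strict transform on $S$, and use $\rkPic(S)=1$ to write $\mathcal{H}\equiv -nK_S$ for some positive rational $n$. The Noether--Fano inequality then says that if $\chi$ is not an isomorphism, the log pair $\left(S,\tfrac1n\mathcal{H}\right)$ fails to be canonical; passing to the minimal non-canonical centre, I obtain a closed point $p\in S$ of degree $\delta=\deg p$ with $m:=\operatorname{mult}_p\mathcal{H}>n$.

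The next step is a self-intersection estimate. Blowing up $p$ with exceptional divisor $E$ (so that $E^2=-\delta$), movability of $\mathcal{H}$ forces $\tilde{\mathcal{H}}^2\ge 0$, where $\tilde{\mathcal{H}}=\sigma^*\mathcal{H}-mE$ satisfies $\tilde{\mathcal{H}}^2=\mathcal{H}^2-\delta m^2=d\,n^2-\delta m^2$. Hence $\delta m^2\le d\,n^2<d\,m^2$, so $\delta<d$ (the infinitely near bookkeeping reduces to the same estimate on the blow-up and is routine in dimension two). For $d=1$ this is impossible, the pair is canonical, and $\chi$ is an isomorphism; this already yields super-rigidity when $d=1$. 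When $S(\KK)=\varnothing$ every closed point has degree $\ge 2$, so the bound $2\le\delta<d\le 3$ leaves only $d=3$, $\delta=2$.

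To complete the super-rigid cases I would rule out a degree-$2$ maximal centre on a cubic surface with $S(\KK)=\varnothing$. If such a point $p$ is separable, the line through its two geometric points is Galois-stable, hence defined over $\KK$; it is not contained in $S$, since no line of $S$ can be defined over~$\KK$ without carrying a $\KK$-point; and its residual intersection with $S$ is then a single $\KK$-point, contradicting $S(\KK)=\varnothing$. The only surviving possibility is an inseparable point of degree~$2$ in characteristic~$2$, where the residual argument collapses; this is exactly the imperfect-field phenomenon treated in~\cite{BFSZ}, which I would invoke to exclude it. Over perfect fields this recovers the classical statement of Manin~\mbox{\cite[Theorem~33.2]{Man74}}.

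Finally, for birational rigidity when $S(\KK)\neq\varnothing$ and $d\in\{2,3\}$, maximal centres do occur, and here I expect the genuine work. A $\KK$-point on a degree-$2$ surface, respectively a $\KK$-point or a degree-$2$ point on a cubic, initiates an elementary link which, after blowing up and passing through the resulting del Pezzo surface of lower degree together with its Bertini or Geiser involution, returns to a del Pezzo surface of degree $d$ with $\rkPic=1$ isomorphic to $S$; thus every regular Mori fiber space birational to $S$ is isomorphic to $S$, while these links are non-biregular, so $S$ is rigid but not super-rigid in these cases. The hard part will be carrying out this link analysis over imperfect fields, where regular del Pezzo surfaces need not be smooth and inseparable closed points of small degree must be controlled; this is precisely where the results of~\cite{BFSZ} do the essential work, the separable and perfect-field cases being handled by the elementary estimates above.
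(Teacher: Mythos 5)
Your overall route coincides with the paper's: decompose $\chi$ into Sarkisov links via \cite[Theorem A]{BFSZ}, observe that every link from $S$ must start with the blow-up of a closed point of degree $<d$, identify the links at points of degree $d-2$ and $d-1$ as birational Geiser and Bertini involutions returning to a surface isomorphic to $S$ (the paper does this in Proposition~\ref{proposition:GeiBerBir} and Corollary~\ref{corollary:GeiBerIso}, resting on \cite[Propositions 4.18 and 4.21]{BFSZ} --- exactly the ``genuine work'' your final paragraph defers to \cite{BFSZ}), and obtain super-rigidity from the absence of admissible centres when $d=1$ or $S(\KK)=\varnothing$. Your Noether--Fano estimate $\delta m^2 \le d\,n^2$ re-derives the degree bound on centres that the paper simply quotes from \cite[Propositions 5.1 and 5.5]{BFSZ}; that is fine and slightly more self-contained, not a genuinely different method.

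The one genuine soft spot is your treatment of degree-$2$ points on a pointless cubic. You split into a separable case (handled via the Galois-stable line) and an inseparable case in characteristic $2$, claim the residual argument ``collapses'' there, and propose to invoke \cite{BFSZ} to exclude it --- but you cite no specific statement, and the link classification in \cite{BFSZ} does not assert that pointless regular cubics carry no degree-$2$ points, so as written this case of super-rigidity is not closed. In fact no case split is needed, and the argument does not collapse: any closed point $p$ of degree $2$, separable or not, is a length-$2$ subscheme of $\PP^3$ whose scheme-theoretic linear span is a line $L$ defined over $\KK$; one has $L\not\subset S$ (a line defined over $\KK$ has $\KK$-points, or argue via $\rkPic(S)=1$ as the paper does), so $S\cap L$ is an effective divisor of degree $3$ on $L\cong\PP^1_\KK$ containing the degree-$2$ divisor $p$, and the residual effective divisor of degree $1$ is a $\KK$-point, contradicting $S(\KK)=\varnothing$. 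This divisor-theoretic version is precisely the paper's argument and is insensitive to separability, so your appeal to an unspecified imperfect-field result in \cite{BFSZ} is both unsupported and unnecessary; with this replacement your proof is complete and agrees with the paper's.
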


Suppose that $S$ is a regular
del Pezzo surface with an action of a finite group $\Gamma$ such that $\rkPic(S)^\Gamma=1$.
Then $S$ is called \emph{$\Gamma$-birationally rigid}, if any regular $\Gamma$-Mori fiber space $\Gamma$-birational to $S$ is $\Gamma$-isomorphic to $S$.
Furthermore, $S$ is called \emph{$\Gamma$-birationally super-rigid}, if any $\Gamma$-birational map $S \dashrightarrow S'$ to a regular $\Gamma$-Mori fiber space $S'$ is an isomorphism.
The next assertion is a generalization of Theorem~\ref{theorem:dP4-unique-equivariant}(i).

\begin{theorem}\label{theorem:G-pointless-dP4}
Let $S$ be a geometrically integral regular del Pezzo surface of degree $4$ over
a field $\KK$ with an action of a finite group $\Gamma$ such that $\rkPic(S)^{\Gamma}=1$
and there are no $\Gamma$-invariant $\KK$-points on $S$.
Then $S$ is \mbox{$\Gamma$-birationally} rigid. Moreover, if $S$ has no $\Gamma$-invariant points of degree~$2$ and~$3$,
and no $\Gamma$-invariant pairs and triples of $\KK$-points,
then it is birationally super-rigid.
\end{theorem}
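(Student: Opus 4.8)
The plan is to run the $\Gamma$-equivariant version of the Noether--Fano--Iskovskikh method and to reduce both assertions to a bound on the length of a maximal center. First I would set up the method. Let $f\colon S\dashrightarrow S'$ be a $\Gamma$-birational map to a regular $\Gamma$-Mori fiber space~$S'$, and let $\mathcal M'$ be a mobile $\Gamma$-invariant linear system on~$S'$ obtained by pulling back a $\Gamma$-invariant ample class from the base of~$S'$, or an anticanonical class if $S'$ is itself a del Pezzo surface of $\Gamma$-Picard rank~$1$. Let $\mathcal M=f^{-1}_*\mathcal M'$ be its strict transform on~$S$. Since $\rkPic(S)^{\Gamma}=1$ and this invariant lattice is generated over~$\QQ$ by $-K_S$, we have $\mathcal M\sim_{\QQ}-nK_S$ for a positive rational number~$n$. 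The $\Gamma$-equivariant Noether--Fano--Iskovskikh inequality, available for regular del Pezzo surfaces through the results of~\cite{BFSZ}, then asserts that if $f$ is not a $\Gamma$-isomorphism of Mori fiber spaces, the log pair $\left(S,\tfrac1n\mathcal M\right)$ fails to be canonical; consequently $\mathcal M$ admits a maximal singularity whose center is a $\Gamma$-invariant and $\Gal(\bar\KK/\KK)$-invariant closed subscheme of~$S$.

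Next I would bound the size of this center. Over~$\bar\KK$ the maximal center is a single $\Gamma\times\Gal(\bar\KK/\KK)$-orbit $\Sigma=\{p_1,\dots,p_N\}$ of points at which $\mathcal M$ has one and the same multiplicity $m>n$. Intersecting two general members of~$\mathcal M$ and using $K_S^2=4$ gives
\[
4n^2=\mathcal M^2\ \ge\ \sum_{i=1}^N \big(\operatorname{mult}_{p_i}\mathcal M\big)^2 = Nm^2 > Nn^2,
\]
so that $N\le 3$; this is exactly where the degree~$4$ hypothesis enters. The case of an infinitely near maximal center is reduced to this one by the usual refinement of the inequality. Now I would translate the geometric orbit into arithmetic data over~$\KK$. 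The orbit $\Sigma$ of length $N\le 3$ descends to a $\Gamma$-invariant reduced subscheme of~$S$ of degree~$N$, which splits into closed points permuted by~$\Gamma$; since $\Gamma$ acts by $\KK$-automorphisms it preserves the degree of each closed point. A short case analysis then shows that $\Sigma$ is either a $\Gamma$-invariant $\KK$-point (when $N=1$), a $\Gamma$-invariant closed point of degree~$2$ or~$3$, or a $\Gamma$-invariant pair or triple of $\KK$-points, every mixed configuration forcing a $\Gamma$-invariant $\KK$-point to occur among the closed points.

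This already yields the second assertion. Under its hypotheses all of the configurations just listed are excluded, so no maximal center can exist; hence $\left(S,\tfrac1n\mathcal M\right)$ is canonical and $f$ is a $\Gamma$-isomorphism, which is birational super-rigidity. For the first assertion I would instead allow the surviving maximal centers. A $\Gamma$-invariant $\KK$-point is forbidden by hypothesis, so in particular no link of conic bundle type can occur, since such a link is produced only by blowing up a $\Gamma$-invariant $\KK$-point, in the spirit of Theorem~\ref{theorem:Iskovskikh-old}(ii). Every $\Gamma$-link therefore starts at a $\Gamma$-invariant point of degree~$2$ or~$3$, or at a $\Gamma$-invariant pair or triple of $\KK$-points, and by the classification of elementary links ends on another regular del Pezzo surface of degree~$4$ of $\Gamma$-Picard rank~$1$.

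It remains to identify the target of each such link with~$S$, and I would do this by tracking the Skorobogatov invariant $(\Delta,\Lambda)$ of Theorem~\ref{theorem:Skorobogatov-short} through the link: the claim is that it is unchanged, so that the target is $\Gamma$-isomorphic to~$S$, whence every chain of links terminates at~$S$ and $S$ is $\Gamma$-birationally rigid. I expect this last step, the link-by-link computation showing that the Skorobogatov invariant of the target quartic equals that of~$S$, to be the main obstacle, together with the care needed to make the multiplicity estimates and the Noether--Fano inequality rigorous for regular del Pezzo surfaces that need not be geometrically smooth.
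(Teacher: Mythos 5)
Your Noether--Fano setup and the orbit bound $4n^2=\mathcal M^2\ge Nm^2>Nn^2$, hence $N\le 3$, is precisely the computation that underlies the classification of links the paper imports from \cite[Propositions 5.1 and 5.5]{BFSZ} (see Proposition~\ref{proposition:links-starting-from-dP4-equivariant}); your case analysis of the possible $\Gamma$-invariant centers matches the paper's list, and the super-rigidity half of your argument is sound, modulo the same caveat the paper itself makes: the $\Gamma$-Sarkisov machinery for regular surfaces over arbitrary fields is only sketched, not spelled out. So up to that point you have rediscovered the intended mechanism by a more hands-on route.

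The genuine gap is in your final step for rigidity, where you propose to identify the target of each type~$\mathrm{II}$ link with $S$ by tracking the Skorobogatov invariant $(\Delta,\Lambda)$. This would fail for two reasons. First, Theorem~\ref{theorem:Skorobogatov} is established only for \emph{smooth} del Pezzo surfaces of degree~$4$ (its proof uses the pencil of quadrics and the configuration of the $16$ lines), whereas in Theorem~\ref{theorem:G-pointless-dP4} the surface $S$ is merely regular and geometrically integral over a possibly imperfect field, so the invariant is not even available. Second, even where it applies, equality of $(\Delta,\Lambda)$ produces an isomorphism of surfaces, not a $\Gamma$-equivariant one: the $\Gamma$-action is extra data that the invariant does not record, and $\Gamma$-birational rigidity requires a $\Gamma$-isomorphism. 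The step is also unnecessary. A link starting from a center of total degree $d-2$ or $d-1$ on a degree-$d$ del Pezzo surface is automatically a birational Geiser or Bertini involution $S\dashrightarrow S$: the corresponding biregular involution $\sigma$ on the blow up $Y$ interchanges the two extremal rays of the nef cone of $Y$ by \cite[Proposition 4.21]{BFSZ}, so the second extremal contraction is $\pi\sigma$ and the target is $S$ itself (Proposition~\ref{proposition:GeiBerBir} and Corollary~\ref{corollary:GeiBerIso}). Moreover, since $\sigma$ is defined by the complete linear system $|-K_{Y_{\KK^{sep}}}|$ (respectively $|-2K_{Y_{\KK^{sep}}}|$), it commutes with every automorphism of $Y_{\KK^{sep}}$; this is exactly Lemma~\ref{lemma:GB-involution-equivariant}, and it forces the two $\Gamma$-actions on source and target to coincide. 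Replacing your invariant-tracking step by this untwisting argument closes the gap and recovers the paper's proof.
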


We also establish an equivariant version of Theorem~\ref{theorem:lowdegree}.

\begin{theorem}
\label{theorem:G-lowdegree}
Let $S$ be a geometrically integral regular del Pezzo surface of degree~\mbox{$d \leqslant 3$} over a field $\KK$ with an action of a finite group $\Gamma$ such that~\mbox{$\rkPic(S)^{\Gamma}=1$}.
Then~$S$ is \mbox{$\Gamma$-birationally} rigid. Moreover, if there are no $\Gamma$-invariant $\KK$-points on $S$, or if $d = 1$, then~$S$ is $\Gamma$-birationally super-rigid.
\end{theorem}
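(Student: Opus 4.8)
The plan is to carry out the Noether--Fano--Iskovskikh method $\Gamma$-equivariantly, running the argument behind Theorem~\ref{theorem:lowdegree} while keeping track of the $\Gamma$-action; equivalently, one may pass to the geometric surface and work with the combined action of $\Gamma$ and the absolute Galois group of~$\KK$, whose invariant Picard group has rank~$1$ by hypothesis. Given a $\Gamma$-birational map $\chi\colon S\dashrightarrow S'$ to a regular $\Gamma$-Mori fiber space~$S'$, I would pick a $\Gamma$-invariant mobile linear system $\mathcal H'$ on~$S'$, namely the pullback of a very ample system or of a sufficiently positive system from the base, and take its strict transform $\mathcal H$ on~$S$. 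Since $\Pic(S)^{\Gamma}$ has rank~$1$ with generator~$-K_S$, one can write $\mathcal H\sim -nK_S$ for some positive integer~$n$, and the whole problem reduces to controlling the base locus of~$\mathcal H$.

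The key input is the intersection bound on maximal singularities. If $\chi$ is not an isomorphism, the surface Noether--Fano inequality produces a $\Gamma$-invariant non-canonical center, that is, a $\Gamma$-orbit of (possibly infinitely near) base points of some length~$k$ and multiplicity $m>n$; comparing two general members of~$\mathcal H$ gives
\[
n^2 d=\mathcal H^2\ \ge\ k\,m^2\ >\ k\,n^2,
\]
so that $k<d$. For $d=1$ this forces $k=0$, which is absurd, so no maximal singularity exists and $\chi$ is already an isomorphism: degree~$1$ is $\Gamma$-birationally super-rigid unconditionally. In general $k\le d-1\le 2$, and I would untwist at the $\Gamma$-invariant center: blowing up the length-$k$ subscheme yields a regular del Pezzo surface of degree $d-k\ge 1$ carrying its biregular Bertini involution when $d-k=1$, or Geiser involution when $d-k=2$. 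The induced birational self-map $\iota$ of~$S$ is $\Gamma$-equivariant, and replacing $\chi$ by its composition with~$\iota$ strictly decreases~$n$; iterating drives~$n$ down until~$\chi$ becomes a $\Gamma$-isomorphism onto~$S$. This proves $\Gamma$-birational rigidity in all degrees $d\le 3$.

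For super-rigidity I must instead show that under the stated hypotheses no maximal singularity can occur at all. A center of length~$1$ is a single $\Gamma$-invariant $\KK$-point, excluded precisely when there are none. For $d=3$ a center of length~$2$ is either a $\Gamma$-invariant closed point of degree~$2$ or a $\Gamma$-invariant pair of $\KK$-points, and here I would exploit the geometry of the cubic $S\subset\PP^3$: the line~$L$ joining the two geometric points is defined over~$\KK$ and $\Gamma$-invariant, and since $\rkPic(S)^{\Gamma}=1$ gives $\Pic(S)^{\Gamma}=\ZZ\cdot(-K_S)$ no line class can be $\Gamma$-invariant, so $L\not\subset S$ and~$L$ is not tangent to~$S$; the residual intersection of~$L$ with~$S$ is then a $\Gamma$-invariant $\KK$-point. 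Hence the single hypothesis ``no $\Gamma$-invariant $\KK$-points'' already excludes length-$2$ centers, which explains why it suffices in degree~$3$, in contrast with degree~$4$ (Theorem~\ref{theorem:G-pointless-dP4}), where a line in~$\PP^4$ meets~$S$ in only two points and pairs and triples must be forbidden separately. With every center excluded, $\mathcal H$ has empty $\Gamma$-invariant base locus and~$\chi$ is an isomorphism, giving super-rigidity.

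The step I expect to be most delicate is verifying that the untwisting genuinely decreases~$n$ and that the involution used is $\Gamma$-equivariant with $\Gamma$-invariant center; equivalently, that after blowing up the $\Gamma$-invariant center the second ray of the $\Gamma$-equivariant two-ray game is the image of the exceptional locus under the Bertini or Geiser involution, so that contracting it returns a surface $\Gamma$-isomorphic to~$S$ rather than a conic bundle or a different del Pezzo surface. A further technical point is that~$S$ is only assumed regular and may fail to be geometrically normal over an imperfect field, so the resolution of~$\chi$, the intersection computations, and the blow-ups must be justified in the regular category; here I would rely on the results of~\cite{BFSZ}, exactly as in the proof of Theorem~\ref{theorem:lowdegree}, to transport the maximal-singularity and two-ray-game arguments to an arbitrary base field while preserving the $\Gamma$-action.
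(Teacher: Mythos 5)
Your proposal is correct in substance and is, at bottom, the same untwisting strategy as the paper's, in a different package. The paper's sketch invokes the equivariant analogue of the decomposition of a birational map into Sarkisov links from~\cite{BFSZ}, observes that the only $\Gamma$-links out of $S$ for $d\leqslant 3$ are birational Geiser involutions (for $d=3$) and Bertini involutions (for $d=2,3$), whence $S'\cong S$ by Proposition~\ref{proposition:GeiBerBir}, and upgrades this to a $\Gamma$-equivariant isomorphism by Lemma~\ref{lemma:GB-involution-equivariant}; super-rigidity then follows because every link starts from a $\Gamma$-invariant point or orbit of total degree less than~$d$, and for $d=3$ a length-two center forces a $\Gamma$-invariant $\KK$-point via the residual intersection with the spanned line --- exactly your argument, which the paper records in the non-equivariant proof of Theorem~\ref{theorem:lowdegree}. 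You instead run the Noether--Fano/maximal-singularity method directly. Since the surface Sarkisov program is itself proved this way, your route avoids quoting the link classification but must re-establish what it encapsulates: that a maximal center lies in Sarkisov general position (so the blow-up is again a regular del Pezzo surface and Proposition~\ref{proposition:GeiBerBir} applies), that untwisting strictly decreases $n$ and the process terminates, and that a conic-bundle target is excluded at the end of the untwisting chain (the case where $\mathcal{H}'$ is pulled back from the base needs its own Noether--Fano step, which your sketch elides). Over an imperfect field, with $S$ only regular, all of this rests on the same \cite{BFSZ}-type foundations that the paper also defers to, so the two sketches have comparable levels of completeness; your version additionally makes explicit, via the line argument, why the single hypothesis on $\Gamma$-invariant $\KK$-points suffices in degree~$3$, which the paper leaves implicit in the equivariant statement.

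Two local corrections. First, your claim that $\rkPic(S)^{\Gamma}=1$ implies $L$ is not tangent to $S$ does not follow: the Picard computation only excludes $L\subset S$. Moreover, in characteristic $2$ a point of degree $2$ may be purely inseparable, so there are no ``two geometric points'' to join; one should take $L$ to be the linear span of the length-two subscheme. Tangency is in any case harmless: at a degree-$2$ point the local length of the length-three scheme $S\cap L$ is even, so a residual $\Gamma$-invariant $\KK$-point exists regardless; and for a $\Gamma$-invariant pair of $\KK$-points, tangency at one of them would propagate to the other via an element swapping the pair (forcing the intersection to have length at least $4$), while if no element swaps them, each point is already a $\Gamma$-invariant $\KK$-point, contradicting the hypothesis. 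Second, the $\Gamma$-equivariance of the untwisting involution is not automatic from invariance of the center; this is precisely the content of Lemma~\ref{lemma:GB-involution-equivariant}, and it holds because the Geiser and Bertini involutions are defined by the anticanonical (respectively, bianticanonical) linear system of the blow-up and therefore commute with all of its automorphisms. With these repairs your sketch is sound and matches the paper's.
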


\begin{remark}
It may be interesting to find out whether Theorems~\ref{theorem:dP4-unique} and~\ref{theorem:dP4-unique-equivariant}(ii)
can be generalized to the case of regular del Pezzo surfaces.
\end{remark}

\medskip
The plan of the paper is as follows.
In Section~\ref{section:markings} we recall the basic properties of Weyl groups~$\mathrm{W}(\mathrm{D}_k)$
and the notion of a marking which allows to fix an action of such a group on irreducible components of degenerate fibers of a conic bundle,
or (for the group~$\WD$) on the lines on a del Pezzo surface of degree~$4$.
In Section~\ref{section:blow-up} we discuss the way to construct
a marking of a conic bundle obtained as a blow up of a del Pezzo surface $S$ of degree~$4$ at a point
from a marking of~$S$, and vice versa.
In Section~\ref{section:pencil}
we recall the properties of quadrics passing through a del
Pezzo surface $S\subset\PP^4$ of degree~$4$.
In Section~\ref{section:biregular}
we discuss biregular classification of del Pezzo surfaces of degree~$4$ and prove Theorem~\ref{theorem:Skorobogatov}
(which implies Theorem~\ref{theorem:Skorobogatov-short}) following~\cite{Skorobogatov-Kummer}.
In Section~\ref{section:birational} we study birational models of minimal del Pezzo surfaces of degree~$4$
and prove Theorem~\ref{theorem:dP4-unique}.
In Section~\ref{section:G-birational}
we provide a~sketch of a proof of Theorem~\ref{theorem:dP4-unique-equivariant}
which follows the same lines as the proof of Theorem~\ref{theorem:dP4-unique}.
In Appendix~\ref{appendix:involutions}
we collect some facts about Geiser and Bertini involutions, prove Theorem~\ref{theorem:lowdegree}, and
sketch a proof of Theorem~\ref{theorem:G-lowdegree}.
In Appendix~\ref{appendix:pointless-4}, we recall the results of~\cite{BFSZ}
concerning Sarkisov links from geometrically integral regular del Pezzo surfaces of degree~$4$,
prove Theorem~\ref{theorem:pointless-dP4}, and sketch a proof of Theorem~\ref{theorem:G-pointless-dP4}.

While we provide complete proofs for all our theorems which do not involve group actions on varieties,
we give only sketches of proofs (which mostly use the same arguments as in the non-equivariant case)
for similar equivariant assertions, i.e. for Theorems~\ref{theorem:dP4-unique-equivariant},~\ref{theorem:G-pointless-dP4}, and~\ref{theorem:G-lowdegree},
as well as for some auxiliary results.
To turn the sketches into rigorous proofs, one needs to spell out the basics of $\Gamma$-Sarkisov theory for regular surfaces
over an arbitrary field with an action of a finite group $\Gamma$, similarly to what is done in~\cite{BFSZ}
for surfaces without group action. Namely, we need the existence of decomposition of a $\Gamma$-equivariant birational map into $\Gamma$-Sarkisov links,
and a classification of such links.

Throughout the paper we assume that all varieties are projective.
Given a field $\KK$, we denote by $\bar{\KK}$ its algebraic closure,
and by~$\KK^{sep}$ its separable closure.
If $X$ is a variety (or a scheme) defined over $\KK$, and $\LL$ is a $\KK$-algebra, we denote by $X_{\LL}$
the scheme~\mbox{$X\otimes_{\Spec\KK}\Spec\LL$} over~$\LL$; similarly, if $\phi$ is a morphism of varieties (or schemes) over $\KK$,
by $\phi_\LL$ we denote the corresponding morphism over~$\LL$.
By a point of degree $d$ on a variety defined over some field~$\KK$ we mean a closed point whose
residue field is an extension of~$\KK$ of degree~$d$; a $\KK$-point is a point of degree~$1$.
If $Y$ is a scheme over $\LL$, by $R_{\LL/\KK}\,Y$ we denote the Weil restriction of scalars
(see e.g.~\cite{Weil-restriction} for definitions and basic facts concerning this construction).
By $\SSS_n$ we denote the symmetric group on~$n$ letters.

We emphasize once again that in this paper we work only with \emph{smooth} del Pezzo surfaces and conic bundles, unless otherwise explicitly stated;
the more general case of possibly non-smooth regular del Pezzo surfaces will appear only in Appendices~\ref{appendix:involutions}
and~\ref{appendix:pointless-4}.
However, the proofs of our main theorems rely on the results of~\cite{BFSZ} concerning regular del Pezzo surfaces and conic bundles, which may a priori appear in Sarkisov links
connecting two smooth surfaces.

\medskip
We are grateful to Fabio Bernasconi, Alexander Kuznetsov, and Yuri Prokhorov for useful discussions.
This work was performed at the Steklov International Mathematical Center and supported by the Ministry of Science and Higher Education of the Russian Federation (agreement no.~075-15-2022-265) and by
the HSE University Basic Research Program.

\section{Markings}
\label{section:markings}

Let us start by recalling some properties of the Weyl group $\mathrm{W}(\mathrm{D}_k)$, $k\ge 4$.
We refer the reader to~\mbox{\cite[\S\,VI.4.8]{Bou02}} for more details.

\begin{notation}\label{notation:Weyl-group}
The Weyl group $\mathrm{W}(\mathrm{D}_k)$ is isomorphic to $(\ZZ/2\ZZ)^{k-1} \rtimes \SSS_k$. The normal subgroup~$(\ZZ/2\ZZ)^{k-1}$ consists of elements $\iota_{i_1 \ldots i_l}$, where $\{i_1, \ldots, i_l\}$ is a subset of $\{1, \ldots, k\}$ and $l$ is even number. One has
$$
\iota_{i_1 \ldots i_l} \cdot \iota_{j_1 \ldots j_m} = \iota_{s_1 \ldots s_r},
$$
where
$$
\{s_1, \ldots, s_r\} = \{i_1, \ldots, i_l\} \triangle \{j_1, \ldots, j_m\}.
$$
For an element $\sigma$ in the non-normal subgroup $\HHH\cong\SSS_k$ in $\mathrm{W}(\mathrm{D}_k)$ one has
$$
\sigma \iota_{i_1 \ldots i_l} \sigma^{-1} = \iota_{\sigma(i_1) \ldots \sigma(i_l)}.
$$
Also note that for $k = 5$ the normal subgroup $\NNN\cong(\ZZ/2\ZZ)^4$ is generated by the elements $\iota_{klmn}$ with the only relation
$$
\iota_{1234} \iota_{1235} \iota_{1245} \iota_{1345} \iota_{2345} = 1.
$$
\end{notation}

From now on we fix Notation~\ref{notation:Weyl-group}
for the elements of $\mathrm{W}(\mathrm{D}_k)$ to compare the action of Weyl groups on different objects.
In particular, we fix a non-normal subgroup~\mbox{$\HHH\subset\mathrm{W}(\mathrm{D}_k)$} isomorphic to~$\SSS_k$.

We are going to work with conic bundles and del Pezzo surfaces over arbitrary fields, and consider the actions of various
groups on the configurations of $(-1)$-curves on such surfaces. Recall that a $(-1)$-curve on a smooth projective surface is a
rational curve with self-intersection equal to~$-1$.
We will use without explicitly reference the following result which is well known to experts.

\begin{theorem}
Let $S$ be a smooth projective geometrically rational surface over
a field~$\KK$. The following assertions hold.
\begin{itemize}
\item[(i)] The natural homomorphism
$$
\Pic(S_{\KK^{sep}})\to \Pic(S_{\bar{\KK}})
$$
is an isomorphism.

\item[(ii)] If $S$ is a del Pezzo surface, then every $(-1)$-curve on $S_{\bar{\KK}}$ is defined over $\KK^{sep}$.

\item[(iii)] If $S$ has a structure of a conic bundle $\phi\colon S\to C$, then every irreducible component of every degenerate fiber of $\phi_{\bar{\KK}}$ is defined over $\KK^{sep}$.

\item[(iv)] If $S$ is a del Pezzo surface of degree $d\neq 8$, then there exists a birational contraction
$$
S_{\KK^{sep}}\to \PP^2_{\KK^{sep}}
$$
which blows down $9-d$ exceptional curves.

\item[(v)] If $S$ is a del Pezzo surface of degree $d\le 5$, then the natural homomorphism
$$
\Aut(S_{\KK^{sep}})\to \Aut(S_{\bar{\KK}})
$$
is an isomorphism.
\end{itemize}
\end{theorem}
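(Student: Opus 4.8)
The plan is to prove (i) first, since (ii)--(v) follow from it together with standard facts about blow-ups and projective frames. For (i), I would use that over $\bar{\KK}$ a smooth projective geometrically rational surface is a blow-up of $\PP^2$ or of a Hirzebruch surface, so $H^1(S_{\bar{\KK}},\mathcal O)=H^2(S_{\bar{\KK}},\mathcal O)=0$ and $\Pic(S_{\bar{\KK}})$ is free of finite rank; by flat base change the same vanishing holds over $\KK^{sep}$. Hence the Picard functor $\underline{\Pic}_{S_{\KK^{sep}}/\KK^{sep}}$ is representable by a smooth group scheme of dimension $0$, i.e. an étale one. Since $\KK^{sep}$ is separably closed one has $\Br(\KK^{sep})=0$, so $\Pic(S_{\KK^{sep}})=\underline{\Pic}_{S_{\KK^{sep}}/\KK^{sep}}(\KK^{sep})$ and likewise $\Pic(S_{\bar{\KK}})=\underline{\Pic}(\bar{\KK})$. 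As $\bar{\KK}/\KK^{sep}$ is purely inseparable and an étale $\KK^{sep}$-scheme gains no new points under any field extension, these two groups of points coincide, giving (i). This last step is the one genuinely delicate point: it is exactly where the purely inseparable extension $\bar{\KK}/\KK^{sep}$ must be controlled, and I expect it to be the main obstacle; morally it reflects the invariance of the étale site, and hence of the Néron--Severi group, under the universal homeomorphism $\Spec\bar{\KK}\to\Spec\KK^{sep}$.

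Granting (i), parts (ii) and (iii) become a single argument. If $C\subset S_{\bar{\KK}}$ is any $(-1)$-curve, then the sequence $0\to\mathcal O\to\mathcal O(C)\to\mathcal O_C(-1)\to 0$ gives $h^0(S_{\bar{\KK}},\mathcal O(C))=1$, so $C$ is the unique effective divisor in its class. By (i) the class of $\mathcal O(C)$ descends to a line bundle $\mathcal L$ on $S_{\KK^{sep}}$, and cohomology and base change yield $h^0(S_{\KK^{sep}},\mathcal L)=1$; the zero locus of the unique section is an effective divisor whose base change to $\bar{\KK}$ is the unique effective divisor $C$. Thus $C$ is defined over $\KK^{sep}$, which is (ii). For (iii) I would use that every irreducible component of a degenerate fibre of a conic bundle is a $(-1)$-curve, so the same computation applies verbatim.

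For (iv) I would contract curves one at a time. Over $\bar{\KK}$ a del Pezzo surface of degree $d\neq 8$ carries $9-d$ disjoint $(-1)$-curves whose contraction is $\PP^2_{\bar{\KK}}$; by (ii) each is already defined over $\KK^{sep}$, and disjointness lets me contract them successively over $\KK^{sep}$ by Castelnuovo's criterion. The resulting surface is smooth and projective over $\KK^{sep}$ and becomes $\PP^2$ over $\bar{\KK}$, hence is a del Pezzo surface of degree $9$, i.e. a Severi--Brauer surface; since $\Br(\KK^{sep})=0$ it is $\PP^2_{\KK^{sep}}$, giving the desired contraction.

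Finally, in (v) injectivity is immediate from faithfully flat descent, so the content is surjectivity. Given $g\in\Aut(S_{\bar{\KK}})$, fix a contraction $\pi\colon S_{\KK^{sep}}\to\PP^2_{\KK^{sep}}$ as in (iv) with exceptional curves $E_1,\dots,E_{9-d}$ and centres $p_1,\dots,p_{9-d}$; each $E_i$ is a smooth conic over $\KK^{sep}$, hence $\cong\PP^1_{\KK^{sep}}$ as $\Br(\KK^{sep})=0$, so the $p_i$ lie in $\PP^2(\KK^{sep})$. The curves $g(E_1),\dots,g(E_{9-d})$ are again disjoint $(-1)$-curves, so by (ii) they too are defined over $\KK^{sep}$ and their contraction is a second map $\pi'\colon S_{\KK^{sep}}\to\PP^2_{\KK^{sep}}$ with centres $q_1,\dots,q_{9-d}\in\PP^2(\KK^{sep})$. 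Then $g$ descends to some $\bar g\in\PGL_3(\bar{\KK})$ with $\bar g(p_i)=q_i$. Since $d\le 5$ we have $9-d\ge 4$ points in general position, i.e. a projective frame, and the transformation carrying a $\KK^{sep}$-frame to prescribed $\KK^{sep}$-points is unique and $\KK^{sep}$-rational, so $\bar g\in\PGL_3(\KK^{sep})$. As $\pi$, $\pi'$ and $\bar g$ are all defined over $\KK^{sep}$, the induced birational self-map of $S_{\KK^{sep}}$ is defined over $\KK^{sep}$, and it agrees with $g$ after base change, hence is itself an isomorphism over $\KK^{sep}$. This produces a preimage of $g$ and completes (v); note that the hypothesis $d\le 5$ enters precisely here, to guarantee four points and thereby the rigidity of $\bar g$.
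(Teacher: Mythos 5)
Your proposal is correct, and for the parts where it overlaps with the paper it follows the same route: your proofs of (ii)--(iv) are essentially identical to the paper's (descend the class of a $(-1)$-curve via (i), use $h^0=1$ to descend the unique effective divisor, observe that components of degenerate fibers are $(-1)$-curves, and contract the $9-d$ curves over $\KK^{sep}$ to get $\PP^2_{\KK^{sep}}$). The genuine difference is that the paper treats (i) and (v) by citation --- (i) to Liedtke (with Coombes as a more general reference) and (v) to Vikulova --- whereas you supply self-contained arguments. For (i), your Picard-scheme argument is the standard one and is sound: $H^2(S,\mathcal{O}_S)=0$ kills the obstruction space, so $\underline{\Pic}$ is smooth, $H^1(S,\mathcal{O}_S)=0$ makes it zero-dimensional, hence \'etale, and an \'etale scheme over the separably closed field $\KK^{sep}$ acquires no new points under the purely inseparable extension $\bar{\KK}/\KK^{sep}$; the comparison $\Pic(X)=\underline{\Pic}_{X/k}(k)$ is justified by $\Br(\KK^{sep})=0$ (or directly by the existence of a $\KK^{sep}$-point on a smooth variety over a separably closed field). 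You correctly identified this as the delicate step. For (v), your frame-rigidity argument is a valid replacement for the citation: injectivity is descent, and surjectivity follows because $g(E_i)$ are again $(-1)$-curves, hence $\KK^{sep}$-rational by (ii), both contractions land in $\PP^2_{\KK^{sep}}$ with centers in $\PP^2(\KK^{sep})$, and the induced $\bar{g}\in\PGL_3(\bar{\KK})$ is pinned down by its values on $9-d\ge 4$ points, no three collinear (automatic from the general position of the blown-up points of a del Pezzo surface), so $\bar{g}\in\PGL_3(\KK^{sep})$; this is precisely where $d\le 5$ enters, as you say. What your approach buys is independence from the two cited references; what the paper's buys is brevity and, via Coombes, a statement of (i) in greater generality. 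Two small points to spell out in a final write-up: the images $q_i=\bar{g}(p_i)$ are also in general position (needed for the uniqueness of the transformation of frames), and the identification $h_{\bar{\KK}}=g$, which upgrades the $\KK^{sep}$-rational map $h=(\pi')^{-1}\bar{g}\,\pi$ to an isomorphism, uses that indeterminacy loci are compatible with the faithfully flat base change $\Spec\bar{\KK}\to\Spec\KK^{sep}$.
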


\begin{proof}
For assertion~(i), we refer the reader to \cite[Theorem~4.2]{Liedtke} (see also~\cite{Coombes} for a more basic and general result).

Observe that an irreducible component of a degenerate fiber of a conic bundle is a $(-1)$-curve.
Now, if $S$ is a smooth projective geometrically rational surface, then every class in $\Pic(S_{\bar{\KK}})$ is defined over $\KK^{sep}$ by assertion~(i).
In particular, if $E$ is a $(-1)$-curve on~$S_{\bar{\KK}}$, then its class $[E]$ is defined over $\KK^{sep}$.
On the other hand, the space of global sections of the corresponding line bundle has dimension~$1$,
which implies that there exists a unique effective divisor $E'$ over $\KK^{sep}$ whose class in $\Pic(S_{\KK^{sep}})$ is $[E]$.
We see that $E'$ is a geometrically rational curve defined over $\KK^{sep}$, which implies that it is actually rational.
Hence, $E'$ is a $(-1)$-curve. This proves assertions~(ii) and~(iii).

Assertion~(iv) follows from assertion~(ii) and the existence of the required contraction over~$\bar{\KK}$.
For assertion~(v), see for instance~\mbox{\cite[Theorem~4(i)]{Vikulova}}.
\end{proof}

For a conic bundle $\phi\colon X\to\PP^1$ over a field $\KK$ with $k\ge 4$
geometric degenerate fibers,
the group of automorphisms of~$\Pic(X_{\KK^{sep}})$ preserving
the anticanonical class, the class of a fiber, and the intersection form is the Weyl group~$\mathrm{W}(\mathrm{D}_k)$. In particular, the group~$\mathrm{W}(\mathrm{D}_k)$ faithfully acts on the set of irreducible components of degenerate fibers of the conic bundle~\mbox{$\phi_{\KK^{sep}}\colon X_{\KK^{sep}}\to\PP^1$} (see \cite[Proposition 0.4e]{KT}). Moreover, an action of~$\mathrm{W}(\mathrm{D}_k)$ on the set of irreducible components of degenerate fibers of $\phi_{\KK^{sep}}$ induces an action of~$\mathrm{W}(\mathrm{D}_k)$ on~$\Pic(X_{\KK^{sep}})$, since~$\Pic(X_{\KK^{sep}})\otimes\QQ$ is generated by $-K_{X_{\KK^{sep}}}$ and the classes of irreducible components of degenerate fibers, and $\Pic(X_{\KK^{sep}})$ has no torsion.

For a del Pezzo surface $S$ of degree $4$ over a field $\KK$, the group of automorphisms of~$\Pic(S_{\KK^{sep}})$ preserving
the anticanonical class and the intersection form is the Weyl group~$\WD$.
In particular, the group $\WD$ acts on the set of lines on~$S_{\KK^{sep}}$;
we recall that $S_{\KK^{sep}}$ contains $16$ lines, no three of which pass through a single point,
and the configuration of lines on $S_{\KK^{sep}}$ (i.e., the incidence relation between the lines)
does not depend on~$S$. Moreover, an action of~$\WD$ on the set of lines on~$S_{\KK^{sep}}$
induces an action of~$\WD$ on~$\Pic(S_{\KK^{sep}})$, since~$\Pic(S_{\KK^{sep}})$ is generated by the classes of lines.
The group $\WD$ is the complete automorphism group of the configuration of lines on~$S_{\KK^{sep}}$ (see~\mbox{\cite[26.9]{Man74}}).

\begin{remark}
Note that an action of the Weyl group $\WD$ on the configuration of lines on~$S_{\KK^{sep}}$ is not unique.
For example, for a given action of $\WD$ any automorphism of~$\WD$ gives another action.
But if we fix an action of $\WD$ on the configuration of lines on~$S_{\KK^{sep}}$, then this action provides homomorphisms~\mbox{$\Gal(\KK^{sep}/\KK)\to \WD$}
and~\mbox{$\Aut(S)\to\WD$}. The same applies to actions of the
Weyl group $\mathrm{W}(\mathrm{D}_k)$ on irreducible components of geometric degenerate fibers
of a conic bundle.
\end{remark}

The following properties of the above action
are well known.

\begin{lemma}\label{lemma:Weyl-group-on-dP4-basic}
Let $S$ be a del Pezzo surface of degree $4$ over a field $\KK$, let $\Lambda$ denote the set of lines on $S_{\KK^{sep}}$,
so that the group $\WD$ acts on $\Lambda$ as the complete automorphism group of the configuration of lines on~$S_{\KK^{sep}}$.
Then
\begin{itemize}
\item[(i)] the action of the Weyl group $\WD$ on $\Lambda$ is faithful;

\item[(ii)] the action of the normal subgroup $\NNN\cong (\ZZ/2\ZZ)^4$ of $\WD$ on $\Lambda$ is free and transitive;

\item[(iii)] the homomorphism $\Aut(S)\to\WD$ is injective.
\end{itemize}
\end{lemma}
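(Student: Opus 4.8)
The three assertions concern the action of $\WD$ on the set $\Lambda$ of $16$ lines, which we may analyze entirely over $\KK^{sep}$ (indeed over $\bar\KK$), so the plan is purely combinatorial: verify facts about the known, $S$-independent configuration of $16$ lines and the abstract group $\WD\cong\NNN\rtimes\HHH$ with $\NNN\cong(\ZZ/2\ZZ)^4$ and $\HHH\cong\SSS_5$. Since $\WD$ is defined to be the complete automorphism group of the configuration of lines, assertion~(i) is essentially tautological: a nontrivial element acting trivially on $\Lambda$ would induce the identity automorphism of the incidence graph, contradicting faithfulness of the representation as $\operatorname{Aut}$ of that graph; I would simply record that the full automorphism group acts faithfully on the vertex set by definition.

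The main content is assertion~(ii), that $\NNN$ acts freely and transitively on $\Lambda$. Since $|\NNN|=16=|\Lambda|$, freeness and transitivity are equivalent, so it suffices to prove that the $\NNN$-action is transitive, or equivalently free (no nontrivial element of $\NNN$ fixes a line). The plan is to use the standard description of the $16$ lines of a degree-$4$ del Pezzo surface coming from a blow-up of $\PP^2$ in five points $p_1,\dots,p_5$: the lines are the five exceptional curves $E_i$, the ten strict transforms $L_{ij}$ of lines through $p_i,p_j$, and the single conic $C$ through all five points. Under the identification of $\WD$ with automorphisms fixing $-K_S$ and the intersection form on $\Pic(S_{\KK^{sep}})$, I would trace through how the generators $\iota_{klmn}\in\NNN$ (from Notation~\ref{notation:Weyl-group}) permute these $16$ classes, exhibiting that $\NNN$ already acts transitively — e.g. that suitable products of the $\iota_{klmn}$ send a fixed line (say $E_5$) to each of the other $15$. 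Equivalently, and perhaps more cleanly, I would show no nonidentity $\iota_{i_1\ldots i_l}$ stabilizes any line, by checking its action on each of the three orbit-types $\{E_i\}$, $\{L_{ij}\}$, $\{C\}$ against the known formulas for how reflections in the $\mathrm D_5$ root system permute the exceptional classes. The main obstacle is bookkeeping: one must fix the specific correspondence between the abstract generators $\iota_{klmn}$ and concrete lattice automorphisms and then verify the permutation action on all $16$ lines, being careful that the single relation $\iota_{1234}\iota_{1235}\iota_{1245}\iota_{1345}\iota_{2345}=1$ is respected so that $|\NNN|$ is genuinely $16$ and not larger.

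Finally, assertion~(iii), injectivity of $\Aut(S)\to\WD$, follows from (i) together with the general fact recalled just before the lemma: an automorphism of $S_{\KK^{sep}}$ acting trivially on $\Pic(S_{\KK^{sep}})$ must be the identity for a del Pezzo surface of degree $\le 5$ (this is assertion~(v) of the displayed theorem, identifying $\Aut(S_{\KK^{sep}})$ with $\Aut(S_{\bar\KK})$, and the classical fact that the latter acts faithfully on the Picard lattice in low degree). Concretely, the homomorphism $\Aut(S)\to\WD$ is induced by the action on lines, which generates $\Pic(S_{\KK^{sep}})$; since by (i) this $\WD$-action is faithful on $\Lambda$ and the lines generate the Picard group, an element of $\Aut(S)$ lying in the kernel acts trivially on all lines, hence trivially on $\Pic(S_{\KK^{sep}})$, hence is the identity. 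I expect this last step to be short, reducing to invoking the cited faithfulness of automorphisms on the Picard lattice for degree $\le 5$.
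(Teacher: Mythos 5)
Your proposal is correct in substance, but on part (ii) --- the real content of the lemma --- it takes a genuinely different route from the paper. You plan an explicit computation: fix a dictionary between the abstract generators $\iota_{klmn}$ and concrete lattice automorphisms, then verify the permutation action on all $16$ lines. The paper never does this. Instead it argues structurally: the stabilizer of the conic $Q$ contains a subgroup $H\cong\SSS_5$ permuting the exceptional curves $L_i$; the intersection $\NNN\cap H$ is a normal $2$-subgroup of $H\cong\SSS_5$, hence trivial, so $\WD\cong\NNN\rtimes H$ by order count; and since $\WD$ acts transitively on $\Lambda$ (quoted from Manin, Corollary 26.7), the $\NNN$-stabilizer of $Q$ is trivial and every line is uniquely of the form $\iota Q$ with $\iota\in\NNN$, giving freeness and transitivity at once. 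This sidesteps exactly the bookkeeping you identify as your main obstacle. Note moreover that the paper fixes the explicit dictionary ($\iota_{klmn}(Q)=L_i$) only \emph{after} the lemma, and that definition is legitimate only once (ii) is known; so to run your computation you would have to fix a realization of $\WD$ on the orthogonal complement of the canonical class independently (the conclusion is nevertheless identification-independent, since $\NNN$ is the unique maximal normal $2$-subgroup of $\WD$ and hence characteristic). Your approach would work and buys explicit permutations --- useful later for markings --- but it is left as a plan, whereas the paper's counting argument is complete in a few lines. Your observation that $|\NNN|=16=|\Lambda|$ makes freeness and transitivity equivalent is correct and is implicitly also used by the paper.

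Two smaller points. For (i), the paper does not treat faithfulness as tautological: it shows that an element fixing all lines fixes the classes of the $L_i$ and of $Q\sim 2L-\sum L_i$, hence fixes $L$ (as $\Pic(S_{\KK^{sep}})$ is torsion-free) and acts trivially on $\Pic(S_{\KK^{sep}})$, on which $\WD$ acts faithfully by its very definition. Your ``faithful by definition'' reading presupposes the identification of the Picard-lattice realization of $\WD$ with the automorphism group of the configuration, which is what (i) is meant to underwrite; the honest one-line argument is exactly the one you deploy in (iii), namely that the lines generate the Picard group. For (iii), the paper argues directly rather than citing a classical fact: the contraction to $\PP^2_{\KK^{sep}}$ is equivariant for an automorphism fixing all lines, and the induced automorphism of $\PP^2_{\KK^{sep}}$ fixes five points in general position, hence is trivial. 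Your appeal to assertion (v) of the displayed theorem is unnecessary --- injectivity of $\Aut(S_{\KK^{sep}})\to\Aut(S_{\bar{\KK}})$ is automatic, and the ``classical fact'' you invoke is proved in the paper by precisely this blow-down argument --- but it is harmless.
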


\begin{proof}
A del Pezzo surface $S_{\KK^{sep}}$ of degree $4$ is isomorphic
to the blowup of $\pi\colon S_{\KK^{sep}} \rightarrow \PP^2_{\KK^{sep}}$
at~$5$ points $p_1, \ldots, p_5$ in general position. Put $L_i = \pi^{-1}(p_i)$,  and $L = \pi^*l$, where~$l$ is the class of a line on $\PP^2_{\KK^{sep}}$.
The $16$ lines on $S_{\KK^{sep}}$ are $L_i$, the proper transforms of $10$ lines passing through two points $p_i$ and $p_j$, and the proper transform $Q$ of the
conic passing through all the points $p_i$. Note that
$$
Q\sim 2L - \sum\limits_{i = 1}^5 L_i.
$$

Assume that $g \in \WD$ acts trivially on $\Lambda$. Then $g$ preserves the classes of $L_i$ and $Q$. Therefore $g$ also preserves the class $L$, and trivially acts on~$\Pic(S_{\KK^{sep}})$. It is possible only for trivial $g$ since $\WD$ faithfully acts on~$\Pic(S_{\KK^{sep}})$. Thus $\WD$ faithfully acts on $\Lambda$.
This proves assertion~(i).

The stabilizer of $Q$ in $\WD$ contains a group $H \cong \SSS_5$ permuting the lines $L_i$: for~\mbox{$\sigma \in H$} one has $\sigma L_i = L_{\sigma(i)}$. The group $\NNN \cap H$ is a normal subgroup of order~\mbox{$2^r$, $r\leqslant 4$}, in $H$, therefore this group is trivial.
One has $|\WD| = |\NNN| \cdot |H|$. Thus~\mbox{$\WD \cong \NNN \rtimes H$}.
(We point out that~$H$ is not required to coincide with the subgroup~\mbox{$\HHH\cong\SSS_5$} fixed after Notation~\ref{notation:Weyl-group}.)

The group $\WD$ transitively acts on $\Lambda$ (see \cite[Corollary 26.7]{Man74}), therefore the stabilizer of $Q$ in $\NNN$ is trivial, and any line in $\Lambda$ can be uniquely represented as $\iota Q$, where~\mbox{$\iota \in \NNN$}. Thus the action of $\NNN$ on $\Lambda$ is transitive and free. This proves assertion~(ii).

Now assume that $g \in \Aut(S)$ acts trivially on $\Lambda$. Then the contraction $\pi\colon S_{\KK^{sep}} \rightarrow \PP^2_{\KK^{sep}}$ is equivariant with respect
to the cyclic group generated by $g$, and $g$ acts on $\PP^2_{\KK^{sep}}$ with five fixed points in general position. This is possible only if $g$ is trivial, which proves assertion~(iii).
\end{proof}

\begin{definition}[{see e.g. \cite{Hosoh}}]
Let $S$ be a del Pezzo surface of degree $4$ over a field~$\KK$. A \emph{marking} of $S$
is an ordered quintuple of lines on $S_{\KK^{sep}}$ intersecting some line
on~$S_{\KK^{sep}}$.
\end{definition}

One can see that the five lines in a marking of a del Pezzo surface of degree $4$ are pairwise
disjoint. Furthermore, these five lines are uniquely defined by the line they intersect, and vice versa
the latter line is uniquely defined by a quintuple of pairwise disjoint lines on~$S_{\KK^{sep}}$.
In other words, to define a marking of $S$
one has to choose a line~$Q$ on $S_{\KK^{sep}}$ and label the lines intersecting $Q$ as $L_1,\ldots,L_5$.
We will refer to such a marking as~\mbox{$L_1,\ldots,L_5$}, because
the line $Q$ will usually be not important for further constructions.

Note that a marking of $S$ allows to fix an action of $\WD$ on the set of lines on~$S_{\KK^{sep}}$. For any $\sigma \in \HHH$ (see Notation \ref{notation:Weyl-group}) we put $\sigma(Q) = Q$ and $\sigma (L_i) = L_{\sigma(i)}$, and for~\mbox{$\iota_{klmn} \in \NNN$} we put~\mbox{$\iota_{klmn}(Q) = L_i$}, where the set of indices $\{i, k, l, m, n\}$ coincides with
the set~\mbox{$\{1, 2, 3, 4, 5\}$}.

\begin{definition}[{see e.g. \cite[Definition~1.1]{Skorobogatov-Invariants}}]
\label{definition:marking-CB}
Let $\phi\colon X\to\PP^1$ be a conic bundle over a field $\KK$.
The \emph{marking} of $\phi$ is the ordering on the degenerate fibers of the conic bundle~\mbox{$\phi_{\KK^{sep}}\colon X_{\KK^{sep}}\to\PP^1$}
together with the choice of one of the two irreducible components in each degenerate fiber.
\end{definition}

In other words, to define a marking of $\phi$ as in Definition~\ref{definition:marking-CB},
one has to label the irreducible components of the degenerate fibers of $\phi_{\KK^{sep}}$ as
$E_1,\ldots,E_k, F_1,\ldots,F_k$ so that the degenerate fibers are $E_i\cup F_i$, $1\le i\le k$.
We will refer to such a marking as
$$
E_1\cup F_1,\ldots, E_k\cup F_k,
$$
assuming that the
chosen components of degenerate fibers are $E_1,\ldots,E_k$.

\begin{definition}
Let $\phi\colon X\to\PP^1$ be a conic bundle over a field $\KK$ such that
$\phi_{\KK^{sep}}$ has $k$ degenerate fibers.
We say that two markings $E_1\cup F_1,\ldots, E_k\cup F_k$ and $E_1'\cup F_1',\ldots, E_k'\cup F_k'$
of $\phi$ are \emph{equivalent} if one has $E_i=F_i'$ for all~\mbox{$1\le i\le k$}.
In other words, the equivalent markings of $X$ are $E_1\cup F_1,\ldots, E_k\cup F_k$
and $F_1\cup E_1,\ldots, F_k\cup E_k$.
\end{definition}

Note that a marking of $\phi$ allows to fix an action of $\mathrm{W}(\mathrm{D}_k)$ on the set of irreducible components of degenerate fibers of $\phi_{\KK^{sep}}$.
For any $\sigma \in \HHH$ (see Notation \ref{notation:Weyl-group}) we put~\mbox{$\sigma (E_i) = E_{\sigma(i)}$} and~\mbox{$\sigma (F_i) = F_{\sigma(i)}$}, and for $\iota_{i_1 \ldots i_l} \in \NNN$ we put $\iota_{i_1 \ldots i_l}(E_j) = F_j$ if~\mbox{$j \in \{i_1, \ldots, i_l\}$}, and~\mbox{$\iota_{i_1 \ldots i_l}(E_j) = E_j$} otherwise.

\begin{remark} \label{remark: equivalent-markings-gives-same-wd-actions}
It is easy to see that two equivalent markings of a conic bundle~$\phi$ give the same actions of $\mathrm{W}(\mathrm{D}_k)$
on the set of irreducible components of degenerate fibers of~$\phi_{\KK^{sep}}$.
\end{remark}

Now we are going to make several observations concerning sections and degenerate fibers
of conic bundles.

\begin{lemma}\label{lemma:intersection-of-sections}
Let $\phi\colon X\to\PP^1$ be a conic bundle over a field $\KK$, and let $E_1\cup F_1,\ldots, E_k\cup F_k$
be a marking of $\phi$. Let $A_1$ and $A_2$ be two sections of $\phi_{\KK^{sep}}$. Then
$$
2A_1 \cdot A_2 = A_1^2 + A_2^2 + l_1 + l_2 - 2r,
$$
where $l_i$ is the number of curves among $E_1,\ldots,E_k$ that intersect $A_i$, and $r$ is the number of curves among $E_1,\ldots,E_k$ that intersect both $A_1$ and $A_2$.
\end{lemma}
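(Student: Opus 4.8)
The plan is to work over $\KK^{sep}$ and compute the intersection number $A_1 \cdot A_2$ by expressing both sections in terms of a convenient basis of $\Pic(X_{\KK^{sep}})\otimes\QQ$. Recall from the discussion preceding the lemma that $\Pic(X_{\KK^{sep}})\otimes\QQ$ is spanned by the anticanonical class $-K$ together with the classes of the chosen components $E_1,\ldots,E_k$. The key relations I would write down first are the standard intersection identities for a conic bundle: if $f$ denotes the class of a general fiber, then $f^2 = 0$, $-K\cdot f = 2$, and each degenerate fiber $E_i\cup F_i$ satisfies $E_i + F_i \sim f$ with $E_i^2 = F_i^2 = -1$ and $E_i\cdot F_i = 1$. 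Moreover $E_i\cdot E_j = E_i\cdot F_j = 0$ for $i\neq j$, and each component meets $-K$ in degree $1$ (so $E_i\cdot f = 0$).

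First I would record that any section $A$ meets the general fiber $f$ once, i.e. $A\cdot f = 1$, and it meets exactly one of the two components of each degenerate fiber. For the section $A_i$, by definition $l_i$ is the number of degenerate fibers in which $A_i$ meets the \emph{chosen} component $E_j$; in the remaining $k - l_i$ degenerate fibers it meets $F_j$ instead. Using $F_j \sim f - E_j$, I would expand the intersection $A_1\cdot A_2$ by decomposing each $A_i$ fiber-component by fiber-component. The cleanest route is to write $A_1 \cdot A_2$ as a sum of local contributions: in a degenerate fiber where both sections pass through $E_j$ the local contribution differs from the case where they pass through opposite components, and the bookkeeping is governed exactly by $r$, the number of fibers where both hit $E_j$.

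Concretely, I would compute $2A_1\cdot A_2$ by the polarization identity, comparing it against $A_1^2$, $A_2^2$, and the self-intersections arising from the degenerate-fiber components. The integers $l_1, l_2$ enter through the count of times each $A_i$ meets a chosen component, while $-2r$ accounts for the degenerate fibers where both sections meet the \emph{same} chosen component (and hence contribute negatively relative to the opposite-component case, since $E_j^2 = -1$ whereas $E_j\cdot F_j = 1$). Assembling these local terms and using the global relations above should yield the stated formula
$$
2A_1\cdot A_2 = A_1^2 + A_2^2 + l_1 + l_2 - 2r
$$
directly.

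The main obstacle I anticipate is the careful case analysis over the degenerate fibers, distinguishing the four possibilities for which components $A_1$ and $A_2$ meet (both $E_j$; both $F_j$; $A_1$ meets $E_j$ while $A_2$ meets $F_j$; and vice versa) and verifying that the contributions sum to precisely the combination $l_1 + l_2 - 2r$. In particular one must confirm that a fiber where both sections pass through $E_j$ is correctly counted once in each of $l_1$ and $l_2$ and once in $r$, so that its net contribution is $l_1 + l_2 - 2r$ consistent, whereas a fiber where both pass through $F_j$ contributes to neither $l_i$ nor $r$, and the asymmetric cases contribute to exactly one $l_i$. Apart from this combinatorial bookkeeping, the remaining algebra is a routine application of the conic-bundle intersection relations, so I would keep that part brief.
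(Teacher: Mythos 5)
Your plan is correct in substance and takes a genuinely different route from the paper, but two slips need repairing before it is a proof. The paper invokes \cite[Proposition 0.4a]{KT} to produce a section $C$ of $\phi_{\KK^{sep}}$ disjoint from all the $E_i$, so that $C, F, E_1,\ldots,E_k$ is an integral basis of $\Pic(X_{\KK^{sep}})$; each $A_i$ is then written as $C+m_iF-E_{i_1}-\ldots-E_{i_{l_i}}$ and the formula falls out of a direct two-line computation with no signs to track. You instead expand in $-K$, the fiber class and the $E_j$'s, which avoids the existence of the distinguished section at the cost of half-integral coefficients (a section $A$ has class $\tfrac12(-K)+bf+\sum c_jE_j$ with $c_j=\pm\tfrac12$). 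First slip: your spanning set is misstated. The classes $-K_{X_{\KK^{sep}}}$ and $E_1,\ldots,E_k$ alone span only a corank-one subspace of $\Pic(X_{\KK^{sep}})\otimes\QQ$; the fiber class $f$ is not in their span, as one sees by pairing a putative relation $f=a(-K)+\sum b_jE_j$ with $E_j$ and with $f$. The paper's statement, which you are paraphrasing, includes \emph{all} components $E_i,F_i$, equivalently $-K$, $f$, and the $E_j$; since you use $f$ and $F_j\sim f-E_j$ anyway, this is harmless once corrected. Second slip: $A_1\cdot A_2$ is \emph{not} literally a sum of local contributions over the degenerate fibers --- the sections may meet over smooth fibers, and that global information is exactly what $A_1^2+A_2^2$ carries on the right-hand side. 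What does localize is the difference class: since $(A_1-A_2)\cdot f=0$, one has $A_1-A_2=bf+\sum c_jE_j$ with $c_j=(A_2-A_1)\cdot E_j\in\{0,\pm 1\}$, nonzero exactly on the symmetric difference of the two sets of chosen components met, whence
$$
2A_1\cdot A_2-A_1^2-A_2^2=-(A_1-A_2)^2=\sum_j c_j^2=l_1+l_2-2r,
$$
which is your polarization identity made precise (using $|S_1\triangle S_2|=l_1+l_2-2r$). With these corrections your argument closes, needing only $\rkPic(X_{\KK^{sep}})=k+2$ (contract one component in each degenerate fiber) in place of the paper's appeal to the special section~$C$; the paper's choice buys an integral basis and a computation free of case analysis, while your version is marginally more self-contained.
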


\begin{proof}
The Picard group of $X_{\KK^{sep}}$ is generated by the class of a section $C$ of $\phi_{\KK^{sep}}$ that does not intersect any of the curves $E_i$,
by the class of a fiber $F$, and by the classes of the curves $E_i$, see~\mbox{\cite[Proposition 0.4a]{KT}}. One has
$$
F^2 = F \cdot E_i = E_i \cdot E_j = C \cdot E_i = 0, \qquad C \cdot F = 1, \qquad C^2 = n.
$$
for $i \neq j$.

Assume that the section $A_1$ of $\phi_{\KK^{sep}}$ intersects $E_{i_1}, \ldots, E_{i_{l_1}}$ and does not intersect the other curves $E_i$, the section $A_2$ of $\phi_{\KK^{sep}}$ intersects $E_{j_1}, \ldots, E_{j_{l_2}}$ and does not intersect the other curves $E_j$. Then the sections $A_1$ and $A_2$ have classes $C + m_1F - E_{i_1} - \ldots - E_{i_{l_1}}$ and $C + m_2F - E_{j_1} - \ldots - E_{j_{l_2}}$, respectively. One has
$$
A_1^2 = C^2 + 2m_1C \cdot F + E_{i_1}^2 + \ldots + E_{i_{l_1}}^2 = n + 2m_1 - l_1.
$$
Similarly, $A_2^2 = n + 2m_2 - l_2$. Let $E_{s_1}$, $\ldots$, $E_{s_r}$ be the curves among $E_1,\ldots,E_k$ that intersect both $A_1$ and $A_2$. Then
\begin{multline*}
2A_1 \cdot A_2 = 2C^2 + 2m_1C \cdot F + 2m_2C \cdot F + 2E_{s_1}^2 + \ldots + 2E_{s_r}^2 = 2n + 2m_1 + 2m_2 - 2r =
\\
= (n + 2m_1 - l_1) + (n + 2m_2 - l_2) + l_1 + l_2 - 2r = A_1^2 + A_2^2 + l_1 + l_2 - 2r.
\end{multline*}
\end{proof}

\begin{corollary}[{cf. \cite[Proposition~4.1]{Skorobogatov-Invariants}}]
\label{corollary:parity-well-defined}
Let $\phi\colon X\to\PP^1$ be a conic bundle over a field $\KK$, and let $E_1\cup F_1,\ldots, E_k\cup F_k$
be a marking of $\phi$. Then one and only one of the following two assertions hold:
\begin{itemize}
\item every $(-1)$-curve on $X_{\KK^{sep}}$ that is a section of $\phi_{\KK^{sep}}$ intersects an even number of curves among $E_1,\ldots,E_k$;

\item every $(-1)$-curve on $X_{\KK^{sep}}$ that is a section of $\phi_{\KK^{sep}}$ intersects an odd number of curves among $E_1,\ldots,E_k$.
\end{itemize}
\end{corollary}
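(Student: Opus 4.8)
The plan is to derive the parity dichotomy as an immediate consequence of Lemma~\ref{lemma:intersection-of-sections}, specialized to pairs of $(-1)$-sections. The content of the statement is really that the parity of the number of distinguished components met by a section is \emph{constant} over all $(-1)$-sections; the exclusivity of the two alternatives is automatic, since for a single fixed section the number of curves among $E_1,\ldots,E_k$ it meets has a definite parity.

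First I would take two arbitrary $(-1)$-curves $A_1$ and $A_2$ on $X_{\KK^{sep}}$ that are sections of $\phi_{\KK^{sep}}$. Being $(-1)$-curves, both satisfy $A_1^2 = A_2^2 = -1$. Substituting these values into the identity of Lemma~\ref{lemma:intersection-of-sections} gives
$$
2A_1 \cdot A_2 = -2 + l_1 + l_2 - 2r,
$$
where, exactly as in that lemma, $l_i$ is the number of curves among $E_1,\ldots,E_k$ meeting $A_i$, and $r$ is the number of them meeting both $A_1$ and $A_2$. Rearranging, I obtain $l_1 + l_2 = 2A_1\cdot A_2 + 2r + 2$. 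Since intersection numbers are integers, the right-hand side is even, and therefore $l_1 \equiv l_2 \pmod{2}$.

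This shows that any two $(-1)$-sections meet the same parity of distinguished components $E_1,\ldots,E_k$, so this parity is an invariant of the marking, independent of the chosen section; hence exactly one of the two alternatives holds. The only thing requiring any care is the bookkeeping of self-intersections: the defining property of a $(-1)$-curve is precisely $A_i^2 = -1$, so that the two self-intersection terms of the lemma combine into the even constant $-2$, which is what forces the parities of $l_1$ and $l_2$ to agree. I do not expect a genuine obstacle here, as the argument reduces to a one-line substitution once Lemma~\ref{lemma:intersection-of-sections} is in hand.
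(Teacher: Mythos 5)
Your argument is correct and is exactly the paper's own proof: both apply Lemma~\ref{lemma:intersection-of-sections} to two $(-1)$-sections, substitute $A_1^2=A_2^2=-1$ to get $2A_1\cdot A_2=-2+l_1+l_2-2r$, and conclude that $l_1\equiv l_2\pmod 2$. Nothing is missing.
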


\begin{proof}
Let $A_1$ and $A_2$ be two $(-1)$-curves that are sections of $\phi_{\KK^{sep}}$. Then by Lemma~\ref{lemma:intersection-of-sections} one has
$$
2A_1 \cdot A_2 = -2 + l_1 + l_2 - 2r.
$$
Therefore $l_1$ and $l_2$ have the same parity.
\end{proof}

\begin{corollary}
\label{corollary:odd-number-of-fibers}
Let $\phi\colon X\to\PP^1$ be a conic bundle over a field $\KK$
such that the number~$k$ of degenerate fibers of $\phi_{\KK^{sep}}$ is odd. Let $E_1\cup F_1,\ldots, E_k\cup F_k$
be a marking of $\phi$. Then one and only one of the following two assertions hold:
\begin{itemize}
\item every $(-1)$-curve on $X_{\KK^{sep}}$ that is a section of $\phi_{\KK^{sep}}$ intersects an even number of curves among $E_1,\ldots,E_k$,
and every such $(-1)$-curve intersects an odd number of curves among $F_1,\ldots,F_k$;

\item every $(-1)$-curve on $X_{\KK^{sep}}$ that is a section of $\phi_{\KK^{sep}}$ intersects an odd number of curves among $E_1,\ldots,E_k$,
and every such $(-1)$-curve intersects an even number of curves among $F_1,\ldots,F_k$.
\end{itemize}
\end{corollary}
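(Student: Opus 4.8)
The plan is to deduce this entirely from Corollary~\ref{corollary:parity-well-defined} together with one elementary observation about how a section meets a degenerate fiber. First I would fix a $(-1)$-curve $A$ on $X_{\KK^{sep}}$ that is a section of $\phi_{\KK^{sep}}$ and examine its intersection with each degenerate fiber $E_i\cup F_i$. Since $A$ is a section and $E_i+F_i$ is the class $F$ of a fiber, one has $A\cdot E_i+A\cdot F_i=A\cdot F=1$. As $A$ is horizontal while $E_i$ and $F_i$ are vertical (contained in a fiber), the curve $A$ coincides with neither component, so both intersection numbers $A\cdot E_i$ and $A\cdot F_i$ are nonnegative; hence exactly one of them equals $1$ and the other vanishes. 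In other words, $A$ meets precisely one of the two components $E_i$, $F_i$ of each degenerate fiber.

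Summing over all $k$ degenerate fibers, I conclude that for every such section $A$ the number $l$ of curves among $E_1,\ldots,E_k$ meeting $A$ and the number $l'$ of curves among $F_1,\ldots,F_k$ meeting $A$ satisfy $l+l'=k$. Since $k$ is odd by hypothesis, $l$ and $l'$ necessarily have opposite parities for each individual section.

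Finally, Corollary~\ref{corollary:parity-well-defined} guarantees that the parity of $l$ is the same for all $(-1)$-curve sections of $\phi_{\KK^{sep}}$. Combining this constancy with the identity $l+l'=k$ for odd $k$ yields precisely the dichotomy in the statement: if every section meets an even number of the $E_i$, then every section meets an odd number of the $F_j$, and symmetrically in the other case; the two alternatives are mutually exclusive because the parity of $l$ cannot be simultaneously even and odd. The argument is short and presents no real obstacle; the only point requiring a moment's care is the nonnegativity of the intersection numbers, i.e. the fact that a section is distinct from every component of a degenerate fiber, which holds because sections are horizontal whereas such components are vertical.
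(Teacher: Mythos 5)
Your proof is correct and follows essentially the same route as the paper: the paper also observes that a section meets exactly one component of each degenerate fiber (so the counts over the $E_i$ and the $F_i$ sum to the odd number $k$) and then invokes Corollary~\ref{corollary:parity-well-defined} for the constancy of the parity. Your intersection-number computation $A\cdot E_i + A\cdot F_i = A\cdot F = 1$ merely makes explicit a step the paper states without calculation.
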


\begin{proof}
Let $A$ be a section of $\phi_{\KK^{sep}}$. If $A$ intersects the curve $E_i$, then it does not intersect~$F_i$, and vice versa.
Thus, $A$ intersects an even number of curves among $E_1,\ldots,E_k$ if and only if it intersects an odd number of curves among $F_1,\ldots,F_k$.
The rest follows from Corollary~\ref{corollary:parity-well-defined}.
\end{proof}

\begin{corollary}\label{corollary:markings-differ-sections}
Let $\phi\colon X\to\PP^1$ be a conic bundle, and let $E_1\cup F_1,\ldots, E_k\cup F_k$
be a marking of $\phi$. Let $A_1$ and $A_2$ be sections of $\phi_{\KK^{sep}}$ with negative self-intesection numbers. Then $A_1$ and $A_2$ cannot intersect
the same collection of curves among $E_1,\ldots,E_k$.
\end{corollary}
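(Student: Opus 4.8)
The plan is to read off the conclusion directly from the intersection formula in Lemma~\ref{lemma:intersection-of-sections}. The statement is clearly about two \emph{distinct} sections (a section trivially meets the same collection of curves as itself), so I would begin by saying that we may assume $A_1\neq A_2$ and argue by contradiction: suppose that $A_1$ and $A_2$ intersect exactly the same curves among $E_1,\ldots,E_k$.

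The key observation is that this hypothesis pins down all three of the combinatorial quantities appearing in Lemma~\ref{lemma:intersection-of-sections}. Indeed, if $A_1$ and $A_2$ meet the same subset of $\{E_1,\ldots,E_k\}$, then the number $l_1$ of such curves meeting $A_1$, the number $l_2$ meeting $A_2$, and the number $r$ meeting both all coincide, so $l_1=l_2=r$. Substituting into the formula of Lemma~\ref{lemma:intersection-of-sections} gives
$$
2\,A_1\cdot A_2 = A_1^2 + A_2^2 + l_1 + l_2 - 2r = A_1^2 + A_2^2,
$$
and the correction terms cancel exactly. Since by assumption both $A_1^2<0$ and $A_2^2<0$, the right-hand side is negative, whence $A_1\cdot A_2<0$.

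This is the desired contradiction: $A_1$ and $A_2$ are two distinct irreducible curves on the smooth surface $X_{\KK^{sep}}$, and distinct irreducible curves on a smooth projective surface always meet non-negatively, so $A_1\cdot A_2\ge 0$. I do not anticipate any genuine obstacle here; the corollary is an immediate consequence of Lemma~\ref{lemma:intersection-of-sections}. The only point that deserves a moment's care is the bookkeeping identity $l_1=l_2=r$ forced by the phrase ``intersect the same collection,'' together with the (standard) fact that two different irreducible curves on a smooth surface have non-negative intersection number; both are elementary.
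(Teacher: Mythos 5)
Your proposal is correct and follows exactly the paper's own argument: both substitute the hypothesis (which forces $l_1=l_2=r$) into Lemma~\ref{lemma:intersection-of-sections} to obtain $2A_1\cdot A_2 = A_1^2+A_2^2<0$, contradicting the fact that two distinct irreducible curves on a smooth surface meet non-negatively. Your explicit reduction to the case $A_1\neq A_2$ is a sensible clarification that the paper leaves implicit.
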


\begin{proof}
Suppose that both $A_1$ and $A_2$ intersect $E_{i_1}, \ldots, E_{i_{l}}$ and do not intersect the other curves $E_i$.
Then by Lemma~\ref{lemma:intersection-of-sections} one has
$$
2A_1 \cdot A_2 = A_1^2 + A_2^2 + l + l - 2l = A_1^2 + A_2^2 < 0,
$$
which is impossible since the sections are irreducible.
\end{proof}

\section{Blow up at a point}
\label{section:blow-up}

Recall that any smooth cubic surface $X$ over a field $\KK$ containing a line defined over~$\KK$
has a structure of a conic bundle $\phi\colon X\to\PP^1$ given by the linear projection from this line.
Conversely, any conic bundle structure on a smooth cubic surface is obtained by a projection from a line.

\begin{lemma}
\label{lemma:unique-bisection}
Let $X$ be a smooth cubic surface admitting a conic bundle
structure
$$
\phi\colon X\to\PP^1.
$$
Let $F$ be a general fiber of $\phi$.
Then there is a line $E\sim -K_X-F$. In other words, $\phi$ is given by the linear projection from $E$.
\end{lemma}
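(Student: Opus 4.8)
The plan is to identify the desired line with the divisor class $-K_X-F$ and to prove that this class contains a unique effective divisor, which will turn out to be a line defined over $\KK$.

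First I would record the relevant intersection numbers. Since $X$ is a smooth cubic surface, $-K_X$ is the hyperplane class and $(-K_X)^2=3$; a general fiber $F$ of $\phi$ is a smooth conic, so $F^2=0$ and $-K_X\cdot F=2$. Setting $E:=-K_X-F$, a direct computation gives
\[
E\cdot(-K_X)=(-K_X)^2-F\cdot(-K_X)=3-2=1,\qquad
E^2=(-K_X)^2-2\,(-K_X)\cdot F+F^2=3-4+0=-1.
\]

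Next, working over $\KK^{sep}$, I would show that the class $E$ is effective and represented by a line. By Riemann--Roch on the rational surface $X_{\KK^{sep}}$ one has $\chi(\mathcal{O}(E))=1+\tfrac12(E^2-E\cdot K_X)=1$, while Serre duality gives $h^2(\mathcal{O}(E))=h^0(\mathcal{O}(K_X-E))$; since $(K_X-E)\cdot(-K_X)=-4<0$ and $-K_X$ is ample, the class $K_X-E$ is non-effective, so $h^2=0$ and therefore $h^0(\mathcal{O}(E))\ge 1$. Writing an effective representative as $\sum n_iC_i$ and using $E\cdot(-K_X)=1$ together with ampleness of $-K_X$, exactly one component can occur and only with multiplicity one, so $E$ is represented by an irreducible reduced curve. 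Adjunction, $p_a(E)=1+\tfrac12(E^2+E\cdot K_X)=0$, combined with $E\cdot(-K_X)=1$, shows this curve is a line; being a $(-1)$-curve, it is moreover the unique effective divisor in its class.

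Finally I would carry out the descent and read off the geometric meaning. Both $K_X$ and the fiber class $F=\phi^*[\mathrm{pt}]$ are defined over $\KK$, so $E=-K_X-F$ is $\Gal(\bar{\KK}/\KK)$-invariant; since the line $E$ is the \emph{unique} effective divisor in this invariant class, it is Galois-invariant and hence defined over $\KK$. The relation $E+F\sim -K_X$ means that $E$ together with a general fiber cuts out a hyperplane section of $X$, so the residual conics to $E$ in the planes through $E$ are precisely the fibers of $\phi$; equivalently, $\phi$ is given by the pencil $|F|=|-K_X-E|$, i.e. by the linear projection from $E$. (One could instead invoke the recalled fact that every conic bundle structure on a smooth cubic surface arises as a projection from a line, which gives existence immediately, but the intrinsic argument above also pins down the class.) The main obstacle I anticipate is the descent step: ensuring that the effective representative is a genuinely irreducible reduced line over $\bar{\KK}$ (where ampleness of $-K_X$ is essential), and that its uniqueness in the Galois-invariant class lets it descend to a line over the possibly non-closed base field $\KK$.
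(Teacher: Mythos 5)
Your proof is correct, but it takes a genuinely different route from the paper's. The paper argues synthetically in the anticanonical embedding $X\subset\PP^3$: since $-K_X\cdot F=2$, the general fiber $F$ is a plane conic; the unique plane $\Pi$ spanned by $F$ cuts out on $X$ a cubic curve, which must split as $F$ plus a residual line $E$ with $E\sim -K_X-F$; as $F$ and hence $\Pi$ are defined over $\KK$, so is $E$, and the projection from $E$ recovers $\phi$ --- four lines in total, with rationality of $E$ over $\KK$ built in for free. Your argument is instead intrinsic: Riemann--Roch plus Serre duality to show $-K_X-F$ is effective over $\KK^{sep}$, ampleness of $-K_X$ to force an irreducible reduced representative, adjunction to identify it as a line, and uniqueness of the effective divisor in a $(-1)$-class to descend it. All the computations check out ($\chi(\mathcal{O}(E))=1$, $h^2=h^0(K_X-E)=0$ since $(K_X-E)\cdot(-K_X)=-4<0$, and $p_a(E)=0$), and your descent is sound, with two small caveats: for non-perfect $\KK$ you should phrase the invariance via $\Gal(\KK^{sep}/\KK)$ rather than $\Gal(\bar\KK/\KK)$ (which is what your $\KK^{sep}$-based Riemann--Roch step in fact supports, and matches the paper's general descent statement for $(-1)$-curves in Section~2); and the parenthetical alternative of ``invoking the recalled fact'' that conic bundle structures on cubics come from projections would be circular here, since this lemma is precisely the paper's justification of that fact. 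What each approach buys: the paper's is shorter and keeps the field of definition transparent, while yours avoids any use of the projective embedding until the last step and is the kind of argument that transfers to situations where no such convenient embedding is available.
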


\begin{proof}
Then $-K_X \cdot F = 2$, therefore $F \subset X \subset \PP^3$ is a curve of degree $2$ in $\PP^3$ that is a conic. Consider a plane $\Pi$ containing $F$. Then $\Pi \cap X$ is a reducible cubic curve consisting of the conic $F$ and a line $E$ with class $-K_X-F$. Therefore $\phi$ is given by the linear projection from $E$.
\end{proof}

\begin{remark}
If in the notation of Lemma~\ref{lemma:unique-bisection} we additionally assume that~\mbox{$\rkPic(X)=2$}, then a line on~$X$
constructed in the lemma is unique,
and hence the conic bundle structure is unique as well.
\end{remark}

\begin{remark}
\label{remark:lines-on-cubic}
The $27$ lines on a smooth cubic surface
$X_{\KK^{sep}}$ over a separably closed field~$\KK^{sep}$ are $10$ irreducible components of degenerate fibers of a
conic bundle $\phi_{\KK^{sep}}\colon X_{\KK^{sep}}\to\PP^1$,
a unique (possibly inseparable) bi-section of $\phi_{\KK^{sep}}$ with self-intersection equal to $-1$, and~$16$ sections of $\phi_{\KK^{sep}}$
with self-intersection equal to $-1$. Moreover, this bi-section does not intersect any of the $16$ lines that are sections of~$\phi_{\KK^{sep}}$, and transversally
intersects each irreducible component of the degenerate fibers.
\end{remark}

Note that a blow up of a $\KK$-point on a del Pezzo surface $S$ of degree $4$ over a field $\KK$ is a smooth cubic surface if and
only if this point is not contained in any line on~$S_{\KK^{sep}}$.
Let~\mbox{$f\colon X \to S$} be the blowup of such a point.
Then there is a structure of a conic bundle~\mbox{$\phi\colon X\to\PP^1$} defined by the linear projection from the line that is the exceptional divisor of the blowup.
Therefore, the $16$ lines on~$S_{\KK^{sep}}$ are in one-to-one correspondence with sections of the conic bundle $\phi_{\KK^{sep}}\colon X_{\KK^{sep}}\to\PP^1$ having
self-intersection equal to~$-1$.

\begin{lemma}\label{lemma:unique-section-odd}
Let $X_{\KK^{sep}}$ be a smooth cubic surface over a separably closed field~$\KK^{sep}$, and let
$\phi_{\KK^{sep}}\colon X\to \PP^1$ be a conic bundle.
Let $E_1\cup F_1,\ldots,E_5\cup F_5$ be a marking of~$\phi_{\KK^{sep}}$.
Suppose that there exists a section of $\phi_{\KK^{sep}}$ with self-intersection~$-1$ that intersects an odd number of curves among~\mbox{$E_1,\ldots,E_5$}.
Then there exists a unique section of $\phi_{\KK^{sep}}$ with self-intersection~$-1$ that intersects
all the curves~\mbox{$E_1,\ldots,E_5$}.
Furthermore, for any~\mbox{$i\in\{1,\ldots,5\}$} there exists a unique section of $\phi_{\KK^{sep}}$ with self-intersection~$-1$ that intersects~$E_i$ but no other curve among~\mbox{$E_1,\ldots,E_5$}.
\end{lemma}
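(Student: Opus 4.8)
The plan is to set up a bijection between the sections of $\phi_{\KK^{sep}}$ with self-intersection~$-1$ and the odd-cardinality subsets of $\{1,\ldots,5\}$, by recording for each section the collection of curves among $E_1,\ldots,E_5$ that it meets. Once this bijection is in place, both the existence and the uniqueness assertions will drop out as the instances corresponding to the subset $\{1,\ldots,5\}$ and to the singletons $\{i\}$.

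First I would pin down the set of sections. By Remark~\ref{remark:lines-on-cubic} there are exactly $16$ sections of $\phi_{\KK^{sep}}$ with self-intersection equal to~$-1$ among the $27$ lines of $X_{\KK^{sep}}$. Conversely, any section $A$ with $A^2=-1$ is irreducible and isomorphic to $\PP^1$ (as $\phi_{\KK^{sep}}$ restricts to it with degree~$1$), so adjunction gives $-K_X\cdot A=1$; hence $A$ is a line, and being a section it is one of those $16$. Thus the sections of self-intersection~$-1$ are precisely these $16$ lines.

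Next, the hypothesis furnishes one section meeting an odd number of the $E_i$, so Corollary~\ref{corollary:parity-well-defined} forces \emph{every} one of the $16$ sections to meet an odd number of the curves $E_1,\ldots,E_5$. To each such section $A$ I attach the subset $I(A)\subseteq\{1,\ldots,5\}$ consisting of those $i$ with $A\cdot E_i=1$; by the previous sentence $I(A)$ always has odd cardinality. Corollary~\ref{corollary:markings-differ-sections} says that two distinct sections of negative self-intersection cannot meet the same collection among the $E_i$, so the map $A\mapsto I(A)$ is injective.

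Finally I would count. The number of odd-cardinality subsets of a $5$-element set is $\binom{5}{1}+\binom{5}{3}+\binom{5}{5}=5+10+1=16$, exactly the number of sections. Hence the injection $A\mapsto I(A)$ from a $16$-element set into a $16$-element set is a bijection. The subset $\{1,\ldots,5\}$ is then the image of exactly one section, which is the unique section meeting all of $E_1,\ldots,E_5$; and for each $i$ the singleton $\{i\}$ is the image of exactly one section, the unique section meeting $E_i$ and none of the other $E_j$. I expect no genuine obstacle in this argument: the only points needing care are the identification of the sections of self-intersection~$-1$ with the $16$ lines (supplied by Remark~\ref{remark:lines-on-cubic}) and the verification that both cited corollaries apply to the curves at hand, after which the statement reduces to the pigeonhole count above.
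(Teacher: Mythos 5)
Your argument is correct and is essentially identical to the paper's proof: both count the $16$ sections of self-intersection $-1$ via Remark~\ref{remark:lines-on-cubic}, use Corollary~\ref{corollary:markings-differ-sections} for injectivity of the map to odd-cardinality subsets of $\{1,\ldots,5\}$ (with the hypothesis and Corollary~\ref{corollary:parity-well-defined} guaranteeing odd parity), and conclude by the pigeonhole count $\binom{5}{1}+\binom{5}{3}+\binom{5}{5}=16$. Your extra adjunction check that every $(-1)$-section is one of the $16$ lines is a harmless elaboration of what the paper takes from Remark~\ref{remark:lines-on-cubic}.
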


\begin{proof}
By Remark~\ref{remark:lines-on-cubic}, there are $16$ sections of $\phi_{\KK^{sep}}$ with self-intersection $-1$.
Moreover, by Corollary~\ref{corollary:markings-differ-sections} two such sections cannot intersect the same collection of curves among~\mbox{$E_1,\ldots,E_5$}. On the other hand, there are exactly $16$ ways to choose an odd number of curves among~\mbox{$E_1,\ldots,E_5$}. In other words, for any choice of
an odd number of curves among~\mbox{$E_1,\ldots,E_5$}, there is a unique section of $\phi_{\KK^{sep}}$ with self-intersection $-1$
which intersects these curves and no other curves among~\mbox{$E_1,\ldots,E_5$}.
\end{proof}

For a given marking of $S$ one can naturally obtain two equivalent markings of $\phi$, and for a given marking of $\phi$ one can obtain a marking of $S$.
This is done by the following two constructions.

\begin{construction}\label{construction:markings-1-to-1-S-to-X}
Let $S$ be a del Pezzo surface of degree $4$ over a field $\KK$, let~$X$ be a smooth cubic surface
obtained as a blow up $f\colon X\to S$ at a $\KK$-point $p$, and let~\mbox{$\phi\colon X\to\PP^1$} be the conic bundle defined by the $f$-exceptional line on $X$.
Let $L_1$, $\ldots$, $L_5$ be a marking of $S$, and let $Q\subset S_{\KK^{sep}}$ be the line meeting each of the lines $L_i$.
Observe that $p$ is not contained in any line on $S_{\KK^{sep}}$, because otherwise $X$ would not be a del Pezzo surface.
The proper transforms $f_*^{-1}Q$ and $f_*^{-1}L_i$ are lines on~\mbox{$X_{\KK^{sep}} \subset \PP^3$}. For any~$i$ the lines~$f_*^{-1}Q$ and~$f_*^{-1}L_i$
intersect each other. Therefore these lines are contained in some plane $\Pi_i$. The intersection of $\Pi_i$ with~$X_{\KK^{sep}}$ consists of three lines. Two of these lines are~$f_*^{-1}Q$
and~$f_*^{-1}L_i$, and we denote the third line by $E_i$. The image $f(E_i)$ is neither a line on~$S_{\KK^{sep}}$, nor a point. Therefore by
Remark~\ref{remark:lines-on-cubic} the line~$E_i$ is an irreducible component of a degenerate fiber of~\mbox{$\phi_{\KK^{sep}}\colon X_{\KK^{sep}}\to\PP^1$}. Moreover, for $i \neq j$ one has
\begin{multline*}
1 = -K_{X_{\KK^{sep}}} \cdot E_j = \left(f_*^{-1}Q + f_*^{-1}L_i + E_i\right) \cdot E_j =\\
= f_*^{-1}Q \cdot E_j + f_*^{-1}L_i \cdot E_j + E_i \cdot E_j = 1 +  E_i \cdot E_j.
\end{multline*}
This gives $E_i \cdot E_j = 0$. In other words, $E_i$ are irreducible components of pairwise different
fibers $E_i\cup F_i$ of $\phi_{\KK^{sep}}$. We will say that the (equivalent) markings
$E_1\cup F_1,\ldots,E_5\cup F_5$ and~\mbox{$F_1\cup E_1,\ldots,F_5\cup E_5$} of~$\phi$
\emph{correspond} to the marking
$L_1,\ldots,L_5$ of $S$.
\end{construction}

The converse to Construction~\ref{construction:markings-1-to-1-S-to-X}
is given as follows.

\begin{construction}\label{construction:markings-1-to-1-X-to-S}
Let $S$ be a del Pezzo surface of degree $4$ over a field $\KK$, let $X$ be a smooth cubic surface
obtained as a blow up $f\colon X\to S$ at a $\KK$-point, and let $\phi\colon X\to\PP^1$ be the conic bundle defined by the $f$-exceptional line on $X$.
Let $E_1 \cup F_1, \ldots, E_5 \cup F_5$ be a marking of $\phi$.
After replacing it by the equivalent marking if necessary, we may assume that
every $(-1)$-curve on $X_{\KK^{sep}}$ that is a section of~$\phi_{\KK^{sep}}$ intersects an odd number of curves among~\mbox{$E_1,\ldots,E_5$}, see
Corollary~\ref{corollary:odd-number-of-fibers}. By Lemma~\ref{lemma:unique-section-odd}, for any $i\in\{1,\ldots,5\}$
there is a unique section $A_i$ of~$\phi_{\KK^{sep}}$
with self-intersection $-1$ that intersects $E_i$
and does not intersect other curves among~\mbox{$E_1,\ldots,E_5$}.
Denote by $L_i$ the image of $A_i$ on $S_{\KK^{sep}}$.
The curves $A_i$ are disjoint by Lemma~\ref{lemma:intersection-of-sections};
also, they do not intersect the unique bi-section of~$\phi_{\KK^{sep}}$ with self-intersection $-1$,
see Remark~\ref{remark:lines-on-cubic}.
Hence the curves $L_i$ are disjoint, and one has~\mbox{$L_i^2=-1$}.
Thus we obtain a marking $L_1,\ldots,L_5$ of~$S$.
We will say that this marking \emph{corresponds} to the marking~\mbox{$E_1\cup F_1, \ldots, E_5\cup F_5$} of~$\phi$.
\end{construction}

It is straightforward to see that for any marking
of $\phi$ corresponding to a marking of $S$ via Construction~\ref{construction:markings-1-to-1-S-to-X},
the marking of~$S$ corresponding to it via Construction~\ref{construction:markings-1-to-1-X-to-S} coincides with the original marking of~$S$.
Similarly, for the marking of $S$ corresponding to a marking of $\phi$ via Construction~\ref{construction:markings-1-to-1-X-to-S},
the marking of $\phi$ corresponding to it via Construction~\ref{construction:markings-1-to-1-S-to-X}
is either the original marking of~$\phi$, or the marking equivalent to the original marking.
Thus, Constructions~\ref{construction:markings-1-to-1-S-to-X} and~\ref{construction:markings-1-to-1-X-to-S}
give mutually inverse one-to-one correspondences between markings of $S$ and pairs of equivalent markings of~$\phi$.

\begin{lemma}\label{lemma:markings-wd-equivariant}
Let $S$ be a del Pezzo surface of degree $4$ over a field $\KK$, let $X$ be a smooth cubic surface
obtained as a blow up $f\colon X\to S$ of $S$ at a $\KK$-point, and let~\mbox{$\phi\colon X\to\PP^1$} be the conic bundle defined by the $f$-exceptional line on $X$.
Consider a marking of~$\phi$, the corresponding marking of $S$, and the actions of $\WD$ on~$\Pic(X_{\KK^{sep}})$ and~$\Pic(S_{\KK^{sep}})$
given by these markings. Then the natural embedding
$f^*\colon \Pic(S_{\KK^{sep}}) \hookrightarrow \Pic(X_{\KK^{sep}})$ is $\WD$-equivariant.

Moreover, let the group $G$ be either
$\Gal(\KK^{sep}/\KK)$, or a subgroup of $\Aut(S)$. Suppose that~$X$
is obtained as a blow up of $S$ at a $G$-fixed point; thus, the group $G$ acts on $X$ as well.
The action of~$G$ on $\Pic(X_{\KK^{sep}})$ and $\Pic(S_{\KK^{sep}})$ gives homomorphisms $\pi_X \colon G \to \WD$ and~\mbox{$\pi_S \colon G \to \WD$}. Then the following diagram is commutative:
$$
\xymatrix{
&G\ar@{->}[ld]_{\pi_S}\ar@{->}[rd]^{\pi_X}&\\
\WD\ar@{->}[rr]^{\sim}&& \WD
}
$$
\end{lemma}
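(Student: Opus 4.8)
The plan is to verify the $\WD$-equivariance of $f^*$ directly on generators of the group, using the explicit correspondence between the two markings. Recall that by Construction~\ref{construction:markings-1-to-1-X-to-S}, the marking $L_1,\ldots,L_5$ of $S$ is built so that $L_i$ is the image of the unique section $A_i$ of $\phi_{\KK^{sep}}$ with self-intersection $-1$ meeting $E_i$ and no other $E_j$. The first step is to pin down how $f^*$ interacts with the geometry: for a line $L$ on $S_{\KK^{sep}}$ not passing through the blown-up point $p$, the proper transform $f_*^{-1}L$ equals $f^*L$ (no correction term since $p\notin L$), and since $p$ lies on no line, all sixteen lines $L_i,\,Q,\ldots$ pull back to the sixteen $(-1)$-sections of $\phi_{\KK^{sep}}$. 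Thus $f^*$ identifies the lines on $S_{\KK^{sep}}$ with the sections, and I must check that this identification intertwines the two $\WD$-actions defined via markings.

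Next I would check equivariance separately for the two types of generators, following Notation~\ref{notation:Weyl-group}. For $\sigma\in\HHH\cong\SSS_5$: on the $S$-side $\sigma$ sends $L_i\mapsto L_{\sigma(i)}$ and fixes $Q$; on the $X$-side $\sigma$ sends $E_i\mapsto E_{\sigma(i)}$, $F_i\mapsto F_{\sigma(i)}$. Since $A_i$ is characterized as the unique $(-1)$-section meeting exactly $E_i$ among $E_1,\ldots,E_5$, the $X$-action carries $A_i$ to the section meeting exactly $E_{\sigma(i)}$, which is $A_{\sigma(i)}$; hence $f^*L_i=A_i\mapsto A_{\sigma(i)}=f^*L_{\sigma(i)}$, matching the $S$-action. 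For the normal generators $\iota_{klmn}\in\NNN$: on the $S$-side $\iota_{klmn}(Q)=L_i$ where $\{i,k,l,m,n\}=\{1,\ldots,5\}$, and one extends this using the group law; on the $X$-side $\iota_{klmn}$ flips $E_j\leftrightarrow F_j$ exactly for $j\in\{k,l,m,n\}$. I would compute, using Lemma~\ref{lemma:intersection-of-sections}, which collection of $E$'s the image section meets after the flip, and confirm it coincides with the pullback of the prescribed image line on $S$. The arithmetic here is a parity bookkeeping: flipping $E_j\leftrightarrow F_j$ toggles whether a given section meets $E_j$, so the action on sections permutes them according to symmetric difference of index sets, exactly mirroring the rule $\iota_{klmn}(Q)=L_i$ together with the $\SSS_5$-action. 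Because $\NNN$ acts freely and transitively on the sixteen lines on each side (Lemma~\ref{lemma:Weyl-group-on-dP4-basic}(ii)), matching the images on a single generating line (say $Q$) together with the already-verified $\HHH$-equivariance suffices.

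For the second, group-theoretic assertion, I would argue that the isomorphism $\WD\xrightarrow{\sim}\WD$ in the diagram is precisely the identity under the marking identifications just established. The point is that $G$ (being $\Gal(\KK^{sep}/\KK)$ or a subgroup of $\Aut(S)$ fixing $p$) acts compatibly on both $S$ and $X$, so the $G$-action commutes with $f^*$; since $f^*$ is $\WD$-equivariant for the marking-induced actions, the two homomorphisms $\pi_S$ and $\pi_X$ describe the same abstract action of $G$ transported across the equivariant isomorphism $f^*$, giving commutativity. Concretely, for $g\in G$ the equality $f^*\circ g=g\circ f^*$ on $\Pic$ forces $\pi_X(g)$ and $\pi_S(g)$ to correspond under the identification of the two copies of $\WD$.

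The main obstacle I expect is the $\NNN$-case verification: unlike the transparent $\SSS_5$-permutation, the flips $\iota_{klmn}$ act on sections in a way that must be read off through the intersection numbers of Lemma~\ref{lemma:intersection-of-sections}, and one has to be careful that the equivalence class of markings (the $E\leftrightarrow F$ ambiguity) does not spoil the match. Here the key reduction is that, by Remark~\ref{remark: equivalent-markings-gives-same-wd-actions}, equivalent markings induce the \emph{same} $\WD$-action, so the choice made in Construction~\ref{construction:markings-1-to-1-X-to-S} is harmless; and by faithfulness of the $\WD$-action on lines (Lemma~\ref{lemma:Weyl-group-on-dP4-basic}(i)) it is enough to check equality of the two actions on the set of lines/sections rather than on all of $\Pic$.
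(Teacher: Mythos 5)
Your proposal is correct and follows essentially the same route as the paper's proof: after normalizing the marking via Corollary~\ref{corollary:odd-number-of-fibers} (harmless by Remark~\ref{remark: equivalent-markings-gives-same-wd-actions}), both arguments identify the proper transforms of the lines with the $(-1)$-sections singled out by Lemma~\ref{lemma:unique-section-odd} and Lemma~\ref{lemma:intersection-of-sections}, check equivariance separately on the generators of $\HHH$ and of $\NNN$ (the paper anchors the $\NNN$-computation at $f_*^{-1}Q$, showing $\iota_{klmn}(f_*^{-1}Q)=f_*^{-1}L_i$, and propagates by conjugation --- precisely your parity bookkeeping), and then deduce $\pi_X=\pi_S$ from the evident $G$-equivariance of $f^*$ combined with the just-established $\WD$-equivariance and the faithfulness of the $\WD$-action from Lemma~\ref{lemma:Weyl-group-on-dP4-basic}(i). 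Your reorganization (verifying $\HHH$ directly on the sections $A_i$ rather than only on $f_*^{-1}Q$, and reducing the $\NNN$-case to a single line via free transitivity from Lemma~\ref{lemma:Weyl-group-on-dP4-basic}(ii)) is a cosmetic variation of the same argument, not a different method.
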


\begin{proof}
Let $E_1 \cup F_1, \ldots, E_5 \cup F_5$ be a marking of $\phi$.
After replacing it by the equivalent marking
if necessary, we may assume that
every $(-1)$-curve on $X_{\KK^{sep}}$ that is a section of~$\phi_{\KK^{sep}}$ intersects an odd number of curves among~\mbox{$E_1,\ldots,E_5$}, see
Corollary~\ref{corollary:odd-number-of-fibers}.
Recall from Remark~\ref{remark: equivalent-markings-gives-same-wd-actions}
that equivalent markings of~$\phi$ gives the same action of~$\WD$ on~$\Pic(X_{\KK^{sep}})$.

Let $L_1,\ldots,L_5$ be the marking of $S$ corresponding to the above marking of $\phi$ as in Construction~\ref{construction:markings-1-to-1-X-to-S},
and let~$Q$ be the unique line on $S_{\KK^{sep}}$ intersecting $L_1,\ldots,L_5$.
According to Lemma~\ref{lemma:unique-section-odd},
there is a unique section $A$ of $\phi_{\KK^{sep}}$ with self-intersection~$-1$ that intersects all the curves $E_i$.
By Lemma~\ref{lemma:intersection-of-sections} the section $A$ intersects
each of the curves~$f_*^{-1}L_i$ (which are also sections of $\phi_{\KK^{sep}}$). Therefore, $A$ coincides with~$f_*^{-1}Q$.
Note that for any~\mbox{$\sigma \in \HHH \subset \WD$} one has $\sigma(E_i) = E_{\sigma(i)}$, and therefore $\sigma(f_*^{-1}Q)$ intersects all the curves~$E_i$.
Thus, we know that~\mbox{$\sigma(f_*^{-1}Q) = f_*^{-1}Q$}, which means that $f_*^{-1}Q$ is $\HHH$-invariant.

Consider the action of $\WD$ on $\Pic(X_{\KK^{sep}})$.
One has $\iota_{2345}(E_1) = E_1$ and~\mbox{$\iota_{2345}(E_j) = F_j$} for~\mbox{$j \neq 1$}. Therefore
$$
\iota_{2345}(f_*^{-1}Q) = f_*^{-1}L_1.
$$
Similarly we can show that~\mbox{$\iota_{klmn}(f_*^{-1}Q) = f_*^{-1}L_i$}, where $\{i, k, l, m, n\} = \{1, 2, 3, 4, 5\}$. Moreover, for any $\sigma \in \HHH\cong\SSS_5$ one has
$$
\sigma(f_*^{-1}L_i) = \sigma\iota_{klmn}(f_*^{-1}Q) = \sigma\iota_{klmn}\sigma^{-1} (\sigma(f_*^{-1}Q)) = \iota_{\sigma(k)\sigma(l)\sigma(m)\sigma(n)} (f_*^{-1}Q) = f_*^{-1}L_{\sigma(i)}.
$$

Consider the action of $\WD$ on $\Pic(S_{\KK^{sep}})$. We have~\mbox{$\sigma(Q) = Q$} and $\sigma (L_i) = L_{\sigma(i)}$ for any~\mbox{$\sigma \in \HHH$}, and $\iota_{klmn}(Q) = L_i$ for $\iota_{klmn} \in \NNN$, where~\mbox{$\{i, k, l, m, n\} = \{1, 2, 3, 4, 5\}$}. We see that this action coincides with the action of~$\WD$ on $\Pic(S_{\KK^{sep}})$ induced by the embedding~$f^*$. Therefore $f^*$
is $\WD$-equivariant.
This proves the first assertion of the lemma.

Also, the embedding $f^*$ is obviously $G$-equivariant. It means that for any $g \in G$ the action of $\pi_X(g)$ on $\Pic(S_{\KK^{sep}})$ induced by $f^*$ coincides with the action of $\pi_S(g)$.
Since~$f^*$ is $\WD$-equivariant, we have $\pi_X(g) = \pi_S(g)$, and the second assertion of the lemma holds.
\end{proof}

\section{Pencil of quadrics}
\label{section:pencil}

Let $\KK$ be an arbitrary field, and let $S$ be a del Pezzo surface of degree $4$ over $\KK$. Then
the anticanonical linear system of $S$ provides an embedding
$S\hookrightarrow \PP^4$, which represents $S$ as an intersection of two quadrics in~$\PP^4$. Denote
by~$\cP$ the pencil of quadrics in $\PP^4$ passing through $S$. Thus, one has~\mbox{$\cP\cong\PP^1$}.
The following description of singular quadrics in the pencil~$\cP$ is well known;
in the sequel we will mostly use it without explicit reference.

\begin{theorem}\label{theorem:dP4-basic}
Let $S$ be a del Pezzo surface of degree $4$ over a field $\KK$. The following assertions hold.
\begin{itemize}
\item[(i)] The singular quadrics in the pencil $\cP$ are parameterized by
a smooth closed subscheme $\Delta(S)$ of length $5$.

\item[(ii)] If $R\in\cP$ is a singular quadric defined over $\KK$, then~$R$
is a cone over a smooth quadric surface with a vertex at a $\KK$-point.

\item[(iii)] Let $R$ be a singular quadric in $\cP$.
Then the projection from its vertex defines a separable double cover
of $S$ to the base of the cone~$R$.
\end{itemize}
\end{theorem}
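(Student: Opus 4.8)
The plan is to establish part~(i) first, since parts~(ii) and~(iii) will both follow once the degenerate members of the pencil~$\cP$ are understood. Throughout I would realize $S$ as a complete intersection $Q_0\cap Q_1$ and parametrize $\cP$ by the quadratic forms $q_t=\lambda q_0+\mu q_1$ with $t=[\lambda:\mu]\in\PP^1$. The geometric input driving everything is the standard observation that $S$ is smooth at a point $p$ if and only if $p$ is not a singular point of any member of~$\cP$: indeed $S$ is singular at $p\in S$ exactly when the differentials $dq_0|_p$ and $dq_1|_p$ are linearly dependent, which happens precisely when $dq_t|_p=0$ for some $t$, i.e. when $p$ is a singular point (a vertex) of the member $Q_t$.

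Over a field of characteristic different from~$2$ each $q_j$ is given by a symmetric $5\times 5$ matrix $M_j$, and the singular members of $\cP$ are the zeroes of the binary quintic $\delta(\lambda,\mu)=\det(\lambda M_0+\mu M_1)$; thus $\Delta(S)=V(\delta)$ has length~$5$ once $\delta\not\equiv 0$, and is smooth precisely when $\delta$ has simple roots. To see that the roots are simple and that every singular member has corank one, I would argue as follows. If some member had corank $\ge 2$, its vertex locus would be a projective linear space of positive dimension, which must meet another member $Q_{t_1}$; any intersection point would be a vertex lying on $S$, contradicting smoothness by the criterion above. Hence every singular member has corank one, with a single vertex $v$. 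Normalizing a root as $t_0=[1:0]$ with $M_0$ of corank one and kernel $\langle v\rangle$, the coefficient of $\mu$ in $\delta$ equals $\operatorname{tr}(\operatorname{adj}(M_0)M_1)$, a nonzero multiple of $q_1(v)$; since $v\in Q_{t_0}$, smoothness forces $v\notin Q_{t_1}$, so $q_1(v)\neq 0$ and the root is simple. The \emph{hard part} is characteristic~$2$, where $M_j$ is forced to be alternating and $\det(\lambda M_0+\mu M_1)\equiv 0$, so the naive discriminant is useless. The fix is to replace $\det$ by a half-discriminant: the polar alternating form $B_t=\lambda B_0+\mu B_1$ has odd size, hence a nontrivial kernel, and where its rank is the generic value~$4$ the kernel is spanned by a vector $v_t$ whose coordinates are the $4\times 4$ Pfaffian minors of $B_t$ (of degree~$2$ in $t$); the member $Q_t$ is singular exactly when $q_t(v_t)=0$. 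Thus $\Delta(S)=V(\delta)$ with $\delta(t)=q_t(v_t)$ again a binary quintic, and the same smoothness criterion, together with the vanishing-locus argument above, shows that $B_{t_0}$ has rank~$4$ at each root (so a single vertex) and that $\delta$ has simple roots. I would also record here that smoothness forbids $\delta\equiv 0$, since otherwise a positive-dimensional vertex locus of a member would meet another member on $S$.

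For part~(ii), the length-five conclusion of~(i) says that a singular member $R=Q_{t_0}$ has corank one, hence is a cone over a smooth quadric surface $\bar R\subset\PP^3$ with a single vertex $v$ as its unique singular point. If $R$ is defined over~$\KK$, then its singular locus $\{v\}$ is $\Gal(\KK^{sep}/\KK)$-invariant, being intrinsic to $R$; as it is a single reduced point, $v$ is a $\KK$-point, and $\bar R$ is a smooth quadric surface over~$\KK$.

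Finally, for part~(iii) I would project from the vertex $v$ of $R$ via $\pi_v\colon\PP^4\dashrightarrow\PP^3$. Since $S$ is smooth and $v$ is the singular point of $R\supset S$, one has $v\notin S$, so $\pi_v$ restricts to a morphism $S\to\bar R$ onto the base of the cone. Its degree is~$2$: writing $S=R\cap Q'$ for another member $Q'$ of $\cP$, the fiber of $\pi_v$ over a point of $\bar R$ is the intersection of the corresponding ruling line of $R$ with $Q'$, a length-two scheme. In characteristic different from~$2$ a degree-two cover is automatically separable. In characteristic~$2$, separability amounts to the generic ruling line meeting $Q'$ transversally, which can fail only if $v$ is the kernel direction of the polar form of $Q'$; but then $v_t$ would be constant along $\cP$ and $\delta$ would drop below degree~$5$, contradicting part~(i). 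Hence the double cover $S\to\bar R$ is separable, and the theorem follows. The main obstacle throughout is exactly this characteristic-$2$ bookkeeping; once the half-discriminant description of $\Delta(S)$ is in place, the arguments for~(ii) and~(iii) go through uniformly.
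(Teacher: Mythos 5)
Your overall route is genuinely different from the paper's: where the paper simply cites Reid's thesis for characteristic $\neq 2$ and Dolgachev--Duncan for characteristic $2$, and proves separability in (iii) by comparing $\rk H^2_{\et}$ of $S$ and of the base $T$ (a purely inseparable cover would be an \'etale homeomorphism, contradicting $6\neq 2$), you give direct discriminant/half-discriminant computations and a hands-on tangency criterion for (iii). Your separability argument is correct and arguably more elementary than the paper's: inseparability of the degree-$2$ cover forces $b'(v,\cdot)\equiv 0$, hence $v\in\ker B_t$ for every member of the pencil, which is incompatible with (i) (though your phrase ``$\delta$ would drop below degree $5$'' is literally off --- if $v_t=c(t)v$ then $\delta=c(t)^2q_t(v)$ still has degree $5$ but is non-reduced; the real contradiction is with smoothness of $\Delta(S)$). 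However, there are two genuine gaps. First, your dismissal of $\delta\equiv 0$ is wrong as stated: if $\delta\equiv 0$, every member of $\cP$ is singular, but each may have corank exactly $1$ with a \emph{moving} vertex (Kronecker's singular pencils, e.g. the pencil with kernel vector $(\lambda,0,-\mu)$ in size $3$), so no single member need have a positive-dimensional vertex locus, and your argument does not apply. The case is still excluded, but needs an extra step: writing $q_s(v_t)=(s-t)\,q_1(v_t)$, either $q_1(v_t)\equiv 0$, making every vertex a singular point of $S$, or the form $q_1(v_t)$ has a root $\tau$ over $\bar\KK$, and then $v_\tau\in S$ is a singular point of $S$; either way smoothness is contradicted. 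This gap occurs in both characteristics.

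Second, and more seriously for the setting of this paper, your proof of (ii) rests on the inference ``the singular locus is a $\Gal(\KK^{sep}/\KK)$-invariant single reduced point, hence a $\KK$-point.'' This inference fails over imperfect fields of characteristic $2$: a closed point with purely inseparable residue field is reduced and Galois-stable without being $\KK$-rational, and the paper includes a remark exhibiting exactly this phenomenon for cone vertices of quadrics in odd-dimensional projective spaces (equation $x_1x_2+\ldots+x_{n-1}^2+tx_n^2=0$ over $\mathbb{F}(t)$). Since Theorem~\ref{theorem:dP4-basic}(ii) is stated over an arbitrary field, Galois descent is the wrong mechanism. The correct argument, as in the paper's use of \cite{DD}, is linear algebra: the kernel of the polar bilinear form $b_R$ is a $\KK$-rational subspace (its dimension is insensitive to field extension, and corank $1$ makes it a line over $\KK$), and the vanishing of the half-discriminant, i.e.\ of $q_R$ on a generator of this kernel, identifies the corresponding $\KK$-point as the vertex. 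Alternatively, one can note that the Jacobian scheme of $R$ is \emph{geometrically} a single reduced point, which forces it to be $\operatorname{Spec}\KK$; but invariance under $\Gal(\KK^{sep}/\KK)$ alone does not suffice. Your characteristic-$2$ simple-root claim in (i) is only asserted, but it does go through: with $t_0=[1:0]$ and $v$ spanning $\ker B_0$, one computes $d\delta/dt|_{0}=q_1(v)$ since $b_0(v,\cdot)=0$, and $q_1(v)\neq 0$ by smoothness, so this is a presentational rather than mathematical gap.
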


\begin{proof}
If the characteristic of $\KK$ is different from $2$, assertions (i) and~(ii) are given by~\mbox{\cite[Proposition~2.1]{Reid}}.
If the characteristic equals~$2$, then assertion~(i)
is given by~\mbox{\cite[Theorem~3.2]{DD}}. Furthermore, in this case
the quadric $R$ has corank~$1$ by~\mbox{\cite[Corollary~3.3]{DD}}, i.e. the kernel of the
associated bilinear form is one-dimensional. The corresponding $\KK$-point of $\PP^4$ is the (unique)
singular point of the quadric $R$: indeed, since $R$ is singular, the half-discriminant of the corresponding quadratic
form $q_R$ vanishes, and the latter is computed as the value of $q_R$ on a vector generating the kernel
of the bilinear form
$$
b_R(u,v)=q_R(u+v)-q_R(u)-q_R(v),
$$
see~\mbox{\cite[\S2.2]{DD}} for details. This proves assertion~(ii) in the case when
the characteristic of~$\KK$ equals~$2$.

Let $R$ be a singular quadric in $\cP$; we know from assertion~(ii) that it is a cone over
a smooth quadric surface~$T$. Let $\omega\colon S\to T$ be the double cover
defined by the projection from the vertex of~$R$.
Let us prove that $\omega$ is geometrically separable. We can assume that $\KK$ is algebraically closed.
If $\omega$ is not separable, then it is purely inseparable (and the characteristic of $\KK$ equals~$2$).
Thus, it is a homeomorphism in \'etale topology. This is impossible, because
$$
\rk H^2_{\et}(S,\ZZ_\ell)=\rkPic(S)=6\neq 2=\rkPic(T)=\rk H^2_{\et}(T,\ZZ_\ell),
$$
where $\ell\neq 2$ is any prime (cf. \cite[Proposition~5.1]{DM}).
Thus, $\omega$ is geometrically separable, and in particular separable. This proves assertion~(iii).
\end{proof}

\begin{corollary}
Let $S$ be a del Pezzo surface of degree $4$ over a field $\KK$.
Then all the points of $\Delta(S)_{\bar{\KK}}$, as well as the vertices
of the singular quadrics in~$\cP$, are
defined over~$\KK^{sep}$.
\end{corollary}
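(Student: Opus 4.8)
The plan is to read off both assertions directly from Theorem~\ref{theorem:dP4-basic}, treating the points of $\Delta(S)$ first and then the vertices.

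First I would note that by Theorem~\ref{theorem:dP4-basic}(i) the subscheme $\Delta(S)\subset\PP^1$ is smooth over $\KK$. Since it is zero-dimensional, smoothness is equivalent to being \'etale over $\KK$, so $\Delta(S)$ is a finite disjoint union of spectra of finite separable field extensions of $\KK$. Consequently every geometric point of $\Delta(S)$ is already defined over $\KK^{sep}$, which gives the first half of the statement.

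Next, for the vertices I would localize at a point of $\Delta(S)$ and invoke Theorem~\ref{theorem:dP4-basic}(ii) after base change. Let $t\in\Delta(S)$ be a closed point and put $L=\kappa(t)$; by the previous paragraph $L$ is a finite separable extension of $\KK$, hence $L\subset\KK^{sep}$. After the base change $S_L$ is again a del Pezzo surface of degree $4$, its pencil of quadrics is $\cP\otimes_\KK L$, and $t$ determines an $L$-point of $\Delta(S_L)=\Delta(S)_L$, i.e.\ a singular quadric $R_t$ defined over $L$. Applying Theorem~\ref{theorem:dP4-basic}(ii) over the field $L$, we find that $R_t$ is a cone whose vertex is an $L$-point; equivalently, the singular locus of $R_t$ is a single reduced $L$-rational point, and in particular it is defined over $\KK^{sep}$. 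Since every singular quadric of $\cP_{\bar{\KK}}$ arises from a geometric point of $\Delta(S)$ lying over such a $t$, and since the singular locus of a quadric commutes with base change, all the vertices are defined over $\KK^{sep}$.

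The statement is essentially immediate once Theorem~\ref{theorem:dP4-basic} is available, so there is no serious obstacle. The only two points that deserve a moment's care are the identification of ``smooth of dimension zero'' with ``\'etale'', and the fact that the vertex descends to the separable field of definition of the quadric rather than requiring a purely inseparable extension. The latter is the one mildly delicate issue, relevant only in characteristic~$2$; it is handled exactly by the description in part~(ii), where the vertex is produced as the (linearly defined) kernel of the associated bilinear form, a computation that is visibly rational over $L$.
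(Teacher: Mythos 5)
Your proof is correct and follows essentially the same route as the paper: both arguments read the statement off Theorem~\ref{theorem:dP4-basic}, deducing the claim about $\Delta(S)_{\bar{\KK}}$ from smoothness of the zero-dimensional scheme $\Delta(S)$ and then obtaining the rationality of the vertices by applying Theorem~\ref{theorem:dP4-basic}(ii) after a separable base change. The only cosmetic difference is that you justify separability of the residue fields via ``smooth of dimension zero equals \'etale,'' where the paper instead cites the fact that an irreducible variety stays irreducible under purely inseparable extensions; these amount to the same thing, and your handling of the characteristic-$2$ subtlety for the vertex matches the paper's use of part~(ii).
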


\begin{proof}
The assertion about $\Delta(S)$ follows from the fact that an irreducible variety remains irreducible after a purely non-separable extension,
see e.g.~\mbox{\cite[Proposition 2.7(c)]{Liu}}.
In particular, we see that all the singular quadrics in $\cP$ are defined over~$\KK^{sep}$.
Thus, the assertion about the vertices of the singular quadrics is given by Theorem~\ref{theorem:dP4-basic}(ii).
\end{proof}

\begin{remark}
If $R$ is a quadric in an odd-dimensional projective space over a field $\KK$
such that~$R_{\bar{\KK}}$ is a cone over a smooth quadric with vertex at a point, then
the vertex may be defined
over a purely inseparable extension of $\KK$, but not over $\KK$ itself (and not over~$\KK^{sep}$).
For instance, set $\KK=\mathbb{F}(t)$, where
$\mathbb{F}$ is any field of characteristic $2$, and $t$ is a transcendental variable.
Let $n\ge 4$ be an even integer, and let $R$ be a quadric over $\KK$
defined by equation
$$
x_1x_2+\ldots+x_{n-3}x_{n-2}+x_{n-1}^2+tx_n^2=0
$$
in the projective space $\PP^{n-1}$ with homogeneous coordinates $x_1,\ldots,x_n$.
Then~$R_{\bar{\KK}}$ is a cone over a smooth quadric with vertex at the point
$$
p=(0:\ldots:0:\sqrt{t}:1)
$$
which is defined over a purely inseparable extension of $\KK$.
For an alternative geometric construction of an example in~$\PP^3$, we refer the reader to~\mbox{\cite[Example 7.14]{BFSZ}}.
\end{remark}

Let $R\subset \PP^4$ be a singular quadric containing a del Pezzo surface $S$ of degree $4$.
According to Theorem~\ref{theorem:dP4-basic}(iii),
the double cover $\omega\colon S\to T$ defined by the projection from the vertex of~$R$ gives
an involution $\iota_R$ on~$S$.
We refer the reader to~\mbox{\cite[\S7]{DD}} for a slightly different description
of these involutions.

\begin{remark}\label{remark:involution-geometric-description}
The involution $\iota_R$ has the following geometric description.
For a point $p$ on~$S$ consider the line $A$ passing through $p$ and the vertex of $R$. If $A$ transversally intersects~$S$ at $p$, then $\iota_R(p)$ is the other point of intersection of $A$ and $S$. Otherwise $A$ lies in the tangent plane of $S$ at the point $p$ and $\iota_R(p)=p$.
In particular, if $\Pi$ is any linear subspace of~$\PP^4$ passing through the vertex of
$R$, then the intersection $S\cap\Pi$ is invariant under the involution~$\iota_R$.
\end{remark}

Let us make some observations concerning the action of the involutions
$\iota_R$ on the lines on a del Pezzo surface of degree~$4$.

\begin{lemma}\label{lemma:involution-description}
Let $S$ be a del Pezzo surface of degree $4$ over a separably closed field, and let $Q\subset S$ be a line.
The following assertions hold.
\begin{itemize}
\item[(i)]
If $R$ is a singular quadric containing $S$, then $\iota_R(Q)$
is a line which meets $Q$ at a point.

\item[(ii)]
If $R_1$ and $R_2$ are two different singular quadrics containing $S$, then
$$
\iota_{R_1}(Q)\neq\iota_{R_2}(Q).
$$

\item[(iii)]
For each line $L\subset S$ intersecting $Q$ there exists a unique singular quadric $R$ containing $S$
such that $L = \iota_{R}(Q)$.
\end{itemize}
\end{lemma}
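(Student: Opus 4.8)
The plan is to reduce all three assertions to a plane-section analysis of $S$ inside the quadric cone $R$. Throughout write $v$ for the vertex of a singular quadric $R\in\cP$, and recall from Theorem~\ref{theorem:dP4-basic}(iii) that the projection $\pi_v$ from $v$ realizes the separable double cover $\omega\colon S\to T$, where $T$ is the smooth quadric base of the cone, isomorphic over the separably closed base field to $\PP^1\times\PP^1$; since $\omega$ is a morphism we have $v\notin S$, and in particular $v\notin Q$. For assertion (i), I would first note that $\omega|_Q$ is nonconstant, so $\bar Q:=\pi_v(Q)$ is a curve on $T$, and the projection formula gives $1=-K_S\cdot Q=(\omega^*H_T)\cdot Q=\deg(\omega|_Q)\cdot(H_T\cdot\bar Q)$, using that $-K_S=\omega^*H_T$ for the anticanonical embedding. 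Hence $\deg(\omega|_Q)=1$ and $H_T\cdot\bar Q=1$, so $\bar Q$ is a line on $T$ and $\omega|_Q\colon Q\to\bar Q$ is an isomorphism. The cone over $\bar Q$ with vertex $v$ is the plane $\Pi=\langle v,Q\rangle=\pi_v^{-1}(\bar Q)$, and since $\bar Q\subset T$ this plane lies in $R$. Writing $S=R\cap R''$ for a second member $R''\in\cP$, we get $S\cap\Pi=R''\cap\Pi$, a conic in $\Pi\cong\PP^2$ containing the line $Q$, so $S\cap\Pi=Q+Q'$ for a residual line $Q'$. As $\omega^*\bar Q=\Pi\cap S=Q+Q'$, the residual line $Q'$ is precisely the second sheet $\iota_R(Q)$; being two lines in the plane $\Pi$, the curves $Q$ and $\iota_R(Q)$ meet in a point, which proves (i) once we know $Q'\neq Q$.

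The one delicate point is exactly that $\iota_R(Q)\neq Q$, and I would settle it by a self-intersection computation. Since $\bar Q$ is a line on $T\cong\PP^1\times\PP^1$ we have $\bar Q^2=0$, hence $(\omega^*\bar Q)^2=\deg(\omega)\cdot\bar Q^2=0$. If instead $\iota_R(Q)=Q$, then $\omega^*\bar Q=2Q$ and $(\omega^*\bar Q)^2=4Q^2=-4\neq0$, a contradiction. This is exactly what rules out the a priori possibility that $Q$ is contained in the ramification curve of $\omega$, and it uses separability of $\omega$ only through the fact that $\omega^*\bar Q$ is the honest preimage divisor $Q+\iota_R(Q)$.

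Assertion (ii) then follows formally from (i). Suppose $\iota_{R_1}(Q)=\iota_{R_2}(Q)=:L$ for two distinct members $R_1,R_2\in\cP$ with vertices $v_1,v_2$. By the construction in (i) we have $L=\iota_{R_i}(Q)\subset\Pi_i:=\langle v_i,Q\rangle$ and $Q\subset\Pi_i$; since $L$ and $Q$ are distinct lines meeting at a point, they span a unique plane, so $\Pi_1=\langle Q,L\rangle=\Pi_2=:\Pi$. Again by (i) one has $\Pi\subset R_1$ and $\Pi\subset R_2$, whence $\Pi\subset R_1\cap R_2=S$, as the base locus of the pencil $\cP$ is $S$. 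This is impossible, since the irreducible quartic surface $S$ contains no plane.

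For assertion (iii) I would combine (i) and (ii) with a count of lines. By Theorem~\ref{theorem:dP4-basic}(i) the scheme $\Delta(S)$ is smooth of length $5$, so over the separably closed base field the pencil $\cP$ has exactly five singular members $R_1,\dots,R_5$. By (i) each $\iota_{R_j}(Q)$ is a line meeting $Q$, and by (ii) these five lines are pairwise distinct. Since exactly five lines on $S$ meet $Q$, the assignment $R_j\mapsto\iota_{R_j}(Q)$ is a bijection onto the set of lines meeting $Q$; this yields existence in (iii), while uniqueness is once more (ii). The main obstacle in the whole argument is precisely the verification that $\iota_R(Q)\neq Q$, i.e. that no line on $S$ is absorbed into the ramification of $\omega$; the self-intersection computation above is what makes this step work uniformly in all characteristics.
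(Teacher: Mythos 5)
Your proof is correct, and for parts (ii) and (iii) it follows essentially the same route as the paper: the same observation that the plane spanned by $Q$ and $\iota_R(Q)$ contains the vertex and hence lies in $R$, so that $\iota_{R_1}(Q)=\iota_{R_2}(Q)$ would force a plane inside $S=R_1\cap R_2$; and the same count matching the five singular quadrics against the five lines meeting $Q$. The genuine difference is in part (i), specifically in the key point $\iota_R(Q)\neq Q$. The paper gets the incidence by noting that the intersection of $S$ with the plane through $Q$ and the vertex is $\iota_R$-invariant (via the geometric description of $\iota_R$ in Remark~\ref{remark:involution-geometric-description}), and rules out $\iota_R(Q)=Q$ by a soft propagation argument: an $\iota_R$-invariant line is fixed pointwise, which forces every line meeting it (and inductively all $16$ lines) to be fixed, contradicting faithfulness of the action of $\Aut(S_{\KK^{sep}})$ on the configuration of lines, i.e.\ Lemma~\ref{lemma:Weyl-group-on-dP4-basic}(iii). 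You instead work on the double cover: from $\omega^*H_T=-K_S$ you get $\deg(\omega|_Q)=1$ and that $\bar Q$ is a ruling line, you identify $\omega^*\bar Q=S\cap\Pi=Q+Q'$ with $Q'=\iota_R(Q)$, and you exclude $\omega^*\bar Q=2Q$ because $(\omega^*\bar Q)^2=2\bar Q^2=0$ while $(2Q)^2=-4$. Both arguments are uniform in the characteristic, since both ultimately rest on Theorem~\ref{theorem:dP4-basic}(iii); yours is a self-contained intersection-theoretic computation which additionally yields the divisorial identity $\omega^*\bar Q=Q+\iota_R(Q)$ and the transversality $Q\cdot\iota_R(Q)=1$, and it produces the containment of the plane $\Pi$ in $R$ (needed for part (ii)) directly as the cone over $\bar Q$, whereas the paper's argument avoids divisor pullbacks along $\omega$ altogether and reuses a lemma already established. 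Two small points you may wish to spell out: the residual-conic step tacitly assumes $\Pi\not\subset R''$ (immediate, since otherwise $\Pi\subset S$), and you only need that a line on a smooth quadric surface has self-intersection $0$ — the identification $T\cong\PP^1\times\PP^1$ over the separably closed base field, while true, is more than is required.
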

\begin{proof}
Let $R$ be a singular quadric containing $S$.
The image $\iota_R(Q)$ is a line.
Suppose that~\mbox{$Q = \iota_R(Q)$}. From Remark~\ref{remark:involution-geometric-description} it is easy to see that if a line on $S$ is $\iota_R$-invariant,
then~$\iota_R$ fixes every point on this line.
In particular, $\iota_R$ fixes every point on $Q$.
Therefore for any line $L\subset S$ intersecting $Q$ one has $\iota_R(L) = L$, because for every point of $S$ there are at most two lines
passing through this point. Thus $\iota_R$ also fixes every point on $L$. In the same way we can show that all $16$ lines on $S$ are fixed by $\iota_R$. Therefore $\iota_R$ acts trivially on $S$ by Lemma~\ref{lemma:Weyl-group-on-dP4-basic}(iii), and we obtain a contradiction.
This shows that~\mbox{$\iota_R(Q)\neq Q$}.

Now consider the plane $\Xi$ passing through $Q$ and the vertex of $R$.
The intersection $\Xi \cap S$ is $\iota_R$-invariant, and contains the line $Q$. Therefore it also contains the line $\iota_R(Q)\neq Q$.
Hence these two lines meet each other at a point.
This proves assertion~(i).

As we have just seen, the plane $\Xi$ spanned by the lines $Q$ and $\iota_R(Q)$ contains the vertex of~$R$. Therefore $\Xi$ is contained in $R$.
Thus if for two different singular quadrics $R_1$ and~$R_2$ containing $S$ one has $\iota_{R_1}(Q) = \iota_{R_2}(Q)$,
then $S = R_1 \cap R_2$ contains a plane, which is impossible.
This proves assertion~(ii).

Let $R_1,\ldots, R_5$ be the five singular quadrics containing $S$.
According to assertion~(i), the lines $L_i=\iota_{R_i}(Q)$ are different from $Q$ and intersect $Q$;
according to assertion~(ii), they are pairwise different. Now assertion~(iii) follows from the fact that
there are exactly five lines on $S$ which intersect~$Q$.
\end{proof}

\medskip
Lemma~\ref{lemma:involution-description}
provides a natural bijection between singular quadrics
containing a del Pezzo surface~$S$ of degree~$4$, and lines
on $S$ intersecting a given line $Q\subset S$.
The next observation extends this to an isomorphism of two copies of~$\PP^1$
which identifies the corresponding subschemes of length~$5$.
This allows one to characterize $\Delta(S)$ in terms of the configuration of lines on $S$
provided that one of the lines is defined over the base field.

\begin{lemma}\label{lemma:5-lines}
Let $S$ be a del Pezzo surface of degree $4$ over a field $\KK$.
Suppose that~$S$ contains a line~$Q$. Let $\Lambda_Q(S)$ be the
Hilbert scheme of lines on $S$ different from $Q$ and intersecting $Q$. Consider
$\Lambda_Q(S)$ as a closed subscheme of $Q\cong\PP^1$ by assigning to a line in~\mbox{$\Lambda_Q(S)$} its
intersection point with $Q$. Then
there exists an isomorphism
$$
Q\stackrel{\sim}\longrightarrow \mathcal{P}
$$
which maps $\Lambda_Q(S)$ isomorphically to $\Delta(S)$.
\end{lemma}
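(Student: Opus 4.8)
The plan is to construct an explicit isomorphism $\mu\colon Q\to\cP$ and to check that it carries $\Lambda_Q(S)$ onto $\Delta(S)$; the bijection on geometric points is already furnished by Lemma~\ref{lemma:involution-description}, so the real content is to promote it to an isomorphism of the two copies of $\PP^1$ together with their length-$5$ subschemes. First I would fix coordinates $x_0,\dots,x_4$ on $\PP^4$ with $Q=\{x_2=x_3=x_4=0\}$. Since every quadric through $S$ vanishes on $Q$, the pencil $\cP$ is spanned by two forms $q^{(0)},q^{(1)}$ each of the shape $x_0\ell_0+x_1\ell_1+c$ with $\ell_0,\ell_1$ linear and $c$ quadratic in $x_2,x_3,x_4$; here I would use the bilinear form $b_R$ of Theorem~\ref{theorem:dP4-basic} so that everything makes sense in characteristic~$2$. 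For $p\in Q$ the tangent plane $T_pS$ is a $2$-plane containing $Q$, hence a point of the $\PP^2$ of $2$-planes through $Q$; and for $R\in\cP$ the polar $2$-plane $\mathrm{Pol}_R(Q)$ (points conjugate to all of $Q$) also contains $Q$, since $Q\subset R$ forces every point of $Q$ to be conjugate to $Q$. This gives two morphisms
$$
\beta\colon Q\to\PP^2,\quad p\mapsto T_pS, \qquad \alpha\colon\cP\to\PP^2,\quad R\mapsto\mathrm{Pol}_R(Q),
$$
each defined over $\KK$ by forms of degree $2$, and I would define $\mu$ by the rule $\mathrm{Pol}_{\mu(p)}(Q)=T_pS$, i.e. as the fibre product $Q\times_{\PP^2}\cP$.

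Next I would prove that $\mu$ is an isomorphism. Conceptually, the images $\alpha(\cP)$ and $\beta(Q)$ are two conics that share the five points arising from the singular members: if $R_i$ is singular with vertex $v_i$ and $L_i=\iota_{R_i}(Q)$ is the line of Lemma~\ref{lemma:involution-description} meeting $Q$ at $p_i$, then $\mathrm{Pol}_{R_i}(Q)=\langle Q,v_i\rangle=\langle Q,L_i\rangle=T_{p_i}S$, so $\alpha(R_i)=\beta(p_i)$; as a conic is determined by five points, $\alpha(\cP)=\beta(Q)=:C$. To make the isomorphism rigorous I would carry out the short coordinate computation: the defining expression of $\mu$ is a pair of quadratic forms in the parameter of $Q$ sharing a common linear factor, so after cancellation $\mu$ is given by linear forms, hence has degree $\le 1$; and since $\mu$ takes the five distinct values $R_i$ at the five distinct points $p_i$, it is non-constant, so it is a degree-$1$ morphism $\PP^1\to\PP^1$, that is, an isomorphism. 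Because $T_pS$ and $\mathrm{Pol}_R(Q)$ are defined over $\KK$, so is $\mu$, even though the individual $p_i$ and $v_i$ live only over $\KK^{sep}$.

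Finally I would match the subschemes. By construction $\mu(p_i)=R_i$, so over $\KK^{sep}$ the isomorphism $\mu$ restricts to the Lemma~\ref{lemma:involution-description} bijection between $\Lambda_Q(S)$ and $\Delta(S)$. Both are length-$5$ subschemes of their ambient $\PP^1$: the scheme $\Delta(S)$ is smooth, hence reduced, by Theorem~\ref{theorem:dP4-basic}, while $\Lambda_Q(S)$ is reduced because the five lines meeting $Q$ do so in five distinct points, no three of the sixteen lines being concurrent. An isomorphism of the ambient $\PP^1$'s that is defined over $\KK$ and matches the geometric points Galois-equivariantly must then identify the two reduced length-$5$ subschemes, yielding $\mu\bigl(\Lambda_Q(S)\bigr)=\Delta(S)$.

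I expect the main obstacle to be the behaviour at the five special points, where the conic cut out on $T_{p_i}S$ by $R_i$ collapses (one has $T_{p_i}S\subset R_i$, so the residual line degenerates): one must check that $\alpha$ and $\beta$ remain honest degree-$2$ parameterizations of a smooth conic there and that $\mu$ extends across them. This is exactly where the reducedness of $\Delta(S)$ (equivalently, the general position of the five vertices $v_i$) and of $\Lambda_Q(S)$ enter, and the cleanest way to dispose of it is the coordinate computation above, which shows that the apparent degree-$2$ formula for $\mu$ in fact reduces to degree~$1$ and therefore defines an isomorphism without any case distinction.
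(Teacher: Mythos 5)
Your route is genuinely different from the paper's, though it is essentially dual to it. The paper forms a \emph{single} incidence map $\tau\colon Q\times\cP\to\mathcal{H}$, $(p,R)\mapsto T_pR$, into the $\PP^2$ of \emph{hyperplanes} through $Q$; since $\tau$ has bidegree $(1,1)$, it is a finite double cover branched over a conic, and the ramification divisor $\mathcal{T}\subset Q\times\cP$, being a smooth divisor of bidegree $(1,1)$, is automatically the graph of the desired isomorphism. The matching of $\Lambda_Q(S)$ with $\Delta(S)$ is then done by showing $(p_i,R_i)\in\mathcal{T}$, via the geometric fact that the hyperplane section $T_{p_i}R_i\cap S$ is singular only at $p_i$; the final identification of the length-$5$ subschemes uses, as you do, that both are smooth finite schemes of the same length. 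Your construction instead lives in the dual $\PP^2$ of $2$-planes through $Q$ and uses two separate degree-$2$ parameterizations $\beta(p)=T_pS$ and $\alpha(R)=\operatorname{Pol}_R(Q)$ of one conic, glued through the five special points $\alpha(R_i)=\beta(p_i)=\langle Q,L_i\rangle$. What your approach buys is an elementary and computation-friendly argument (the cross products of the linear forms defining the polars make $\alpha$ and $\beta$ explicit and visibly characteristic-free, consistent with the use of $b_R$ in Theorem~\ref{theorem:dP4-basic}); what it costs is a series of degeneracy checks that the paper's ramification-divisor argument gets for free.

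Two of those checks are asserted rather than proved, and one is false as literally stated: five points do \emph{not} determine a unique conic without a general-position hypothesis, and with three collinear points the unique conic through them is a line pair, which cannot be the image of either of your irreducible parameterizations. The fix is available but must be supplied. First, the five points $\langle Q,L_i\rangle$ are pairwise distinct (a plane $\Pi$ with $\Pi\cap S\supset Q\cup L_i\cup L_j$ would force both quadrics of the pencil to contain $\Pi$), and no three of them are collinear: if $\langle Q,L_i\rangle$, $\langle Q,L_j\rangle$, $\langle Q,L_k\rangle$ lay in a common hyperplane $H$, then $H\cap S$ would be the anticanonical divisor $Q+L_i+L_j+L_k$, whose class in $\Pic(S_{\KK^{sep}})$ is not $-K_S$ --- a contradiction; alternatively, once you know the images are irreducible curves of degree at most $2$ through five distinct points and are not lines, Bézout (two distinct conics meet in at most four points) gives $\alpha(\cP)=\beta(Q)$ directly. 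Second, you use without proof that $\operatorname{Pol}_R(Q)$ is exactly a $2$-plane for \emph{every} $R\in\cP$, equivalently that $\alpha$ has no base points (the two linear forms cutting the polar are never proportional); if they were proportional for some member $R$, then $R$ would be singular at a point of $Q\subset S$, which contradicts smoothness of $S$, since the tangent space of $S$ at such a point would have dimension at least $3$. With these two verifications inserted, your five-point gluing, the degree count showing $\mu$ has degree $1$, and the concluding comparison of the two geometrically reduced length-$5$ subschemes (using Theorem~\ref{theorem:dP4-basic}(i) for $\Delta(S)$ and Lemma~\ref{lemma:involution-description} for the pointwise bijection) all go through, in every characteristic.
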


\begin{proof}
Let $\mathcal{H}\cong\PP^2$ be the linear system of hyperplanes in $\PP^4$ passing through the line~$Q$.
We observe that for every $p\in Q$ and every $R\in\mathcal{P}$, the tangent hyperplane~\mbox{$T_p R$} is
contained in $\mathcal{H}$. Conversely, every hyperplane in $\mathcal{H}$ has such a form for some $p$ in $R$
by dimension count.

Consider the surface $Q\times\mathcal{P}\cong\PP^1\times\PP^1$, and the morphism
$$
\tau\colon Q\times\mathcal{P}\to \mathcal{H}, \quad (p, R)\mapsto T_p R.
$$
More explicitly, if $q_R$ is the quadratic form which defines a quadric $R$, and
$$
b_R(u,v)=q_R(u+v)-q_R(u)-q_R(v)
$$
is the corresponding bilinear form, then the hyperplane $T_p R$ is given by the linear equation
$$
b_R(p, \cdot)=0,
$$
whose coefficients are linear in the coefficients of $q_R$ and in the coordinates of $p$.
In other words, $\tau$ is defined by polynomials of bidegree $(1,1)$.
This implies that $\tau$ is a finite morphism of degree $2$ branched over a smooth conic in $\mathcal{H}\cong\PP^2$,
and ramified in a smooth divisor $\mathcal{T}\subset Q\times\mathcal{P}$ of bidegree $(1,1)$.

The divisor $\mathcal{T}$ provides a natural isomorphism between the rulings $Q$ and $\mathcal{P}$ of $Q\times\mathcal{P}$.
It remains to check that $\Lambda_Q(S)$ is mapped to $\Delta(S)$ under this isomorphism.
To do this, we may assume that
$\KK$ is separably closed.
Choose a point~\mbox{$p\in\Lambda_Q(S)$}. In other words, $p\in Q$ is a point such that there exists a line $L\subset S$ different
from $Q$ and passing through~$p$.
By Lemma~\ref{lemma:involution-description}(iii), there exists a unique singular quadric $R$ containing $S$ such that
$$
\iota_{R}(Q) = L.
$$
The plane $\Xi$ spanned by $Q$ and $L$ is contained in $R$. Note that the intersection $R\cap T_p R$ consists of two planes $\Xi$ and $\Xi'$,
and $\Xi \cap \Xi'$ is the line passing through the point~$p$ and the vertex of the cone $R$.
The latter line intersects $S$ only at the point $p$, because~\mbox{$\iota_R(p)=p$}. 
Therefore, $C = \Xi' \cap S$ is a conic such that $Q \cap C=p$.
This shows that the hyperplane section of $S$ by $H=T_p R$ is singular only at the point $p$,
which in turn means that $H$ is not tangent to any quadric from $\mathcal{P}$ at any point
of $Q$ different from~$p$. Hence
$$
(p, R)\in \mathcal{T},
$$
so that $p$ is mapped to $R$ by the above isomorphism.
Since $\Lambda_Q(S)$ and $\Delta(S)$ are smooth finite schemes of the same length, we conclude that $\Lambda_Q(S)$
isomorphically mapped to~$\Delta(S)$.
\end{proof}

\begin{remark}
In the notation of Lemma~\ref{lemma:5-lines}, assume that the field $\KK$ is separably closed, and
let~\mbox{$\pi\colon S\to\PP^2$} be the contraction of the five lines intersecting $Q$.
Then $\pi(Q)$ is a conic, and for any $H\in\mathcal{H}$ one has $\pi(H\cap S)=Q\cup\ell$
for some line $\ell$.
This gives an identification of $\mathcal{H}$ with the dual projective plane $(\PP^2)^\vee$.
Under such an identification, the branch locus of the double cover $\tau$
is the conic $Q^\vee$ projectively dual to $Q$, i.e. the locus of lines on $\PP^2$ tangent to $Q$,
or, in other words, the locus of lines
such that the corresponding hyperplane section of $S$ is singular only at one point on $Q$.
\end{remark}

\begin{remark}\label{remark:Skorobogatov-upgrade}
Lemma~\ref{lemma:5-lines} was established in course of the proof of \cite[Lemma~2.2]{Skorobogatov-Kummer}
under the additional assumption that the field $\KK$ contains at least five elements.
This assumption was necessary because the construction of the isomorphism depended on a choice
of a point outside $\Lambda_Q(S)$ (although a posteriori one could see that the construction
did not actually depend on the point). Our approach allows to avoid the choice of a point,
and thus to drop the assumption on the number of elements
in the field.
\end{remark}

\section{Biregular classification}
\label{section:biregular}

Following~\cite{Skorobogatov-Kummer}, we introduce some additional notions and notation associated with del Pezzo surfaces of degree~$4$.

Consider the five-dimensional \'etale $\KK$-algebra $\LL=\KK[\Delta(S)]$, and set
$$
\tilde{G}(S)=R_{\LL/\KK}\,(\ZZ/2\ZZ),
$$
where $\ZZ/2\ZZ$ is considered as a (constant) group scheme over $\LL$.
Then $\tilde{G}(S)$ is a group scheme over $\KK$ such that $\tilde{G}(S)_{\LL}\cong(\ZZ/2\ZZ)^5$.
Furthermore, one has an isomorphism
$$
(\ZZ/2\ZZ)\otimes_{\Spec\KK}\Spec\LL\stackrel{\sim}\longrightarrow \ZZ/2\ZZ
$$
of group schemes over $\LL$, where $\ZZ/2\ZZ$ on the left hand side is considered as a group scheme over $\KK$,
and $\ZZ/2\ZZ$ on the right hand side is considered as a group scheme over $\LL$.
Since Weil restriction of scalars is the right adjoint functor to base change,
we obtain an injective homomorphism of groups schemes $\ZZ/2\ZZ \hookrightarrow \tilde{G}(S)$ over $\KK$.
Denote
$$
G(S)=\tilde{G}(S)/(\ZZ/2\ZZ).
$$
Thus, $G(S)$ is a group scheme over $\KK$ such that $G(S)_{\LL}\cong(\ZZ/2\ZZ)^4$.

Let $\NNN\cong (\ZZ/2\ZZ)^4$ be the normal subgroup of $\WD$ endowed with an action  of the Galois group $\Gal(\KK^{sep}/\KK)$ by conjugation.
Denote by $\Lambda(S)$ the Hilbert scheme of lines on~$S$.

\begin{lemma}\label{lemma:GS-vs-NNN}
Let $\KK$ be a field.
Let $S$ be a del Pezzo surface of degree $4$ over $\KK$.
Let~$G(S)$ and $\Lambda(S)$ be constructed as above.
The following assertions hold.
\begin{itemize}
\item[(i)] There is a
$\Gal(\KK^{sep}/\KK)$-equivariant
isomorphism
$$
G(S)_{\KK^{sep}}\stackrel{\sim}\longrightarrow \NNN.
$$

\item[(ii)] There is a natural action of the group scheme $G(S)$ on $S$.

\item[(iii)]
The Hilbert scheme of lines $\Lambda(S)$ is a torsor over $G(S)$.
\end{itemize}
\end{lemma}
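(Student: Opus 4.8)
\emph{Plan.} The strategy is to reduce all three assertions to a single geometric computation carried out over $\KK^{sep}$: the identification of the five involutions $\iota_{R_1},\dots,\iota_{R_5}$ attached to the singular quadrics $R_1,\dots,R_5$ of $\cP$ (the geometric points of $\Delta(S)$) with the five generators of $\NNN$. First I unwind the group scheme $\tilde{G}(S)=R_{\LL/\KK}(\ZZ/2\ZZ)$ over $\KK^{sep}$. Since $\LL\otimes_\KK\KK^{sep}\cong(\KK^{sep})^{5}$, with the factors indexed by $\Delta(S)(\KK^{sep})=\{R_1,\dots,R_5\}$ and permuted by $\Gal(\KK^{sep}/\KK)$ accordingly, one gets $\tilde{G}(S)_{\KK^{sep}}\cong(\ZZ/2\ZZ)^{5}$, and the adjunction homomorphism $\ZZ/2\ZZ\hookrightarrow\tilde{G}(S)$ becomes the diagonal embedding. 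Hence $G(S)_{\KK^{sep}}\cong(\ZZ/2\ZZ)^{5}/\mathrm{diag}$, with the Galois action permuting the five coordinates exactly as on $\Delta(S)(\KK^{sep})$.

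For assertion~(i) I then define a homomorphism $\tilde{G}(S)_{\KK^{sep}}\to\Aut(S_{\KK^{sep}})$ sending the $i$-th standard generator to $\iota_{R_i}$, and view its image inside $\WD$ via the faithful action on lines (Lemma~\ref{lemma:Weyl-group-on-dP4-basic}(iii)). The crux, discussed below, is that each $\iota_{R_i}$ is the generator $\iota_{klmn}\in\NNN$ with $\{i,k,l,m,n\}=\{1,\dots,5\}$. Granting this, the five involutions generate $\NNN$ and satisfy precisely the relation $\iota_{R_1}\cdots\iota_{R_5}=1$ of Notation~\ref{notation:Weyl-group}, so the homomorphism is surjective onto $\NNN$ with kernel the diagonal $\ZZ/2\ZZ$; it therefore descends to an isomorphism $G(S)_{\KK^{sep}}\xrightarrow{\sim}\NNN$. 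Galois-equivariance is then automatic, since $\gamma\iota_{R_i}\gamma^{-1}=\iota_{\gamma(R_i)}$ for $\gamma\in\Gal(\KK^{sep}/\KK)$, matching the coordinate permutation on $G(S)_{\KK^{sep}}$ induced by the Galois action on $\Delta(S)$.

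Assertions~(ii) and~(iii) follow formally. Using~(i), the group $G(S)_{\KK^{sep}}\cong\NNN$ acts on $S_{\KK^{sep}}$ through the commuting involutions $\iota_{R_i}$; as this action is Galois-equivariant (again by $\gamma\iota_{R_i}\gamma^{-1}=\iota_{\gamma(R_i)}$), Galois descent yields an action of the group scheme $G(S)$ on $S$ over $\KK$, which is~(ii). This action permutes the lines, hence induces an action of $G(S)$ on the Hilbert scheme $\Lambda(S)$; over $\KK^{sep}$ it coincides, through the identification above, with the $\NNN$-action on the sixteen lines, which is free and transitive by Lemma~\ref{lemma:Weyl-group-on-dP4-basic}(ii). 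Thus $\Lambda(S)_{\KK^{sep}}$ is a trivial torsor under $G(S)_{\KK^{sep}}$, and since $G(S)$, $\Lambda(S)$ and the action are all defined over $\KK$, the scheme $\Lambda(S)$ is a $G(S)$-torsor, proving~(iii).

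The main obstacle is thus the identification $\iota_{R_i}=\iota_{klmn}\in\NNN$. I would prove it over $\KK^{sep}$ by fixing a line $Q$ and the marking with $L_i=\iota_{R_i}(Q)$ (Lemma~\ref{lemma:involution-description}), so that the sixteen lines are labelled by $\NNN$ with $Q$ the identity and $L_i$ labelled by $\iota_{klmn}$; in this labelling two lines meet exactly when the sum of their labels is one of the five weight-four elements. By Lemma~\ref{lemma:involution-description}(i) the involution $\iota_{R_i}$ carries every line to a line meeting it, i.e. adds a weight-four element to every label, while sending $Q\mapsto L_i$. Since $\iota_{R_i}$ is moreover an automorphism of the entire incidence configuration, these constraints force it to be the translation by $\iota_{klmn}$, and in particular to lie in $\NNN$. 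Alternatively one can pin $\iota_{R_i}$ down by computing its action on $\Pic(S_{\KK^{sep}})$ from the double cover $\omega\colon S\to T$ of Theorem~\ref{theorem:dP4-basic}(iii): its invariant sublattice has rank $\rkPic(T)=2$, which matches the weight-four generators of $\NNN$ and excludes the weight-two elements. Verifying that these conditions isolate $\iota_{klmn}$ among \emph{all} of $\WD$ is the one genuinely combinatorial point, and is where I expect to spend the most care.
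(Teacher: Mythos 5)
Your overall architecture is the same as the paper's: you unwind $\tilde{G}(S)_{\KK^{sep}}$ with the diagonal $\ZZ/2\ZZ$ exactly as in the text, deduce (ii) by descent and (iii) from the free transitive action of $\NNN$ on the sixteen lines (Lemma~\ref{lemma:Weyl-group-on-dP4-basic}(ii)), and correctly identify the crux as placing the five involutions $\iota_{R_i}$ inside $\NNN$. However, your primary argument for that crux has a genuine gap: the constraints you list --- $g$ is an automorphism of the incidence configuration, $g$ carries every line to a line meeting it, and $g(Q)=L_i$ --- do \emph{not} isolate the translation by $\iota_{klmn}$ among all of $\WD$. In your own labelling (lines $\leftrightarrow$ $\NNN$, with $x$ meeting $y$ iff $xy$ is one of the five weight-four elements, and $\iota\sigma\in\NNN\rtimes\HHH$ acting by $x\mapsto\iota\,\sigma x\sigma^{-1}$), take $w=\iota_{2345}(12)$. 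Then $w(Q)=L_1$, and for every $x\in\NNN$ one computes $w(x)\,x=\iota_{2345}\,\big((12)x(12)\big)\,x$, which equals $\iota_{2345}$ when conjugation by $(12)$ fixes $x$, and $\iota_{2345}\iota_{12}=\iota_{1345}$ otherwise; in either case it has weight four, so $w$ sends every line to an incident line. Yet $w^2=\iota_{2345}\iota_{1345}=\iota_{12}\neq 1$, so $w$ has order $4$ and is not a translation. The missing ingredient is the fact, which you never invoke, that $\iota_{R_i}$ has order $2$ (it is the Galois involution of the separable double cover of Theorem~\ref{theorem:dP4-basic}(iii)). This is exactly where the paper's proof differs: it classifies involutions of $\WD$ into the five conjugacy classes of $\iota_{12}$, $\iota_{1234}$, $(12)$, $\iota_{34}(12)$, $(12)(34)$, checks that every class except that of $\iota_{1234}$ interchanges a pair of \emph{disjoint} lines (contradicting Lemma~\ref{lemma:involution-description}(i)), and concludes that the five pairwise distinct involutions $\iota_R$ (distinct by Lemma~\ref{lemma:involution-description}(ii)) exhaust the five elements conjugate to $\iota_{1234}$, whence $\cG(S)=\NNN$.

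Your alternative route via the invariant Picard rank suffers from the same omission and admits the same repair. Rank $2$ alone does not isolate the class of $\iota_{1234}$ in all of $\WD$: a $5$-cycle in $\HHH$ also has invariant rank $2$ on $\Pic(S_{\KK^{sep}})\otimes\QQ$ (the diagonal line in the reflection representation plus $K_S$). But among \emph{involutions} the invariant ranks of the five classes above are $4$, $2$, $5$, $3$, $4$ respectively, so once you add the order-$2$ property this variant does single out the class of $\iota_{1234}$, i.e.\ the five weight-four elements of $\NNN$; the condition $\iota_{R_i}(Q)=L_i$ then pins down $\iota_{R_i}=\iota_{klmn}$ in the notation of Notation~\ref{notation:Weyl-group}. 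With that correction your proof closes and is essentially equivalent to the paper's (the paper's incidence criterion for excluding the other involution classes is interchangeable with your rank computation); the remaining steps --- the relation $\prod_i\iota_{R_i}=1$, the diagonal kernel, Galois equivariance via $\gamma\iota_R\gamma^{-1}=\iota_{\gamma(R)}$, and the descent arguments for (ii) and (iii) --- all match the paper.
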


\begin{proof}
Let $R\subset\PP^4$ be a singular quadric containing $S_{\KK^{sep}}$, so that $R$
is a cone over a smooth quadric surface, and~$S$ is a separable double cover of the base of this cone.
Let
$$
\cG(S)\subset\Aut(S_{\KK^{sep}})
$$
be the group generated by the five Galois involutions
of such double covers, that is, by the involutions $\iota_R$ introduced in Section~\ref{section:pencil}.

Choose a marking $L_1,\ldots,L_5$ of $S$, and consider the action of $\WD$ on $\Pic(S_{\KK^{sep}})$ given by this marking.
Thus, we get an embedding $\Aut(S_{\KK^{sep}}) \hookrightarrow \WD$, see Lemma~\ref{lemma:Weyl-group-on-dP4-basic}(iii).
This endows~$\cG(S)$ with an action of the Galois group $\Gal(\KK^{sep}/\KK)$ by conjugation, via
the natural homomorphism
$$
\Gal(\KK^{sep}/\KK)\to\WD.
$$

Let us show that the embedding $\Aut(S_{\KK^{sep}})\hookrightarrow \WD$ provides a
$\Gal(\KK^{sep}/\KK)$-equivariant
isomorphism
$$
\cG(S)\stackrel{\sim}\longrightarrow \NNN.
$$
Consider an involution $\iota_R$ corresponding to a singular quadric~\mbox{$R\subset\PP^4$} containing $S$.
The involution $\iota_R$ maps any line $L$ to a line $\iota_R(L)\neq L$ meeting~$L$ at a point by Lemma~\ref{lemma:involution-description}(i).
Note that the group $\cG(S)$ contains at least five elements with such property.

On the other hand, it is easy to see that any element of order $2$ in $\WD$ is conjugate to $\iota_{12}$, $\iota_{1234}$, $(12)$, $\iota_{34}(12)$, or $(12)(34)$. One has
$$
\iota_{12} L_1 = L_2, \quad (12) L_1 = L_2, \quad \iota_{34}(12)L_3 = L_4, \quad (12)(34)L_1 = L_2.
$$
Therefore for any element $g$ of order $2$ that is not conjugate to $\iota_{1234}$ one can find a pair of disjoint lines interchanged by $g$.
Observe that there are exactly five elements in $\WD$ conjugate to $\iota_{1234}$. Thus $\cG(S)$ contains all these elements, and therefore $\cG(S) \subset \WD$ coincides with $\NNN$, since $\NNN$ is generated by the elements conjugate to $\iota_{1234}$.

Now consider a finite group
$$
\tilde{\cG}(S)=\prod\limits_{R\in \Delta(S)_{\KK^{sep}}}\ZZ/2\ZZ
$$
with an action of the Galois group $\Gal(\KK^{sep}/\KK)$, and the $\Gal(\KK^{sep}/\KK)$-equivariant homomorphism $\tilde{\cG}(S) \rightarrow \cG(S)$, such that the
generator of the factor corresponding to~\mbox{$R\in \Delta(S)_{\KK^{sep}}$} maps to the involution $\iota_R$ corresponding to the singular quadric~$R$. Note that this homomorphism is surjective, since $\cG(S)$ is generated by the involutions~$\iota_R$. Moreover, the kernel of this homomorphism is isomorphic to $\ZZ/2\ZZ$ generated by the product of the five generators of the factors of $\tilde{\cG}(S)$. To show this we can consider $\cG(S)$ as a subgroup of~$\WD$, then the images of the five generators are elements conjugate to~$\iota_{1234}$, but
$$
\iota_{1234} \iota_{1235} \iota_{1245} \iota_{1345} \iota_{2345} = 1.
$$

Now consider a $\Gal(\KK^{sep}/\KK)$-equivariant isomorphism
$$
\tilde{G}(S)_{\KK^{sep}}\cong \tilde{\cG}(S).
$$
Observe that $G(S)_{\KK^{sep}}\cong\tilde{G}(S)_{\KK^{sep}}/(\ZZ/2\ZZ)$ and $\cG(S)\cong\tilde{\cG}(S)/(\ZZ/2\ZZ)$, where the subgroups
$\ZZ/2\ZZ$ are generated by the product of generators of the factors of~$\tilde{G}(S)_{\KK^{sep}}$ and~$\tilde{\cG}(S)$ respectively.
This defines a $\Gal(\KK^{sep}/\KK)$-equivariant isomorphism~\mbox{$G(S)_{\KK^{sep}}\cong\cG(S)$}.
Together with the isomorphism~\mbox{$\cG(S) \cong \NNN$} this proves assertion~(i).

To prove assertion~(ii), observe that the category of smooth
finite group schemes over a field~$\KK$ is equivalent to the category
of finite groups over~$\KK^{sep}$ with an action of the Galois group~$\Gal(\KK^{sep}/\KK)$.

Assertion~(iii) follows from assertion~(i) and Lemma~\ref{lemma:Weyl-group-on-dP4-basic}(ii).
\end{proof}

Denote by $S_{\Delta(S)}^{triv}$ the del Pezzo surface of degree $4$ obtained as a blow up of $\PP^2$ at the image of $\Delta(S)$
under the Veronese embedding $v_2\colon\cP\to\PP^2$ of the pencil $\cP \cong \PP^1$ of quadrics passing through $S$.

\begin{lemma}[{see \cite[Lemma~2.2]{Skorobogatov-Kummer}}]
\label{lemma:Delta-vs-Delta-triv}
Let $S$ be a del Pezzo surface of degree $4$ over a field $\KK$. The following assertions hold.
\begin{itemize}
\item[(i)] The surface $S_{\Delta(S)}^{triv}$ contains a line defined over $\KK$.

\item[(ii)] If $S$ contains a line defined over $\KK$, then $S\cong S_{\Delta(S)}^{triv}$.

\item[(iii)] One has
$$
\Delta(S_{\Delta(S)}^{triv})\cong\Delta(S)
$$
as subschemes of $\PP^1$.
\end{itemize}
\end{lemma}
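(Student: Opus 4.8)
The plan is to single out a distinguished line on each surface involved and to feed it into Lemma~\ref{lemma:5-lines}, which recovers $\Delta$ from the five lines meeting a given line; all three assertions then reduce to tracking $\KK$-rational identifications among several copies of~$\PP^1$. Since this lemma is a step toward the classification in Theorem~\ref{theorem:Skorobogatov-short}, I would argue~(ii) directly rather than invoke that classification.

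First I would dispose of~(i). The Veronese map $v_2\colon\cP\to\PP^2$ is defined over~$\KK$ and its image is a smooth conic $C'$ defined over~$\KK$; the subscheme $v_2(\Delta(S))\subset C'$ being blown up is a $\KK$-subscheme, so the blow up, and hence the proper transform $Q'$ of $C'$, is defined over~$\KK$. In the notation of Lemma~\ref{lemma:Weyl-group-on-dP4-basic} one has $Q'\sim 2L-\sum_{i=1}^5 L_i$, so $Q'^2=-1$ and $Q'$ is a line, which proves~(i).

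Next, for~(iii), I would apply Lemma~\ref{lemma:5-lines} to $S_{\Delta(S)}^{triv}$ and its $\KK$-line $Q'$, obtaining an isomorphism of $Q'$ with the pencil $\cP'$ of quadrics through $S_{\Delta(S)}^{triv}$ that carries $\Lambda_{Q'}(S_{\Delta(S)}^{triv})$ to $\Delta(S_{\Delta(S)}^{triv})$. A direct intersection computation shows that the five lines meeting $Q'$ are exactly the exceptional curves $L_i$, and that $Q'\cap L_i$ is the point of $Q'$ over $v_2(\delta_i)$, where $\{\delta_i\}=\Delta(S)$ over $\KK^{sep}$. Since the blow down maps $Q'$ isomorphically onto $C'=v_2(\cP)\cong\cP$, this identifies $\Lambda_{Q'}(S_{\Delta(S)}^{triv})\subset Q'$ with $\Delta(S)\subset\cP$. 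Composing the two identifications of $Q'$ (with $\cP$ and with $\cP'$) yields $\Delta(S_{\Delta(S)}^{triv})\cong\Delta(S)$.

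Finally, for~(ii), assume $S$ contains a $\KK$-line $Q$. The five lines meeting $Q$ are pairwise disjoint and Galois-stable, so their union is defined over~$\KK$, and contracting it gives a $\KK$-morphism $\pi\colon S\to V$ to a smooth surface with $K_V^2=9$; as $V_{\bar{\KK}}\cong\PP^2$, the surface $V$ is a Severi--Brauer surface. Here $C=\pi(Q)$ is a smooth conic over~$\KK$ through the reduced degree-$5$ $\KK$-subscheme $Z=\pi\bigl(\bigcup L_i\bigr)$. The hard part will be to show $V\cong\PP^2$ over~$\KK$, and this is exactly where the hypothesis that $S$ has a $\KK$-line is consumed: since $\deg Z=5$ is odd, $Z$ has a closed component of odd degree, so $C$ carries a closed point of odd degree; as the Brauer class of a conic is $2$-torsion, Springer's theorem forces $C$ to be split, giving a $\KK$-point on $C\subset V$, whence $V\cong\PP^2$. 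Then both $S$ and $S_{\Delta(S)}^{triv}$ are blow ups of $\PP^2$ at degree-$5$ subschemes lying on smooth conics ($Z\subset C$ and $v_2(\Delta(S))\subset v_2(\cP)$), and Lemma~\ref{lemma:5-lines} applied to $S$ and $Q$, as in~(iii), matches $Z\subset C$ with $\Delta(S)\subset\cP$, just as $v_2(\Delta(S))$ matches $\Delta(S)$ tautologically. Both conics carry a $\KK$-point, hence each is projectively equivalent to the Veronese conic; the isomorphism between them taking $Z$ to $v_2(\Delta(S))$ extends to some $g\in\PGL_3(\KK)$, which lifts to the desired isomorphism $S\cong S_{\Delta(S)}^{triv}$. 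The only genuine difficulty is the Severi--Brauer triviality just described; the remaining work is the bookkeeping of these $\KK$-rational identifications.
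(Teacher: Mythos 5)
Your proposal is correct and follows essentially the same route as the paper: assertion~(i) via the proper transform of $v_2(\cP)$, assertion~(iii) from (i) and Lemma~\ref{lemma:5-lines}, and assertion~(ii) by contracting the five lines meeting $Q$ and matching the image conic with its five points against $\Delta(S)\subset\cP$ via Lemma~\ref{lemma:5-lines}. The one divergence, your Springer's theorem detour for splitting the Severi--Brauer target, is an unnecessary complication --- a line $Q\subset S\subset\PP^4$ defined over $\KK$ is already isomorphic to $\PP^1_{\KK}$, so $\pi(Q)$ carries a $\KK$-point directly, which is what the paper implicitly uses when writing $\pi(Q)\cong Q\cong\PP^1$ --- although your explicit extension of the conic isomorphism to an element of $\PGL_3(\KK)$ usefully spells out a step the paper leaves tacit.
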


\begin{proof}
The proper transform of the conic $v_2(\cP)$ is a line on $S_{\Delta(S)}^{triv}$; this proves assertion~(i).
Assertion~(iii) follows from assertion~(i) and Lemma~\ref{lemma:5-lines}.

Suppose that $S$ contains a line $Q$ defined over $\KK$. Then the quintuple $\mathcal{L}$ of lines on~$S_{\KK^{sep}}$ different from $Q$ and intersecting $Q$
is defined over $\KK$, and the lines of $\mathcal{L}$ are pairwise disjoint. Let $\pi\colon S\to\PP^2$
be the contraction of $\mathcal{L}$. Then $\pi(Q)$ is a conic on $\PP^2$, and
$$
\pi(Q)\cong Q\cong\PP^1.
$$
Finally, $\pi(\mathcal{L})\subset \pi(Q)$ is
isomorphic to $\Delta(S)$ as a subscheme of $\pi(Q)\cong\PP^1$ by Lemma~\ref{lemma:5-lines}.
This proves assertion~(ii).
\end{proof}

Lemma~\ref{lemma:Delta-vs-Delta-triv}(iii)
implies that
$$
G(S_{\Delta(S)}^{triv})\cong G(S).
$$
In particular, $G(S)$ acts on $S_{\Delta(S)}^{triv}$.

Recall that if $\Lambda$ is a (left) torsor over a group scheme $G$, and $X$ is a scheme with a (left) action of~$G$, then the
\emph{twist} of $X$ by $\Lambda$ is defined as
$$
\leftidx{^\Lambda}{X}{}=(\Lambda\times X)/G,
$$
where the action of $G$ on $X\times\Lambda$ is diagonal; in other words, the quotient is taken by the equivalence
relation~\mbox{$(g^{-1}\lambda,x)\sim (\lambda, gx)$}.

\begin{remark}
Observe that every non-trivial element of $G(S)_{\KK^{sep}}$ has order $2$.
Thus, every non-trivial torsor
$\Lambda$ over $G(S)$ has order $2$ as an element of the cohomology
group~\mbox{$H^1\big(\Gal(\KK^{sep}/\KK), G(S)_{\KK^{sep}}\big)$}.
In particular, the torsor $\leftidx{^\Lambda}{\Lambda}{}$ is trivial, and
for any scheme~$X$ with an action of $G(S)$ one has
$$
\phantom{X}^\Lambda(\leftidx{^\Lambda}{X}{})\cong X.
$$
\end{remark}

The following theorem of A.\,Skorobogatov provides a biregular classification of
del Pezzo surfaces of degree~$4$.

\begin{theorem}[{see \cite[Theorem~2.3]{Skorobogatov-Kummer}}]
\label{theorem:Skorobogatov}
Let $S$ be a del Pezzo surface of degree $4$ over a field $\KK$.
Let $\Delta(S)$, $G(S)$, $\Lambda(S)$, and~$S_{\Delta(S)}^{triv}$ be constructed as above.
Then $S$ is isomorphic to the twist of $S_{\Delta(S)}^{triv}$ by the $G(S)$-torsor $\Lambda(S)$.
In particular, the surface $S$ is uniquely defined by the isomorphism class of the subscheme $\Delta(S)\subset\PP^1$
and the isomorphism class of the $G(S)$-torsor $\Lambda(S)$.
\end{theorem}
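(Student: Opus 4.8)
The plan is to deduce the single isomorphism $S\cong {}^{\Lambda(S)}S'$, where I abbreviate $S'=S_{\Delta(S)}^{triv}$; the final ``in particular'' clause is then immediate, since $S'$ and the group scheme $G(S)$ depend only on $\Delta(S)$, and the twist ${}^{\Lambda(S)}S'$ depends only on $S'$ and on the $G(S)$-torsor $\Lambda(S)$. Throughout I would use Lemma~\ref{lemma:GS-vs-NNN}, which identifies $G(S)_{\KK^{sep}}$ with $\NNN$ in a $\Gal(\KK^{sep}/\KK)$-equivariant way, supplies the action of $G(S)$ on $S$ (so that the twist ${}^{\Lambda(S)}S$ makes sense), and exhibits $\Lambda(S)$ as a $G(S)$-torsor. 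I would also use that the subgroup $\cG(S)=\NNN\subset\Aut(S_{\KK^{sep}})$ generated by the involutions $\iota_R$ is canonically attached to $S$, so that \emph{every} isomorphism of del Pezzo surfaces of degree~$4$ is automatically equivariant for the $G$-actions; this compatibility is what allows all the constructions below to be carried out on the nose.

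First I would twist $S$ by its own torsor of lines and identify the result with $S'$. Form ${}^{\Lambda(S)}S$. Since the formation of the Hilbert scheme of lines is an intrinsic, hence $G(S)$-equivariant, construction, it commutes with twisting, and therefore
\[
\Lambda\bigl({}^{\Lambda(S)}S\bigr)\;\cong\;{}^{\Lambda(S)}\Lambda(S),
\]
where $\Lambda(S)$ on the right is viewed as a $G(S)$-scheme via its torsor action, so that ${}^{\Lambda(S)}\Lambda(S)$ is the contracted product $\Lambda(S)\times^{G(S)}\Lambda(S)$. As $G(S)$ is commutative, this contracted product represents $2[\Lambda(S)]$ in $H^1\bigl(\Gal(\KK^{sep}/\KK),G(S)_{\KK^{sep}}\bigr)$, which vanishes because every element of $G(S)_{\KK^{sep}}\cong\NNN$ has order~$2$. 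Hence $\Lambda({}^{\Lambda(S)}S)$ is the trivial torsor, i.e. ${}^{\Lambda(S)}S$ carries a line defined over $\KK$. Moreover, twisting by a $G(S)$-torsor alters the Galois descent data only within $\NNN$, which acts trivially on the pencil $\cP$ (as one sees from the involutions $\iota_R$) and thus lies in the kernel of $\WD\to\SSS_5=\Aut(\Delta)$; therefore $\Delta({}^{\Lambda(S)}S)\cong\Delta(S)$ as subschemes of $\cP$. Applying Lemma~\ref{lemma:Delta-vs-Delta-triv}(ii) to ${}^{\Lambda(S)}S$ and using Lemma~\ref{lemma:Delta-vs-Delta-triv}(iii), I obtain
\[
{}^{\Lambda(S)}S\;\cong\;S_{\Delta({}^{\Lambda(S)}S)}^{triv}\;\cong\;S_{\Delta(S)}^{triv}\;=\;S'.
\]
Finally, since $[\Lambda(S)]$ has order~$2$, twisting is involutive (by the remark preceding the theorem), and the canonicity of $G(\cdot)$ makes the isomorphism above $G$-equivariant, so
\[
S\;\cong\;{}^{\Lambda(S)}\bigl({}^{\Lambda(S)}S\bigr)\;\cong\;{}^{\Lambda(S)}S',
\]
which is exactly the asserted isomorphism.

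I expect the main obstacle to be making the equivariance and twisting formalism fully rigorous. One must check that the Hilbert-scheme-of-lines construction genuinely commutes with the contracted-product twist (this rests on its canonical nature together with faithfully flat descent), that the $G(S)$-action on $\Lambda(S)$ appearing in the functoriality statement is precisely the torsor action of Lemma~\ref{lemma:GS-vs-NNN}(iii), and that the intermediate identification ${}^{\Lambda(S)}S\cong S'$ respects the $G$-structures so that the involutivity of the twist applies. The auxiliary point that $\NNN$ lies in the kernel of $\WD\to\Aut(\Delta)$ --- equivalently, that each $\iota_R$ fixes $\cP$ pointwise --- also requires a short separate verification, cleanest in the simultaneously diagonalized model of the pencil when $\Char\KK\neq 2$ and by descent when $\Char\KK=2$.
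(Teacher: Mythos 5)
Your proposal is correct and follows essentially the same route as the paper's proof: twist $S$ by $\Lambda(S)$, observe that the resulting surface has trivial torsor of lines and the same $\Delta$ (since $G(S)$ acts trivially on $\cP$), identify it with $S_{\Delta(S)}^{triv}$ via Lemma~\ref{lemma:Delta-vs-Delta-triv}(ii)--(iii), and untwist using the involutivity of twisting by an order-$2$ torsor noted in the remark preceding the theorem. The extra verifications you flag (compatibility of the Hilbert-scheme construction with twisting, triviality of the $\NNN$-action on $\cP$ --- for the latter, note $\NNN$ fixes all five points of $\Delta(S)_{\KK^{sep}}$, hence acts trivially on $\cP\cong\PP^1$) are exactly the points the paper's proof treats as implicit.
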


\begin{proof}
Let
$$
S'=\leftidx{^{\Lambda(S)}}{S}{}
$$
be the twist of $S$ by the $G(S)$-torsor $\Lambda(S)$. Then
$$
\Lambda(S')\cong \leftidx{^{\Lambda(S)}}{\Lambda(S)}{}.
$$
Hence $\Lambda(S')$ is the trivial $G(S)$-torsor.
Also, observe that $\Delta(S')$ and $\Delta(S)$ are isomorphic as subschemes of~$\PP^1$, since the action of $G(S)$ on $\cP$ and $\Delta(S)$ is trivial.
Therefore, one has
$$
S'\cong S_{\Delta(S')}^{triv}\cong S_{\Delta(S)}^{triv}
$$
by Lemma~\ref{lemma:Delta-vs-Delta-triv}(ii).
Thus, we obtain
$$
S\cong \phantom{S}^{\Lambda(S)}(\leftidx{^{\Lambda(S)}}{S}{})\cong
\leftidx{^{\Lambda(S)}}{S'}{}\cong \leftidx{^{\Lambda(S)}}{S_{\Delta(S)}^{triv}}{}.
\qedhere
$$
\end{proof}

As a by-product of Theorem~\ref{theorem:Skorobogatov}, one obtains Theorem~\ref{theorem:Skorobogatov-short}.
Furthermore, there is the following convenient corollary of Theorem~\ref{theorem:Skorobogatov}.

\begin{corollary}\label{corollary:Skorobogatov}
Let $S$ and $S'$ be del Pezzo surfaces of degree $4$ over a field $\KK$.
Suppose that one has an automorphism $\PP^1\stackrel{\sim}\longrightarrow \PP^1$ which induces
an isomorphism $\Delta(S)\stackrel{\sim}\longrightarrow \Delta(S')$, and one has an isomorphism
$$
\tilde{\psi}\colon \Lambda(S)\stackrel{\sim}\longrightarrow \Lambda(S')
$$
of torsors over $G(S)\cong G(S')$. Then there exists an isomorphism
$$
\psi\colon  S\stackrel{\sim}\longrightarrow S'
$$
such that the induced isomorphism $\Lambda(S)\to \Lambda(S')$
coincides with $\tilde{\psi}$.
\end{corollary}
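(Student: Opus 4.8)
The plan is to deduce the corollary from Theorem~\ref{theorem:Skorobogatov} by exploiting the functoriality of the twist construction. By that theorem there are canonical isomorphisms $S\cong \leftidx{^{\Lambda(S)}}{S_{\Delta(S)}^{triv}}{}$ and $S'\cong \leftidx{^{\Lambda(S')}}{S_{\Delta(S')}^{triv}}{}$, so it suffices to compare the two twists. First I would assemble the data needed for this comparison. The given automorphism of $\PP^1=\cP$ carrying $\Delta(S)$ to $\Delta(S')$ identifies the \'etale algebras $\KK[\Delta(S)]$ and $\KK[\Delta(S')]$, and hence induces an isomorphism of group schemes $\alpha\colon G(S)\stackrel{\sim}\longrightarrow G(S')$. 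Composing this automorphism with the Veronese embedding $v_2$ produces an automorphism of $\PP^2$ carrying the image of $\Delta(S)$ to that of $\Delta(S')$, and therefore an isomorphism $\beta\colon S_{\Delta(S)}^{triv}\stackrel{\sim}\longrightarrow S_{\Delta(S')}^{triv}$. By construction $\beta$ is equivariant with respect to $\alpha$ and the natural actions provided by Lemma~\ref{lemma:GS-vs-NNN}(ii), because those actions are realized through the involutions $\iota_R$ attached to the singular quadrics in $\cP$, and the automorphism of $\PP^1$ matches these quadrics.

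Next I would invoke the functoriality of the twist. The torsor isomorphism $\tilde{\psi}\colon\Lambda(S)\to\Lambda(S')$ is equivariant with respect to $\alpha$, and so is $\beta$; hence the pair $(\tilde{\psi},\beta)$ descends to an isomorphism of quotients
$$
\leftidx{^{\Lambda(S)}}{S_{\Delta(S)}^{triv}}{}\stackrel{\sim}\longrightarrow \leftidx{^{\Lambda(S')}}{S_{\Delta(S')}^{triv}}{}.
$$
Composing with the identifications of Theorem~\ref{theorem:Skorobogatov} yields the sought isomorphism $\psi\colon S\stackrel{\sim}\longrightarrow S'$.

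It remains to verify that the isomorphism $\Lambda(S)\to\Lambda(S')$ induced by $\psi$ is exactly $\tilde{\psi}$; this is the step I expect to demand the most care. The mechanism is that forming the Hilbert scheme of lines commutes with twisting, that is, there is a canonical isomorphism $\Lambda(\leftidx{^T}{X}{})\cong \leftidx{^T}{\Lambda(X)}{}$ for a $G(S)$-scheme $X$ and a $G(S)$-torsor $T$. Applied to $X=S_{\Delta(S)}^{triv}$, whose Hilbert scheme of lines is the trivial $G(S)$-torsor by Lemma~\ref{lemma:Delta-vs-Delta-triv}(i) and Lemma~\ref{lemma:GS-vs-NNN}(iii), this recovers the identification $\Lambda(S)\cong \leftidx{^{\Lambda(S)}}{G(S)}{}\cong\Lambda(S)$ underlying Theorem~\ref{theorem:Skorobogatov}. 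Tracking $\psi$ through this chain, the induced map on Hilbert schemes is the twist $\leftidx{^{\tilde{\psi}}}{\Lambda(\beta)}{}$; since $\beta$ carries the distinguished $\KK$-line of $S_{\Delta(S)}^{triv}$ to that of $S_{\Delta(S')}^{triv}$, the map $\Lambda(\beta)$ is the base-point-preserving, $\alpha$-equivariant identification of the trivial torsors $G(S)$ and $G(S')$, and under the standard isomorphism $\leftidx{^T}{G(S)}{}\cong T$ the twist collapses to $\tilde{\psi}$. The main obstacle is precisely this bookkeeping: checking that the several canonical identifications are mutually compatible, so that no spurious automorphism of the torsor is introduced along the way.
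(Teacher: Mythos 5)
Your proposal is correct and follows essentially the same route as the paper's proof: both arguments realize $S$ and $S'$ as twists of the trivial surface via Theorem~\ref{theorem:Skorobogatov}, transport the torsor isomorphism $\tilde{\psi}$ through the twist construction to produce $\psi$, and then identify the induced map on Hilbert schemes of lines with $\tilde{\psi}$ by observing that it is the twist of the base-point-preserving identification of trivial torsors. The only difference is one of explicitness — you spell out the isomorphism $\beta\colon S_{\Delta(S)}^{triv}\stackrel{\sim}\longrightarrow S_{\Delta(S')}^{triv}$ via the Veronese embedding, its equivariance, and the compatibility $\Lambda(\leftidx{^T}{X}{})\cong \leftidx{^T}{\Lambda(X)}{}$, all of which the paper leaves implicit in the phrase ``by construction.''
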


\begin{proof}
Recall that the isomorphism $\Delta(S)\stackrel{\sim}\longrightarrow \Delta(S')$ gives an isomorphism
of group schemes $G(S)\stackrel{\sim}\longrightarrow G(S')$.
We know from Theorem~\ref{theorem:Skorobogatov} that both $S$ and $S'$ are obtained as
the twist of the surface $S_{\Delta(S)}^{triv}$ by the torsor $\Lambda(S)\cong\Lambda(S')$ over $G(S)\cong G(S')$.
Thus, we have an isomorphism $\psi\colon S\stackrel{\sim}\longrightarrow S'$, and an induced isomorphism
$$
\hat{\psi}\colon \Lambda(S)\stackrel{\sim}\longrightarrow \Lambda(S').
$$
By construction, $\hat{\psi}$
is the isomorphism of torsors $\leftidx{^{\Lambda(S)}}{\Lambda^{triv}}{}$ and $\leftidx{^{\Lambda(S')}}{\Lambda^{triv}}{}$
obtained from the isomorphism $\tilde{\psi}$, where $\Lambda^{triv}$ is the trivial torsor over
$G(S)\cong G(S')$. Therefore, one has~\mbox{$\hat{\psi}=\tilde{\psi}$}.
\end{proof}

We point out that a del Pezzo surface $S$ of degree $4$ cannot be recovered from
the $G(S)$-torsor $\Lambda(S)$ and the scheme $\Delta(S)$ without taking into account
the embedding $\Delta(S)\hookrightarrow \PP^1$.

\begin{example}
Let $\KK$ be a separably closed field, and let $S$ and $S'$ be del Pezzo surfaces of degree $4$ over $\KK$.
Then $\Delta(S) \cong \Delta(S')$ is just a quintuple of points over $\KK$, one has
$$
G(S)\cong G(S')\cong(\ZZ/2\ZZ)^4,
$$
and $\Lambda(S) \cong \Lambda(S')$ is the trivial torsor over~$(\ZZ/2\ZZ)^4$.
However, there are many non-isomorphic del Pezzo surfaces of degree~$4$ over~$\KK$.
\end{example}

\begin{remark}
One can show that the scheme $\Delta(S)$ (without the embedding into~$\PP^1$)
can be recovered from $\Lambda(S)_{\KK^{sep}}$ considered as a set with incidence relation and an action of the group $\Gal(\KK^{sep}/\KK)$.
Indeed, the points of $\Delta(S)_{\KK^{sep}}$ are in a one-to-one correspondence
with the five ways to split the $16$ lines of~$\Lambda(S)_{\KK^{sep}}$
into two eightuples, such that each eightuple consists
of four pairs of lines $L_1, L_1',\ldots,L_4,L_4'$ with~\mbox{$L_i\cap L_i'\neq\varnothing$}
and
$$
L_i\cap L_j=L_i\cap L_j'=L_i'\cap L_j'=\varnothing
$$
for $i\neq j$. Namely, these eightuples consist of irreducible components
of singular conics cut out on $S_{\KK^{sep}}$ by the planes lying in one of the two
families of planes on a singular quadric containing~$S_{\KK^{sep}}$. Therefore the five pairs of eightuples correspond
to five singular quadrics containing~$S_{\KK^{sep}}$. The action of $\Gal(\KK^{sep}/\KK)$ on $\Lambda(S)_{\KK^{sep}}$ induces an action of~\mbox{$\Gal(\KK^{sep}/\KK)$} on the five pairs of eightuples, which coincides with the action on $\Delta(S)_{\KK^{sep}}$.
\end{remark}

Similarly, a del Pezzo surface $S$ of degree $4$ cannot be recovered
from the scheme~\mbox{$\Delta(S)\subset\PP^1$} and the scheme $\Lambda(S)$ without
the action of $G(S)$, even if we take into account the natural
incidence relation on $\Lambda(S)_{\KK^{sep}}$.

\begin{example}
Let $\KK = \mathbb{R}$ be the field of real numbers, and let
$p_1$, $p_2$, $p_3$, $p_4$, and~$p_5$ be five points lying on a smooth conic in $\PP^2$.
Suppose that $p_1,\ldots,p_5$ are sufficiently general, i.e.
there are no non-trivial automorphisms of the conic mapping the set~\mbox{$\{p_1,\ldots,p_5\}$} to itself.
Consider the blowup $\pi\colon S_0 \to \PP^2$ of the points $p_i$, and
put $L_i = \pi^{-1}(p_i)$. Then~\mbox{$L_1, \ldots, L_5$} is a marking
of the del Pezzo surface $S_0$ of degree $4$. This marking allows one to fix
an action of $\WD$ on the Hilbert scheme of lines $\Lambda(S_0)$. Note that all lines
on $S_0$ are defined over~$\mathbb{R}$. Therefore, the Galois group
$\Gal(\mathbb{C}/\mathbb{R})$ acts trivially on $\Lambda(S_0)_{\mathbb{C}}$.
Moreover, the Galois group acts trivially on $G(S_0)_{\mathbb{C}} \cong (\ZZ/2\ZZ)^4$,
since all five points of $\Delta(S_0)_{\mathbb{C}}$ are defined over~$\mathbb{R}$,
cf.~Lemma~\ref{lemma:GS-vs-NNN}(i). Thus~\mbox{$G(S_0) \cong (\ZZ/2\ZZ)^4$}.

For every $1\le i\le 5$, consider the $G(S_0)$-torsor $\Lambda(S_i)$
such that $\Lambda(S_i)_{\mathbb{C}} \cong \Lambda(S_0)_{\mathbb{C}}$ and
the action of the non-trivial element of the Galois group
$\Gal(\mathbb{C}/\mathbb{R})$ on $\Lambda(S_i)_{\mathbb{C}}$
coincides with the action of the involution
${\iota_{klmn} \in \NNN \subset \WD}$,
where~\mbox{$\{i, k, l, m, n\} = \{1, 2, 3, 4, 5\}$}.
Let~$S_i$ be the twist of $S_0$ by the $G(S_0)$-torsor $\Lambda(S_i)$.
Note that there exists an isomorphism~\mbox{$\Delta(S_0) \cong \Delta(S_i)$}
of subschemes of $\PP^1$ by
Theorem~\ref{theorem:Skorobogatov} and Lemma~\ref{lemma:Delta-vs-Delta-triv}(iii).
Moreover, such isomorphism is unique (and even the isomorphism
$\Delta(S_0)_{\mathbb{C}} \cong \Delta(S_i)_{\mathbb{C}}$ is unique) by the choice of
the points $p_1,\ldots,p_5$.
Furthermore, the conjugation by $(ij) \in \SSS_5 \subset \WD$ provides an isomorphism of schemes~$\Lambda(S_i)$ and $\Lambda(S_j)$
preserving the natural incidence relation on~$\Lambda(S)_{\mathbb{C}}$,
because
$$
(ij) \iota_{jlmn} (ij) = \iota_{ilmn}
$$
in $\WD$.

Let us show that the surfaces $S_i$ and $S_j$ are not isomorphic for $i\neq j$.
Recall that there is a unique isomorphism between $\Delta(S_i)$ and $\Delta(S_j)$ as subschemes of $\mathbb{P}^1$, which induces isomorphisms
$$
G(S_i)\cong G(S_0)\cong G(S_j).
$$
Suppose that there is a $G(S_0)$-equivariant isomorphism between $\Lambda(S_i)_{\mathbb{C}}$ and $\Lambda(S_j)_{\mathbb{C}}$. Let~\mbox{$\gamma \in \Gal(\mathbb{C}/\mathbb{R})$} be the non-trivial element. Then the action of $\gamma$ on $\Lambda(S_i)_{\mathbb{C}}$ coincides with the action of $\iota_{jlmn} \in G(S_0)$, and the action of $\gamma$ on $\Lambda(S_j)_{\mathbb{C}}$ coincides with the action of $\iota_{ilmn} \in G(S_0)$, where~\mbox{$\{i, j, l, m, n\} = \{1, 2, 3, 4, 5\}$}. Therefore, there are no $G(S_0)$-equivariant isomorphisms between $\Lambda(S_i)_{\mathbb{C}}$ and $\Lambda(S_j)_{\mathbb{C}}$ which are at the same time $\Gal(\mathbb{C}/\mathbb{R})$-equivariant. Thus $\Lambda(S_i)$ and $\Lambda(S_j)$ are not isomorphic as torsors over the group scheme $G(S_i)\cong G(S_j)$. Hence the surfaces $S_i$ and $S_j$ are not isomorphic for~\mbox{$i \neq j$}
(this can be seen directly from the procedure which recovers the torsors $\Lambda(S_i)$ and $\Lambda(S_j)$ out of the surfaces $S_i$ and $S_j$,
or from Theorem~\ref{theorem:Skorobogatov}).
\end{example}

\section{Birational models}
\label{section:birational}

In this section we study birational models of minimal del Pezzo surfaces
of degree~$4$ and prove Theorem~\ref{theorem:dP4-unique}.

\begin{lemma}\label{lemma:discriminant}
Let $S$ be a del Pezzo surface of degree $4$ over
a field $\KK$. Let $X$ be a smooth cubic surface
obtained as a blow up $f\colon X\to S$ at a $\KK$-point $p$.
Let $\phi\colon X\to\PP^1$ be the conic bundle defined by the $f$-exceptional line on $X$, and let $\Delta(\phi)\subset\PP^1$ be the
discriminant of~$\phi$, i.e. the closed subscheme of $\PP^1$ parameterizing the degenerate fibers of~$\phi$.
Then there is a natural isomorphism $\cP\stackrel{\sim}\longrightarrow\PP^1$ which restricts
to an isomorphism~\mbox{$\Delta(S)\stackrel{\sim}\longrightarrow\Delta(\phi)$}.
\end{lemma}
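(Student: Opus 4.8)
The plan is to produce the isomorphism $\cP\to\PP^1$ geometrically and then to identify the two length-$5$ subschemes by a direct analysis of the singular quadrics. First I would describe the base of $\phi$ intrinsically. Write $T_pS\subset\PP^4$ for the embedded tangent plane of $S$ at $p$; it is a $2$-plane, since $p$ is a smooth $\KK$-point of $S$. By Lemma~\ref{lemma:unique-bisection} the fibers satisfy $F\sim -K_X-E=f^*(-K_S)-2E$, so the members of the pencil $\phi$ are the proper transforms of the hyperplane sections $\Pi\cap S$ with $\Pi\supset T_pS$ (these are exactly the anticanonical curves on $S$ singular at $p$). Hence the base $\PP^1$ of $\phi$ is canonically the pencil of hyperplanes in $\PP^4$ containing $T_pS$. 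Now for every $R\in\cP$ the point $p$ is a smooth point of $R$ (the vertex of $R$, if any, does not lie on $S$, since the double cover of Theorem~\ref{theorem:dP4-basic}(iii) is a morphism), and $T_pS\subset T_pR$ because $S\subset R$; thus $R\mapsto T_pR$ is a morphism from $\cP$ to the pencil of hyperplanes through $T_pS$, i.e. to the base of $\phi$. In coordinates $T_pR$ is cut out by $b_R(p,\cdot)=0$, which is linear in the coefficients of $q_R$, so the map is linear, and it is an isomorphism once non-constant. It is non-constant because for two distinct members $R_1,R_2$ of $\cP$ one has $T_pS=T_pR_1\cap T_pR_2$, which is two-dimensional, forcing $T_pR_1\neq T_pR_2$. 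This gives the required natural isomorphism $\cP\xrightarrow{\sim}\PP^1$.

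It remains to match $\Delta(S)$ with $\Delta(\phi)$ under this isomorphism. Since the isomorphism is defined over $\KK$ and both subschemes are reduced of length $5$ (Theorem~\ref{theorem:dP4-basic}(i) for $\Delta(S)$; for $\Delta(\phi)$ one uses that the $5$ degenerate fibers are reduced nodal conics on the smooth cubic surface, so the discriminant is reduced of length~$5$), it suffices to work over $\KK^{sep}$ and to check that the five $\KK^{sep}$-points of $\Delta(S)$ are carried to the five $\KK^{sep}$-points of $\Delta(\phi)$; by counting it is enough to show that each singular quadric maps to a degenerate fiber. So I would fix a singular $R\in\cP$ over $\KK^{sep}$, a cone with vertex $v\neq p$, and set $\Pi=T_pR$. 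The hard part, which is the geometric heart of the lemma, is to show that $\Pi\cap S$ is reducible. For this I would use that $v\in\Pi$ (the tangent hyperplane at a smooth point of a cone contains the vertex) and that projection from $v$ realizes $R$ as a cone over a smooth quadric surface $T$, under which $\Pi$ descends to the tangent plane of $T$ at the image $\bar p$ of $p$. As the tangent-plane section of a smooth quadric surface is a pair of lines, $\Pi\cap R$ is the cone over these two lines, namely a union of two $2$-planes $P_1\cup P_2$ meeting along the ruling line $\overline{vp}$. Writing $S=R\cap R'$ for a second member $R'$ of the pencil, we obtain $\Pi\cap S=(P_1\cap R')\cup(P_2\cap R')$, a union of two conics; these are distinct, since a common component would lie in $P_1\cap P_2=\overline{vp}$, which is only a line. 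Its proper transform under $f$ is therefore reducible, so the fiber of $\phi$ over the image of $R$ is degenerate, as wanted.

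This produces an injection of the five singular quadrics into the (at most five) degenerate fibers, hence a bijection on $\KK^{sep}$-points; combined with the reducedness of both length-$5$ subschemes and the fact that the ambient map is an isomorphism defined over $\KK$, it follows that $\cP\xrightarrow{\sim}\PP^1$ restricts to an isomorphism $\Delta(S)\xrightarrow{\sim}\Delta(\phi)$. The main obstacle throughout is the reducibility analysis of $\Pi\cap S$ for singular $R$; the rest is formal. I would also take care, when setting up the base change to $\KK^{sep}$, that the vertices of the singular quadrics avoid $p$, which is exactly why I single out that these vertices do not meet~$S$.
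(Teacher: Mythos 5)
Your proposal is correct and takes essentially the same route as the paper's own proof: both identify the base of $\phi$ with the pencil of hyperplanes containing the tangent plane to $S$ at $p$, send $R\in\cP$ to its tangent hyperplane at $p$, show that for singular $R$ this hyperplane meets $R$ in two planes through the line joining $p$ to the vertex (so that the hyperplane section of $S$ breaks into two conics and the corresponding fiber of $\phi$ is degenerate), and conclude by matching the two length-$5$ schemes point by point. The only difference is cosmetic: the paper gets the isomorphism $\cP\stackrel{\sim}\longrightarrow\PP^1$ over $\KK$ by Galois descent of the map defined over $\KK^{sep}$, whereas you obtain it directly from the linearity of $R\mapsto b_R(p,\cdot)$, and along the way you make explicit several points the paper leaves implicit (the vertex avoiding $p$, non-constancy of the map, distinctness of the two conics, and reducedness of both discriminant schemes).
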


\begin{proof}
The fibers of the conic bundle $\phi_{\KK^{sep}}$ are proper transforms of the hyperplane sections of~$S_{\KK^{sep}}$ that are singular at~$p$.
In other words, a fiber of $\phi_{\KK^{sep}}$ is the proper transform of a curve
cut out on $S_{\KK^{sep}}$ by a hyperplane $H\subset\PP^4$ which passes through the tangent plane~$\Pi$ to $S_{\KK^{sep}}$ at~$p$.

Let $R\in\cP_{\KK^{sep}}$ be a quadric passing through $S_{\KK^{sep}}$, and let $H_R$ be the tangent hyperplane to~$R$ at $p$.
Associating to $R$ the image under $\phi_{\KK^{sep}}$ of the
proper transform of $H_R\cap S$ gives a $\Gal(\KK^{sep}/\KK)$-equivariant isomorphism
$$
\delta_{\KK^{sep}}\colon \cP_{\KK^{sep}}\stackrel{\sim}\longrightarrow\PP^1.
$$
Thus, $\delta_{\KK^{sep}}$ can be obtained by extension of scalars from an isomorphism
$\delta\colon \cP\stackrel{\sim}\longrightarrow\PP^1$. Furthermore, if $R$ is singular, then $H_R$ contains two planes $\Pi_1,\Pi_2\subset R$
passing through~$p$. Therefore, the curve $H_R\cap S$ is reducible in this case, which means that
$$
\delta(\Delta(S))\subset\Delta(\phi).
$$
On the other hand, we know that $\Delta(S)_{\KK^{sep}}$ and $\Delta(\phi)_{\KK^{sep}}$ are finite schemes that consist of the same number of
points. Hence $\delta$ restricts to an isomorphism between $\Delta(S)$ and~$\Delta(\phi)$.
\end{proof}

Recall the following standard terminology.

\begin{definition}\label{definition:elementary-transformation}
Let $X$ be a smooth surface, and let $\phi\colon X\to C$ be a conic bundle.
A \emph{transformation of the conic bundle $\phi$}
is a birational map $\theta\colon X\dasharrow X'$ to another conic bundle $\phi'\colon X\to C'$
which fits into a commutative diagram
$$
\xymatrix{
X\ar@{->}[d]_{\phi}\ar@{-->}[rr]^{\theta}&& X'\ar@{->}[d]^{\phi'}\\
C\ar@{->}[rr]^{\sim}&& C
}
$$
and induces an isomorphism in a neighborhood of degenerate fibers of $\phi$ and $\phi'$.
\end{definition}

\begin{lemma}\label{lemma:markings-wd-equivariant-cb}
Let $X$ be a smooth cubic surface over a field $\KK$, and let $\phi\colon X\to \PP^1$ be a conic bundle.
Consider a transformation $\theta\colon X \to X'$ of $\phi$, and denote by $\phi'\colon X'\to \PP^1$ the resulting conic bundle.
Assume that $X'$ is again a (smooth) cubic surface.
Consider a marking
$$
E_1\cup F_1,\ldots,E_5\cup F_5
$$
of $\phi$. Since the degenerate fibers (and their irreducible components)
of~$\phi$ and~$\phi'$ are naturally identified with each other,
this provides a marking
$$
E'_1\cup F'_1,\ldots,E'_5\cup F'_5
$$
of $\phi'$, where $E'_i = \theta(E_i)$ and $F'_i = \theta(F_i)$. These markings give actions of $\WD$ on~$\Pic(X_{\KK^{sep}})$ and~$\Pic(X'_{\KK^{sep}})$.
Then there exists a $\WD$-equivariant isomorphism
$$
\tilde{\theta} \colon \Pic(X_{\KK^{sep}}) \stackrel{\sim}\longrightarrow \Pic(X'_{\KK^{sep}}),
$$
such that either $\tilde{\theta}(E_i) = E'_i$ for any $i$, or $\tilde{\theta}(E_i) = F'_i$ for any $i$.

Moreover, let the group $G$ be either
$\Gal(\KK^{sep}/\KK)$, or a subgroup of $\Aut(X)$. Suppose that~$\theta$ is $G$-equivariant;
thus, the group $G$ acts on $X'$ as well. The action of $G$ on~\mbox{$\Pic(X_{\KK^{sep}})$} and~\mbox{$\Pic(X'_{\KK^{sep}})$} gives homomorphisms~\mbox{$\pi_X \colon G \to \WD$} and~\mbox{$\pi_{X'} \colon G \to \WD$}. Then the following diagram is commutative:
$$
\xymatrix{
&G\ar@{->}[ld]_{\pi_X}\ar@{->}[rd]^{\pi_{X'}}&\\
\WD\ar@{->}[rr]^{\sim}&& \WD
}
$$
\end{lemma}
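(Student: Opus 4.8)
The plan is to realise $\tilde\theta$ as the isomorphism of Picard groups induced by the birational map $\theta$ itself, and to deduce $\WD$-equivariance from the fact that $\theta$ leaves the degenerate fibres untouched; the same map will then settle the equivariant statement. Concretely, I would first resolve $\theta$ by a common smooth model over $\KK$: choose birational morphisms $g\colon Y\to X$ and $g'\colon Y\to X'$ with $\theta=g'\circ g^{-1}$. Since $\theta$ is an isomorphism in a neighbourhood of the degenerate fibres (Definition~\ref{definition:elementary-transformation}) and commutes with the projections to $\PP^1$, every point blown up by $g$ or by $g'$ lies on a \emph{smooth} fibre; equivalently, $\theta$ is a composition of elementary transformations centred at points of smooth fibres. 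I then set $\tilde\theta=g'_*\circ g^*\colon\Pic(X_{\KK^{sep}})\to\Pic(X'_{\KK^{sep}})$. Treating one elementary transformation at a time (and composing) shows that $\tilde\theta$ is an isomorphism of abelian groups. Because the centres avoid the degenerate fibres, $g$ and $g'$ are isomorphisms near each $E_i$ and $F_i$, so $\tilde\theta(E_i)=E_i'$ and $\tilde\theta(F_i)=F_i'$ (this is the first alternative; the second corresponds to replacing the marking of $\phi'$ by its equivalent, which defines the same $\WD$-action by Remark~\ref{remark: equivalent-markings-gives-same-wd-actions}), and in particular $\tilde\theta$ sends the fibre class to the fibre class.

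Next I would record the effect of $\tilde\theta$ on the anticanonical class. A direct computation with the blow-up formulae for a single elementary transformation at a point of a smooth fibre gives $\tilde\theta(-K_X)=-K_{X'}+F'$, and hence $\tilde\theta(-K_X)=-K_{X'}+mF'$ for a composition of $m$ of them. Thus $\tilde\theta$ is \emph{not} an isometry, but the discrepancy is a multiple of the fibre class, and this is exactly what makes the argument work: by definition $\WD$ fixes both the anticanonical class and the class of a fibre. Recall that the $\WD$-action on $\Pic(X_{\KK^{sep}})\otimes\QQ$ is the unique one fixing $-K_X$ and acting on the components $E_i,F_i$ by the combinatorial rules of Notation~\ref{notation:Weyl-group}, because $-K_X$ together with the components generate $\Pic(X_{\KK^{sep}})\otimes\QQ$; the same holds on $X'$. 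I would then check equivariance on this generating set: on the components it holds because $\tilde\theta$ matches the two markings and the combinatorial action is given by identical formulae on both sides, while on $-K_X$ one has, for every $w\in\WD$, the equalities $\tilde\theta(w(-K_X))=\tilde\theta(-K_X)=-K_{X'}+mF'=w(-K_{X'}+mF')=w\,\tilde\theta(-K_X)$. Hence $\tilde\theta$ intertwines the two $\WD$-actions.

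For the equivariant statement, note that if $\theta$ is $G$-equivariant then $G$ lifts to $Y$ so that $g$ and $g'$ are $G$-equivariant, whence $\tilde\theta=g'_*g^*$ is $G$-equivariant; here the geometric $G$-action is, by definition of $\pi_X$ and $\pi_{X'}$, the action of $\pi_X(g)$ and $\pi_{X'}(g)$ through the markings. Combining this with the $\WD$-equivariance just proved, for every $g\in G$ and $\alpha\in\Pic(X_{\KK^{sep}})$ one obtains $\pi_{X'}(g)\,\tilde\theta(\alpha)=g\cdot\tilde\theta(\alpha)=\tilde\theta(g\cdot\alpha)=\tilde\theta(\pi_X(g)\alpha)=\pi_X(g)\,\tilde\theta(\alpha)$. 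Since $\tilde\theta$ is surjective and $\WD$ acts faithfully, this forces $\pi_{X'}(g)=\pi_X(g)$, which is the asserted commutativity of the triangle (its bottom arrow being the identification of the two copies of $\WD$ induced by $\tilde\theta$). This mirrors the second half of Lemma~\ref{lemma:markings-wd-equivariant}.

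The step I expect to be the main obstacle is the construction in the first paragraph: justifying rigorously that $\theta$ factors into elementary transformations centred on smooth fibres, and that $g'_*\circ g^*$ is therefore a genuine isomorphism of Picard groups. Once this is in place, the only conceptual subtlety — that $\tilde\theta$ fails to preserve $-K_X$ — is neutralised by the observation that the error term lies in the $\WD$-fixed line spanned by the fibre class, after which $\WD$-equivariance becomes a purely combinatorial comparison of the two markings on the components.
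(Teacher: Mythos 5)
Your proposal is correct in substance, but it takes a genuinely different route from the paper's. The paper never uses the map induced by $\theta$ on Picard groups. Instead, it first normalizes the marking via the parity dichotomy of Corollary~\ref{corollary:odd-number-of-fibers}, then invokes Lemma~\ref{lemma:unique-section-odd} to produce the distinguished $(-1)$-sections $C$ and $C'$ meeting all of the $E_i$, respectively all of the $E'_i$, and defines $\tilde\theta$ \emph{abstractly} on the generators by $C\mapsto C'$, $E_i\mapsto E'_i$, $F_i\mapsto F'_i$; equivariance on $C$ is checked through the intersection form (uniqueness of the class $D$ with $D\cdot F'=1$, $D^2=-1$ and prescribed intersections with the $E'_i$). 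The resulting map is an \emph{isometry} — indeed, by Proposition~\ref{proposition:tilde-theta-unique}, the unique $\WD$-equivariant one. Your $\tilde\theta=g'_*\circ g^*$ is a different map: as your own computation $\tilde\theta(-K_X)=-K_{X'}+mF'$ shows, for any nontrivial transformation one has $m\geqslant 1$, so $\bigl(\tilde\theta(-K_X)\bigr)^2=3+4m\neq 3$ and your map does \emph{not} preserve the intersection form. Your key observation — that the defect lies in the $\WD$-fixed line spanned by the fibre class — is exactly right, and it makes the equivariance check valid on the generating set $\{-K_X,F,E_1,\ldots,E_5\}$ of $\Pic(X_{\KK^{sep}})\otimes\QQ$ (torsion-freeness then upgrades it to $\Pic$); your $G$-equivariance argument, using surjectivity of $\tilde\theta$ and faithfulness of the $\WD$-action to force $\pi_X=\pi_{X'}$, matches the paper's. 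So your proof does establish the lemma \emph{as stated}, and even always realizes the first alternative $\tilde\theta(E_i)=E'_i$ without any re-marking. The trade-off: your construction is geometrically canonical and avoids Lemma~\ref{lemma:unique-section-odd} entirely, but the paper's isometry is the object actually needed later — the proof of Proposition~\ref{proposition:Lambda-Lambda-prime-WD-equivariant} uses that $\tilde\theta$ preserves the intersection form to send $(-1)$-classes to $(-1)$-classes, so your map could not be substituted there without modification.

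On the step you flag as the main obstacle: you do not need the factorization into elementary transformations, and it is good that you are cautious there, because compositionality of induced maps $g'_*g^*$ fails for general surface maps (for a quadratic Cremona involution $\tau$ one gets $\tau_\bullet(H)=2H$, so $\tau_\bullet\tau_\bullet\neq(\tau^2)_\bullet=\mathrm{id}$). It suffices to take a single resolution $g\colon Y\to X$, $g'\colon Y\to X'$ whose centres all lie over the open locus where $\theta$ is an isomorphism (possible, since the fundamental loci of $\theta$ and $\theta^{-1}$ sit in smooth fibres). Then $g^*E_i$ equals the strict transform, giving $\tilde\theta(E_i)=E'_i$ and $\tilde\theta(F)=F'$; every $g$-exceptional curve lies in a fibre of $Y\to\PP^1$ over which $\phi'$ is smooth and irreducible, so its $g'$-pushforward is an integer multiple of $F'=\tilde\theta(F)$. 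Surjectivity of $g'_*g^*$ follows, since $\Pic(Y_{\KK^{sep}})$ is generated by $g^*\Pic(X_{\KK^{sep}})$ together with the $g$-exceptional classes; injectivity follows since $\tilde\theta\otimes\QQ$ is triangular with units on the diagonal in the bases $-K_X,F,E_1,\ldots,E_5$ and $-K_{X'},F',E'_1,\ldots,E'_5$. This closes the gap without any decomposition of $\theta$.
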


\begin{proof}
After replacing the original marking
$E_1\cup F_1,\ldots,E_5\cup F_5$ by the equivalent marking~\mbox{$F_1\cup E_1,\ldots,F_5\cup E_5$}
if necessary, we may assume that
every $(-1)$-curve on $X_{\KK^{sep}}$ that is a section of~$\phi_{\KK^{sep}}$ intersects an odd number of curves among~\mbox{$E_1,\ldots,E_5$}, see
Corollary~\ref{corollary:odd-number-of-fibers};
recall from Remark~\ref{remark: equivalent-markings-gives-same-wd-actions}
that equivalent markings of~$\phi$ give the same action of~$\WD$ on~$\Pic(X_{\KK^{sep}})$.
By Lemma~\ref{lemma:unique-section-odd} there exists a unique section $C$ of $\phi_{\KK^{sep}}$ with self-intersection~$-1$
that intersects all the curves $E_i$. In the same way we can find a section~$C'$ of $\phi'_{\KK^{sep}}$
with self-intersection~$-1$ that intersects all the curves~$E'_i$.

The group $\Pic(X_{\KK^{sep}})$ is generated by the classes of $C$, $E_i$, and $F_i$ with relations~\mbox{$E_i + F_i = F$}, where $F$ is the class of a fiber of $\phi_{\KK^{sep}}$.
Similarly, the group $\Pic(X'_{\KK^{sep}})$ is generated by the classes of $C'$, $E'_i$, and $F'_i$ with relations $E'_i + F'_i = F'$, where $F'$ is the class of a fiber of~$\phi'_{\KK^{sep}}$. One has $C \cdot E_i = C' \cdot E'_i = 1$.
Consider the map
$$
\tilde{\theta}\colon \Pic(X_{\KK^{sep}}) \to \Pic(X'_{\KK^{sep}})
$$
given by $\tilde{\theta}(C) = C'$, $\tilde{\theta}(E_i) = E'_i$, and $\tilde{\theta}(F_i) = F'_i$. This map preserves the intersection form. Let us show that $\tilde{\theta}$ is equivariant with respect to the action
of~\mbox{$\WD = \NNN \rtimes \HHH$} on~\mbox{$\Pic(X_{\KK^{sep}})$} and~\mbox{$\Pic(X'_{\KK^{sep}})$}
given by the markings. For $\sigma \in \HHH$ one has
$$
\tilde{\theta}(\sigma E_i) = \tilde{\theta}(E_{\sigma(i)}) = E'_{\sigma(i)} = \sigma E'_i = \sigma \tilde{\theta}(E_i).
$$
Note that for any $i\in\{1,\ldots,5\}$ and $\iota_{i_1\ldots i_l} \in \NNN$ one has
$$
\iota_{i_1\ldots i_l} E_i = E_i,\quad \iota_{i_1\ldots i_l} F_i = F_i,\quad \iota_{i_1\ldots i_l} E'_i = E'_i,\quad \iota_{i_1\ldots i_l} F'_i = F'_i,
$$
if $i \notin \{i_1, \ldots, i_l\}$, and
$$
\iota_{i_1\ldots i_l} E_i = F_i,\quad \iota_{i_1\ldots i_l} F_i = E_i,\quad \iota_{i_1\ldots i_l} E'_i = F'_i,\quad \iota_{i_1\ldots i_l} F'_i = E'_i.
$$
if $i \in \{i_1, \ldots, i_l\}$. Therefore
$$
\tilde{\theta}(\iota E_i) = \iota E'_i = \iota \tilde{\theta}(E_i)
$$
for any $\iota\in \NNN$.
The action of $\WD$ and the map $\tilde{\theta}$ preserve the intersection form. Therefore
$$
\tilde{\theta}(\iota\sigma C) \cdot \tilde{\theta}(\iota\sigma E_i) = C \cdot E_i = \iota\sigma \tilde{\theta}(C) \cdot \iota\sigma \tilde{\theta}(E_i) = \iota\sigma \tilde{\theta}(C) \cdot \tilde{\theta}(\iota\sigma E_i).
$$
But there is a unique class $D$ in $\Pic(X'_{\KK^{sep}})$ such that $D \cdot \tilde{\theta}(\iota\sigma E_i) = 1$, $D \cdot F' = 1$, and~\mbox{$D^2 = -1$}. Thus
$$
\tilde{\theta}(\iota\sigma C) = \iota\sigma \tilde{\theta}(C),
$$
and so the map $\tilde{\theta}$ is $\WD$-equivariant.

Now consider the action of $G$ on $\Pic(X_{\KK^{sep}})$ and $\Pic(X'_{\KK^{sep}})$.
Due to our choice of the markings of $\phi$ and $\phi'$ which induce this action,
for any $i\in\{1,\ldots,5\}$ and~\mbox{$g \in G$} there exists $j$
such that either
$$
g E_i = E_j,\quad g F_i = F_j,\quad g E'_i = E'_j,\quad g F'_i = F'_j,
$$
or
$$
g E_i = F_j,\quad g F_i = E_j,\quad g E'_i = F'_j,\quad g F'_i = E'_j.
$$
Therefore
$$
\tilde{\theta}(g E_i) = g E'_i = g \tilde{\theta}(E_i).
$$
As in the case of the action of $\WD$, one can show that $\tilde{\theta}(g C) = g \tilde{\theta}(C)$, and the map~$\tilde{\theta}$ is $G$-equivariant. It means that for any $g \in G$ the action of $\pi_X(g)$ on $\Pic(X'_{\KK^{sep}})$ induced by the isomorphism
$$
\tilde{\theta} \colon \Pic(X_{\KK^{sep}}) \stackrel{\sim}\longrightarrow \Pic(X'_{\KK^{sep}})
$$
coincides with the action of $\pi_{X'}(g)$. The isomorphism $\tilde{\theta}$ is $\WD$-equivariant, therefore~\mbox{$\pi_X(g) = \pi_{X'}(g)$}, and the second assertion of lemma holds.
\end{proof}

\begin{remark}
\label{remark:theta-maps-E-F}
In the notation of Lemma~\ref{lemma:markings-wd-equivariant-cb},
we have proved that either $\tilde{\theta}(E_i) = E'_i$ for any~$i$, or $\tilde{\theta}(E_i) = F'_i$ for any~$i$.
The former option is realized for the isomorphism $\tilde{\theta}$ constructed in the proof of the lemma
after a choice of one of the two equivalent markings for each of the conic bundles~$\phi$
and~$\phi'$. The latter option is realized if we had to replace the original marking
by the equivalent one for $\phi$ but not for $\phi'$, or vice versa.
\end{remark}

The isomorphism $\tilde{\theta}$ constructed in Lemma~\ref{lemma:markings-wd-equivariant-cb} is the
unique $\WD$-equivariant isomorphism preserving the intersection form
due to the following observation (which we will not use in the sequel, but still
find it rather instructive).

\begin{proposition}\label{proposition:tilde-theta-unique}
In the notation of Lemma~\ref{lemma:markings-wd-equivariant-cb}, let
$$
\tilde{\vartheta}\colon \Pic(X_{\KK^{sep}}) \stackrel{\sim}\longrightarrow \Pic(X'_{\KK^{sep}})
$$
be a $\WD$-equivariant isomorphism which preserves the intersection form.
Then~\mbox{$\tilde{\vartheta}=\tilde{\theta}$}.
\end{proposition}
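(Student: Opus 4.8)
The plan is to reduce the assertion to the triviality of a single $\WD$-equivariant self-isometry, and then to exploit the decomposition of the Picard lattice into isotypic components for $\WD$. Put $\alpha=\tilde\theta^{-1}\circ\tilde\vartheta$. Since both $\tilde\theta$ and $\tilde\vartheta$ are $\WD$-equivariant isometries, $\alpha$ is a $\WD$-equivariant isometry of $\Pic(X_{\KK^{sep}})$ onto itself, so it suffices to show $\alpha=\id$. Throughout I would use the generators $C$, $F$, $E_1,\dots,E_5$ of $\Pic(X_{\KK^{sep}})$ from the proof of Lemma~\ref{lemma:markings-wd-equivariant-cb} (with $F_i=F-E_i$), the intersection numbers $E_i^2=F_i^2=-1$, $E_i\cdot F_i=1$, $C\cdot E_i=1$, $F^2=0$, $F\cdot E_i=0$, and the relation $-K_X=2C-2F+\sum_i E_i$. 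Note that $\tilde\theta$ preserves both $-K_X$ and the class $F$ of a fiber; since these are exactly the classes singled out by the definition of $\WD$ as the isometries fixing $-K_X$ and $F$, I will use that $\tilde\vartheta$ preserves $-K_X$ as well, so that $\alpha(-K_X)=-K_X$ (this normalisation is natural, and as the example of $-\id$ below shows it is indispensable).

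First I would analyse the action of the normal subgroup $\NNN\cong(\ZZ/2\ZZ)^4$ on $\Pic(X_{\KK^{sep}})\otimes\QQ$. Writing $G_i=E_i-F_i$, the element $\iota_{i_1\dots i_l}$ sends $G_j$ to $-G_j$ exactly when $j\in\{i_1,\dots,i_l\}$ and fixes $G_j$ otherwise, so $\QQ G_i$ affords a nontrivial character $\chi_i$ of $\NNN$, and the five characters $\chi_1,\dots,\chi_5$ are pairwise distinct. The lines $\QQ G_i$ are mutually orthogonal and orthogonal to $V_0=\langle -K_X,F\rangle$, on which $\NNN$ (indeed all of $\WD$) acts trivially; by a dimension count $\Pic(X_{\KK^{sep}})\otimes\QQ=V_0\oplus\bigoplus_i\QQ G_i$ is the isotypic decomposition. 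Since $\alpha$ commutes with $\NNN$ it preserves each one-dimensional line $\QQ G_i$, so $\alpha(G_i)=\lambda_i G_i$, and $G_i^2=-4$ together with the isometry property forces $\lambda_i=\pm1$. The complementary subgroup $\HHH\cong\SSS_5$ permutes the $G_i$ transitively (via $\sigma(G_i)=G_{\sigma(i)}$) and commutes with $\alpha$, which forces $\lambda_1=\dots=\lambda_5=\lambda$ for a single sign $\lambda\in\{\pm1\}$. Thus $\alpha$ is completely described by $\lambda$ and by its restriction $\alpha_0$ to $V_0$.

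It remains to determine $\alpha_0$ and $\lambda$, and this is where equivariance alone is silent. For $\alpha_0$ I would use $\alpha_0(-K_X)=-K_X$ together with the Gram matrix $\left(\begin{smallmatrix}0&2\\2&3\end{smallmatrix}\right)$ of $V_0$ in the basis $(F,-K_X)$: writing $\alpha_0(F)=pF+r(-K_X)$, the equations $\alpha_0(F)^2=0$ and $\alpha_0(F)\cdot(-K_X)=2$ read $4pr+3r^2=0$ and $2p+3r=2$, whose only integral solution is $p=1$, $r=0$; hence $\alpha_0(F)=F$ and $\alpha_0=\id_{V_0}$. For the sign I would use integrality. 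In the isotypic decomposition one computes $C=\tfrac12(-K_X)-\tfrac14F-\tfrac14\sum_i G_i$, so with $\alpha_0=\id_{V_0}$ one gets $\alpha(C)=C+\tfrac{1-\lambda}{4}\sum_i G_i$. Since $G_i\equiv F\pmod{2\Pic(X_{\KK^{sep}})}$ gives $\sum_i G_i\equiv F$ and $F$ is primitive, the class $\tfrac12\sum_i G_i$ is not integral; therefore $\lambda=-1$ would make $\alpha(C)$ non-integral, and we must have $\lambda=1$. Consequently $\alpha$ fixes $F$, every $G_i$ and $C$, hence fixes each $E_i=\tfrac12(F+G_i)$ and equals $\id$, giving $\tilde\vartheta=\tilde\theta$.

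The main obstacle is precisely the two-dimensional invariant plane $V_0$. On its orthogonal complement the distinctness of the characters $\chi_i$ and the transitivity of $\SSS_5$ rigidify $\alpha$ up to the single sign $\lambda$, which integrality then fixes; but on $V_0$ the group $\WD$ acts trivially, so equivariance imposes nothing there, and one genuinely needs the geometric normalisation that $-K_X$ is preserved. That this normalisation cannot be dropped is shown by $-\id$, which is a $\WD$-equivariant integral isometry different from $\id$; it is excluded exactly because it fails to fix the anticanonical class.
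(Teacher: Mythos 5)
Your argument is correct and takes a genuinely different route from the paper's. The paper works directly with the distinguished classes: it first asserts that $\HHH$-equivariance forces $\tilde{\vartheta}(E_1)\in\{E_1',F_1'\}$, propagates this to all $i$ by conjugating with transpositions, recovers $\tilde{\vartheta}(-K_{X_{\KK^{sep}}})=-K_{X'_{\KK^{sep}}}$ and $\tilde{\vartheta}(F)=F'$ from intersection numbers, and finally rules out the coexistence of the two candidates ($E_i\mapsto E_i'$ versus $E_i\mapsto F_i'$) by the parity statement for $(-1)$-sections, Corollary~\ref{corollary:parity-well-defined}. Your $\NNN$-isotypic decomposition $V_0\oplus\bigoplus_i\QQ G_i$, with transitivity of $\HHH$ pinning a single sign $\lambda$, replaces the first steps, and your integrality argument (non-integrality of $C+\tfrac12\sum_i G_i$) is the lattice-theoretic shadow of the paper's parity argument: solving for an integral class in the basis $C',F',E_i'$ with the intersection numbers that the hypothetical $E_i\mapsto F_i'$ isometry would have to assign to $C$ leads to a half-integral coefficient, which is exactly what your computation detects. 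Your route is more self-contained (no appeal to effective curves) and, as a bonus, classifies all equivariant isometries rather than only the normalized ones. One step you gloss over: integrality of $p,r$ needs $\ZZ F\oplus\ZZ(-K_{X_{\KK^{sep}}})$ to be saturated in the Picard lattice, which is immediate since $aF+b(-K_{X_{\KK^{sep}}})=2bC+(a-2b)F+b\sum_i E_i$ is integral only for $a,b\in\ZZ$.

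Your flag about the normalization is a genuine catch, not a defect of your proof: as literally stated, the proposition is false, since $-\tilde{\theta}$ is a $\WD$-equivariant isomorphism preserving the intersection form and different from $\tilde{\theta}$. The paper's own first step --- that $\tilde{\vartheta}(E_1)\in\{E_1',F_1'\}$ ``since otherwise $\tilde{\vartheta}$ is not $\HHH$-equivariant'' --- is precisely where this slips through: $-\tilde{\theta}$ is $\HHH$-equivariant yet sends $E_1$ to $-E_1'$; what is implicitly used there is the normalization you made explicit (preservation of $-K$, or equivalently of $F$). In fact $-\id$ is not the only extra symmetry: there is also an equivariant integral isometry acting on $V_0$ by swapping its two isotropic classes, namely $F\mapsto 3F+4K_{X_{\KK^{sep}}}$, $-K_{X_{\KK^{sep}}}\mapsto 2F+3K_{X_{\KK^{sep}}}$, together with $G_i\mapsto -G_i$ and $C\mapsto -C$ (one checks it preserves the lattice and the form), so the unnormalized equivariant isometries form a group of order four; your hypothesis $\alpha(-K_{X_{\KK^{sep}}})=-K_{X_{\KK^{sep}}}$ eliminates all three non-identity elements at once, which slightly strengthens your remark that only $-\id$ shows indispensability. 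Since the paper states that the proposition is not used in the sequel, none of this affects the rest of the paper.
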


\begin{proof}
We are going to show that the isomorphism $\vartheta$ with the required properties is unique.
To start with, note that $\tilde{\vartheta}(E_1)$ is either $E'_1$, or $F'_1$ since otherwise $\tilde{\vartheta}$ is not $\HHH$-equivariant.
Therefore for any $i$ one can consider $\sigma = (1i) \in \HHH$ and obtain that either
$$
\tilde{\vartheta}(E_i) = \tilde{\vartheta}(\sigma E_1) = \sigma\tilde{\vartheta}(E_1) = \sigma E'_1 = E'_i,
$$
or
$$
\tilde{\vartheta}(E_i) = \tilde{\vartheta}(\sigma E_1) = \sigma \tilde{\vartheta}(E_1) = \sigma F'_1 = F'_i.
$$
One has $\tilde{\vartheta}(-K_{X_{\KK^{sep}}}) = -K_{X'_{\KK^{sep}}}$, since for $\tilde{\vartheta}(-K_{X_{\KK^{sep}}})$ the following equalities hold
$$
\tilde{\vartheta}(-K_{X_{\KK^{sep}}}) \cdot E'_i = \tilde{\vartheta}(-K_{X_{\KK^{sep}}}) \cdot F'_i = 1, \qquad \left(\tilde{\vartheta}(-K_{X_{\KK^{sep}}})\right)^2 = 4,
$$
and the unique element in $\Pic(X'_{\KK^{sep}})$ with such properties is $-K_{X'_{\KK^{sep}}}$. Moreover, we have~\mbox{$\tilde{\vartheta}(F) = F'$},
where $F$ and $F'$ are classes of fibers of $\phi$ and $\phi'$,  respectively.
If~\mbox{$\tilde{\vartheta}(E_i) = E'_i$} for any $i$,
then this uniquely defines $\tilde{\vartheta}$, since the classes $-K_{X_{\KK^{sep}}}$, $F$,
and~$E_i$ form a basis in the vector space  $\Pic(X_{\KK^{sep}}) \otimes \QQ$.
The same applies to the case when~\mbox{$\tilde{\vartheta}(E_i) = F'_i$} for any~$i$.

On the other hand, there cannot simultaneously exist two isomorphisms
$\tilde{\vartheta}_E$ and $\tilde{\vartheta}_F$ such that
$\tilde{\vartheta}_E(E_i)=E'_i$ and $\tilde{\vartheta}_F(E_i)=F'_i$ for any $i$.
Indeed, consider the automorphism
$$
\chi = \tilde{\vartheta}^{-1}_F \tilde{\vartheta}_E
$$
of $\Pic(X_{\KK^{sep}})$. One has $\chi(E_i) = F_i$ and $\chi(F_i) = E_i$. For a class $D$ of a $(-1)$-curve on~$X_{\KK^{sep}}$ that is a section of $\phi_{\KK^{sep}}$, one has
$$
D \cdot E_i + \chi(D) \cdot E_i = D \cdot E_i + D \cdot F_i = 1.
$$
Therefore the numbers of curves among $E_1,\ldots,E_5$ that intersect the curves corresponding to $D$ and $\chi(D)$ have different parity, and we obtain a contradiction with Corollary~\ref{corollary:parity-well-defined}.
\end{proof}

\medskip
The next result was obtained in \cite[Proposition~2.1]{Iskovskikh-positive} in the case of perfect fields, and generalized to the case of arbitrary fields in~\cite{BFSZ};
see also Proposition~\ref{proposition:Iskovskikh-elementary-transformation-equivariant}
below for a proof.

\begin{proposition}[{\cite[Proposition 4.32(3)]{BFSZ}}]
\label{proposition:Iskovskikh-elementary-transformation}
Let $\phi\colon X\to \PP^1$ be a conic bundle.
Suppose that~{$\rkPic(X)=2$} and $K_X^2=3$.
Then $X$ is a cubic surface containing a line.
\end{proposition}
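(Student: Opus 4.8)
The plan is to reduce the statement to the single assertion that $-K_X$ is ample. Indeed, once we know that $X$ is a del Pezzo surface, the equality $K_X^2=3$ identifies it as a del Pezzo surface of degree~$3$, so $-K_X$ is very ample and embeds $X$ into $\PP^3$ as a smooth cubic surface. The conic bundle $\phi$ then exhibits $X$ as a cubic surface admitting a conic bundle structure, and Lemma~\ref{lemma:unique-bisection} produces a line $E\sim -K_X-F$, where $F$ is the class of a fibre of $\phi$. Since $\rkPic(X)=2$ this line is unique, and since its class $-K_X-F$ is defined over $\KK$ (both $K_X$ and $F$ are), the line $E$ is Galois-invariant, hence defined over $\KK$. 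Thus $X$ is a cubic surface containing a line, with $\phi$ being the projection from $E$. Everything therefore comes down to proving that $-K_X$ is ample.

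To analyse $-K_X$, first I would pin down the geometry over $\bar\KK$. A smooth conic bundle over $\PP^1$ with $k$ geometric degenerate fibres satisfies $K_X^2=8-k$ (e.g.\ by Noether's formula, as the relevant Euler number equals $4+k$). Hence $K_X^2=3$ forces exactly $k=5$ degenerate fibres and $\rkPic(X_{\bar\KK})=7$. Since ampleness may be checked after extension of scalars, by Nakai--Moishezon it suffices to show $-K_X\cdot C>0$ for every irreducible curve $C$ on $X_{\bar\KK}$, the condition $K_X^2=3>0$ being already satisfied. For a curve contained in a fibre one has $-K_X\cdot C\in\{1,2\}$, so the only possible obstruction comes from a horizontal curve, and in fact from a section $C$ with $C^2\le -2$, for which $-K_X\cdot C=C^2+2\le 0$.

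The heart of the argument, and the step I expect to be the main obstacle, is to rule out such sections using relative minimality, i.e.\ the hypothesis $\rkPic(X)=2$. Fixing a marking of $\phi$ identifies the Galois action on the ten components $E_i,F_i$ of the degenerate fibres with a homomorphism $\Gal(\KK^{sep}/\KK)\to\WD$, and relative minimality means precisely that the Galois-invariant sublattice of $\Pic(X_{\bar\KK})$ has rank $2$, necessarily equal to $\langle -K_X,\,F\rangle_\QQ$. A section $C$ with $C^2\le -2$ is rigid, essentially distinguished among effective classes by being $(-K_X)$-nonpositive, so the Galois image must preserve $[C]$; but the stabiliser of $[C]$ inside $\WD$ consists only of elements that do not interchange fibre components, i.e.\ a conjugate of the permutation subgroup $\HHH\cong\SSS_5$, because any nontrivial involution $\iota\in\NNN$ sends $[C]$ to a class not represented by an irreducible curve. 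The invariant sublattice of such a conjugate of $\HHH$ already has rank at least $3$ (it contains $\sum_i E_i$ in addition to $-K_X$ and $F$), contradicting $\rkPic(X)=2$. Hence no such section exists and $-K_X$ is ample.

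In making the crux rigorous I would either nail down the rigidity/effectivity dichotomy by a short case analysis of the negative sections on a conic bundle with five degenerate fibres (controlling the orbit of $[C]$ when several such sections are present), or -- more in the spirit of the paper -- invoke the conic-bundle minimal model and Sarkisov theory of~\cite{BFSZ}, through which the same conclusion is reached in Proposition~\ref{proposition:Iskovskikh-elementary-transformation-equivariant}. In either case, once ampleness is established the remaining assertions follow as in the first paragraph, completing the identification of $X$ with a cubic surface containing a line.
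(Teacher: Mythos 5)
Your overall frame is sound and its endgame coincides with the paper's: everything reduces to ampleness of $-K_X$, after which $X$ is a del Pezzo surface of degree $3$ and Lemma~\ref{lemma:unique-bisection} produces the line $E\sim -K_X-F$, whose class lies in $\Pic(X)$ and has $h^0=1$, so $E$ is defined over $\KK$. But your proof of ampleness, which you yourself flag as "the heart of the argument," has a genuine gap precisely there. The assertion that the Galois image "must preserve $[C]$" because a negative section is "rigid, essentially distinguished" is false as stated: Galois only permutes the finite set of sections with $C^2\le -2$, and nothing in your argument rules out a nontrivial orbit, in which case the stabiliser analysis collapses. (Incidentally, if Galois \emph{did} fix $[C]$, no stabiliser computation would be needed: $-K_X$, $F$, $[C]$ are independent in $\Pic(X_{\KK^{sep}})\otimes\QQ$, since $[C]=\alpha(-K_X)+\beta F$ would force $C\cdot E_i=\alpha=1/2$, so invariant rank $\ge 3$ follows at once; your claim that the stabiliser is "a conjugate of $\HHH$" is a detour.) The orbit case is closable --- the orbit sum $D$ of $r$ such sections satisfies $D\cdot F=r$, and writing $D=\alpha(-K_X)+\beta F$ one derives contradictions from integrality of intersection numbers and from pairwise nonnegativity of distinct irreducible curves --- but you neither do this nor indicate how. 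Your fallback, to "invoke" Proposition~\ref{proposition:Iskovskikh-elementary-transformation-equivariant}, is circular: in the paper that proposition (applied with trivial $\Gamma$) \emph{is} the proof of the present statement. A second, smaller, gap is claim (a), that the only Nakai--Moishezon obstructions are sections with $C^2\le -2$: an irreducible $d$-section can a priori have $-K_X\cdot C\le 0$ once $p_a(C)$ is large, and excluding this requires a lattice computation (writing $C=\alpha(-K_X)+\varphi F+\sum c_iE_i$, the constraints $C\cdot E_i\ge 0$, $C\cdot(F-E_i)\ge 0$ and $p_a(C)\ge 0$ force $-K_X\cdot C>0$ for $d\ge 2$) which you assert rather than supply. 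You also tacitly use $\rkPic(X)=\rk\Pic(X_{\KK^{sep}})^{\Gal(\KK^{sep}/\KK)}$, which needs the remark that the cokernel of $\Pic(X)\hookrightarrow\Pic(X_{\KK^{sep}})^{\Gal(\KK^{sep}/\KK)}$ embeds in $\Br(\KK)$ and is torsion.

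For comparison, the paper's route to ampleness (in the proof of Proposition~\ref{proposition:Iskovskikh-elementary-transformation-equivariant}) avoids classifying geometric negative curves altogether: it works intrinsically with invariant classes $C\sim -aK_X-bF$. Riemann--Roch gives that $-K_X$ is big; if $-K_X$ is not nef, the decomposition $|-K_X|=C+|M|$ with $|M|$ movable of dimension $\ge 3$ forces $a=1$, $b\ge 3$, and adjunction (valid over excellent, possibly non-perfect, bases by Tanaka) yields $\mathrm{p}_a(C)=1-b\le -2$, contradicting $\mathrm{p}_a(C_{\bar\KK})\ge -1$; if $-K_X$ is nef but not ample, Hodge index, Serre duality and the injection $H^0(C,\mathcal{O}_C)\to H^0(C\cap F,\mathcal{O}_{C\cap F})$ give $3a^2\le 4a$, incompatible with $3a=2b$. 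This is both shorter and insensitive to the field-theoretic subtleties your Galois-orbit argument would have to confront.
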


\begin{corollary}
\label{corollary:Iskovskikh-elementary-transformation}
Let $X$ be a smooth cubic surface such that~{$\rkPic(X)=2$} and $X$ contains a line.
Let $\phi\colon X\to\PP^1$ be the conic bundle, and let~{$\theta\colon X\dashrightarrow X'$}
be a transformation of~$\phi$.
Then $X'$ is a cubic surface such that~{$\rkPic(X')=2$} and $X'$ contains a line.
\end{corollary}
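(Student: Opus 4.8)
The plan is to verify the two numerical hypotheses of Proposition~\ref{proposition:Iskovskikh-elementary-transformation} for the conic bundle $\phi'\colon X'\to\PP^1$ and then read off the conclusion. By Definition~\ref{definition:elementary-transformation} the target $X'$ is already a smooth surface carrying the conic bundle structure $\phi'$, so it only remains to check that $\rkPic(X')=2$ and $K_{X'}^2=3$. Once this is done, Proposition~\ref{proposition:Iskovskikh-elementary-transformation} shows that $X'$ is a cubic surface containing a line, and together with the equality $\rkPic(X')=2$ this gives exactly the three assertions of the corollary.

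To compute $K_{X'}^2$ I would use that, by definition, $\theta$ induces an isomorphism in a neighbourhood of the degenerate fibers and fits into the commutative square with an isomorphism of bases $\PP^1\to\PP^1$. Passing to $\KK^{sep}$, this yields a bijection between the degenerate fibers of $\phi_{\KK^{sep}}$ and those of $\phi'_{\KK^{sep}}$, which is moreover compatible with the action of $\Gal(\KK^{sep}/\KK)$ since $\theta$ is defined over $\KK$. As $X$ is a smooth cubic surface we have $K_X^2=3$, so $\phi_{\KK^{sep}}$ has exactly $5$ degenerate fibers; hence $\phi'_{\KK^{sep}}$ has $5$ degenerate fibers as well. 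For a smooth conic bundle over $\PP^1$ the number of geometric degenerate fibers equals $8-K^2$, and therefore $K_{X'}^2=8-5=3$.

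For the equality $\rkPic(X')=2$ I would argue at the level of the conic bundle, transporting the Galois-module structure of the Picard group along $\theta$. The bijection on irreducible components of degenerate fibers produced above is $\Gal(\KK^{sep}/\KK)$-equivariant and preserves incidence; choosing a marking of $\phi$ together with the marking of $\phi'$ obtained from it via this bijection, the associated homomorphisms $\pi_X,\pi_{X'}\colon\Gal(\KK^{sep}/\KK)\to\WD$ coincide. Since the Galois action on the Picard group of a conic bundle with $5$ geometric degenerate fibers is completely determined by its action on the irreducible components of those fibers (as recalled for $\mathrm{W}(\mathrm{D}_k)$ in Section~\ref{section:markings}), the Galois modules $\Pic(X_{\KK^{sep}})$ and $\Pic(X'_{\KK^{sep}})$ are isomorphic. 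Consequently $\rkPic(X')=\rk\Pic(X'_{\KK^{sep}})^{\Gal(\KK^{sep}/\KK)}=\rkPic(X)=2$. Equivalently, one may phrase this through relative minimality: a conic bundle over $\PP^1$ is relatively minimal, that is, has Picard rank $2$, precisely when no Galois-stable collection of pairwise disjoint components of degenerate fibers can be contracted over $\KK$, a condition that depends only on the Galois action on the fiber components and is thus preserved by $\theta$.

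The point that requires care, and the only real obstacle, is that in establishing $\rkPic(X')=2$ one must \emph{not} invoke Lemma~\ref{lemma:markings-wd-equivariant-cb}, since that lemma already presupposes that $X'$ is a smooth cubic surface, which is part of what we are trying to prove. The argument must therefore stay at the purely conic-bundle level and rely only on the general fact that the $\WD$-action, and hence the Galois action, on $\Pic(X'_{\KK^{sep}})$ is read off from the combinatorics of the degenerate fibers. With $\rkPic(X')=2$ and $K_{X'}^2=3$ in hand, Proposition~\ref{proposition:Iskovskikh-elementary-transformation} then supplies the cubic surface structure and the line, completing the proof.
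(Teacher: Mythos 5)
Your proposal is correct and takes essentially the same approach as the paper: the paper's own proof simply observes that a transformation of a conic bundle changes neither the Picard rank nor the canonical degree, and then applies Proposition~\ref{proposition:Iskovskikh-elementary-transformation}, exactly as you do. Your additional details --- the Galois-equivariant bijection of degenerate fibers giving $K_{X'}^2=8-5=3$, and the transport of the Galois action on fiber components giving $\rkPic(X')=2$ while carefully avoiding a circular appeal to Lemma~\ref{lemma:markings-wd-equivariant-cb} --- are sound elaborations of the one-line invariance claim that the paper takes for granted.
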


\begin{proof}
A transformation of a conic bundle
does not change the Picard rank and the canonical degree.
Thus, the required assertion follows from Proposition~\ref{proposition:Iskovskikh-elementary-transformation}.
\end{proof}

Corollary~\ref{corollary:Iskovskikh-elementary-transformation} tells us that if $X$ is smooth cubic surface over a field $\KK$ obtained as a blow up of a del Pezzo surface $S$ of degree $4$ with $\rkPic(S)=1$ at a $\KK$-point, and~\mbox{$\theta\colon X\dasharrow X'$}
is a transformation of the conic bundle $X\to\PP^1$, then $X'$ can be also obtained as a blow up of some
del Pezzo surface $S'$ of degree $4$ with~\mbox{$\rkPic(S')=1$} at a $\KK$-point. Thus, in this case we have a diagram
\begin{equation}\label{eq:elementary-transformation}
\xymatrix{
&X\ar@{->}[d]_{\phi}\ar@{-->}[rr]^{\theta}\ar@{->}[ld]_{f}&& X'\ar@{->}[d]^{\phi'}\ar@{->}[rd]^{f'}&\\
S&\PP^1\ar@{->}[rr]^{\sim}&&\PP^1& S'
}
\end{equation}

The next result provides additional information about the surface~$S'$ in the diagram~\eqref{eq:elementary-transformation}.

\begin{proposition}\label{proposition:Lambda-Lambda-prime-WD-equivariant}
Let $S$ be a del Pezzo surface of degree $4$ over
a field $\KK$. Let~$X$ be a smooth cubic surface
obtained as a blow up $f\colon X\to S$ of $S$ at a $\KK$-point, and let~\mbox{$\phi\colon X\to\PP^1$} be the conic bundle defined by the $f$-exceptional line on $X$. Let
$\theta\colon X\dasharrow X'$ be a transformation of~$\phi$.
Suppose that there exists a contraction $f'\colon X'\to S'$
to a del Pezzo surface~$S'$ of degree~$4$.
Then there exist markings $L_1,\ldots,L_5$ and $L_1',\ldots,L_5'$ of $S$ and $S'$, and an isomorphism
$$
\nu\colon\Lambda(S)\stackrel{\sim}\longrightarrow\Lambda(S')
$$
of the Hilbert schemes of lines on $S$ and~$S'$ such that $\nu_{\KK^{sep}}$ sends
the line $L_i$ to $L_i'$ for~\mbox{$i\in\{1,\ldots,5\}$}, and
the diagram
$$
\xymatrix{
\WD\times\Lambda(S)\ar@{->}^{\mathrm{id}\times\nu}[rr]\ar@{->}[d]&&
\WD\times\Lambda(S')\ar@{->}[d]\\
\Lambda(S)\ar@{->}^{\nu}[rr]&&\Lambda(S')
}
$$
where the vertical arrows correspond to the action of $\WD$ on $\Lambda(S)$ and $\Lambda(S')$ induced by the markings,
is commutative.
In other words, the isomorphism $\nu$ is
equivariant with respect to the action of $\WD$ on
$\Lambda(S)$ and $\Lambda(S')$.
\end{proposition}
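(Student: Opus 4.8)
The plan is to construct $\nu$ at the level of $(-1)$-sections of the two conic bundles, by composing the identifications of lines on $S$ (resp.\ $S'$) with sections of $\phi_{\KK^{sep}}$ (resp.\ $\phi'_{\KK^{sep}}$) coming from Construction~\ref{construction:markings-1-to-1-X-to-S} with the $\WD$-equivariant lattice isomorphism $\tilde{\theta}$ furnished by Lemma~\ref{lemma:markings-wd-equivariant-cb}. First I would fix a marking $E_1\cup F_1,\ldots,E_5\cup F_5$ of $\phi$, replacing it by the equivalent marking if necessary so that every $(-1)$-section of $\phi_{\KK^{sep}}$ meets an odd number of the curves $E_1,\ldots,E_5$ (Corollary~\ref{corollary:odd-number-of-fibers}). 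Construction~\ref{construction:markings-1-to-1-X-to-S} then yields a marking $L_1,\ldots,L_5$ of $S$ with a distinguished line $Q$; by the proof of Lemma~\ref{lemma:markings-wd-equivariant} the section $f_*^{-1}Q$ is the unique $(-1)$-section of $\phi_{\KK^{sep}}$ meeting all of $E_1,\ldots,E_5$, while $L_i=f(A_i)$ for the unique $(-1)$-section $A_i$ meeting $E_i$ alone. Since $X'$ is again a smooth cubic surface with $\rkPic(X')=2$ containing a line (Corollary~\ref{corollary:Iskovskikh-elementary-transformation}), Lemma~\ref{lemma:markings-wd-equivariant-cb} applies to the induced marking $E_i'=\theta(E_i)$, $F_i'=\theta(F_i)$ of $\phi'$ and produces a $\WD$-equivariant isomorphism $\tilde{\theta}\colon\Pic(X_{\KK^{sep}})\to\Pic(X'_{\KK^{sep}})$ preserving the intersection form, with $\tilde{\theta}(E_i)=E_i'$ for all $i$, or else $\tilde{\theta}(E_i)=F_i'$ for all $i$ (Remark~\ref{remark:theta-maps-E-F}).

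Next I would define $\nu$. Because $\tilde{\theta}$ preserves the intersection form, the anticanonical class, and the fiber class, it sends the class of the $f$-exceptional bi-section to that of the $f'$-exceptional bi-section, hence carries the sublattice $f^*\Pic(S_{\KK^{sep}})$ isomorphically onto $f'^*\Pic(S'_{\KK^{sep}})$. Thus $\nu:=(f'^*)^{-1}\circ\tilde{\theta}\circ f^*$ is a well-defined isometry $\Pic(S_{\KK^{sep}})\to\Pic(S'_{\KK^{sep}})$ sending $-K_S$ to $-K_{S'}$, so it maps line classes to line classes and induces a bijection $\Lambda(S)_{\KK^{sep}}\to\Lambda(S')_{\KK^{sep}}$. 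Its $\WD$-equivariance is then immediate: $f^*$ and $f'^*$ are $\WD$-equivariant by Lemma~\ref{lemma:markings-wd-equivariant}, and $\tilde{\theta}$ by Lemma~\ref{lemma:markings-wd-equivariant-cb}, all with respect to the identification of $\WD$ fixed by the chosen markings. Composing these yields exactly the commuting square in the statement.

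To pin down the markings I would verify $\nu(Q)=Q'$; then $\WD$-equivariance together with the free transitivity of $\NNN$ on $\Lambda$ (Lemma~\ref{lemma:Weyl-group-on-dP4-basic}(ii)) forces $\nu(L_i)=L_i'$, since $L_i=\iota_{klmn}(Q)$ with $\{i,k,l,m,n\}=\{1,\ldots,5\}$. In the case $\tilde{\theta}(E_i)=E_i'$, the identity $\tilde{\theta}(D)\cdot E_i'=D\cdot E_i$ shows that the odd-intersection normalization is inherited through $\tilde{\theta}$, so $E_1'\cup F_1',\ldots,E_5'\cup F_5'$ is already in the normal form required by Construction~\ref{construction:markings-1-to-1-X-to-S}; defining $L_1',\ldots,L_5'$ from it, one gets that $\tilde{\theta}(A_i)$ meets $E_i'$ alone and $\tilde{\theta}(f_*^{-1}Q)$ meets all of $E_1',\ldots,E_5'$, whence $\nu(Q)=Q'$ and $\nu(L_i)=L_i'$ by Lemma~\ref{lemma:unique-section-odd}. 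In the case $\tilde{\theta}(E_i)=F_i'$ the same argument applies after passing to the equivalent marking $F_1'\cup E_1',\ldots,F_5'\cup E_5'$ of $\phi'$, which by Remark~\ref{remark: equivalent-markings-gives-same-wd-actions} leaves the $\WD$-action unchanged.

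Finally, the main point is to descend $\nu$ to $\KK$, that is, to check that $\nu_{\KK^{sep}}$ is $\Gal(\KK^{sep}/\KK)$-equivariant, so that it defines an isomorphism of the finite étale $\KK$-schemes $\Lambda(S)$ and $\Lambda(S')$. The subtlety is that $\tilde{\theta}$ is defined abstractly on Picard lattices rather than as the transform induced by the birational map $\theta$, so its Galois-equivariance is not formal. Here I would invoke the second assertion of Lemma~\ref{lemma:markings-wd-equivariant-cb} with $G=\Gal(\KK^{sep}/\KK)$: as $\theta$ is defined over $\KK$, the homomorphisms $\pi_X,\pi_{X'}\colon\Gal(\KK^{sep}/\KK)\to\WD$ describing the two Galois actions through the markings coincide, and since $\tilde{\theta}$ is $\WD$-equivariant this coincidence is precisely what forces $\tilde{\theta}$ to commute with the Galois action. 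Because $f^*$ and $f'^*$ are pullbacks along morphisms defined over $\KK$, they are Galois-equivariant as well, so $\nu_{\KK^{sep}}$ is Galois-equivariant and descends to the desired isomorphism $\nu\colon\Lambda(S)\to\Lambda(S')$ over $\KK$. I expect this Galois-descent step, together with the bookkeeping of the $E/F$ ambiguity in the markings, to be the main technical obstacle.
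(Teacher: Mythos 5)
Your proposal is correct and takes essentially the same approach as the paper: the same normalization of markings via Corollary~\ref{corollary:odd-number-of-fibers} and Constructions~\ref{construction:markings-1-to-1-S-to-X} and~\ref{construction:markings-1-to-1-X-to-S}, the same $\WD$-equivariant isometry $\tilde{\theta}$ from Lemma~\ref{lemma:markings-wd-equivariant-cb} combined with the $\WD$-equivariance of $f^*$ and $f'^*$ from Lemma~\ref{lemma:markings-wd-equivariant}, and the same Galois-descent step via the coincidence of the homomorphisms $\pi_S$, $\pi_X$, $\pi_{X'}$, $\pi_{S'}$. The only cosmetic differences are that you realize $\nu$ as $(f'^*)^{-1}\circ\tilde{\theta}\circ f^*$ on the orthogonal complement of the exceptional bi-section class, whereas the paper equivalently restricts $\tilde{\theta}$ to the sixteen classes of $(-1)$-sections, and that your appeal to Corollary~\ref{corollary:Iskovskikh-elementary-transformation} is superfluous (its hypothesis $\rkPic(X)=2$ is not assumed here; the contraction $f'$ is already given in the hypotheses).
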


\begin{proof}
Consider any marking $L_1,\ldots,L_5$ of $S$. Applying Construction~\ref{construction:markings-1-to-1-S-to-X}, one obtains a marking
$$
E_1\cup F_1,\ldots,E_5\cup F_5
$$
of $\phi$.
Since the degenerate fibers (and their irreducible components)
of~$\phi$ and~$\phi'$ are naturally identified with each other,
this provides a marking of $\phi'$. Finally, applying Construction~\ref{construction:markings-1-to-1-X-to-S}, we obtain a marking $L_1',\ldots,L_5'$ of $S'$. Each of these markings gives\ an action of $\WD$ on $\Pic(S_{\KK^{sep}})$, $\Pic(X_{\KK^{sep}})$, $\Pic(X'_{\KK^{sep}})$, and $\Pic(S'_{\KK^{sep}})$, respectively. The natural embeddings $\Pic(S_{\KK^{sep}}) \hookrightarrow \Pic(X_{\KK^{sep}})$ and $\Pic(S'_{\KK^{sep}}) \hookrightarrow \Pic(X'_{\KK^{sep}})$ are $\WD$-equivariant by Lemma~\ref{lemma:markings-wd-equivariant}.

Consider a $\WD$-equivariant isomorphism
$$
\tilde{\theta} \colon \Pic(X_{\KK^{sep}}) \stackrel{\sim}\longrightarrow \Pic(X'_{\KK^{sep}})
$$
constructed in Lemma~\ref{lemma:markings-wd-equivariant-cb}.
One has either $\tilde{\theta}(E_i) = E'_i$ for any $i$, or $\tilde{\theta}(E_i) = F'_i$ for any~$i$.
We see from Construction~\ref{construction:markings-1-to-1-S-to-X} (or from
Construction~\ref{construction:markings-1-to-1-X-to-S}) that for a given index~$k$
we have
\begin{equation}\label{eq:kk-vs-kj}
f^{-1}_*L_k \cdot E_k \neq f^{-1}_*L_k \cdot E_j
\end{equation}
for any $j \neq k$.
More precisely, for one of the two equivalent markings of $\phi$ the left hand side
of~\eqref{eq:kk-vs-kj} equals~$1$, while the right hand side equals~$0$ for any $j\neq k$,
and vice versa for the other marking.
Therefore
$$
\tilde{\theta} (f^{-1}_*L_k) \cdot E'_k \neq \tilde{\theta} (f^{-1}_*L_k) \cdot E'_j
$$
for any $j \neq k$.
Also, $\tilde{\theta} (f^{-1}_*L_k)$ intersects a fiber of $\phi'$ by $1$
(because $f^{-1}_*L_k$ intersects a fiber of $\phi$ by $1$), and we have
$$
\big(\tilde{\theta} (f^{-1}_*L_k)\big)^2=-1.
$$
Applying Construction~\ref{construction:markings-1-to-1-X-to-S},
one concludes that
$$
f'\big(\tilde{\theta} (f^{-1}_*L_k)\big) = L'_k.
$$
Note that $\tilde{\theta}$ preserves the intersection form, therefore it maps any class of a $(-1)$-curve in~\mbox{$\Pic(X_{\KK^{sep}})$} to a class of a $(-1)$-curve in $\Pic(X'_{\KK^{sep}})$. Moreover, if a $(-1)$-curve is a section of~$\phi_{\KK^{sep}}$, then the image of its class is the class of a section of~$\phi'_{\KK^{sep}}$, again with self-intersection equal to~$-1$.
The $16$ sections of $\phi_{\KK^{sep}}$ with self-intersection $-1$ are mapped by $f$ to $16$ lines on $S_{\KK^{sep}}$, and the $16$ sections of $\phi'_{\KK^{sep}}$ with self-intersection $-1$ are mapped by $f'$ to $16$ lines on $S'_{\KK^{sep}}$, see Remark~\ref{remark:lines-on-cubic}.
Therefore the restriction of $\tilde{\theta}$ to the set of sections of $\phi_{\KK^{sep}}$ with self-intersection $-1$ gives an isomorphism
$$
\nu_{\KK^{sep}} \colon \Lambda(S_{\KK^{sep}})\stackrel{\sim}\longrightarrow\Lambda(S'_{\KK^{sep}})
$$
such that $\nu_{\KK^{sep}}$ sends the line $L_i$ to $L_i'$, since $\tilde{\theta} (f^{-1}_*L_k) = f'^{-1}_*L'_k$.

The action of the group $\Gal(\KK^{sep}/\KK)$ on the Picard groups $\Pic(S_{\KK^{sep}})$, $\Pic(X_{\KK^{sep}})$, $\Pic(X'_{\KK^{sep}})$, and~\mbox{$\Pic(S'_{\KK^{sep}})$} gives homomorphisms $\pi_S$, $\pi_X$, $\pi_{X'}$, and $\pi_{S'}$ from $\Gal(\KK^{sep}/\KK)$ to~$\WD$. All these homomorphisms coincide
by Lemmas~\ref{lemma:markings-wd-equivariant} and~\ref{lemma:markings-wd-equivariant-cb}.
Therefore the constructed $\WD$-equivariant isomorphism~$\nu_{\KK^{sep}}$ is $\Gal(\KK^{sep}/\KK)$-equivariant,
which means that there exists a $\WD$-equivariant isomorphism $\nu \colon \Lambda(S) \to \Lambda(S')$
such that $\nu_{\KK^{sep}}$ is obtained by extension of scalars from~$\nu$.
\end{proof}

\begin{corollary}\label{corollary:dP4-isomorphic}
Let $S$ be a del Pezzo surface of degree $4$ over
a field $\KK$. Let~$X$ be a smooth cubic surface
obtained as a blow up $f\colon X\to S$ at a $\KK$-point, and let~\mbox{$\phi\colon X\to\PP^1$}
be the conic bundle defined by the $f$-exceptional line on $X$. Let
$\theta\colon X\dasharrow X'$ be a transformation of~$\phi$.
Suppose that there exists a contraction $f'\colon X'\to S'$
to a del Pezzo surface~$S'$ of degree~$4$.
Then
\begin{itemize}
\item[(i)] $\Delta(S)$ is naturally isomorphic to $\Delta(S')$ as
closed subschemes of $\PP^1$;

\item[(ii)] $G(S)$ is naturally isomorphic to $G(S')$ as group schemes;

\item[(iii)] the $\WD$-equivariant isomorphism $\nu\colon\Lambda(S)\stackrel{\sim}\longrightarrow\Lambda(S')$ from Proposition~\ref{proposition:Lambda-Lambda-prime-WD-equivariant} is isomorphism of torsors over $G(S)\cong G(S')$;

\item[(iv)] there exists an isomorphism $\psi\colon S \stackrel{\sim}\longrightarrow S'$ such that the induced map
$$
\tilde{\psi}\colon\Lambda(S)\stackrel{\sim}\longrightarrow\Lambda(S')
$$
coincides with $\nu$.
\end{itemize}
\end{corollary}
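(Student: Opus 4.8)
The plan is to establish the four assertions in order, taking the $\WD$-equivariant isomorphism $\nu$ produced in Proposition~\ref{proposition:Lambda-Lambda-prime-WD-equivariant} as the main input and closing with the biregular classification of Corollary~\ref{corollary:Skorobogatov}. For assertion~(i) I would apply Lemma~\ref{lemma:discriminant} to both $S$ and $S'$, obtaining natural isomorphisms $\cP\xrightarrow{\sim}\PP^1$ and $\cP'\xrightarrow{\sim}\PP^1$ onto the bases of $\phi$ and $\phi'$ that carry $\Delta(S)$ onto $\Delta(\phi)$ and $\Delta(S')$ onto $\Delta(\phi')$. Since $\theta$ is a transformation of conic bundles in the sense of Definition~\ref{definition:elementary-transformation}, it supplies an isomorphism of the two copies of $\PP^1$ identifying the degenerate fibers of $\phi$ with those of $\phi'$, hence $\Delta(\phi)$ with $\Delta(\phi')$; composing gives $\Delta(S)\cong\Delta(S')$ as subschemes of $\PP^1$. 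Assertion~(ii) is then formal, since $G(S)$ is built solely from the \'etale algebra $\KK[\Delta(S)]$, and the isomorphism of~(i) induces $\KK[\Delta(S)]\cong\KK[\Delta(S')]$ and hence $G(S)\cong G(S')$.

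The substance lies in assertion~(iii), where the abstract $\WD$-equivariant $\nu$ must be upgraded to a morphism of torsors over $G(S)\cong G(S')$. By Lemma~\ref{lemma:GS-vs-NNN}, after base change to $\KK^{sep}$ and under the identification $G(S)_{\KK^{sep}}\cong\NNN$ coming from the marking $L_1,\ldots,L_5$, the torsor action of $G(S)$ on $\Lambda(S)$ is precisely the free transitive action of $\NNN\subset\WD$ on the lines, and likewise for $S'$ with the marking $L_1',\ldots,L_5'$. Since $\nu$ is $\WD$-equivariant it is in particular $\NNN$-equivariant, and being defined over $\KK$ it is $\Gal(\KK^{sep}/\KK)$-equivariant; thus $\nu$ becomes a torsor isomorphism once I check that the identification $G(S)\cong G(S')$ of~(ii) agrees, through the two marking-induced identifications with $\NNN$, with the identity automorphism of $\NNN$. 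I would verify this by tracing Lemma~\ref{lemma:GS-vs-NNN}(i): the factor of $\tilde{\cG}(S)$ indexed by a singular quadric $R$ maps to the involution $\iota_R$, which in terms of the marking is the element $\iota_{klmn}\in\NNN$ singled out by $\iota_R(Q)=L_i$. Under the identification of~(i) the quadric $R$ is matched with the singular quadric $R'$ of $S'$ attached to the same degenerate fiber, and since $\nu$ intertwines the $\WD$-actions and sends $L_i$ to $L_i'$, the involution $\iota_{R'}$ is the same element $\iota_{klmn}$; hence the two identifications coincide and $\nu$ is an isomorphism of $G(S)\cong G(S')$-torsors.

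Assertion~(iv) then follows directly from Corollary~\ref{corollary:Skorobogatov}, fed with the automorphism of $\PP^1$ from~(i), which induces $\Delta(S)\cong\Delta(S')$, and with the torsor isomorphism $\nu$ from~(iii): the corollary yields an isomorphism $\psi\colon S\xrightarrow{\sim}S'$ whose induced map on Hilbert schemes of lines is exactly $\nu$. I expect the only genuine difficulty to be assertion~(iii), namely the compatibility of the $\Delta$-induced identification $G(S)\cong G(S')$ with the two marking-induced identifications with $\NNN$; assertions~(i), (ii), and~(iv) are either formal consequences of~(i) or direct applications of results already established.
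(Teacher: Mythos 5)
Your proposal is correct and follows essentially the same route as the paper: assertion~(i) via Lemma~\ref{lemma:discriminant} together with the fact that a transformation of conic bundles preserves the discriminant, (ii) formally from the construction of $G(S)$, (iii) from the $\NNN$-equivariance and $\Gal(\KK^{sep}/\KK)$-equivariance of $\nu$ combined with the identifications $G(S)_{\KK^{sep}}\cong\NNN\cong G(S')_{\KK^{sep}}$ of Lemma~\ref{lemma:GS-vs-NNN}(i), and (iv) by feeding (i) and (iii) into Corollary~\ref{corollary:Skorobogatov}. Your explicit verification in (iii) that the marking-induced identifications with $\NNN$ are compatible with the $\Delta$-induced isomorphism $G(S)\cong G(S')$ of (ii) is a step the paper leaves implicit (it simply takes the composed isomorphism $\gamma_{\KK^{sep}}$ as the identification), so your treatment is if anything slightly more careful than the original.
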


\begin{proof}
Assertion~(i) immediately follows from Lemma~\ref{lemma:discriminant}
and the fact that an elementary transformation of conic bundles does not affect the discriminant.
Assertion~(ii) follows from the construction of the group scheme $G(S)$, which depends only on
the isomorphism class of~$\Delta(S)$.

The $\WD$-equivariant isomorphism
$$
\nu\colon\Lambda(S)\to \Lambda(S')
$$
is equivariant with respect to the subgroup $\NNN\subset\WD$.
On the other hand, by Lemma~\ref{lemma:GS-vs-NNN}(i)
there exist $\Gal(\KK^{sep}/\KK)$-equivariant isomorphisms
$$
G(S)_{\KK^{sep}}\cong\NNN\cong G(S')_{\KK^{sep}},
$$
where $\NNN\cong (\ZZ/2\ZZ)^4$ is a normal subgroup of $\WD$.
This gives a $\Gal(\KK^{sep}/\KK)$-equivariant isomorphism
$$
\gamma_{\KK^{sep}}\colon G(S)_{\KK^{sep}}\to G(S')_{\KK^{sep}}
$$
which fits into a $\Gal(\KK^{sep}/\KK)$-equivariant commutative diagram
$$
\xymatrix{
G(S)_{\KK^{sep}}\times\Lambda(S)_{\KK^{sep}}\ar@{->}^{\gamma_{\KK^{sep}}\times\nu_{\KK^{sep}}}[rr]\ar@{->}[d]&&
G(S')_{\KK^{sep}}\times\Lambda(S')_{\KK^{sep}}\ar@{->}[d]\\
\Lambda(S)_{\KK^{sep}}\ar@{->}^{\nu_{\KK^{sep}}}[rr]&&\Lambda(S')_{\KK^{sep}}
}
$$
Hence, $\nu$ is an isomorphism of torsors over
$G(S)\cong G(S')$.
This proves assertion~(iii).

Finally, by Corollary~\ref{corollary:Skorobogatov}, the isomorphisms from assertions (i) and (iii) provide an isomorphism $\psi\colon S \stackrel{\sim}\longrightarrow S'$ such that the induced isomorphism $\tilde{\psi}\colon\Lambda(S)\stackrel{\sim}\longrightarrow\Lambda(S')$ coincides with~$\nu$.
\end{proof}

The next result is just a more precise version of Theorem~\ref{theorem:Iskovskikh-old}(ii).
It was initially proved over perfect fields by V.\,A.\,Iskovskikh,
see~\mbox{\cite[Theorems 2.5 and 2.6]{Isk96}}, and cf.~\mbox{\cite[Theorem 3b]{Iskovskikh-positive}}.
The proof in the case of arbitrary field is based on the results of~\cite{BFSZ}.
We refer the reader to Definitions~\ref{definition:Sarkisov-general} and~\ref{definition:birational-GB} for the terminology
we are going to use.

\begin{theorem}
\label{theorem:Iskovskikh-dP4-models}
Let $S$ be a del Pezzo surface of degree~$4$ over
a  field $\KK$
such that~\mbox{$\rkPic(S)=1$}.
Then all the Mori fiber spaces birational to $S$ are either del Pezzo
surfaces of Picard rank~$1$ and degree~$4$, or smooth cubic surfaces with a structure
of a relatively minimal conic bundle over~$\PP^1$.
Furthermore, all birational maps from $S$ to Mori fiber spaces are
obtained by a composition of maps of the following types:
\begin{itemize}
\item automorphisms;

\item birational Geiser involutions of del Pezzo surfaces of degree $4$ with Picard rank~$1$ defined by points of degree~$2$ in Sarkisov general
position;

\item birational Bertini involutions of del Pezzo surfaces of degree $4$ with Picard rank~$1$ defined by points of degree~$3$ in Sarkisov general
position;

\item blow ups of $\KK$-points in Sarkisov general position
on del Pezzo surfaces of degree $4$ with Picard rank~$1$,
and their inverses;

\item transformations between conic bundles on smooth cubic surfaces of
Picard rank~$2$.
\end{itemize}
\end{theorem}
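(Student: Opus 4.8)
The proof proceeds through the Sarkisov program for regular surfaces over an arbitrary field, as developed in \cite{BFSZ}. The plan is to factor an arbitrary birational map from $S$ to a Mori fiber space into a chain of Sarkisov links, and then to verify that the class of Mori fiber spaces consisting of del Pezzo surfaces of degree $4$ with Picard rank $1$ together with smooth cubic surfaces of Picard rank $2$ is closed under such links, and that every link between members of this class is one of the five listed types.

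First I would invoke the two basic inputs of \cite{BFSZ}: the existence of a decomposition of any birational map between regular Mori fiber spaces into Sarkisov links, and the classification of these links. Since $S$ itself is a Mori fiber space, it then suffices to enumerate the links emanating from each Mori fiber space that can occur. Consider first a del Pezzo surface $S'$ of degree $4$ with $\rkPic(S')=1$. By the link classification of \cite{BFSZ}, a link starting from $S'$ is centered at a closed point $p\in S'$ in Sarkisov general position (Definition~\ref{definition:Sarkisov-general}); blowing up $p$ lowers the anticanonical degree by $\deg p$ and raises the Picard rank by $1$, and the requirement that the subsequent relative minimal model program end on a Mori fiber space restricts $\deg p$ to lie in $\{1,2,3\}$. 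A $\KK$-point ($\deg p=1$) in Sarkisov general position is not contained in any line, so its blow-up is a smooth cubic surface $X$ of Picard rank $2$ carrying the conic bundle structure obtained by projection from the exceptional line, as in Section~\ref{section:blow-up}; this link and its inverse account for the blow-ups of $\KK$-points and their inverses. A point of degree $2$ (respectively $3$) in Sarkisov general position produces, after blow-up and the ensuing contraction, a birational Geiser (respectively Bertini) involution landing again on a del Pezzo surface of degree $4$ with Picard rank $1$; here the Sarkisov general position hypothesis guarantees that the intermediate del Pezzo surface of degree $2$, respectively $1$, is genuinely a smooth del Pezzo surface on which the Geiser, respectively Bertini, involution is defined (Definition~\ref{definition:birational-GB}).

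Next I would treat a smooth cubic surface $X$ of Picard rank $2$ with its conic bundle structure $\phi\colon X\to\PP^1$. Since $\rkPic(X)=2$, the surface $X$ contains a unique line, so its conic bundle structure is unique (Lemma~\ref{lemma:unique-bisection}); consequently there is no link changing the fibration, and by the classification of \cite{BFSZ} the only links from $X$ are transformations of $\phi$ in the sense of Definition~\ref{definition:elementary-transformation} and the contraction back to a del Pezzo surface of degree $4$ with Picard rank $1$ (the inverse of the preceding link, as in the diagram~\eqref{eq:elementary-transformation}). By Corollary~\ref{corollary:Iskovskikh-elementary-transformation}, every transformation of $\phi$ yields again a smooth cubic surface of Picard rank $2$ containing a line, hence again a member of our class with its own conic bundle structure. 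Thus the class is closed under Sarkisov links, and all links within it, together with the isomorphisms of Mori fiber spaces appearing in the decomposition, are accounted for by automorphisms, birational Geiser and Bertini involutions, blow-ups of $\KK$-points and their inverses, and transformations between conic bundles on cubic surfaces. Composing the links of a Sarkisov decomposition yields the asserted description.

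The main obstacle is the link-classification input itself. One must verify, using \cite{BFSZ}, that no link centered at a point of degree $\ge 4$ on a minimal del Pezzo surface of degree $4$ is admissible, and that no Mori fiber space outside our class, such as $\PP^2$ or a conic bundle of different canonical degree, can be reached. The latter rests on the fact that a transformation of a conic bundle preserves both the Picard rank and the anticanonical degree, so that the canonical degree of any relatively minimal conic bundle reached from $X$ stays equal to $3$; by Proposition~\ref{proposition:Iskovskikh-elementary-transformation} such a conic bundle is necessarily a cubic surface containing a line, which closes the induction.
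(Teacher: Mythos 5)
Your proposal follows essentially the same route as the paper: both invoke the Sarkisov decomposition of \cite{BFSZ} (Theorem A together with the link classification of Propositions 5.1, 5.5, and 5.6 there, packaged in the paper as Proposition~\ref{proposition:links-starting-from-dP4}), identify the links from a minimal degree-$4$ del Pezzo surface as Geiser/Bertini involutions or the blow-up of a $\KK$-point, use the invariance of Picard rank and canonical degree under conic bundle transformations together with Proposition~\ref{proposition:Iskovskikh-elementary-transformation} (via Corollary~\ref{corollary:Iskovskikh-elementary-transformation}) to show the class of cubic surfaces of Picard rank~$2$ is closed under links, and conclude by induction on the length of the decomposition. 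Your additional remark that the uniqueness of the line on the rank-$2$ cubic rules out links changing the fibration is a harmless supplement to what the paper obtains directly from \cite[Proposition~5.6]{BFSZ}.
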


\begin{proof}
By \cite[Theorem A]{BFSZ}, any birational map between Mori fiber spaces of dimension $2$ is a composition of Sarkisov links and automorphisms of Mori fiber spaces.
On the other hand, from Proposition~\ref{proposition:links-starting-from-dP4} we know the description of Sarkisov links starting from~$S$: it is either
a link of type $\mathrm{II}$ which is a birational Geiser or Bertini involution~\mbox{$S\dasharrow S$}, or
a link of type $\mathrm{I}$ which is a blow up of a $\KK$-point resulting in a smooth cubic surface with a structure of a relatively minimal conic bundle.
Any Sarkisov link from the latter Mori fiber space is either the inverse to the link of the previous type (so that it again brings us to a del Pezzo surface
of degree~$4$ and Picard rank~$1$), or a link of type~$\mathrm{II}$, see~\mbox{\cite[Proposition 5.6]{BFSZ}}.
In the latter case, the link is a transformation between conic bundles on smooth cubic surfaces by Corollary~\ref{corollary:Iskovskikh-elementary-transformation}.
Therefore, the required assertion follows by induction on the number of links in the decomposition.
\end{proof}

Theorem~\ref{theorem:Iskovskikh-dP4-models} immediately implies Theorem~\ref{theorem:Iskovskikh-old}.
Moreover, one can deduce the following more precise assertion.

\begin{corollary}
\label{corollary:Iskovskikh-dP4-models}
Let $S$ be a del Pezzo surface of degree $4$ over
a field $\KK$ such that~\mbox{$\rkPic(S)=1$}.
Then all the Mori fiber spaces birational to $S$ are~$S$ itself
and smooth cubic surfaces obtained by a blow up
of a $\KK$-point on $S$. In particular, if $S(\KK)=\varnothing$, then the only Mori fiber space birational to $S$ is $S$ itself.
\end{corollary}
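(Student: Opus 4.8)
The plan is to combine the classification of Mori fiber spaces birational to $S$ given by Theorem~\ref{theorem:Iskovskikh-dP4-models} with the one-step statement of Corollary~\ref{corollary:dP4-isomorphic}, running an induction along a Sarkisov decomposition of the connecting birational map. By Theorem~\ref{theorem:Iskovskikh-dP4-models} every Mori fiber space $Y$ birational to $S$ is either a del Pezzo surface of degree~$4$ of Picard rank~$1$, or a smooth cubic surface carrying a relatively minimal conic bundle structure over $\PP^1$ (hence of Picard rank~$2$), and any birational map from $S$ to $Y$ factors as a composition of Sarkisov links of the five types listed there. First I would prove, by induction on the number of links in such a decomposition, the combined claim that every degree~$4$ del Pezzo surface occurring as a node of the chain is isomorphic to $S$, while every cubic node is isomorphic to a blow up of a $\KK$-point on $S$.

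For the inductive step I would treat the link types one at a time. A birational Geiser or Bertini involution is a self-map of a degree~$4$ del Pezzo node and so preserves its isomorphism class, while a blow up of a $\KK$-point turns a node isomorphic to $S$ into a cubic surface that is by construction a blow up of a $\KK$-point on $S$. The remaining two cases start from a cubic node $X$ which, by the inductive hypothesis, is the blow up of a $\KK$-point on some $S''\cong S$. Since $\rkPic(X)=2$, the conic bundle structure on $X$ is unique and is the linear projection from the unique $(-1)$-bisection, namely the exceptional line of $X\to S''$ (see Remark~\ref{remark:lines-on-cubic} and the remark after Lemma~\ref{lemma:unique-bisection}); hence the inverse blow-down link contracts exactly this bisection and recovers $S''\cong S$. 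For a conic bundle transformation $\theta\colon X\dashrightarrow X'$ the target $X'$ is again a Picard rank~$2$ cubic surface by Corollary~\ref{corollary:Iskovskikh-elementary-transformation}, and contracting its bisection produces a degree~$4$ del Pezzo surface; the whole configuration is then diagram~\eqref{eq:elementary-transformation}, so Corollary~\ref{corollary:dP4-isomorphic}(iv) identifies this surface with $S''$, and therefore with $S$, whence $X'$ is again a blow up of a $\KK$-point on $S$. This closes the induction and proves the first assertion; as a by-product it yields Theorem~\ref{theorem:dP4-unique} for degree~$4$ del Pezzo surfaces of Picard rank~$1$.

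The last clause is then immediate: a cubic Mori fiber space can occur only as a blow up of a $\KK$-point on $S$, so if $S(\KK)=\varnothing$ no such cubic exists and $S$ is the only Mori fiber space in its birational class. The step I expect to require the most care is the bookkeeping of this Sarkisov induction, and in particular the verification at the transformation step that the cubic target always admits the contraction to a degree~$4$ del Pezzo surface needed to invoke Corollary~\ref{corollary:dP4-isomorphic}; this rests on the uniqueness of both the conic bundle structure and the $(-1)$-bisection on a cubic surface of Picard rank~$2$. Granting this, every link either preserves the isomorphism class outright or is controlled by a single application of Corollary~\ref{corollary:dP4-isomorphic}, and the remainder is formal.
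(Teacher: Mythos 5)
Your proposal is correct and takes essentially the same route as the paper's proof: both decompose the map via Theorem~\ref{theorem:Iskovskikh-dP4-models}, apply Corollary~\ref{corollary:dP4-isomorphic}(iv) at the conic-bundle transformation steps, and use the uniqueness of the conic bundle structure and of the line on a Picard rank~$2$ cubic (Proposition~\ref{proposition:Iskovskikh-elementary-transformation} and Corollary~\ref{corollary:Iskovskikh-elementary-transformation}) to identify the cubic models as blow ups of $\KK$-points on~$S$. The only difference is bookkeeping: you induct link by link, carrying the cubic-node invariant through the chain, whereas the paper groups each maximal string of transformations between consecutive degree~$4$ nodes into a single transformation and applies Corollary~\ref{corollary:dP4-isomorphic}(iv) once per segment, handling arbitrary cubic Mori fiber spaces at the very end.
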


\begin{proof}
Let $\chi \colon S \dashrightarrow S'$ be a birational map such that $S'$ is a del Pezzo surface of Picard rank $1$ and degree~$4$.
By Theorem~\ref{theorem:Iskovskikh-dP4-models},
the map $\chi$ can be decomposed into a sequence of birational maps
$$
S = S_0 \stackrel{\chi_0}\dashrightarrow S_1 \stackrel{\chi_1}\dashrightarrow \ldots \stackrel{\chi_{n-1}}\dashrightarrow S_n = S',
$$
where each $S_i$ is either del Pezzo surface of Picard rank $1$ and degree~$4$, or a smooth cubic surfaces with a structure
of a relatively minimal conic bundle over~$\PP^1$, and each $\chi_i$ is a birational map of one of the types listed in Theorem~\ref{theorem:Iskovskikh-dP4-models}.
Let $i_0, \ldots, i_k$ be integers such that
$$
0 = i_0 < \ldots < i_k = n,
$$
any surface $S_{i_j}$ is a del Pezzo surface of degree $4$, and $S_l$ is a cubic surface for
$$
l \notin \{i_0, \ldots, i_k\}.
$$

Let us show that all surfaces $S_{i_j}$ are isomorphic.
Consider any non-negative integer~\mbox{$j < k$}. If
$i_{j+1} = i_j + 1$,
then the map $\chi_{i_j}$ is either an automorphism, or a birational Geiser or Bertini involution. Therefore $S_{i_j} \cong S_{i_{j+1}}$.

If $i_{j+1} > i_j + 1$, then
$$
\chi_{i_j}^{-1} \colon S_{i_j+1} \rightarrow S_{i_j}
$$
and
$$
\chi_{i_{j+1}-1} \colon S_{i_{j+1} - 1} \rightarrow S_{i_{j+1}}
$$
are blow ups of $\KK$-points
on del Pezzo surfaces of degree $4$ and
$$
\chi_{i_{j+1}-2}\ldots\chi_{i_j+1} \colon S_{i_j+1} \dashrightarrow S_{i_{j+1} - 1}
$$
is a transformation between conic bundles on smooth cubic surfaces. Therefore $S_{i_j} \cong S_{i_{j+1}}$ by Corollary \ref{corollary:dP4-isomorphic}(iv). Hence all surfaces $S_{i_j}$ are isomorphic.

In particular, we see that any del Pezzo surface~$S'$ of Picard rank~$1$ and degree~$4$ which is birational to~$S$ is actually isomorphic to $S$.
On the other hand, if~$X'$ is a smooth cubic surface with a structure of a relatively minimal conic bundle which is birational to~$S$, then by Corollary~\ref{corollary:Iskovskikh-elementary-transformation} there is a line $E$ on $X'$. One can blow down this line and obtain a del Pezzo surface~$S'$ of Picard rank~$1$ and degree~$4$. We have already proved that~${S' \cong S}$. Therefore, the cubic surface~$X'$ is isomorphic to a blow up of a $\KK$-point on~$S$. On the other hand, we know from Theorem~\ref{theorem:Iskovskikh-dP4-models} that any Mori fiber space birational to $S$ is either a del Pezzo surface of Picard rank $1$ and degree~$4$, or a smooth cubic surfaces with a structure of a relatively minimal conic bundle. This proves the first of the required assertions.

If $S(\KK)=\varnothing$, then it is impossible to blow up a $\KK$-point on $S$. Therefore the only Mori fiber space birational to $S$ is $S$ itself.
\end{proof}

Now we can prove Theorem~\ref{theorem:dP4-unique}.

\begin{proof}[Proof of Theorem~\ref{theorem:dP4-unique}]
If $S(\KK)=\varnothing$, then the required assertion is given by
Corollary~\ref{corollary:Iskovskikh-dP4-models}
(or by Theorem~\ref{theorem:Iskovskikh-old}(i)).

Suppose that $S(\KK)\neq\varnothing$,
and let $S'$ be a del Pezzo surface of degree $4$ birational to $S$.
If $\rkPic(S')=1$, then $S'\cong S$ by Corollary~\ref{corollary:Iskovskikh-dP4-models}.
Otherwise $S'$ either has a structure of a relatively minimal conic bundle, or
is birational to a Mori fiber space $S''$ with $K_{S''}^2>4$.
This is also impossible by Corollary~\ref{corollary:Iskovskikh-dP4-models}
(or by Theorem~\ref{theorem:Iskovskikh-old}(ii)).
\end{proof}

\section{Equivariant birational models}
\label{section:G-birational}

In this section we discuss equivariant birational geometry of del Pezzo surfaces of degree~$4$.
Suppose that $\Gamma$ is a group, and $X_1$ and $X_2$ are two varieties acted on by~$\Gamma$, i.e. there are
homomorphisms $\rho_i\colon \Gamma\to\Aut(X_i)$. In this case we say that a rational map~\mbox{$\chi\colon X_1\dasharrow X_2$}
is $\Gamma$-equivariant, if for any element $g\in\Gamma$
one has
$$
\chi\rho_1(g)=\rho_2(g)\chi.
$$
If the homomorphisms $\rho_i$ are clear from the context, we will sometimes drop them from the notation and write $g$ instead of~$\rho_i(g)$.

Our current goal is to give a sketch of a proof of Theorem~\ref{theorem:dP4-unique-equivariant}.
To turn it into a rigorous proof, one needs to spell out the basics of $\Gamma$-Sarkisov theory for regular surfaces over arbitrary fields, using the same arguments as in~\cite{BFSZ}. Namely, we need the existence of decomposition of a $\Gamma$-equivariant birational map into $\Gamma$-Sarkisov links, and a classifictaion of such links.

To start with, we will need an equivariant version of
Proposition~\ref{proposition:Iskovskikh-elementary-transformation},
cf.~also~\mbox{\cite[Propositions 4.30(3) and 4.32(3)]{BFSZ}} and~\mbox{\cite[Proposition 5.2]{Pr18}}.

\begin{proposition}
\label{proposition:Iskovskikh-elementary-transformation-equivariant}
Let $\phi\colon X\to \PP^1$ be a conic bundle over a field~$\KK$
with an action of a finite group $\Gamma$.
Suppose that $\rkPic(X)^{\Gamma}=2$ and $K_X^2=3$.
Then~$X$ is a cubic surface containing a $\Gamma$-invariant line.
\end{proposition}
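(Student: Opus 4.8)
The plan is to prove that $-K_X$ is ample; once this is known, $X$ is a smooth del Pezzo surface of degree $K_X^2=3$, hence a smooth cubic surface anticanonically embedded in $\PP^3$, and the $\Gamma$-invariant line will drop out of the uniqueness of a suitable $(-1)$-curve. I would work on $X_{\KK^{sep}}$ with the commuting actions of $\Gamma$ and $\Gal(\KK^{sep}/\KK)$ on $\Pic(X_{\KK^{sep}})$, using that the invariant subspace $\big(\Pic(X_{\KK^{sep}})\otimes\QQ\big)^{\Gamma\times\Gal}$ has dimension $\rkPic(X)^{\Gamma}=2$. Since $-K_X$ is always invariant and, $\phi$ being $\Gamma$-equivariant, the class $F$ of a fibre of $\phi$ is invariant as well, and since $F^2=0$, $-K_X\cdot F=2$, $K_X^2=3$ force $-K_X$ and $F$ to be independent, this invariant subspace is exactly $\QQ\langle -K_X,F\rangle$, carrying a non-degenerate intersection form.

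The crucial step is the ampleness of $-K_X$, which by the Nakai--Moishezon criterion (together with $K_X^2=3>0$) reduces to showing $-K_X\cdot C>0$ for every irreducible curve $C\subset X_{\KK^{sep}}$. Suppose some $C$ has $-K_X\cdot C\le 0$. A vertical curve is either a whole fibre or a $(-1)$-component of a degenerate fibre, and $-K_X$ meets both positively; hence $C$ is horizontal with $d:=F\cdot C\ge 1$. I would then form the reduced sum $D=\sum_{i=1}^{n}C_i$ over the finite $\Gamma\times\Gal$-orbit of $C$: this is an invariant effective divisor, so its class lies in $\QQ\langle -K_X,F\rangle$ and the $C_i$ are mutually isometric. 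Writing $D=x(-K_X)+yF$ and reading off $F\cdot D$ and $-K_X\cdot D$ pins down $x,y$, and hence $D^2$, purely in terms of $n$, $d$ and $\kappa:=K_X\cdot C\ge 0$. Comparing this with the lower bound $D^2\ge\sum_iC_i^2=n\,C^2$ (distinct irreducible curves meet non-negatively) and substituting $C^2=2p_a(C)-2-\kappa$ from adjunction, I expect an inequality forcing $n\,d^2\le 8/3$, so that $d=1$ and $n\le 2$; in each of these finitely many cases the two computations of $D^2$ become incompatible over $\ZZ$ (they force a half-integral self-intersection, or a half-integral value of $C_1\cdot C_2$). This contradiction is the main obstacle: essentially all the work lies in extracting ampleness from the rank-$2$ invariant lattice, and it is precisely here that the hypothesis $\rkPic(X)^{\Gamma}=2$ (rather than merely $K_X^2=3$) is used. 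Conceptually this is the equivariant version of excluding a second conic bundle structure, which one also sees numerically from $-K_X=\lambda(F+F')$ yielding $F\cdot F'=8/3\notin\ZZ$; I prefer the orbit-sum computation because it is self-contained and simultaneously rules out negative multisections.

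Once $-K_X$ is ample, $X$ is a smooth cubic surface, and it remains to produce the line. I would consider the $\Gamma\times\Gal$-invariant class $-K_X-F$, for which a direct Riemann--Roch computation gives $(-K_X-F)^2=-1$, $-K_X\cdot(-K_X-F)=1$, and $h^0\big(X_{\KK^{sep}},-K_X-F\big)=1$. Thus this class has a unique effective representative $E$, necessarily a line (it is the line of Lemma~\ref{lemma:unique-bisection}, from which $\phi$ is the projection). By uniqueness $E$ is preserved by $\Gamma$ and by $\Gal(\KK^{sep}/\KK)$; being a $(-1)$-curve it is defined over $\KK^{sep}$, and Galois-invariance then shows it is defined over $\KK$, while $\Gamma$-invariance is built in. Hence $E$ is the required $\Gamma$-invariant line, and specialising to trivial $\Gamma$ recovers Proposition~\ref{proposition:Iskovskikh-elementary-transformation}.
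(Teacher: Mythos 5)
Your overall strategy---one uniform Nakai--Moishezon argument via orbit sums in the rank-$2$ invariant lattice, in place of the paper's three-stage analysis showing $-K_X$ is big, then nef, then ample---is legitimate and in several respects cleaner, and your final step producing the $\Gamma$-invariant line as the unique effective representative of the invariant class $-K_X-F$ is essentially what the paper does via its Lemma~\ref{lemma:unique-bisection}. But there is a genuine gap in the key inequality. To extract $nd^2\le 8/3$ from $D^2\ge nC^2$ and adjunction you need $\mathrm{p}_a(C)\ge 0$, and this fails in the setting you chose: $\KK^{sep}$ is separably closed but not algebraically closed, so in positive characteristic an integral curve $C\subset X_{\KK^{sep}}$ need not be geometrically reduced, and then $H^0(C,\mathcal{O}_C)$ is a nontrivial purely inseparable extension $k'$ of $\KK^{sep}$, giving only $\mathrm{p}_a(C)=1-[k':\KK^{sep}]+h^1(C,\mathcal{O}_C)\ge 1-[k':\KK^{sep}]$. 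Concretely, a $\Gamma\times\Gal$-invariant integral curve with $C_{\bar{\KK}}=2A$ for a section $A$ of $\phi_{\bar{\KK}}$ has $d=2$, $n=1$ and possibly $\mathrm{p}_a(C)=-1$; your two computations of $D^2$ then give $C^2=-3-2\kappa$ with $\kappa$ odd, a perfectly good integer, so neither the bound $nd^2\le 8/3$ nor your parity contradictions dispose of this case. This is not a hypothetical corner: the paper works over arbitrary fields with characteristic~$2$ central to it, and its own proof devotes two specific arguments to exactly these inseparability phenomena, namely the divisibility argument (a geometrically non-reduced multisection would have $C^2=4A^2\equiv 0\pmod 4$, contradicting an odd value of $C^2$), and, in the nef-but-not-ample case, the bound $h^0(C,\mathcal{O}_C)\le C\cdot F$ coming from injectivity of the restriction $H^0(C,\mathcal{O}_C)\to H^0(C\cap F,\mathcal{O}_{C\cap F})$.

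The repair is not hard, and you have two options. Either keep $\KK^{sep}$ and supplement your numerics: using $h^0(C,\mathcal{O}_C)\le C\cdot F=d$ you get $\mathrm{p}_a(C)\ge 1-d$, which weakens your bound to $nd\le 8/3$ and leaves the single extra case $(n,d)=(1,2)$; there $\mathrm{p}_a(C)<0$ forces $C$ to be geometrically non-reduced, so $C_{\bar{\KK}}=2A$ and $4\mid C^2$, contradicting $C^2=-3-2\kappa$ odd. Or, more simply, run the whole orbit computation on $X_{\bar{\KK}}$, where integral curves do satisfy $\mathrm{p}_a\ge 0$: the identification $\Pic(X_{\KK^{sep}})\cong\Pic(X_{\bar{\KK}})$ recalled in Section~\ref{section:markings} keeps the invariant part two-dimensional (your identification of its rank with $\rkPic(X)^{\Gamma}$ is fine, since the cokernel of $\Pic(X)\hookrightarrow\Pic(X_{\KK^{sep}})^{\Gal(\KK^{sep}/\KK)}$ embeds into the torsion group $\Br(\KK)$), and orbits of curves remain finite since every curve is defined over a finite extension of $\KK$. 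With either repair your argument goes through and is genuinely different from the paper's, which first proves $-K_X$ big via Riemann--Roch, excludes non-nefness by decomposing $|-K_X|=C+|M|$ and analyzing the moveable part, and excludes nef-but-not-ample via the Hodge index theorem; your single orbit-sum computation replaces all of this, at the cost of being more sensitive to the characteristic-$p$ subtleties above.
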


\begin{proof}
Since~{$\rkPic(X)^{\Gamma}=2$},
the group $\Pic(X)^{\Gamma}$ is generated by $-K_X$ and~$F$,
where $F$ is the class of a fiber of~$\phi$.
In this basis one has $K_X^2 = 3$, $K_X \cdot F = -2$, and $F^2 = 0$.

Observe that $X$ is geometrically rational. In particular, for any positive integer $m$ one has $h^0(X,\mathcal{O}_X(mK_X))=0$.
By Serre duality, this gives
$$
h^2(X,\mathcal{O}_X(-mK_X))=h^0(X,\mathcal{O}_X((m+1)K_X))=0.
$$
Thus, by Riemann--Roch theorem one has
$$
h^0(X, \mathcal{O}_X(-mK_X)) \geqslant
\chi(X, \mathcal{O}_X(-mK_X))
%h^0(X, \mathcal{O}_X(-mK_X)) - h^1(X, \mathcal{O}_X(-mK_X)) + h^2(X, \mathcal{O}_X(-mK_X))
=\frac{m(m+1)}{2}K_X^2 + 1.
$$
Therefore, the divisor $-K_X$ is big.

Assume that the anticanonical class $-K_X$ is not nef. Then there exists a $\Gamma$-invariant curve $C$ such that $-K_X \cdot C < 0$, and
$$
|-K_X| = C + |M|,
$$
where $|M|$ is a moveable linear system of dimension
$$
\dim |-K_X| = h^0(X, \mathcal{O}_X(-K_X)) - 1
\geqslant K_X^2 = 3.
$$

One has
$$
C \sim -aK_X - bF
$$
with $0 < 3a < 2b$, since $C\cdot F\ge 0$ and $-K_X \cdot C < 0$. Thus
$$
M \sim (a-1)K_X + bF.
$$
Hence
$$
M^2 = 3(a-1)^2 -2(a-1)b = (a - 1)(3a - 3 - 2b).
$$
Since $3a < 2b$, this number can be non-negative only if $a = 1$, and the linear system~\mbox{$|M| \sim |bF|$} has dimension at least $3$ only if $b \geqslant 3$.
One has $C \sim -K_X - bF$ and~\mbox{$C \cdot F = 2$}, therefore there are three possibilities for $C_{\bar{\KK}}$ on $X_{\bar{\KK}}$: the curve $C_{\bar{\KK}}$ is either irreducible and generically reduced, or consists of two distinct sections of $\phi_{\bar{\KK}}$, or is a section of $\phi_{\bar{\KK}}$ with multiplicity~$2$ (in this case $C$ is not geometrically reduced).
The latter case is impossible since
$$
C^2 = K_X^2 + 2bK_X \cdot F = 3 - 4b
$$
is not divisible by $4$.

In the former two cases, observe that $C_{\bar{\KK}}$ is a Cartier divisor on a smooth surface, and each irreducible component of $C_{\bar{\KK}}$ is generically reduced: this implies that
$C_{\bar{\KK}}$ is reduced.
Since $C_{\bar{\KK}}$ consists of one or two irreducible components, one has~\mbox{$\mathrm{p}_a(C_{\bar{\KK}}) \geqslant -1$}.
On the other hand, by adjunction (see for instance~\mbox{\cite[Corollary 2.9]{Tan18}}) we compute
$$
2\mathrm{p}_a(C) - 2 = C \cdot (C + K_X) = -2b.
$$
Thus
$$
\mathrm{p}_a(C_{\bar{\KK}}) = \mathrm{p}_a(C) = 1 - b \leqslant -2,
$$
which gives a contradiction.

Now assume that the anticanonical class $-K_X$ is nef but not ample.
Since $K_X^2>0$, we conclude from Nakai--Moishezon criterion
that there exists a $\Gamma$-invariant $\Gamma$-irreducible reduced curve~$C$ such that $-K_X \cdot C = 0$. One has $C^2 < 0$ by Hodge index theorem since $-K_X$ is nef and big.
The curve $C$ has class $-aK_X - bF$. We see that $3a = 2b$, because~\mbox{$-K_X \cdot C = 0$}.
By adjunction we have
$$
C^2 = C^2 +  C \cdot K_X = C \cdot (C + K_X) = \operatorname{deg} \omega_C.
$$
Thus $h^0 (C, \omega_C) = 0$, and by Serre duality we get $h^1 (C, \mathcal{O}_C) = 0$. Hence
$$
C^2 = \operatorname{deg} \omega_C = 2(h^1 (C, \mathcal{O}_C) - h^0 (C, \mathcal{O}_C)) = -2h^0 (C, \mathcal{O}_C).
$$
Moreover, the map
$$
H^0 (C, \mathcal{O}_C) \to H^0 (C \cap F, \mathcal{O}_{C \cap F})
$$
is injective,
since $H^0 (C, \mathcal{O}_C)$ is a direct sum of finite extensions of $\KK$ transitively permuted by $\Gamma$. We observe that
$$
4a = 2C \cdot F = 2h^0 (C \cap F, \mathcal{O}_{C \cap F}) \geqslant 2h^0 (C, \mathcal{O}_C) = -C^2.
$$
On the other hand, one has
$$
C^2 = a^2K_X^2 + 2abK_X \cdot F + b^2F^2 = 3a^2-4ab= -3a^2.
$$
Therefore $3a^2 \leqslant 4a$, so we have either $a = 0$, or $a = 1$. Both of these cases are impossible, because~\mbox{$3a = 2b$}.

Therefore, we see that the anticanonical class $-K_X$ is ample, so that $X$ is a del Pezzo
surface of degree~\mbox{$K_X^2=3$}. Furthermore, by Lemma~\ref{lemma:unique-bisection} there is a line $E$ on $X$ such that
$$
E \sim -K_X - F
$$
in $\Pic(X)$. Therefore $E$ is $\Gamma$-invariant.
\end{proof}

\begin{corollary}
\label{corollary:Iskovskikh-elementary-transformation-equivariant}
Let $X$ be a smooth cubic surface over
a field $\KK$ with an action of a finite group $\Gamma$
such that~{$\rkPic(X)^\Gamma=2$} and $X$ contains a $\Gamma$-invariant line.
Let~\mbox{$\phi\colon X\to\PP^1$} be the $\Gamma$-equivariant conic bundle defined by this line,
and let~\mbox{$\theta\colon X\dashrightarrow X'$}
be a $\Gamma$-equivariant transformation of $\phi$.
Then $X'$ is a cubic surface such that~{$\rkPic(X')^\Gamma=2$} and~$X'$ contains a $\Gamma$-equivariant line.
In particular, $X'$ can be obtained by a blow up of a del Pezzo surface~$S'$ of degree $4$ with an action of $\Gamma$ at a
$\Gamma$-invariant $\KK$-point.
\end{corollary}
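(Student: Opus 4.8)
The plan is to mimic the one-line argument for the non-equivariant Corollary~\ref{corollary:Iskovskikh-elementary-transformation}, feeding in the equivariant input Proposition~\ref{proposition:Iskovskikh-elementary-transformation-equivariant} in place of its non-equivariant counterpart. First I would record that $X'$ is again a smooth projective surface carrying a $\Gamma$-action (since $\theta$ is $\Gamma$-equivariant, the group $\Gamma$ acts on the target $X'$ as well) together with the conic bundle $\phi'\colon X'\to\PP^1$. The two invariants I need to transport from $X$ to $X'$ are the canonical degree and the $\Gamma$-invariant Picard rank: I claim that $K_{X'}^2=K_X^2=3$ and $\rkPic(X')^\Gamma=\rkPic(X)^\Gamma=2$. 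Granting this, the conic bundle $\phi'$ satisfies the hypotheses of Proposition~\ref{proposition:Iskovskikh-elementary-transformation-equivariant}, which immediately yields that $X'$ is a smooth cubic surface containing a $\Gamma$-invariant line.

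The substance of the argument, and the step I expect to be the main obstacle, is the preservation of $\rkPic^\Gamma$. A transformation of conic bundles is an isomorphism in a neighbourhood of the degenerate fibers and, over the locus of smooth fibers, decomposes into a sequence of elementary transformations (blow up a point on a smooth fiber, then blow down the strict transform of that fiber). Each elementary transformation induces a canonical isomorphism of geometric Picard lattices preserving the intersection form, the canonical class, and the fiber class; composing them, the transformation $\theta$ induces an isomorphism $\Pic(X_{\KK^{sep}})\stackrel{\sim}\longrightarrow\Pic(X'_{\KK^{sep}})$ of this kind. This isomorphism is $\Gal(\KK^{sep}/\KK)$-equivariant, and since $\theta$ itself is $\Gamma$-equivariant it is $\Gamma$-equivariant as well. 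Passing to invariants under $\Gal(\KK^{sep}/\KK)\times\Gamma$ then gives $\rkPic(X')^\Gamma=\rkPic(X)^\Gamma$, while the same isomorphism gives $K_{X'}^2=K_X^2$. This is precisely the equivariant version of Lemma~\ref{lemma:markings-wd-equivariant-cb}; the delicate point is that one cannot yet invoke that lemma here, since it presupposes $X'$ to be a cubic surface, so the compatibility of the lattice identification with the $\Gamma$-action must be argued directly from the decomposition into elementary transformations, in the spirit of the $\Gamma$-Sarkisov formalism alluded to in the introduction.

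It remains to establish the final ``in particular'' assertion. By Lemma~\ref{lemma:unique-bisection} the $\Gamma$-invariant line $E$ produced above satisfies $E\sim -K_{X'}-F'$, where $F'$ is the class of a fiber of $\phi'$; since both $-K_{X'}$ and $F'$ are defined over $\KK$, so is the class of $E$, and as a $(-1)$-class it has a unique effective representative. Hence $E$ is a line defined over $\KK$ that is invariant under $\Gamma$. Contracting $E$ therefore gives a $\Gamma$-equivariant birational morphism $f'\colon X'\to S'$ defined over $\KK$ onto a del Pezzo surface $S'$ with $K_{S'}^2=K_{X'}^2+1=4$, endowed with the induced $\Gamma$-action. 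The image $f'(E)$ is a $\Gamma$-invariant $\KK$-point, and $X'$ is recovered as the blow up of $S'$ at this point, as required.
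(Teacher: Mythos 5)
Your proposal is correct and takes essentially the same route as the paper: the paper's proof is exactly the observation that a $\Gamma$-equivariant transformation of a conic bundle changes neither the $\Gamma$-invariant Picard rank nor the canonical degree, after which Proposition~\ref{proposition:Iskovskikh-elementary-transformation-equivariant} yields the cubic surface with its $\Gamma$-invariant line. Your extra elaborations --- justifying the preservation of $\rkPic^\Gamma$ via the ($\Gamma$- and Galois-equivariant) identification of geometric Picard lattices coming from the decomposition into elementary transformations, and spelling out the ``in particular'' clause by contracting the $\Gamma$-invariant line (which indeed satisfies $E\sim -K_{X'}-F'$ and is defined over $\KK$) --- merely fill in details the paper leaves implicit.
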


\begin{proof}
An equivariant transformation of a conic bundle
does not change the invariant Picard rank and the canonical degree, so that
the required assertion follows from Proposition~\ref{proposition:Iskovskikh-elementary-transformation-equivariant}.
\end{proof}

A smooth geometrically irreducible projective surface $Z$ with an action of a finite group~$\Gamma$ and a $\Gamma$-equivariant
morphism~\mbox{$\phi\colon Z\to B$}
is called a \emph{$\Gamma$-Mori fiber space} if~\mbox{$\dim Z>\dim B$}, the anticanonical divisor of $Z$ is $\phi$-ample, one
has
$$
\rkPic(Z)^{\Gamma}-\rkPic(B)^{\Gamma}=1,
$$
and~\mbox{$\phi_*\mathcal{O}_Z=\mathcal{O}_B$}. Thus, any $\Gamma$-Mori fiber space is either
a del Pezzo surface of $\Gamma$-invariant Picard rank~$1$,
or a relatively $\Gamma$-minimal conic bundle (i.e. a conic bundle of relative $\Gamma$-invariant Picard rank~$1$).

The next result is an equivariant version of Theorem~\ref{theorem:Iskovskikh-dP4-models}.

\begin{proposition}
\label{proposition:Iskovskikh-dP4-models-equivariant}
Let $S$ be a del Pezzo surface of degree~$4$ over
a field $\KK$ with an action of a finite group $\Gamma$
such that $\rkPic(S)^{\Gamma}=1$.
Then all the $\Gamma$-Mori fiber spaces $\Gamma$-equivariantly birational to $S$ are either del Pezzo
surfaces of $\Gamma$-invariant Picard rank~$1$ and degree~$4$, or smooth cubic surfaces with a structure
of a relatively $\Gamma$-minimal conic bundle over~$\PP^1$.
Furthermore, all $\Gamma$-equivariant birational maps from $S$ to $\Gamma$-Mori fiber spaces are
obtained by a composition of maps of the following types:
\begin{itemize}
\item $\Gamma$-equivariant automorphisms;

\item birational Geiser involutions of del Pezzo surfaces of degree $4$ defined \mbox{by $\Gamma$-invariant} points of degree $2$
or a $\Gamma$-invariant pairs of $\KK$-points in Sarkisov general position;

\item birational Bertini involutions of del Pezzo surfaces of degree $4$ defined \mbox{by $\Gamma$-invariant}
points of degree $3$ or a $\Gamma$-invariant triples of $\KK$-points in Sarkisov general position;

\item blow ups of $\Gamma$-invariant $\KK$-points in Sarkisov general position
on del Pezzo surfaces of degree $4$ with $\Gamma$-invariant Picard rank~$1$,
and their inverses;

\item $\Gamma$-equivariant transformations between conic bundles on smooth cubic surfaces of
Picard rank~$2$.
\end{itemize}
\end{proposition}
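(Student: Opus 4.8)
The plan is to repeat the argument proving Theorem~\ref{theorem:Iskovskikh-dP4-models} almost verbatim, with ordinary Sarkisov theory replaced by its $\Gamma$-equivariant counterpart. Thus I would first invoke the $\Gamma$-equivariant factorization theorem: any $\Gamma$-equivariant birational map between two-dimensional $\Gamma$-Mori fiber spaces is a composition of $\Gamma$-Sarkisov links and $\Gamma$-equivariant automorphisms of $\Gamma$-Mori fiber spaces. This is the equivariant analogue of~\cite[Theorem A]{BFSZ}, and it is exactly the ingredient that must be supplied by developing $\Gamma$-Sarkisov theory for regular surfaces over an arbitrary field, as explained in the introduction. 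Granting this, the proof reduces to describing the possible $\Gamma$-Sarkisov links emanating from the two kinds of $\Gamma$-Mori fiber spaces that can occur, and then running an induction on the number of links in the decomposition.

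The first step is to classify the $\Gamma$-Sarkisov links starting from the del Pezzo surface $S$ of degree $4$ with $\rkPic(S)^{\Gamma}=1$. As in the non-equivariant case, the center of such a link is a $\Gamma$-invariant subscheme in Sarkisov general position, and the only possibilities of sufficiently small degree are links of type $\mathrm{II}$ given by birational Geiser or Bertini involutions, and links of type $\mathrm{I}$ given by blow-ups. The essential new feature relative to Theorem~\ref{theorem:Iskovskikh-dP4-models} is that the admissible centers are now taken $\Gamma$-invariant: a Geiser involution is defined by a $\Gamma$-invariant point of degree $2$ or by a $\Gamma$-invariant pair of $\KK$-points interchanged by $\Gamma$, a Bertini involution is defined by a $\Gamma$-invariant point of degree $3$ or by a $\Gamma$-invariant triple of $\KK$-points, and a link of type $\mathrm{I}$ is the blow-up of a $\Gamma$-invariant $\KK$-point. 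In the last case the resulting surface is a smooth cubic surface, and the class of the exceptional line is $\Gamma$-invariant, so it carries a $\Gamma$-equivariant conic bundle structure of relative $\Gamma$-invariant Picard rank~$1$; thus we obtain precisely the two types of target $\Gamma$-Mori fiber spaces listed in the statement.

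The second step is to classify the $\Gamma$-Sarkisov links starting from such a cubic surface $X$ with its $\Gamma$-minimal conic bundle $\phi\colon X\to\PP^1$. As in the non-equivariant argument, such a link is either the inverse of the blow-up of the previous step, returning us to a del Pezzo surface of degree $4$ with $\rkPic^{\Gamma}=1$, or a link of type $\mathrm{II}$ over the base $\PP^1$. In the latter case the link is a $\Gamma$-equivariant transformation of the conic bundle $\phi$ in the sense of Definition~\ref{definition:elementary-transformation}, and by Corollary~\ref{corollary:Iskovskikh-elementary-transformation-equivariant} the target is again a cubic surface with a $\Gamma$-minimal conic bundle containing a $\Gamma$-invariant line, hence again a blow-up of a del Pezzo surface of degree~$4$ at a $\Gamma$-invariant $\KK$-point. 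Since each link keeps us within the two listed families, the description of the $\Gamma$-Mori fiber spaces and of the admissible birational maps follows by induction on the length of the decomposition, exactly as in the proof of Theorem~\ref{theorem:Iskovskikh-dP4-models}.

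The main obstacle is the first step, namely making the $\Gamma$-equivariant Sarkisov program rigorous: one needs the existence of the factorization into $\Gamma$-Sarkisov links together with a complete classification of those links over an arbitrary field for regular surfaces with a $\Gamma$-action, paralleling~\cite{BFSZ}. Once this framework is in place, the bookkeeping of admissible centers---in particular the appearance of $\Gamma$-invariant pairs and triples of $\KK$-points alongside points of degree $2$ and $3$---and the inductive conclusion are routine adaptations of the non-equivariant arguments, with Proposition~\ref{proposition:Iskovskikh-elementary-transformation-equivariant} and Corollary~\ref{corollary:Iskovskikh-elementary-transformation-equivariant} supplying the needed facts about transformations of conic bundles on cubic surfaces.
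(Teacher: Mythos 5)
Your proposal is correct and follows essentially the same route as the paper's own (sketch of a) proof: factorization into $\Gamma$-Sarkisov links via the equivariant analogue of \cite[Theorem A]{BFSZ}, classification of the links starting from $S$ as in Proposition~\ref{proposition:links-starting-from-dP4-equivariant}, and then the same induction as in Theorem~\ref{theorem:Iskovskikh-dP4-models}, using Corollary~\ref{corollary:Iskovskikh-elementary-transformation-equivariant} for links from the cubic conic bundles. You also correctly identify, as the paper itself does, that the only genuine gap is the missing rigorous development of the $\Gamma$-equivariant Sarkisov program for regular surfaces over arbitrary fields.
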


\begin{proof}[Sketch of a proof]
Every $\Gamma$-equivariant birational map from $S$ to another $\Gamma$-Mori fiber space
is a composition of $\Gamma$-Sarkisov links and automorphisms.
The latter can be classified similarly to the non-equivariant case, see Proposition~\ref{proposition:links-starting-from-dP4-equivariant}.
The remaining part of the argument goes as in the proof of Theorem~\ref{theorem:Iskovskikh-dP4-models}.
\end{proof}

To proceed, we need to analyze some of the birational maps listed in Proposition~\ref{proposition:Iskovskikh-dP4-models-equivariant}
in more detail.
Namely, in certain cases we show that the two surfaces involved in such a map, which we already know to
be equivariantly birational and (possibly non-equivariantly) isomorphic to each other, are in fact
equivariantly isomorphic.

\begin{lemma}\label{lemma:transformation-via-cubic-equivariant}
Let $S$ and $S'$ be del Pezzo surfaces of degree $4$ over
a field $\KK$
with an action of a finite group $\Gamma$
such that~\mbox{$\rkPic(S)^\Gamma=\rkPic(S')^\Gamma=1$}.
Let $\chi \colon S \dashrightarrow S'$ be a $\Gamma$-equivariant birational map.
Suppose that
$$
\chi = f^{-1}\theta f',
$$
where $f \colon X \rightarrow S$ and $f' \colon X' \rightarrow S'$ are blow ups of $\Gamma$-invariant $\KK$-points on del Pezzo surfaces of degree $4$ and $\theta \colon X \dashrightarrow X'$ is a $\Gamma$-equivariant transformation between conic bundles on smooth cubic surfaces.
Then there exists a $\Gamma$-equivariant isomorphism between~$S$ and~$S'$.
\end{lemma}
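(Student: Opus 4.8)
The plan is to produce, by the non-equivariant machinery, the isomorphism $\psi\colon S\to S'$ given by Corollary~\ref{corollary:dP4-isomorphic}(iv), and then to upgrade it to a $\Gamma$-equivariant isomorphism by a rigidity argument. First I would fix compatible markings $L_1,\ldots,L_5$ and $L_1',\ldots,L_5'$ of $S$ and $S'$ as in Proposition~\ref{proposition:Lambda-Lambda-prime-WD-equivariant}: these come from a marking of $\phi$ via Constructions~\ref{construction:markings-1-to-1-S-to-X} and~\ref{construction:markings-1-to-1-X-to-S} together with the natural identification of the degenerate fibers of $\phi$ and $\phi'$ under $\theta$. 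Proposition~\ref{proposition:Lambda-Lambda-prime-WD-equivariant} then supplies a $\WD$-equivariant isomorphism $\nu\colon\Lambda(S)\to\Lambda(S')$ with $\nu(L_i)=L_i'$, and Corollary~\ref{corollary:dP4-isomorphic}(iv) supplies an isomorphism $\psi\colon S\to S'$ over $\KK$ whose induced map on Hilbert schemes of lines is exactly $\nu$.

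The next step is to check that $\nu$ is $\Gamma$-equivariant. Since $f$ and $f'$ are blow ups of $\Gamma$-invariant $\KK$-points and $\theta$ is $\Gamma$-equivariant, the group $\Gamma$ acts compatibly on $X$ and $X'$, so that Lemma~\ref{lemma:markings-wd-equivariant} applies to $f$ and $f'$, and Lemma~\ref{lemma:markings-wd-equivariant-cb} applies to $\theta$, all with the group $G$ taken to be (the image of) $\Gamma$. Chaining these lemmas gives the equalities
$$
\pi_S=\pi_X=\pi_{X'}=\pi_{S'}
$$
of homomorphisms $\Gamma\to\WD$. The group $\Gamma$ acts on $\Lambda(S)$ and on $\Lambda(S')$ precisely through these homomorphisms followed by the marking-induced $\WD$-action; hence the $\WD$-equivariance of $\nu$ together with the equality $\pi_S=\pi_{S'}$ immediately forces $\nu$ to be $\Gamma$-equivariant.

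It remains to transfer $\Gamma$-equivariance from $\nu$ to $\psi$, which I would do by a uniqueness argument. The isomorphism $\psi$ is the \emph{unique} isomorphism $S\to S'$ inducing $\nu$ on lines: any two such differ by an automorphism of $S$ acting trivially on $\Lambda(S)$, and such an automorphism is trivial by the faithfulness of the $\WD$-action on $\Lambda(S)$ and the injectivity of $\Aut(S)\to\WD$ (Lemma~\ref{lemma:Weyl-group-on-dP4-basic}(i),(iii)). Now for each $g\in\Gamma$ the isomorphism $\rho_{S'}(g)^{-1}\,\psi\,\rho_S(g)\colon S\to S'$ induces on lines the map $\rho_{S'}(g)^{-1}\circ\nu\circ\rho_S(g)$, which equals $\nu$ because $\nu$ is $\Gamma$-equivariant; by uniqueness it coincides with $\psi$, so $\psi\,\rho_S(g)=\rho_{S'}(g)\,\psi$ and $\psi$ is $\Gamma$-equivariant.

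The conceptual content is the observation that the equivariant statement splits into the $\Gamma$-equivariance of $\nu$, which is automatic once $\pi_S=\pi_{S'}$, and the rigidity of $\psi$. I expect the only step requiring genuine care to be the verification of $\pi_S=\pi_{S'}$, that is, the bookkeeping that correctly propagates the homomorphism to $\WD$ through the two blow ups and the conic-bundle transformation by invoking Lemmas~\ref{lemma:markings-wd-equivariant} and~\ref{lemma:markings-wd-equivariant-cb}; the rest of the argument is formal.
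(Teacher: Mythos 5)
Your proposal is correct and takes essentially the same route as the paper's own proof: both fix compatible markings, obtain the $\WD$-equivariant $\nu$ from Proposition~\ref{proposition:Lambda-Lambda-prime-WD-equivariant} and the isomorphism $\psi$ inducing it from Corollary~\ref{corollary:dP4-isomorphic}(iv), and derive $\pi_S=\pi_X=\pi_{X'}=\pi_{S'}$ via Lemmas~\ref{lemma:markings-wd-equivariant} and~\ref{lemma:markings-wd-equivariant-cb} to conclude that $\nu$ is $\Gamma$-equivariant. Your final uniqueness argument is only a repackaging of the paper's direct check that $(\psi g)^{-1}(g\psi)$ fixes every line on $S_{\KK^{sep}}$ and is therefore trivial by the faithfulness in Lemma~\ref{lemma:Weyl-group-on-dP4-basic}.
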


\begin{proof}
We have the following $\Gamma$-equivariant diagram
\begin{equation*}
\xymatrix{
&X\ar@{->}[d]_{\phi}\ar@{-->}[rr]^{\theta}\ar@{->}[ld]_{f}&& X'\ar@{->}[d]^{\phi'}\ar@{->}[rd]^{f'}&\\
S&\PP^1\ar@{->}[rr]^{\sim}&&\PP^1& S'
}
\end{equation*}

Consider any marking $L_1,\ldots,L_5$ of $S$. Applying Construction~\ref{construction:markings-1-to-1-S-to-X}, one obtains a marking
$$
E_1\cup F_1,\ldots,E_5\cup F_5
$$
of $\phi$.
Since the degenerate fibers (and their irreducible components)
of~$\phi$ and~$\phi'$ are naturally identified with each other,
this provides a marking of $\phi'$. Finally, applying Construction~\ref{construction:markings-1-to-1-X-to-S}, we obtain a marking $L_1',\ldots,L_5'$ of $S'$. Each of these markings gives an action of $\WD$ on $\Pic(S_{\KK^{sep}})$, $\Pic(X_{\KK^{sep}})$, $\Pic(X'_{\KK^{sep}})$, and $\Pic(S'_{\KK^{sep}})$, respectively.

The markings on $S$ and $S'$ induce the actions of $\WD$ on $\Lambda(S)$ and $\Lambda(S')$.
According to Proposition~\ref{proposition:Lambda-Lambda-prime-WD-equivariant} there exists a $\WD$-equivariant isomorphism
$$
\nu\colon\Lambda(S)\to \Lambda(S').
$$
Moreover, by Corollary \ref{corollary:dP4-isomorphic}(iv) there exists an isomorphism $\psi \colon S \stackrel{\sim}\longrightarrow S'$ such that the induced map $\tilde{\psi}\colon\Lambda(S)\to \Lambda(S')$ coincides with $\nu$.

The action of the group $\Gamma$ on $\Pic(S_{\KK^{sep}})$, $\Pic(X_{\KK^{sep}})$, $\Pic(X'_{\KK^{sep}})$, and $\Pic(S'_{\KK^{sep}})$ gives homomorphisms $\pi_S$, $\pi_X$, $\pi_{X'}$, and $\pi_{S'}$ from $\Gamma$ to $\WD$. All these homomorphisms coincide
by Lemmas~\ref{lemma:markings-wd-equivariant} and~\ref{lemma:markings-wd-equivariant-cb}.
Therefore the $\WD$-equivariant isomorphism~$\tilde{\psi}_{\KK^{sep}} = \nu_{\KK^{sep}}$ is $\Gamma$-equivariant. Thus for any line $L \in \Lambda(S)_{\KK^{sep}}$ and $g \in \Gamma$ one has
$$
g \tilde{\psi}_{\KK^{sep}}(L) = \tilde{\psi}_{\KK^{sep}} g(L).
$$
Hence the automorphism~\mbox{$(\psi_{\KK^{sep}} g)^{-1} (g \psi_{\KK^{sep}})$} preserves each line on $S_{\KK^{sep}}$. However, we know that the action of the automorphism group of $S_{\KK^{sep}}$
on the lines is faithful. Thus~\mbox{$(\psi g)^{-1} (g\psi)$} is the trivial automorphism of $S$. This means that the morphisms~\mbox{$g\psi \colon S \rightarrow S'$} and~\mbox{$\psi g \colon S \rightarrow S'$} coincide, so that $\psi$ is \mbox{a $\Gamma$-equivariant} isomorphism between $S$ and $S'$.
\end{proof}

Now one can deduce an equivariant analog of Corollary~\ref{corollary:Iskovskikh-dP4-models}.

\begin{corollary}
\label{corollary:Iskovskikh-dP4-models-equivariant}
Let $S$ be a del Pezzo surface of degree $4$ over
a field $\KK$
with an action of a finite group $\Gamma$
such that~\mbox{$\rkPic(S)^\Gamma=1$}.
Then all the $\Gamma$-Mori fiber spaces $\Gamma$-equivariantly birational to $S$ are~$S$ itself
and smooth cubic surfaces obtained by a blow up
of a $\Gamma$-invariant $\KK$-point on $S$.
\end{corollary}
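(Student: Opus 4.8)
The plan is to run the argument of Corollary~\ref{corollary:Iskovskikh-dP4-models} verbatim, replacing each non-equivariant ingredient by its $\Gamma$-equivariant counterpart. First I would take a $\Gamma$-equivariant birational map $\chi\colon S\dashrightarrow S'$ where $S'$ is a del Pezzo surface of degree $4$ with $\rkPic(S')^{\Gamma}=1$, and apply Proposition~\ref{proposition:Iskovskikh-dP4-models-equivariant} to write $\chi$ as a composition
$$
S=S_0\stackrel{\chi_0}\dashrightarrow S_1\stackrel{\chi_1}\dashrightarrow\ldots\stackrel{\chi_{n-1}}\dashrightarrow S_n=S',
$$
where each $S_i$ is either a del Pezzo surface of $\Gamma$-invariant Picard rank~$1$ and degree~$4$, or a smooth cubic surface carrying a relatively $\Gamma$-minimal conic bundle, and each $\chi_i$ is one of the five types of maps listed there. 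As in the non-equivariant case, I would single out the indices $0=i_0<\ldots<i_k=n$ at which $S_{i_j}$ is a del Pezzo surface of degree~$4$, so that the intermediate surfaces are cubic surfaces with a conic bundle structure.

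The core step is to prove that consecutive del Pezzo surfaces $S_{i_j}$ and $S_{i_{j+1}}$ are $\Gamma$-equivariantly isomorphic. When $i_{j+1}=i_j+1$, the connecting map $\chi_{i_j}$ is a $\Gamma$-equivariant automorphism or a birational Geiser or Bertini involution $S_{i_j}\dashrightarrow S_{i_j}$; in either case the two surfaces are literally the same (for the involutions) or $\Gamma$-equivariantly isomorphic (for the automorphisms). When $i_{j+1}>i_j+1$, the maps $\chi_{i_j}^{-1}$ and $\chi_{i_{j+1}-1}$ are blow ups of $\Gamma$-invariant $\KK$-points, and the middle portion $\chi_{i_{j+1}-2}\ldots\chi_{i_j+1}$ is a $\Gamma$-equivariant transformation between conic bundles on smooth cubic surfaces. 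This is exactly the situation of Lemma~\ref{lemma:transformation-via-cubic-equivariant}, which delivers a $\Gamma$-equivariant isomorphism $S_{i_j}\cong S_{i_{j+1}}$. Chaining these isomorphisms together shows that $S'$ is $\Gamma$-equivariantly isomorphic to $S$.

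Finally I would treat the case where the target is a smooth cubic surface $X'$ with a relatively $\Gamma$-minimal conic bundle. By Corollary~\ref{corollary:Iskovskikh-elementary-transformation-equivariant}, such an $X'$ contains a $\Gamma$-invariant line; contracting it produces a del Pezzo surface $S'$ of degree~$4$ with $\rkPic(S')^{\Gamma}=1$, which by the previous paragraph is $\Gamma$-equivariantly isomorphic to $S$. Hence $X'$ is the blow up of a $\Gamma$-invariant $\KK$-point on $S$, and together with the classification of $\Gamma$-Mori fiber spaces in Proposition~\ref{proposition:Iskovskikh-dP4-models-equivariant} this yields the claim. The main obstacle is the middle case of the core step: the non-equivariant Corollary~\ref{corollary:dP4-isomorphic}(iv) only produces an abstract isomorphism, and upgrading it to a $\Gamma$-equivariant one is precisely what Lemma~\ref{lemma:transformation-via-cubic-equivariant} achieves, by comparing the homomorphisms $\Gamma\to\WD$ attached to the chosen markings and invoking the faithfulness of the $\Aut$-action on the lines. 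Everything here is already in place, so the proof reduces to assembling these equivariant lemmas in the same order as in Corollary~\ref{corollary:Iskovskikh-dP4-models}.
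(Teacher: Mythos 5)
Your proposal follows the paper's own proof almost step for step: decompose the map via Proposition~\ref{proposition:Iskovskikh-dP4-models-equivariant}, single out the consecutive degree-$4$ surfaces in the chain, settle the segments passing through cubic surfaces by Lemma~\ref{lemma:transformation-via-cubic-equivariant}, and handle conic bundle targets by producing a $\Gamma$-invariant line and contracting it. (One small citation slip: for an arbitrary smooth cubic surface $X'$ with a relatively $\Gamma$-minimal conic bundle, the existence of the $\Gamma$-invariant line is given directly by Proposition~\ref{proposition:Iskovskikh-elementary-transformation-equivariant}, which is what the paper cites; Corollary~\ref{corollary:Iskovskikh-elementary-transformation-equivariant} has as hypothesis that the source of the transformation already contains an invariant line.)

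There is, however, one genuinely under-justified step. When $i_{j+1}=i_j+1$ and $\chi_{i_j}$ is a birational Geiser or Bertini involution, you assert that the two surfaces are ``literally the same.'' As varieties they are, but in the chain produced by the $\Gamma$-Sarkisov decomposition the source $S_{i_j}$ and the target $S_{i_{j+1}}$ a priori carry two possibly different actions of $\Gamma$, say $\rho$ and $\rho'$, and all the decomposition guarantees is that $\chi_{i_j}$ intertwines them. To chain the identifications together into a $\Gamma$-equivariant isomorphism $S\cong S'$ for the \emph{given} actions, you need the identity map to be equivariant, i.e. $\rho=\rho'$. This is precisely the content of Lemma~\ref{lemma:GB-involution-equivariant}, which the paper invokes at this point alongside Lemma~\ref{lemma:transformation-via-cubic-equivariant}, and its proof is not formal: it uses the fact that the quotient map by the Geiser (respectively, Bertini) involution on the blow up $Y$ is given by the complete (bi)anticanonical system, so the involution commutes with every automorphism of $Y_{\KK^{sep}}$. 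It is a little ironic that you correctly identify the analogous issue for the cubic-transformation segments --- upgrading the abstract isomorphism of Corollary~\ref{corollary:dP4-isomorphic}(iv) to an equivariant one --- while overlooking the same phenomenon in the involution case. Once Lemma~\ref{lemma:GB-involution-equivariant} is inserted there, your argument coincides with the paper's proof.
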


\begin{proof}
Let $\chi \colon S \dashrightarrow S'$ be a $\Gamma$-equivariant birational map, where $S'$ is a del Pezzo surface of degree $4$
and $\Gamma$-invariant Picard rank~$1$.
By Proposition~\ref{proposition:Iskovskikh-dP4-models-equivariant},
the map $\chi$ can be decomposed into a sequence of birational maps
$$
S = S_0 \stackrel{\chi_0}\dashrightarrow S_1 \stackrel{\chi_1}\dashrightarrow \ldots \stackrel{\chi_{n-1}}\dashrightarrow S_n = S',
$$
where each $\chi_i$ is a $\Gamma$-equivariant birational map of one of the types listed in Proposition~\ref{proposition:Iskovskikh-dP4-models-equivariant}.
Using Lemmas~\ref{lemma:GB-involution-equivariant} and~\ref{lemma:transformation-via-cubic-equivariant}, we show that the surfaces $S$ and $S'$ are $\Gamma$-equivariantly isomorphic,
similarly to how it was done in the proof of Corollary~\ref{corollary:Iskovskikh-dP4-models}.

If $X'$ is a smooth cubic surface of $\Gamma$-invariant Picard rank~$2$ which is $\Gamma$-equivariantly birational to~$S$,
then by Proposition~\ref{proposition:Iskovskikh-elementary-transformation-equivariant} there is a $\Gamma$-invariant line $E$ on $X'$. One can blow down this line and obtain a del Pezzo surface $S'$ of degree $4$. But we
have already proved that $S'$ is $\Gamma$-isomorphic to $S$. Therefore, $X'$ is isomorphic to a blow up of a $\Gamma$-invariant $\KK$-point on $S$.
It remains to recall from Proposition~\ref{proposition:Iskovskikh-dP4-models-equivariant} that there are no other types of $\Gamma$-Mori fiber spaces
which are $\Gamma$-equivariantly birational to~$S$.
\end{proof}

Finally, we prove the main result of this section.

\begin{proof}[Proof of Theorem~\ref{theorem:dP4-unique-equivariant}]
Assertion~(ii) is given by Corollary~\ref{corollary:Iskovskikh-dP4-models-equivariant}.
Assertion~(i) also follows from Corollary~\ref{corollary:Iskovskikh-dP4-models-equivariant}
because if there are no $\Gamma$-invariant $\KK$-points on $S$, then no cubic surface
can be obtained from $S$ by blowing up such a point.
\end{proof}

\appendix

\section{Geiser and Bertini involutions}
\label{appendix:involutions}

In this section we recall some properties of (birational) Geiser and Bertini involutions on regular del Pezzo surfaces,
and apply them to prove Theorem~\ref{theorem:lowdegree} and sketch a proof of Theorem~\ref{theorem:G-lowdegree}.

Let us start with (a variation of) a well known definition.

\begin{definition}[{cf. \cite[Definition 4.34]{BFSZ}}]
\label{definition:Sarkisov-general}
Let $S$ be a regular del Pezzo surface, and let $P\subset S$ be a collection of distinct points.
We say that $P$ is \emph{in Sarkisov general position} if the blow up of $S$ at $P$ is again a regular del Pezzo surface.
\end{definition}

\begin{proposition}
\label{proposition:GeiBerBir}
Let $S$ be a geometrically integral regular del Pezzo surface of degree~$d$ over a field~$\KK$, and let~\mbox{$P \subset S$} be a collection of distinct points such that the sum of their degrees
equals~\mbox{$d-2$}
(respectively,~\mbox{$d-1$}). Suppose that $P$ is in Sarkisov general position.
Then there exists a birational involution
$$
\chi \colon S \dashrightarrow S
$$
such that the indeterminacy locus of $\chi$ coincides with $P$ and $\chi$ regularizes on $Y$,
where $Y$ is the blow up of $S$ at $P$.
\end{proposition}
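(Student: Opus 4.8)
The plan is to obtain $\chi$ by conjugating a genuine (biregular) involution of a degree~$2$ or degree~$1$ surface by the blow-up $f$. Let $f\colon Y\to S$ be the blow up of $S$ at $P$; since $P$ is in Sarkisov general position, $Y$ is a regular del Pezzo surface, and
$$
K_Y^2=K_S^2-\deg P=d-\deg P,
$$
which equals $2$ in the Geiser case ($\deg P=d-2$) and $1$ in the Bertini case ($\deg P=d-1$). Thus $Y$ is a regular del Pezzo surface of degree~$2$ (respectively~$1$), and it carries a biregular Geiser (respectively Bertini) involution $\sigma\colon Y\to Y$ defined over $\KK$: in the degree~$2$ case $\sigma$ is the deck transformation of the degree~$2$ morphism given by $|-K_Y|$, and in the degree~$1$ case it is the deck transformation of the degree~$2$ morphism given by $|-2K_Y|$. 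I then define
$$
\chi=f\circ\sigma\circ f^{-1}\colon S\dashrightarrow S.
$$

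The two structural properties are then immediate. Because $\sigma^2=\mathrm{id}_Y$ and $f^{-1}f=\mathrm{id}_Y$ as birational maps, one has $\chi^2=f\sigma f^{-1}f\sigma f^{-1}=f\sigma^2 f^{-1}=\mathrm{id}_S$, so $\chi$ is a birational involution. Moreover $\chi$ regularizes on $Y$: precomposing with $f$ gives $\chi\circ f=f\circ\sigma$, whose right-hand side is a morphism $Y\to S$, so $f$ resolves $\chi$.

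Next I would identify the indeterminacy locus. Since $f$ is an isomorphism over $S\setminus P$, the rational map $f^{-1}$ is a morphism there, and hence so is $\chi$; thus $\operatorname{Indet}(\chi)\subseteq P$. For the opposite inclusion it suffices to show that $\sigma$ does not send the exceptional locus of $f$ into itself. Passing to $\KK^{sep}$ and writing this locus as a union of $(-1)$-curves $E_1,\dots,E_m$, the involution $\sigma$ fixes $K_Y$ and acts as $-1$ on $K_Y^{\perp}$, so
$$
\sigma^{*}E_i=(E_i\cdot K_Y)K_Y-E_i=-K_Y-E_i
$$
in the degree~$2$ case, and $\sigma^{*}E_i=-2K_Y-E_i$ in the degree~$1$ case. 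Using $E_i^2=E_i\cdot K_Y=-1$ and $E_i\cdot E_j=0$ for $i\neq j$, a one-line computation shows that $\sigma(E_i)$ is again a $(-1)$-curve and that the equality $\sigma(E_i)=E_j$ would force $E_i\cdot E_j\in\{2,3\}$, which is absurd. Hence $f\circ\sigma$ contracts none of the exceptional curves over a point of $P$, so $\chi$ blows that point up to a curve and is undefined there. Therefore $\operatorname{Indet}(\chi)=P$, as claimed.

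The formal part of the argument is a routine conjugation; the real work, and the main obstacle, lies in the input that $Y$ carries a biregular Geiser (respectively Bertini) involution $\sigma$ defined over $\KK$. Over an arbitrary, possibly non-perfect field, and for a merely regular (possibly geometrically non-normal) del Pezzo surface $Y$, one must verify that the (bi)anticanonical morphism is a well-behaved finite double cover whose deck transformation is a biregular automorphism defined over $\KK$, rather than only over $\KK^{sep}$. This is precisely the point where the regular, non-smooth geometry developed earlier in this appendix (and in \cite{BFSZ}) is needed, and where the computation of the action of $\sigma$ on $\Pic(Y_{\KK^{sep}})$ used above must be justified in this generality.
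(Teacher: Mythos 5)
Your proposal is correct and takes essentially the same route as the paper: blow up $P$ to get a regular del Pezzo surface $Y$ of degree $2$ (respectively $1$), take the biregular Geiser (respectively Bertini) involution $\sigma$ on $Y$ — which the paper obtains from \cite[Proposition 4.18]{BFSZ} after noting $\rkPic(Y)\geqslant 2$, exactly the input you correctly identify as the real content — and set $\chi=f\sigma f^{-1}$. Your explicit verification that the indeterminacy locus equals $P$ goes beyond the paper's proof, which leaves this implicit; the only caution is that over a non-perfect field the exceptional locus of $f$ need not decompose into $(-1)$-curves after base change to $\KK^{sep}$ (the residue fields in $P$ may be inseparable and $Y_{\KK^{sep}}$ non-regular), so that intersection computation is safer carried out directly over $\KK$, where each exceptional component $E_i$ over a point of degree $e_i$ satisfies $E_i^2=E_i\cdot K_Y=-e_i$ and the same argument excludes $\sigma^{*}E_i=E_j$.
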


\begin{proof}
Let $\pi \colon Y \to S$ be the blow up of $S$ at $P$ and $E$ be the exceptional divisor of~$\pi$. Then~$Y$ is a del Pezzo surface of degree $2$ (respectively, $1$), because $P$ is in Sarkisov general position. Moreover, one has~\mbox{$\rkPic(Y) \geqslant 2$}, and therefore there exists the Geiser (respectively, Bertini) involution $\sigma$ on $Y$ by~\mbox{\cite[Proposition 4.18]{BFSZ}}. Thus
the birational map
$$
\chi = \pi \sigma \pi^{-1}
$$
is the required birational involution on $S$.
\end{proof}

\begin{definition}\label{definition:birational-GB}
The birational involution $\chi \colon S \dashrightarrow S$ constructed in Proposition~\ref{proposition:GeiBerBir}
is called a \emph{birational Geiser involution} (respectively, a \emph{birational Bertini involution}).
\end{definition}

\begin{corollary}
\label{corollary:GeiBerIso}
Let $S$ be a geometrically integral regular del Pezzo surface of degree~$d$ over a field~$\KK$ such that~\mbox{$\rkPic(S)=1$}, and $p\in S$ be a point of degree $d-2$ (respectively,~\mbox{$d-1$}).
Suppose that there is a Sarkisov link $\chi\colon S \dashrightarrow S'$ starting with the blow up of $P$. Then~$\chi$
is a birational Geiser (respectively, Bertini) involution. In particular, one has $S' \cong S$.
\end{corollary}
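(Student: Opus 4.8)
The plan is to recognise the given Sarkisov link as the pair of extremal contractions of a single surface $Y=\Bl_P S$, and to produce the isomorphism $S'\cong S$ from the Geiser (resp.\ Bertini) involution of $Y$, which I will show interchanges these two contractions. Throughout, $P$ denotes the single point $p$ of degree $d-2$ (resp.\ $d-1$).

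First I would fix the shape of the link. Since $\chi$ is a Sarkisov link starting with the blow up $\pi\colon Y\to S$ of $P$ and $\rkPic(S)=1$, the surface $Y$ satisfies $\rkPic(Y)=2$; moreover $P$ is in Sarkisov general position, so that $Y$ is a regular del Pezzo surface of degree $K_Y^2=d-(d-2)=2$ (resp.\ $d-(d-1)=1$). In particular Proposition~\ref{proposition:GeiBerBir} applies and provides the Geiser (resp.\ Bertini) involution $\sigma\in\Aut(Y)$, defined over $\KK$, together with the birational Geiser (resp.\ Bertini) involution $\chi_0=\pi\sigma\pi^{-1}$ of $S$. As surfaces admit no flops, the middle isomorphism in codimension one is a genuine isomorphism, so the link takes the form
$$
S\xleftarrow{\ \pi\ }Y\xrightarrow{\ \pi'\ }S',
$$
where $\pi'$ realises the second of the two extremal contractions of the (two-dimensional) Mori cone $\overline{\mathrm{NE}}(Y)$.

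The core step is to show that $\sigma$ swaps the two extremal rays of $\overline{\mathrm{NE}}(Y)$. For this I would use the classical description of the action on the Picard lattice: $\sigma^*$ fixes $K_Y$ and acts as $-\mathrm{id}$ on $K_Y^{\perp}$ (see e.g.\ \cite{Man74}). Writing $E$ for the class of the $\pi$-exceptional divisor, so that $K_Y=\pi^*K_S+E$ and $E^2=E\cdot K_Y=2-d$ (resp.\ $1-d$), a short computation yields
$$
\sigma^*E=-E+\frac{2(E\cdot K_Y)}{K_Y^2}\,K_Y=(d-2)\,\pi^*(-K_S)+(1-d)\,E
$$
(resp.\ $\sigma^*E=2(d-1)\,\pi^*(-K_S)+(1-2d)\,E$), which has nonzero $\pi^*(-K_S)$-coefficient and so is not proportional to $E$. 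Since $\sigma^*$ is an isometry fixing $K_Y$, it permutes the two extremal rays of $\overline{\mathrm{NE}}(Y)$; as it does not preserve the ray spanned by $E$, it must interchange the two rays. Hence $\sigma(E)$ is exactly the curve contracted by $\pi'$, its class being of the same numerical type as $E$; this also confirms that the link is of type $\mathrm{II}$ with $S'$ a del Pezzo surface of degree $d$. Now $\pi'\circ\sigma\colon Y\to S'$ contracts $\sigma^{-1}(\sigma(E))=E$, precisely as $\pi$ does, so these two birational morphisms to smooth surfaces agree up to a unique isomorphism $\alpha\colon S\xrightarrow{\ \sim\ }S'$ with $\alpha\pi=\pi'\sigma$. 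Therefore
$$
\chi=\pi'\pi^{-1}=\alpha\,\pi\sigma\pi^{-1}=\alpha\,\chi_0,
$$
so that, identifying $S'$ with $S$ via $\alpha$, the link $\chi$ is exactly the birational Geiser (resp.\ Bertini) involution $\chi_0$; in particular $S'\cong S$.

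The main obstacle I anticipate is the very first point: justifying cleanly, from the definition of a Sarkisov link, that $Y=\Bl_P S$ is a genuine (not merely weak) del Pezzo surface — equivalently, that $P$ is in Sarkisov general position — since this is what makes $\sigma$ available and lets Proposition~\ref{proposition:GeiBerBir} apply. Once $Y$ is known to be del Pezzo of degree $2$ (resp.\ $1$), the lattice computation and the identification of the gluing isomorphism $\alpha$ are routine.
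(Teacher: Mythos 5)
Your argument is correct and follows essentially the same route as the paper: blow up $p$ to obtain a regular del Pezzo surface $Y$ of degree $2$ (resp.\ $1$) with $\rkPic(Y)=2$, invoke the Geiser (resp.\ Bertini) involution $\sigma$ via Proposition~\ref{proposition:GeiBerBir}, show that $\sigma$ interchanges the two extremal rays of $Y$, and conclude that the second extremal contraction is $\pi\sigma$ up to an isomorphism, whence $\chi$ is the birational Geiser (resp.\ Bertini) involution and $S'\cong S$. The one divergence is how the ray swap is justified: the paper simply cites \cite[Proposition~4.21]{BFSZ}, while you derive it by hand from the fact that $\sigma^*$ fixes $K_Y$ and acts as $-\mathrm{id}$ on $K_Y^{\perp}$, together with the (correct) intersection numbers $E^2=E\cdot K_Y=2-d$ (resp.\ $1-d$). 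Your computation is fine, but the citation of \cite{Man74} for the $-\mathrm{id}$ action is too weak for the stated generality: the corollary concerns geometrically integral \emph{regular} del Pezzo surfaces over an arbitrary, possibly imperfect, field, where $Y$ need not be smooth and the classical lattice theory of smooth del Pezzo surfaces does not directly apply. In the rank-$2$ case needed here this is easily patched without \cite{Man74}: by \cite[Proposition~4.18]{BFSZ} the quotient of $Y$ by $\sigma$ is $\PP^2$ (resp.\ the weighted plane $\PP(1,1,2)$), of Picard rank $1$, and the quotient map $q$ is given by $|-K_Y|$ (resp.\ $|-2K_Y|$); since $q$ is Galois of degree $2$, any $\sigma$-invariant class $D$ satisfies $2D=D+\sigma^*D=q^*q_*D\in\QQ K_Y$, so the invariant part of $\Pic(Y)\otimes\QQ$ is $\QQ K_Y$ and $\sigma^*$ must act by $-1$ on the line $K_Y^{\perp}$ --- which is exactly the rank-$2$ content of the statement the paper imports from \cite[Proposition~4.21]{BFSZ}. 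Finally, the obstacle you flag at the end --- that $p$ is in Sarkisov general position, i.e.\ that $Y$ is a genuine regular del Pezzo surface --- is treated in the paper exactly as you assume: it is asserted parenthetically in the paper's proof and is built into the classification of links in \cite[Propositions~5.1 and~5.5]{BFSZ}, so it is a matter of citation rather than a missing argument.
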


\begin{proof}
Let $\pi \colon Y \to S$ be the blow up of $S$ at $p$.
Then $Y$ is a geometrically integral regular del Pezzo surface (and so the point $p$ is
in Sarkisov general position).
Denote by $\sigma$ the Geiser (respectively, Bertini) involution on $Y$.
Since $\rkPic(Y) = 2$, we know from~\mbox{\cite[Proposition 4.21]{BFSZ}} that $\sigma$ interchanges the two extremal rays of the nef cone of $Y$.
Therefore, for the second extremal contraction $\pi'\colon Y \to S'$ one has~\mbox{$\pi' = \pi \sigma$}.
Thus, it follows from Proposition~\ref{proposition:GeiBerBir} that $\chi$ is
a birational Geiser (respectively, Bertini) involution, and $S' \cong S$.
\end{proof}

Now we prove Theorem~\ref{theorem:lowdegree}.

\begin{proof}[Proof of Theorem~\ref{theorem:lowdegree}]
By \cite[Theorem A]{BFSZ}, any birational map between regular Mori fiber spaces of dimension $2$ is a composition of Sarkisov links and automorphisms of Mori fiber spaces.

From~\mbox{\cite[Propositions 5.1 and 5.5]{BFSZ}} one can see that for a geometrically integral regular del Pezzo surface~$S$ of Picard rank~$1$ and degree~$d$,
any Sarkisov link $\chi\colon S\dasharrow S'$ starts with the blow up of a point~$p$ of degree less than $d$. Moreover, if $\operatorname{deg} (p) = d - 1$ or~\mbox{$\operatorname{deg} (p) = d - 2$}, then one has $S' \cong S$ by Corollary~\ref{corollary:GeiBerIso}. Thus $S$ is birationally rigid for~\mbox{$d \leqslant 3$}.

Observe that if $d = 3$ and $S(\KK)=\varnothing$, then there are no points of degree $2$ on $S$. Indeed, for a line $L$ passing through a point of degree $2$ on a cubic surface $S$ in $\PP^3$,
one has~\mbox{$\operatorname{deg} (S \cap L) = 3$} (note that the line $L$ is not contained in $S$ since $\rkPic(S)=1$). Therefore, the intersection~\mbox{$S \cap L$} contains a $\KK$-point.
Thus there are no Sarkisov links starting from $S$ if $d\le 3$ and $S(\KK)=\varnothing$, or if $d = 1$. This means that $S$ is birationally super-rigid in these cases.
\end{proof}

Now we study $\Gamma$-equivariant birational Geiser and Bertini involutions.

\begin{lemma}\label{lemma:GB-involution-equivariant}
Let $S$ be a geometrically integral regular del Pezzo surface of degree~$d$ over
a field $\KK$
with two actions of a finite group $\Gamma$ given by homomorphisms
$\rho\colon \Gamma\to \Aut(S)$ and $\rho'\colon \Gamma\to \Aut(S)$.
Suppose that
$$
\rkPic(S)^{\rho(\Gamma)}=\rkPic(S)^{\rho'(\Gamma)}=1.
$$
Let $\chi \colon S \dashrightarrow S$
be a birational Geiser or Bertini involution which is equivariant with respect to these two actions of $\Gamma$ on the two copies of $S$;
in other words, one has
$$
\rho'(g)\chi=\chi\rho(g)
$$
for any $g\in \Gamma$.
Then $\rho=\rho'$.
In particular, the identity automorphism of $S$ is equivariant with respect to the two actions of $\Gamma$ on the two copies of $S$.
\end{lemma}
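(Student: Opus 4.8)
The plan is to lift the whole situation to the surface $Y$ on which $\chi$ regularizes, and then to exploit that the Geiser (resp.\ Bertini) involution is central in $\Aut(Y)$. Write $\chi=\pi\sigma\pi^{-1}$ as in Proposition~\ref{proposition:GeiBerBir}, where $\pi\colon Y\to S$ is the blow up of the base locus $P$ of $\chi$ (so $Y$ is a del Pezzo surface of degree~$2$ in the Geiser case and degree~$1$ in the Bertini case), and $\sigma$ is the Geiser (resp.\ Bertini) involution of $Y$. Since $\chi$ regularizes on $Y$, one has $\chi\pi=\pi\sigma$ as morphisms $Y\to S$.

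First I would check that $P$ is invariant under both group actions. Comparing indeterminacy loci in the identity $\rho'(g)\chi=\chi\rho(g)$ and using that $\rho(g)$ and $\rho'(g)$ are automorphisms gives $\rho(g)^{-1}(P)=P$; taking inverses of the same identity and replacing $g$ by $g^{-1}$ yields $\chi\rho'(g)=\rho(g)\chi$, whence likewise $\rho'(g)^{-1}(P)=P$. Thus both $\rho$ and $\rho'$ lift along $\pi$ to actions $\tilde\rho,\tilde\rho'\colon\Gamma\to\Aut(Y)$ with $\rho(g)\pi=\pi\tilde\rho(g)$ and $\rho'(g)\pi=\pi\tilde\rho'(g)$.

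Next I would transport the equivariance relation to $Y$. Composing $\rho'(g)\chi=\chi\rho(g)$ with $\pi$ on the right and using $\chi\pi=\pi\sigma$ together with the two lifting identities gives
$$
\pi\,\tilde\rho'(g)\,\sigma=\pi\,\sigma\,\tilde\rho(g).
$$
As $\pi$ is birational and both sides are automorphisms of $Y$, I may cancel $\pi$ to get $\tilde\rho'(g)\sigma=\sigma\tilde\rho(g)$, that is, $\tilde\rho'(g)=\sigma\tilde\rho(g)\sigma^{-1}$ for every $g\in\Gamma$.

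It then remains to invoke the centrality of $\sigma$ in $\Aut(Y)$: the involution $\sigma$ fixes $K_Y$ and acts as $-\id$ on $K_Y^{\perp}\subset\Pic(Y_{\bar{\KK}})$, so every conjugate of $\sigma$ induces on $\Pic(Y_{\bar{\KK}})$ exactly the same automorphism as $\sigma$; since $\Aut(Y_{\bar{\KK}})$ acts faithfully on $\Pic(Y_{\bar{\KK}})$ for a del Pezzo surface of degree at most two, any such conjugate equals $\sigma$. Hence $\sigma\tilde\rho(g)\sigma^{-1}=\tilde\rho(g)$, so $\tilde\rho'(g)=\tilde\rho(g)$ for all $g$, and pushing this equality down along the dominant morphism $\pi$ gives $\rho'(g)=\rho(g)$, i.e.\ $\rho=\rho'$. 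The main obstacle is precisely this centrality statement; I expect it to be the one point requiring care over imperfect fields and for possibly non-smooth regular $Y$, where one should either cite the description of these involutions from \cite{BFSZ} or argue directly that every automorphism of $Y$ preserves the (anti)canonical double cover defining $\sigma$ and therefore commutes with its covering involution.
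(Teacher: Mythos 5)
Your proposal follows the same skeleton as the paper's proof: write $\chi=\pi\sigma\pi^{-1}$, lift the $\Gamma$-actions to $Y$, reduce everything to the identity $\tilde\rho'(g)=\sigma\tilde\rho(g)\sigma^{-1}$, and conclude by showing that $\sigma$ commutes with every automorphism of $Y$. Your explicit verification that the equivariance of $\chi$ forces the center $P$ to be invariant under \emph{both} actions is a detail the paper asserts without proof, and is a welcome addition. The one genuine divergence is your primary justification of the centrality of $\sigma$, and there your argument has a gap in the stated generality: the lemma concerns geometrically integral \emph{regular} del Pezzo surfaces, so $Y$ (of degree $2$ or $1$) may fail to be smooth, and over an imperfect field $Y_{\bar{\KK}}$ can be singular or even non-normal. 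In that case $\Pic(Y_{\bar{\KK}})$ need not contain the $\mathrm{E}_7$ or $\mathrm{E}_8$ lattice, the description of $\sigma$ as $-\id$ on $K_Y^{\perp}$ is not available, and the faithfulness of the action of $\Aut(Y_{\bar{\KK}})$ on $\Pic(Y_{\bar{\KK}})$ --- which is usually proved by realizing $Y_{\bar{\KK}}$ as a blow up of $\PP^2$ at points in general position --- is unjustified. (Note also that the paper deliberately works over $\KK^{sep}$ rather than $\bar{\KK}$, precisely because regularity survives separable but not inseparable base change.) The fallback you flag in your last sentence is exactly the paper's argument: by \cite[Proposition 4.18]{BFSZ} the quotient of $Y_{\KK^{sep}}$ by $\Sigma=\langle\sigma\rangle$ is $\PP^2_{\KK^{sep}}$ (respectively, $\PP_{\KK^{sep}}(1,1,2)$) and the quotient map is given by the complete linear system $\mathcal{L}=|-K_{Y_{\KK^{sep}}}|$ (respectively, $|-2K_{Y_{\KK^{sep}}}|$); for $\Sigma'=\langle\alpha\sigma\alpha^{-1}\rangle$ the quotient map is given by $\alpha(\mathcal{L})=\mathcal{L}$, so the two double covers coincide and hence $\alpha\sigma\alpha^{-1}=\sigma$. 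Thus, modulo replacing your lattice-theoretic centrality argument (valid only for smooth $Y$) by the linear-system argument you yourself propose, your proof coincides with the paper's.
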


\begin{proof}
By construction of a birational Geiser (respectively, Bertini) involution (see~Proposition~\ref{proposition:GeiBerBir}), one has
$$
\chi = \pi\sigma\pi^{-1},
$$
where $\pi \colon Y \rightarrow S$ is the blowup of a $\rho(\Gamma)$-invariant collection of distinct points such that the sum of their degrees equals to~\mbox{$d-2$} (respectively,~\mbox{$d-1$}),
the surface $Y$ is a geometrically integral regular del Pezzo
surface of degree~$2$ (respectively,~$1$), and $\sigma$ is a Geiser (respectively, Bertini) involution on~$Y$.
Denote by $\rho_Y$ the homomorphism of $\Gamma$ to the group $\Aut(Y)$
which defines the action of~$\Gamma$ on $Y$. Furthermore,
the diagram
$$
\xymatrix{
& Y\ar@{->}[ld]_\pi\ar@{->}[rd]^{\pi\sigma} &\\
S\ar@{-->}[rr]^\chi && S
}
$$
is $\Gamma$-equivariant. This means that the maps $\pi$ and $\pi\sigma$ are equivariant, or, to be more precise, that
\begin{equation}\label{eq:pi-equivariant}
\rho(g)\pi=\pi\rho_Y(g), \quad \rho'(g)\pi\sigma=\pi\sigma\rho_Y(g)
\end{equation}
for any element $g\in\Gamma$.

Let us show that for any $\alpha \in \Aut(Y_{\KK^{sep}})$
one has
\begin{equation}\label{eq:sigma-equivariant}
\alpha\sigma = \sigma \alpha.
\end{equation}
Denote by $\Sigma\cong\ZZ/2\ZZ$ and $\Sigma'\cong\ZZ/2\ZZ$ the subgroups of $\Aut(Y_{\KK^{sep}})$ generated by the involutions
$\sigma$ and $\sigma' = \alpha\sigma\alpha^{-1}$, respectively.
Then the quotient~\mbox{$Y_{\KK^{sep}} / \Sigma$}
is isomorphic to~$\PP^2_{\KK^{sep}}$ (respectively, to the weighted projective plane~\mbox{$\PP_{\KK^{sep}}(1,1,2)$}), and the map
$$
Y_{\KK^{sep}} \rightarrow Y_{\KK^{sep}} / \Sigma
$$
is given by the complete linear system $\mathcal{L} = |-K_{Y_{\KK^{sep}}}|$ (respectively, by~\mbox{$\mathcal{L} = |-2K_{Y_{\KK^{sep}}}|$}), see~\cite[Proposition 4.18]{BFSZ}. Therefore, the quotient map
$$
Y_{\KK^{sep}} \rightarrow Y_{\KK^{sep}} / \Sigma'
$$
is given by the linear system $\alpha(\mathcal{L})$. But the anticanonical linear system is invariant under the action of any automorphism. Thus one has $\mathcal{L} = \alpha(\mathcal{L})$, and the map $Y_{\KK^{sep}} \rightarrow Y_{\KK^{sep}} / \Sigma'$ is given by $|-K_{Y_{\KK^{sep}}}|$ (respectively, by~$|-2K_{Y_{\KK^{sep}}}|$). Hence the groups $\Sigma$ and $\Sigma'$ coincide, and one has $\sigma = \alpha\sigma\alpha^{-1}$.
In other words, equality~\eqref{eq:sigma-equivariant} holds.

Now, using~\eqref{eq:pi-equivariant} and~\eqref{eq:sigma-equivariant}, for any $g \in \Gamma$ we compute
$$
\rho'(g) \pi\sigma=
\pi\sigma \rho_Y(g)=
\pi\rho_Y(g)\sigma=
\rho(g)\pi\sigma.
$$
This implies that $\rho(g)=\rho'(g)$ and completes the proof of the lemma.
\end{proof}

Now we can give a sketch of a proof of Theorem~\ref{theorem:G-lowdegree}.
To turn it into a rigorous proof, one needs to
establish the existence of decomposition of a $\Gamma$-equivariant birational map into $\Gamma$-Sarkisov links, and classify such links
for regular surfaces over arbitrary fields using the same arguments as in~\cite{BFSZ}.

\begin{proof}[Sketch of a proof of Theorem~\ref{theorem:G-lowdegree}]
Every $\Gamma$-equivariant birational map from $S$ to another regular $\Gamma$-Mori fiber space
is a composition of $\Gamma$-Sarkisov links and automorphisms.

For a geometrically integral regular del Pezzo surface $S$ of $\Gamma$-invariant Picard rank~$1$ and degree~$d \leqslant 3$ the only possible $\Gamma$-Sarkisov links $S \dashrightarrow S'$ are birational Geiser involutions for~\mbox{$d = 3$}, and birational Bertini involutions for $d = 3$ and $d = 2$. Therefore, one has~\mbox{$S' \cong S$} by Proposition~\ref{proposition:GeiBerBir}, and moreover there exists a $\Gamma$-equivariant isomorphism of~$S$ and~$S'$ by Lemma~\ref{lemma:GB-involution-equivariant}. Thus, $S$ is $\Gamma$-birationally rigid.

Finally, observe that for $d \leqslant 3$ there are no $\Gamma$-Sarkisov links starting from $S$ if there are no $\Gamma$-invariant $\KK$-points on $S$, or if $d = 1$. Hence
$S$ is birationally super-rigid in these cases.
\end{proof}

\section{Sarkisov links}
\label{appendix:pointless-4}

In this section we recall the results of~\cite{BFSZ}
concerning Sarkisov links from geometrically integral regular del Pezzo surfaces of degree~$4$,
and apply them to prove Theorem~\ref{theorem:pointless-dP4} and sketch a proof of Theorem~\ref{theorem:G-pointless-dP4}.

\begin{proposition}
\label{proposition:links-starting-from-dP4}
Let $S$ be a geometrically integral regular del Pezzo surface of degree~$4$ over
a  field $\KK$
such that~\mbox{$\rkPic(S)=1$}.
Then all Sarkisov links starting from $S$ are of one of the following types:
\begin{itemize}
\item link of type $\mathrm{II}$ which is a birational Geiser involution $S\dasharrow S$ defined by a point of degree~$2$ in Sarkisov general
position;

\item link of type $\mathrm{II}$ which is a birational Bertini involution defined by a point of degree~$3$ in Sarkisov general
position;

\item link of type $\mathrm{I}$ which is a blow up of a $\KK$-point on $S$
in Sarkisov general position resulting in a regular cubic surface with a structure of a conic bundle.
\end{itemize}
\end{proposition}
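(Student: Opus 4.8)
The plan is to combine the general Sarkisov machinery of \cite{BFSZ} with the degree bookkeeping for blow ups of del Pezzo surfaces and with the analysis of Geiser and Bertini involutions carried out in Appendix~\ref{appendix:involutions}. By \cite[Theorem A]{BFSZ} every birational map between regular Mori fiber spaces decomposes into Sarkisov links, and by \cite[Propositions 5.1 and 5.5]{BFSZ} every Sarkisov link starting from a geometrically integral regular del Pezzo surface $S$ of Picard rank $1$ and degree $d$ is a link of type $\mathrm{I}$ or $\mathrm{II}$ whose first step is the blow up $\pi\colon Y\to S$ of a closed point $p$ of degree $e<d$ in Sarkisov general position; the latter condition means exactly that $Y$ is again a regular del Pezzo surface, necessarily of degree $d-e$ and Picard rank $2$. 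For $d=4$ this leaves only the possibilities $e\in\{1,2,3\}$, and the plan is to identify the resulting link in each of these three cases.

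First I would dispose of the cases $e=2$ and $e=3$, where $Y$ is a regular del Pezzo surface of degree $2$, respectively $1$. Here the degree of the blown up point equals $d-2$, respectively $d-1$, which is exactly the numerical hypothesis of Corollary~\ref{corollary:GeiBerIso}. That corollary shows that the Geiser (respectively Bertini) involution $\sigma$ on $Y$ interchanges the two extremal rays of the nef cone, so the second extremal contraction $\pi\sigma$ again blows $Y$ down to a del Pezzo surface $S'$ of degree $4$ with $S'\cong S$; consequently the link is of type $\mathrm{II}$ and is the birational Geiser (respectively Bertini) involution $S\dashrightarrow S$. This accounts for the first two items in the statement.

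It remains to treat the case $e=1$, where $p$ is a $\KK$-point. Since $p$ is in Sarkisov general position, $Y$ is a regular del Pezzo surface of degree $3$, that is, a cubic surface, of Picard rank $2$, and the exceptional divisor $E=\pi^{-1}(p)$ is a line on $Y$ defined over $\KK$ (cf. the discussion after Remark~\ref{remark:lines-on-cubic}). Linear projection from $E$ equips $Y$ with a conic bundle structure $\phi\colon Y\to\PP^1$. I would then compute in $\Pic(Y_{\KK^{sep}})$ that the general fiber satisfies $F\sim -K_Y-E$, whence $F^2=0$ and $-K_Y\cdot F=2$, so that $[F]$ spans an extremal ray of the Mori cone of $Y$ distinct from $[E]$. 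As $\rkPic(Y)=2$, these are the only two extremal rays: contracting $[E]$ recovers $S$, while contracting $[F]$ produces the morphism $\phi$. Hence the second step of the two ray game is the fibration $\phi$, the link is of type $\mathrm{I}$, and its target is the conic bundle $Y\to\PP^1$, which is relatively minimal because $\rkPic(Y)-\rkPic(\PP^1)=1$. Together with the previous paragraph this exhausts all cases.

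The step I expect to demand the most care is the verification, in the case $e=1$, that the second extremal contraction really is the conic bundle $\phi$ rather than some alternative divisorial contraction, equivalently that $[F]$ is extremal and no third $\KK$-rational ray intervenes. Conceptually this is the elementary cone computation above, but one must be attentive to the arithmetic subtleties over imperfect fields (regular versus smooth, and the possibility that $Y$ is a regular yet non-smooth cubic surface), and it is precisely here that the classification of links in \cite[Propositions 5.1 and 5.5]{BFSZ} for regular del Pezzo surfaces replaces the classical smooth-surface arguments. The only remaining thing to check is that the three cases $e=1,2,3$ are mutually exclusive and exhaustive, which is immediate from $1\le e<d=4$.
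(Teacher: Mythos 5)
Your proposal is correct and follows essentially the same route as the paper's proof: invoke \cite[Propositions 5.1 and 5.5]{BFSZ} to see that any link starts with the blow up of a point of degree $e<4$, apply Corollary~\ref{corollary:GeiBerIso} for $e=2,3$, and identify the $e=1$ case as a type $\mathrm{I}$ link to a regular cubic surface with a conic bundle structure. The only differences are cosmetic: you spell out the two-ray cone computation ($F\sim -K_Y-E$, $F^2=0$) that the paper delegates to \cite{BFSZ}, and your citation of \cite[Theorem A]{BFSZ} is superfluous here since the proposition classifies single links rather than decompositions of birational maps.
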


\begin{proof}
From~\mbox{\cite[Propositions 5.1 and 5.5]{BFSZ}}
one can see that there are three kinds of Sarkisov links from $S$, namely,
links of type $\mathrm{I}$ starting with the blow up of a $\KK$-point~$p$,
and links of type $\mathrm{II}$ starting with a blow up of a point~$p$ of degree $2$ or~$3$.
Furthermore, if~$p$ is a $\KK$-point, and $\pi\colon X \rightarrow S$ is the blow up of $S$ at $p$, then $X$ is a regular cubic surface admitting a structure of a conic bundle.
On the other hand, if the point $p$ has degree~$2$ (respectively, degree~$3$), then the corresponding Sarkisov link $S \dashrightarrow S'$
is a birational Geiser (respectively, Bertini) involution, and one has $S' \cong S$, see Corollary~\ref{corollary:GeiBerIso}.
\end{proof}

Using Proposition~\ref{proposition:links-starting-from-dP4},
we prove Theorem~\ref{theorem:pointless-dP4}.

\begin{proof}[Proof of Theorem~\ref{theorem:pointless-dP4}]
By \cite[Theorem A]{BFSZ}, every birational map from $S$ to another regular Mori fiber space is a composition of Sarkisov links and automorphisms of Mori fiber spaces.
Since $S$ has no $\KK$-points, we see from Proposition~\ref{proposition:links-starting-from-dP4} that any Sarkisov link
starting from $S$ is a birational involution of $S$. Hence $S$ is birationally rigid.
Moreover, if $S$ has no points of degree $2$ and $3$, then there are no Sarkisov links starting from $S$ at all,
which means that $S$ is birationally super-rigid.
\end{proof}

The next assertion is an equivariant version of Proposition~\ref{proposition:links-starting-from-dP4} which is well known to experts, at least over perfect fields.

\begin{proposition}
\label{proposition:links-starting-from-dP4-equivariant}
Let $S$ be a geometrically integral regular del Pezzo surface of degree~$4$ over
a field $\KK$ with an action of a finite group $\Gamma$
such that $\rkPic(S)^{\Gamma}=1$.
Then all $\Gamma$-Sarkisov links starting from $S$ are of one of the following types:
\begin{itemize}
\item link of type $\mathrm{II}$ which is a birational Geiser involution $S\dasharrow S$ defined by \mbox{a $\Gamma$-invariant} point of degree $2$
or a $\Gamma$-invariant pair of $\KK$-points in Sarkisov general position;

\item link of type $\mathrm{II}$ which is a birational Bertini involution defined by a $\Gamma$-invariant
point of degree $3$ or a $\Gamma$-invariant triple of $\KK$-points in Sarkisov general position;

\item link of type $\mathrm{I}$ which is a blow up of a $\Gamma$-invariant $\KK$-point on $S$
in Sarkisov general position resulting in a regular cubic surface with a structure of a $\Gamma$-equivariant conic bundle.
\end{itemize}
\end{proposition}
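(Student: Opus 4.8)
The plan is to run exactly the argument from the proof of Proposition~\ref{proposition:links-starting-from-dP4}, with the non-equivariant Sarkisov theory of~\cite{BFSZ} replaced by its $\Gamma$-equivariant counterpart. Granting the $\Gamma$-equivariant analogues of~\mbox{\cite[Propositions 5.1 and 5.5]{BFSZ}}, every $\Gamma$-Sarkisov link $\chi\colon S\dasharrow S'$ starts with a $\Gamma$-equivariant extremal blow up $\pi\colon Y\to S$ of a $\Gamma$-invariant collection $P$ of closed points in Sarkisov general position, where $Y$ is a geometrically integral regular del Pezzo surface with $\rkPic(Y)^{\Gamma}=2$.

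First I would pin down the admissible centers $P$. Since $\Pic(Y_{\KK^{sep}})$ carries commuting actions of $\Gamma$ and $\Gal(\KK^{sep}/\KK)$, and the exceptional locus of $\pi_{\KK^{sep}}$ contributes to $\rkPic(Y)^{\Gamma}$ exactly the number of $\Gamma\times\Gal(\KK^{sep}/\KK)$-orbits on the set of geometric points of $P$, the condition $\rkPic(Y)^{\Gamma}=\rkPic(S)^{\Gamma}+1=2$ forces $\Gamma\times\Gal(\KK^{sep}/\KK)$ to act transitively on these points. In particular all closed points of $P$ share a common degree $d$, say there are $k$ of them, and the requirement that $Y$ be a del Pezzo surface gives $K_Y^2=4-kd\ge 1$. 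Hence $kd\in\{1,2,3\}$, and the possibilities are precisely: a $\Gamma$-invariant $\KK$-point ($kd=1$); a $\Gamma$-invariant point of degree~$2$ or a $\Gamma$-invariant pair of $\KK$-points ($kd=2$); and a $\Gamma$-invariant point of degree~$3$ or a $\Gamma$-invariant triple of $\KK$-points ($kd=3$).

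Next I would identify the link in each case. If $kd=1$, then $Y$ is a regular cubic surface and the exceptional divisor $E$ of $\pi$ is a $\Gamma$-invariant line; the linear projection from $E$ endows $Y$ with a relatively $\Gamma$-minimal, $\Gamma$-equivariant conic bundle structure (recall the discussion at the beginning of Section~\ref{section:blow-up}), which is the type~$\mathrm{I}$ link. If $kd=2$ (respectively, $kd=3$), then $Y$ is a regular del Pezzo surface of degree~$2$ (respectively,~$1$) with $\rkPic(Y)\ge 2$, so it carries a Geiser (respectively, Bertini) involution $\sigma$, and as in Proposition~\ref{proposition:GeiBerBir} the link is the birational Geiser (respectively, Bertini) involution $\chi=\pi\sigma\pi^{-1}\colon S\dasharrow S$; in particular the second extremal contraction equals $\pi\sigma$ and $S'\cong S$. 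The $\Gamma$-equivariance of $\chi$ is then immediate from the centrality of $\sigma$ in $\Aut(Y_{\KK^{sep}})$ established in the proof of Lemma~\ref{lemma:GB-involution-equivariant}, which shows that $\sigma$ commutes with the $\Gamma$-action. This exhausts the list.

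The main obstacle is not in this reduction but in its input: setting up $\Gamma$-Sarkisov theory for regular surfaces over an arbitrary field, namely the existence of a decomposition of a $\Gamma$-equivariant birational map into $\Gamma$-Sarkisov links together with the classification of the links issuing from a $\Gamma$-Mori fiber space, parallel to~\cite{BFSZ}. I do not carry this out. Once it is available, the orbit-counting argument for $P$ under $\Gamma\times\Gal(\KK^{sep}/\KK)$ and the centrality of the Geiser and Bertini involutions are both routine, and the proof follows the lines of Proposition~\ref{proposition:links-starting-from-dP4} verbatim.
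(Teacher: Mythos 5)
The paper in fact gives no proof of this proposition: it is stated as ``well known to experts, at least over perfect fields,'' and the authors explicitly defer the missing foundations (existence of a decomposition of a $\Gamma$-equivariant birational map into $\Gamma$-Sarkisov links, and their classification, for regular surfaces over arbitrary fields) — the very input your proposal also takes as granted. Modulo that shared input, your sketch is precisely the equivariant transcription of the paper's proof of Proposition~\ref{proposition:links-starting-from-dP4}, combined with Proposition~\ref{proposition:GeiBerBir}, Corollary~\ref{corollary:GeiBerIso}, and the commutation of the Geiser/Bertini involution with all automorphisms established in the proof of Lemma~\ref{lemma:GB-involution-equivariant}, so it matches the argument the authors evidently intend, at the same level of rigor the paper allows itself. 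One small point of hygiene: the contribution of the blown-up locus to $\rkPic(Y)^{\Gamma}$ is most safely counted as the number of $\Gamma$-orbits of \emph{closed} points of $P$, using the decomposition $\Pic(Y)=\Pic(S)\oplus\bigoplus_{p\in P}\ZZ E_p$ directly, rather than via points of $Y_{\KK^{sep}}$ — since $S$ is only assumed regular, $Y_{\KK^{sep}}$ need not be regular and the residue fields of the centers may be inseparable over $\KK$; your count agrees with this one because $\Gal(\KK^{sep}/\KK)$ acts transitively on the $\KK^{sep}$-points lying over a fixed closed point, but the closed-point formulation sidesteps the subtlety entirely.
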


Using Proposition~\ref{proposition:links-starting-from-dP4-equivariant}, we sketch a proof of Theorem~\ref{theorem:G-pointless-dP4}.

\begin{proof}[Sketch of a proof of Theorem~\ref{theorem:G-pointless-dP4}]
Every $\Gamma$-equivariant birational map from $S$ to another regular $\Gamma$-Mori fiber space
is a composition of $\Gamma$-Sarkisov links and automorphisms.
Since~$S$ has no $\Gamma$-invariant $\KK$-points, we see from Proposition~\ref{proposition:links-starting-from-dP4-equivariant} and Lemma~\ref{lemma:GB-involution-equivariant} that any $\Gamma$-Sarkisov link starting from $S$ is a $\Gamma$-equivariant birational involution of $S$.
Hence~$S$ is $\Gamma$-birationally rigid.
Moreover, if $S$ has no $\Gamma$-invariant points of degree $2$ and $3$, and no $\Gamma$-invariant pairs and triples of $\KK$-points,
then there are no $\Gamma$-Sarkisov links starting from~$S$ at all,
which means that $S$ is $\Gamma$-birationally super-rigid.
\end{proof}

\begin{remark}
One can get rid of the assumpition on the absence of
points of degree $3$ in Theorem~\ref{theorem:pointless-dP4},
and similarly of the assumption on the absence of
$\Gamma$-invariant points of degree $3$ and $\Gamma$-invariant triples of $\KK$-points
in Theorem~\ref{theorem:G-pointless-dP4}. Indeed, if a geometrically integral regular del Pezzo
surface of degree $4$ over a field $\KK$ has a point of degree $3$, then it also has a $\KK$-point, and
an analogous assertion holds in the equivariant setting. This can be proved using the arguments
similar to those in the proof of~\mbox{\cite[Lemma~2.4]{ShramovVikulova}}.
\end{remark}

\end{document}